\newtheorem{alg}{Algorithm}
\newtheorem{thm}{Theorem}
\newtheorem{lem}{Lemma}
\newtheorem{remark}{Remark}
\newcommand{\unit}{\texttt{1}\!\!\texttt{l}}
\newcommand{\Int}{\int_{0}^{2\pi}}
\newcommand{\tr}{\mbox{tr}}
\newcommand{\M}{\mathcal{M}}
\def \D #1{\underline{D}_{#1}}
\let\gplgaddtomacro\g@addto@macro
\gdef\gplbacktext{}%
\gdef\gplfronttext{}%
\numberwithin{equation}{section}
\author{Charles M. Elliott\footnotemark[1]~ and Hans Fritz\footnotemark[1]}
\title{On Approximations of the Curve Shortening Flow and of the Mean Curvature Flow based on the DeTurck trick}
\date{}
\begin{document}
\maketitle
\renewcommand{\thefootnote}{\fnsymbol{footnote}}
\footnotetext[1]{Mathematics Institute, Zeeman Building, University of Warwick, Coventry. CV4 7AL. UK\\ 
C.M.Elliott@warwick.ac.uk, H.Fritz@warwick.ac.uk}
\abstract{
In this paper we discuss novel numerical schemes for the computation of the 
curve shortening and mean curvature flows that are based on 
special reparametrizations. The main idea is to use special solutions 
to the harmonic map heat flow in order to reparametrize the equations of motion. 
This idea is widely known from the Ricci flow as the DeTurck trick. 
By introducing a variable time scale for the harmonic map heat flow, we obtain 
families of numerical schemes for the reparametrized flows. 
For the curve shortening flow this family unveils a surprising geometric connection 
between the numerical schemes in \cite{BGN11} and \cite{DD94}. 
For the mean curvature flow we obtain families of schemes with good mesh properties
similar to those in \cite{BGN08}.
We prove error estimates for the semi-discrete scheme of the curve shortening flow.
The behaviour of the fully-discrete schemes with respect to the 
redistribution of mesh points is studied in numerical experiments.
We also discuss possible generalizations of our ideas to other extrinsic flows.
}\\

\textbf{Key words.} 
Curve shortening flow, mean curvature flow, harmonic map heat flow, DeTurck trick,
parametric finite elements, error estimates, tangential redistributions, mesh properties. 
\\

\textbf{AMS subject classifications.} 65M60, 65M15, 35K93, 53C44, 58E20. 
\\

\section{Introduction}
\subsection*{Motivation}
The numerical analysis and the approximation of geometric flows have made
significant progress during the last decades, see \cite{DDE05, DE13} and references therein. Different numerical schemes for geometric flows 
such as the mean curvature flow and the Willmore flow 
have been proposed by several authors - all very appealing for different kinds of reason.
In \cite{Dz91}, for example, Dziuk presented a discretization of the mean curvature flow
based on the fact that the mean curvature flow is a kind of diffusion equation for the surface embedding.
On the other hand, Barrett, Garcke and N\"urnberg introduced in
a series of papers, see \cite{BGN07,BGN08,BGN08b, BGN11,BGN12}, 
numerical schemes with good properties
with respect to the redistribution of mesh points.
Algorithms providing time-dependent rearrangements of mesh points
that prevent mesh degenerations are indeed very desirable. 
In fact, it can be stated that the formation of degenerate meshes 
in the simulation of geometric flows is the Achilles heel of many 
state of the art algorithms that are based on the parametric approach. 
Numerical simulations usually have to be stopped,
when the mesh degenerates and some sophisticated machinery for remeshing
the polyhedral surfaces, for example, using harmonic maps between surfaces \cite{St14}, 
has to be applied. It seems therefore to be a far better solution of this problem
to use algorithms that already induce tangential motions 
that lead to good redistributions of mesh points, see \cite{BGN07,BGN08,BGN08b, BGN11,BGN12}.
On the other hand, introducing schemes that lead to artificial tangential motions 
seems to be problematic too.  
Obviously, discrete solutions with non-vanishing tangential motions 
cannot converge to smooth solutions
with vanishing tangential velocities. Hence, such solutions cannot converge to the
surface parametrization evolving according to 
the original (that is non-reparametrized) system of PDEs.  
It is thus unclear whether there is at all a well-defined evolution of 
the surface parametrization that is approximated by the discrete solution. 
Since the numerical analysis of geometric flows is usually based on the analysis of
non-degenerate evolution equations for the surface parametrization
rather than on the analysis of evolving shapes only determined by their normal velocity,
the numerical analysis of such schemes seems to be very difficult.

\subsection*{Our approach}
An obvious possibility to tackle this problem is to replace the original evolution equations 
by suitable reparametrizations that lead
to the desired tangential motions in the discrete setting. 
An interesting reparametrization of the curve shortening flow can be found in the paper
of Deckelnick and Dziuk \cite{DD94}. 
Surprisingly, it turns out that this reparametrization can be linked to the so-called DeTurck trick.
This trick refers to an idea, which was originally introduced by DeTurck for the
Ricci flow for purely analytical reasons, see \cite{DeT83} and \cite{Ham95} for details. 
Reparametrizing the Ricci flow by solutions to the harmonic map heat flow
leads to a strongly parabolic PDE, 
which is now known as the Ricci-DeTurck flow or as the dual Ricci-harmonic map heat flow. 
Fortunately, this idea is not restricted to the Ricci flow.
Rather, it is possible to apply this idea also to other geometric
flows such as the curve shortening and mean curvature flows, see \cite{Ba10, Ham89}. 
Below, we will explicitly derive the reparametrized evolution equations of these flows
by using solutions to harmonic map heat flows. 
For the curve shortening flow, we will obtain the evolution equations 
considered by Deckelnick and Dziuk in \cite{DD94} for numerical reasons as a special 
case of our more 
general approach.
As we will see, the reparametrization by the harmonic map heat flow gives rise to tangential motions, 
which we here aim to exploit for purely numerical reasons,
namely for the tangential redistribution of the mesh points.
Since the numerical scheme in \cite{DD94} is fully based on a consistent discretization
of a non-degenerate system of PDEs, 
it is possible to prove rigorous error estimates. 
This still holds in our more general setting.
Unfortunately, in general, the scheme in \cite{DD94} does not provide  
sufficiently large redistributions that are able to keep the mesh points 
approximately equidistributed. The reason for this lack seems to be that
the time scale on which the tangential redistributions take place
is just too large. 

In this paper, we hence generalize the idea to use the DeTurck trick 
as a tool of deriving useful reparametrizations of geometric evolution equations
by introducing a variable time scale.
Our aim is to develop novel algorithms which can be analysed rigorously
and which also provide good mesh properties. 
We here say that a discrete curve has good mesh properties if
all mesh segments have approximately the same length.
For higher-dimensional hypersurfaces a triangulation is said to be a good mesh 
if the quotient of the diameter of a simplex and of the radius of the largest ball contained in it
is reasonable small for all simplices of the triangulation, see also definition (\ref{definition_sigma_max}) below.   
This seems to be a good quantity to evaluate the mesh quality, since it plays an important role in the numerical analysis of PDEs.
An interesting open question is the rigorous proof
that an algorithm maintains this mesh quality.

Our approach leads to families of numerical schemes 
for the approximation
of the curve shortening and mean curvature flows
depending on a parameter $\alpha > 0$,
which determines the time scale for the tangential motions of the surface
parametrization. 
Indeed, it is not very surprising that for a special choice of this parameter,
here for $\alpha = 1$, 
we recover the semi-discrete problem
studied in \cite{DD94},
that is
$$
 \Int \hat{X}_{ht} \cdot \varphi_h |\hat{X}_{h\theta}|^2 d\theta +  
 \Int \hat{X}_{h\theta} \cdot \varphi_{h\theta} d\theta = 0,
 \quad \forall \varphi_{h} \in \mathcal{S}^2_h, 0 < t < T,
$$
where $\hat{X}_h \in H^{1,2}((0,T), \mathcal{S}^2_h)$ is an approximation
to the reparametrized curve shortening flow.
Here, $\mathcal{S}_h$ denotes the space of piecewise linear, continuous 
functions $\varphi_h: [0,2\pi] \rightarrow \mathbb{R}$
with $\varphi_h(0) = \varphi_h(2 \pi)$ on a given grid in $[0,2 \pi]$ with grid size $h$. 
We are also able to recover
a scheme similar to the curve shortening flow scheme in \cite{BGN11}
if we formally choose $\alpha = 0$.
This means that we are able to connect
the schemes in \cite{BGN11} and in \cite{DD94} by a family of numerical schemes depending on the time scale parameter $\alpha$. 
To be more precise, for $\alpha = 0$, Algorithm \ref{algo_CSF} proposed below
simplifies to
\begin{align*}
	&\Int \left(  
	\left(\frac{\hat{X}^{m+1}_{h} - \hat{X}^{m}_{h}}{\tau} \cdot \nu^m_h \right)
				(\nu^m_h \cdot \varphi_h) \right) 
				|\hat{X}^m_{h\theta}|^2 d\theta 
	+ \Int \hat{X}^{m+1}_{h\theta} \cdot \varphi_{h\theta} d\theta 
	= 0 ,
	\quad \forall \varphi_h \in \mathcal{S}_h^2,			
\end{align*}
while the solution of algorithm (2.16a) from \cite{BGN11} satisfies 
$$
 \Int  I_h \left( \left( \frac{\hat{X}_h^{m+1} - \hat{X}_h^m}{\tau} \cdot \rho^{m+1}_h \right)
 \left( \rho^{m+1}_h \cdot \varphi_h \right) 
 \right) d\theta
 + \Int \hat{X}^{m+1}_{h \theta} \cdot \varphi_{h\theta} 
 d\theta = 0,
 \quad \forall \varphi_{h} \in \mathcal{S}^2_h,
$$
where $I_h$ denotes the Lagrange interpolation operator and
$\rho_h^{m+1} \in \mathcal{S}_h^2$
is defined such that in each node $\theta_j \in [0,2\pi]$ of the grid
it is given by the mean value of the piecewise constant vector field 
$\nu_h^{m+1} |\hat{X}^{m+1}_{h \theta}|$.

The important similarities between both schemes are the appearance of a discrete
normal projection acting on the discrete time derivative and the multiplication of this term by the square of the length element.
Furthermore, the elliptic operator in both schemes only depends linearly on the curve parametrization,
which is a rather surprising result, since the curve shortening flow itself is non-linear with respect to the curve parametrization. 
In \cite{BGN11} the normal projection is motivated by the fact that the geometric problem associated with the curve shortening flow only describes the normal velocity,
whereas the tangential motion is undetermined by the geometry, and therefore, regarded as free. 
It is hence a bit surprising that the algorithm based on this view gives rise to desirable tangential motions that lead 
to the advantageous redistribution of the mesh points. In our derivation,
the normal projection is a formal limit of a certain map which has its origin
in the reparametrization by the DeTurck trick.
This observation leads to the interesting question whether the tangential redistributions 
observed in \cite{BGN11} could be explained as the limiting behaviour of the DeTurck reparametrization process.
Although, we are able to formally choose $\alpha = 0$ in our algorithms,
which in the one-dimensional case $n=1$ leads to the above scheme, 
it has to be stated that the derivation of the 
reparametrized evolution equations is only valid for $\alpha > 0$. 
Unfortunately, 
it is not only the derivation, which becomes problematic for the choice
$\alpha = 0$, but also the system of PDEs for the surface parametrization
itself. This means that the reparametrized evolution equation 
that underlies Algorithm \ref{algo_CSF}
changes from a non-degenerate system of parabolic differential equations 
for $\alpha > 0$ to a system of PDEs where the normal component of the system looks parabolic and 
its tangential part seems to be elliptic.
We, therefore, restrict the analytical part of this paper to the case $\alpha >0$.
Nevertheless, the relation between Algorithm \ref{algo_CSF} for $\alpha=0$
and the scheme proposed in \cite{BGN11} strengthens the idea, 
that for a suitable choice of the parameter $\alpha > 0$
a sufficiently good redistribution behaviour of the mesh points should be
achievable.
In order to confirm this assumption we study the limiting behaviour
$\alpha \searrow 0$ of our algorithms in numerical experiments.

In contrast to the curve shortening flow, our numerical schemes for the mean curvature flow 
seem to be totally new. Although, it is still possible to connect them loosely 
to the scheme in \cite{BGN08}, the main difference between both approaches
is that our schemes are based on the consistent discretization of a non-degenerate system
of parabolic PDEs for the surface parametrization, 
whereas the scheme in \cite{BGN08} is based on the evolution equation for the normal velocity
of the surface, and hence on a degenerate equation. This difference becomes manifest in the appearance
of an additional (second order) term in our numerical scheme for the mean curvature flow. 

The here presented approach is based on the DeTurck trick and the harmonic map heat flow.
At first glance this approach seems to be rather ad hoc and it does not seem to be clear why this approach should lead to schemes with good mesh properties.
Yet, the following observations strengthen the idea that it is indeed possible to produce nice meshes by using DeTurck reparametrizations.  
\begin{enumerate}
\item[1.] Under certain assumptions it is possible to prove that solutions to the harmonic map heat flow converge to harmonic maps for long times; see \cite{ES64}.
\item[2.] Under certain assumptions, harmonic maps of surfaces are conformal maps; see, for example, in \cite{EW76}.
We therefore expect that the map $y_h^m$ in Algorithms \ref{algo_moving_hypersurface} and \ref{algorithm_harmonic_MCF} of this paper
will also approximate a conformal map -- at least for very small $\alpha$.
\item[3.] Suppose that there is a conformal map between two surfaces with different Riemannian metrics. Furthermore, suppose that one surface 
		is approximated by a simplicial mesh such that the triangles do not have any sharp angles with respect to the corresponding metric.
		The image of this mesh under the conformal map, or more precisely, under a good approximation thereof should then give a good mesh for the other surface.    
\end{enumerate}  
Any rigorous results in this direction are far beyond the scope of this paper. 
In the above arguments we have not made use of any properties of the curve shortening and mean curvature flows.
Hence, our approach should be applicable in a much wider context;
see Section \ref{Section_generalization_to_other_geometric_flows}.
In the main part of this paper, however, we will focus on the curve shortening and mean curvature flows.  
The applicability of our approach might be restricted by the fact that, in the general case, solutions to the harmonic map heat flow can generate singularities in finite time; 
see, for example, in \cite{CDY92}. It is unclear whether these singularities then also arise in the curve shortening-DeTurck and mean curvature-DeTurck flows.

\subsection*{Related work}
The redistribution of mesh points in order to prevent mesh degenerations has been studied for quite a long time.
In \cite{HLS94}, a non-local equation for the tangential velocity functional of curves in $\mathbb{R}^2$ was introduced and utilized for the first time.
Later, this functional has been studied in detail in \cite{MS01} showing that the redistribution preserves relative local lengths of curve segments.
The method was then generalized in \cite{MS04} in order to achieve asymptotically uniform redistribution of grid points for evolving curves.  
In \cite{MS04b} this approach was extended by the addition of a diffusive term to the equation of the tangential velocity.
The advantages of adding a diffusive term to the curve shortening flow have already been exploited in \cite{DD94}.
The scheme proposed in \cite{MS04b} is suitably chosen in order to uniformly redistribute the mesh points.

Only recently, a numerical scheme for the tangential redistribution of mesh points on higher-dimensional manifolds has been proposed in \cite{MRSS14}.
The approach of this paper is based on an appropriate variation of the surface velocities. The additional tangential velocities are chosen in such a way that the volume density
of the surface parametrization can be controlled.
In order to obtain well-defined problems, the authors assume 
that the tangential velocities are in fact gradient fields.
This assumption then leads to elliptic problems, in which time is 
an additional parameter. The authors demonstrate the effectiveness of their approach
in numerical experiments. 
It is an interesting question whether controlling relative volumes during the evolution can really prevent the formation of mesh degenerations in general.
Reparametrizations by harmonic maps, see \cite{St14}, might be an interesting alternative, since it allows for conformal remeshing.

We believe that introducing additional equations into numerical schemes in order to control
certain mesh quantities not only increases the computational costs, but also makes the numerical analysis of the scheme much more involved.
Hence, we aim to use a kind of built-in reparametrization based on 
the DeTurck trick such that we do not have to solve other problems 
than the (reparametrized) evolution equation of the surface parametrization. 
This approach differs from the schemes in \cite{HLS94, MS01, MS04, MS04b, MRSS14, St14}, where a larger system of PDEs is considered in order to
improve or maintain the mesh quality. 
We therefore only compare our numerical results to the schemes in \cite{BGN08, BGN08b, BGN11}, which are also in the spirit of a
built-in approach. We expect that such an approach can reduce the computational errors, 
while still providing sufficiently good mesh behaviour.

In \cite{St14}, the DeTurck trick has already 
been used to derive a family of numerical schemes
for the approximation of the curve shortening flow; compare equation (4.5) in \cite{St14} to (\ref{intermediate_result_CSF_2}) in the present paper.
However, the author does not use an important trick, which
we introduce below and which is crucial for different kinds of reasons.
Firstly, without this trick, it is not possible to consider the limit $\alpha \searrow 0$ and to see the connection between the schemes in \cite{BGN11} and \cite{DD94}.
Furthermore, in \cite{St14} a variable
for the mean curvature vector has to be introduced
in order to be able to discretize the weak formulation by piecewise linear finite elements.
In contrast to this result, it is not necessary in our formulation to introduce any further variables
for the computation of the curve shortening flow. 
Finally, the author in \cite{St14} does not prove any error estimates for his scheme, which we will do in Theorem \ref{approximation_theorem}.
We are not aware of any further publications, where the 
reparametrization of the evolution equations by solutions 
to the harmonic map heat flow has been used
to develop numerical schemes based on surface finite elements. 
We would like to emphasize that the ideas developed in this paper are
not restricted to any dimension $n$ of the hypersurface, even if the error analysis
in Section $3$ is only valid for the one-dimensional case, that is $n=1$.

\subsubsection*{Outline of the paper}
This paper is organised as follows. In Section $2$, we 
introduce the DeTurck trick and apply it to the $n$-dimensional
mean curvature flow. We derive the reparametrized evolution equations in detail
for any dimension $n \in \mathbb{N}$ of the hypersurface.  
In Section $3$, we discretize a weak formulation of the reparametrized
evolution equations in space for the one-dimensional problem, that is for
the curve shortening flow. We then show that it is possible to adapt the proof of
the error estimates in \cite{DD94} to our novel schemes with only minor changes. In Section \ref{Numerical_results_CSF}, numerical tests for the computation of the curve shortening
flow are presented with a special focus on the behaviour of the mesh properties.
In Section $5$, we derive a weak formulation of the reparametrized mean curvature
flow of $n$-dimensional hypersurfaces for arbitrary $n \in \mathbb{N}$.   
These equations are then lifted onto the moving 
hypersurface and discretized using surface finite elements. 
In Section $6$, we introduce a variant of the DeTurck trick that leads to a formulation,
which only contains terms that can be related to the first variation of some energy functionals.
In Section $7$, numerical experiments for the mean curvature flow are presented with a special focus on mesh properties. 
We compare the performance of our schemes to the behaviour of the scheme proposed in \cite{BGN08}.
Possible generalizations of the ideas developed in this paper to other geometric flows 
are explained in Section $8$. In Section $9$, we discuss the results of this paper
and compare our approach to previous works.

The main results of this paper are Algorithm \ref{algo_CSF} for the computation of the 
reparametrized curve shortening flow and Algorithms \ref{algo_moving_hypersurface} and \ref{algorithm_harmonic_MCF} 
for the computation of the
reparametrized mean curvature flow.

\section{Reparametrizations via the DeTurck trick}
\subsection{Notation}
Henceforward, let $\M$ be a closed (that is compact and without boundary), 
connected, orientable, $n$-dimensional smooth manifold $\M$
(that is a topological space which is locally homeomorphic to open subsets of $\mathbb{R}^n$
via the so-called coordinate charts 
$\mathcal{C}_i: U_i \subset \M \rightarrow \Omega_i \subset \mathbb{R}^n$,
where the transition maps $\mathcal{C}_i \circ \mathcal{C}_j^{-1}$ are supposed to be smooth).
We denote the identity map on $\M$ by $id(p) = p$.
Let $g$ be a smooth Riemannian metric on $\M$, that is a smooth map which defines an inner product on all tangent spaces of $\M$.
The components of $g$ with respect to a local coordinate system are denoted by $g_{ij}$. 
The components of the inverse of the matrix $(g_{ij})_{i,j= 1, \ldots,n}$ are denoted by $g^{ij}$. 
In the following we will make use of the convention to sum over repeated indices.
 
For a $C^2$-function $f: \mathcal{M} \rightarrow \mathbb{R}$ the Laplace operator
$\Delta_g$ with respect to the metric $g$ is defined by
\begin{equation}
\label{defi_laplacian}
	(\Delta_g f) \circ \mathcal{C}^{-1}_1  := g^{ij} \left( \frac{\partial^2 F}{\partial \theta^i \theta^j}
				- \Gamma(g)^k_{ij} \frac{\partial F}{\partial \theta^k} \right),
\end{equation}
where $F := f \circ \mathcal{C}_1^{-1}$ and $\Gamma(g)^k_{ij}$ are
the Christoffel symbols of $g$ defined by 
\begin{align}
  \Gamma(g)^k_{ij} := \frac{1}{2} g^{kl} \left(
  	\frac{\partial g_{lj}}{\partial \theta^i} 
  	+ \frac{\partial g_{li}}{\partial \theta^j} 
  	- \frac{\partial g_{ij}}{\partial \theta^l} 
  \right).
\label{definition_Christoffel_symbols}
\end{align} 
If $\M$ is a hypersurface of the Euclidean space $\mathbb{R}^{n+1}$
and $g$ the corresponding induced metric, the Laplace operator $\Delta_g$ coincides
with the surface Laplacian defined in Definition (2.3) of \cite{DDE05}. 
Because of Jacobi's formula for the derivative of the
determinant we obtain
\begin{align*}
	g^{ij} \Gamma(g)^k_{ij} 
	&= g^{ij} g^{kl} 	\left( \frac{\partial g_{li}}{\partial \theta^j}
							 - \frac{1}{2} \frac{\partial g_{ij}}{\partial \theta^l}
						\right)	
	= - \frac{\partial g^{jk}}{\partial \theta^j}
		- g^{kl} \frac{1}{\sqrt{|g|}} \frac{\partial \sqrt{|g|}}{\partial \theta^l}
	\\	
	&= - \frac{1}{\sqrt{|g|}} \frac{\partial}{\partial \theta^j} 
		\left( \sqrt{|g|} g^{jk} \right),
\end{align*}
where $|g| = \det (g_{ij})$, and hence,
\begin{align*}
	(\Delta_g f) \circ \mathcal{C}^{-1}_1 
	&= \frac{1}{\sqrt{|g|}} \frac{\partial}{\partial \theta^i}
				\left( \sqrt{|g|} g^{ij} \frac{\partial F}{\partial \theta^j} \right),
\end{align*}
which is sometimes used as an alternative definition of the Laplace operator.

The map Laplacian $\Delta_{g,h} \psi$ of a twice-differentiable map $\psi: \M \rightarrow \M$ is defined by 
\begin{equation}
\label{map_Laplacian}
(\mathcal{C}_2 \circ (\Delta_{g, h} \psi) \circ \mathcal{C}^{-1}_1)^q := g^{ij}
	\left( \frac{\partial^2 \Psi^q}{\partial \theta^i \theta^j} 
		- \Gamma(g)^k_{ij} \frac{\partial \Psi^q}{\partial \theta^k}
		+ (\Gamma(h)^q_{mn} \circ \Psi ) \frac{\partial \Psi^m}{\partial \theta^i}
						   \frac{\partial \Psi^n}{\partial \theta^j}		
		\right).
\end{equation}
Here, $g^{ij}$ and $\Gamma(g)^k_{ij}$ are the components of the inverse of
the matrix $(g_{ij})_{i,j= 1, \ldots,n}$, and respectively, of the Christoffel symbols of $g$ with respect to
the coordinate chart $\mathcal{C}_1$. 
The quantities $h^{mn}$ and $\Gamma(h)^q_{mn}$ denote the corresponding quantities
of the metric $h$ with respect to the coordinate chart $\mathcal{C}_2$.
The local representation of the map $\psi$ with respect
to the charts $\mathcal{C}_1$ and $\mathcal{C}_2$ is denoted by 
$\Psi := \mathcal{C}_2 \circ \psi \circ \mathcal{C}^{-1}_1$. 
In order to keep notation simple yet concise,
we make the convention 
that whenever a term depends on different local coordinates, 
then the indices $i,j,k,l$ refers to the coordinates with respect to the chart $\mathcal{C}_1$,
whereas the indices $m,n,p,q$ refers to the coordinates with respect to the chart $\mathcal{C}_2$
-- if not otherwise stated.

The differential $\nabla f $ of a differentiable function
$f: \M \rightarrow \mathbb{R}$ on $\M$ is defined by
$$
	(\nabla f) (w) \circ \mathcal{C}^{-1}_1
	:= W^j \frac{\partial F}{\partial \theta^j},
$$
where $F := f \circ \mathcal{C}^{-1}_1$ and $w = W^j \frac{\partial}{\partial \theta^j}$
is an arbitrary tangent vector field on $\M$.

For a function $f$ on an $n$-dimensional smooth hypersurface 
$\mathcal{N} \subset \mathbb{R}^{n+1}$ differentiable at $p \in \mathcal{N}$,
the tangential gradient $\nabla_{\mathcal{N}} f (p)$ is defined by
$$
	\nabla_{\mathcal{N}} f(p) 
	:= \nabla \overline{f}(p) - (\nu \cdot \nabla \overline{f})(p) 
	\nu(p).
$$
Here, $\nu(p)$ is a unit normal to $\mathcal{N}$ at the point $p$
and $\overline{f}$ is a differentiable extension of $f$ to an open neighbourhood
$U \subset \mathbb{R}^{n+1}$ of $p$, such that 
$\overline{f}_{|\mathcal{N} \cap U} = f_{|\mathcal{N} \cap U}$. 
The tangential gradient is well-defined, since the above definition only depends
on the values of $f$ on $\mathcal{N}$, see \cite{DDE05} for more details.
The components of the tangential gradient are denoted by
$$
	\left(
	\begin{array}{c}
		\D 1 f
		\\
		\vdots
		\\
		\D {n+1} f
	\end{array}
	\right)
	:= \nabla_\mathcal{N} f.
$$

We define the integral on a Riemannian manifold $(\mathcal{M}, g)$ 
with respect to a Riemannian metric $g$ by
$$
	\int_{\mathcal{M}} f do_{g} := \int_{\Omega} f \circ \mathcal{C}^{-1}_1 \sqrt{\det(g_{ij})} d^n\theta,
$$
where $\mathcal{C}_1: U \subset \mathcal{M} \rightarrow \Omega \subset \mathbb{R}^n$
denotes a local coordinate chart of $\mathcal{M}$ and $f : \mathcal{M} \rightarrow \mathbb{R}$ is an integrable function on $\mathcal{M}$ 
with support ${supp} f \subset U$.
Using a partition of unity, this definition easily generalizes to
arbitrary integrable functions on $\mathcal{M}$.
On $n$-dimensional hypersurfaces in $\mathbb{R}^{n+1}$ the volume form that is induced by the $n$-dimensional Hausdorff measure will be denoted by $d\sigma$. 
Below, we will make use of the matrix scalar product $A:B$
defined by 
$A:B = \sum_{\alpha, \beta = 1}^{n+1} A_{\alpha \beta} B_{\alpha \beta}$
for $A,B \in \mathbb{R}^{(n+1) \times (n+1)}$.

Henceforward, let $x: \M \times [0,T) \rightarrow \mathbb{R}^{n+1}$
be a time-dependent embedding of $\M$ 
(that is an immersion on $\M$ which is a homeomorphism of $\M$ onto $x(\M)$) of at least class $C^2$.
The local representation of the embedding $x$ with respect to a coordinate chart
$\mathcal{C}_1$ is denoted by $X:= x \circ \mathcal{C}_1^{-1}$. 
The embedding $x$ induces a Riemannian metric on $\M$ given by the the pull-back of the Euclidean metric $\mathfrak{e}$ in $\mathbb{R}^{n+1}$.  
In the following we will denote this metric by $g(t) := x(t)^\ast \mathfrak{e}$.
In local coordinates the pull-back metric $g(t)$ is given by
\begin{equation}
	\label{induced_metric}
	g_{ij}(\theta, t) 
			:= \mathfrak{e}\left( \frac{\partial X}{\partial \theta^i}(\theta, t) ,
				 \frac{\partial X}{\partial \theta^j}(\theta, t)  \right)
			:= \frac{\partial X}{\partial \theta^i}(\theta, t) 
					\cdot \frac{\partial X}{\partial \theta^j}(\theta, t),
			\quad \forall i,j \in \{1, \ldots, n \}.
\end{equation}
The Euclidean metric $\mathfrak{e}$ will also be denoted by $\cdot$, where convenient.
For the sake of convenience, we will omit the full dependency of the metric
$g(p,t)(\cdot, \cdot)$ with
$(p,t) \in \M \times [0,T)$ where appropriate. 

\subsection{The mean curvature flow}
We next introduce the (non-reparametrized) evolution equations of the mean curvature flow.
Very readable surveys on the mean curvature flow are \cite{Ec04} and \cite{Ma10}. 
The embedding $x$ is said to evolve according to the mean curvature flow if 
\begin{equation}
	\frac{\partial}{\partial t} x = - (H \nu) \circ x.
	\label{MCF_equation}
\end{equation}
Here, $\nu$ denotes a unit normal vector field 
on the embedded hypersurface $\Gamma(t) := x(\M,t) \subset \mathbb{R}^{n+1}$
and $H$ is the corresponding mean curvature, that is
\begin{equation}
\label{mean_curvature}
	H \circ X := g^{ij} \frac{\partial X}{\partial \theta^i}
					\cdot \frac{\partial (\nu \circ X)}{\partial \theta^j} 
			= - g^{ij} \frac{\partial^2 X}{\partial \theta^i \partial \theta^j} 
					\cdot  (\nu \circ X).
\end{equation}
The definition of the mean curvature flow does not depend on 
the choice of the unit normal field. Please note that our definition of the mean curvature $H$
differs form the more common one by a factor of $n$. For example, the mean curvature of the $n$-dimensional unit sphere with unit normal pointing outwards is $n$.
From the definition of the induced metric (\ref{induced_metric}) one directly obtains that
$$
	\frac{\partial^2 X}{\partial \theta^i \partial \theta^j}
	\cdot \frac{\partial X}{\partial \theta^l}
	= \frac{1}{2} \left( 
			\frac{\partial g_{jl}}{\partial \theta^i} 
			+ \frac{\partial g_{il}}{\partial \theta^j} 
			- \frac{\partial g_{ij}}{\partial \theta^l}				
		\right) 
	= g_{kl} \Gamma(g)^k_{ij}, 
$$
and hence,
$$
	\Gamma(g)^k_{ij} \frac{\partial X}{\partial \theta^k}
	= \frac{\partial^2 X}{\partial \theta^i \partial \theta^j}
		\cdot \frac{\partial X}{\partial \theta^l} g^{lk}
		\frac{\partial X}{\partial \theta^k}.
$$
The following identity 
$$
	\unit = (\nu \circ X) \otimes (\nu \circ X) 
	+ g^{lk} \frac{\partial X}{\partial \theta^l} \otimes \frac{\partial X}{\partial \theta^k}
$$
then gives the decomposition
\begin{equation}
\label{decomposition}
	\frac{\partial^2 X}{\partial \theta^i \partial \theta^j}
	= \frac{\partial^2 X}{\partial \theta^i \partial \theta^j} \cdot (\nu \circ X)
	 (\nu \circ X) + \Gamma(g)^k_{ij} \frac{\partial X}{\partial \theta^k}.
\end{equation}
From this identity, the definition of the Laplace operator (\ref{defi_laplacian}) and
the definition of the mean curvature in (\ref{mean_curvature}) it follows that
$$
	(\Delta_{g(t)} x) \circ \mathcal{C}_1^{-1}
	= g^{ij} \frac{\partial^2 X}{\partial \theta^i \partial \theta^j} \cdot (\nu \circ X)
	 (\nu \circ X) 
	= - (H \circ X)  (\nu \circ X). 
$$
This means that
\begin{equation}
	\label{Laplace_of_the_identity}
	\Delta_{g(t)} x = - (H \nu) \circ x.
\end{equation}
The mean curvature flow is therefore given by the following non-linear heat equation
$$
	\frac{\partial}{\partial t} x = \Delta_{g(t)} x,
$$
where $g(t) = x(t)^\ast \mathfrak{e}$.

A straightforward calculation gives the evolution of the metric $g(t)$
under the mean curvature flow, see, for example, in \cite{Ma10},
$$
	\frac{\partial}{\partial t} g_{ij} 
			= 2 (H \circ X) \frac{\partial^2 X}{\partial \theta^i \partial \theta^j} 
				\cdot (\nu \circ X)
			= - 	 2 (H \circ X) \mathcal{H}_{ij},
$$
where $\mathcal{H}_{ij}$ denotes the components of the second fundamental form $\mathcal{H}$
defined by
$$
	\mathcal{H}_{ij} := \frac{\partial X}{\partial \theta^i}
					\cdot \frac{\partial (\nu \circ X)}{\partial \theta^j}. 
$$

\subsection{The harmonic map heat flow}
We now introduce the harmonic map heat flow on the manifold $\M$.
A map $\psi: \M \times [0,T) \rightarrow \M$ is said to evolve according to the
harmonic map heat flow if 
\begin{align*}
	\frac{\partial}{\partial t} \psi = \Delta_{g,h} \psi.
\end{align*}
Here, $\Delta_{g,h}$ denotes the map Laplacian (\ref{map_Laplacian}) on $\M$ with
respect to the smooth metrics $g$ and $h$ on $\M$.
In the following we choose $g$ to be the time-dependent metric $g(t) = x(t)^\ast \mathfrak{e}$ induced by the embedding $x(t)$ on $\M$.
This choice is motivated by our aim to use the harmonic map heat flow $\psi$
for the reparametrization of the mean curvature flow $x(t)$.
In contrast to the metric $g(t)$, we keep the metric $h$ arbitrary
but fixed in time. 
In local coordinates, the harmonic map heat flow is given by
$$
	\frac{\partial}{\partial t} \Psi^q = g^{ij}
	\left( \frac{\partial^2 \Psi^q}{\partial \theta^i \theta^j} 
		- \Gamma(g)^k_{ij} \frac{\partial \Psi^q}{\partial \theta^k}
		+ (\Gamma(h)^q_{mn} \circ \Psi ) \frac{\partial \Psi^m}{\partial \theta^i}
						   \frac{\partial \Psi^n}{\partial \theta^j}		
		\right).
$$
Short-time existence and uniqueness results for this flow can be found in \cite{ES64}.
In the following, we choose the initial conditions for the harmonic map heat flow
to be the identity on $\M$, that is $\psi(\cdot, 0) = id(\cdot)$.

\subsection{The mean curvature-DeTurck flow}
The DeTurck trick was first introduced in \cite{DeT83}
in order to prove existence and uniqueness of solutions to the Ricci flow \cite{Ham82}.
Later, it was also used to prove existence and uniqueness
for the mean curvature flow, see for example \cite{Ba10, Ham89}.
However, to our knowledge, it has never been considered in a numerical setting so far.

The basic idea of the trick is to reparametrize the original evolution equations
by a smooth family of diffeomorphisms solving the harmonic map heat flow.
Since this is a rather special concept,
we here provide a detailed derivation of the reparametrized evolution equations.
In the first step, we combine the mean curvature flow with the harmonic map heat flow by
\[
(P)=
\left\{
\begin{aligned}
	& \frac{\partial}{\partial t} x = - (H \nu) \circ x,
	\quad \textnormal{with $x(\cdot,0) = x_0$ on $\M$,}
	\\
	& \frac{\partial}{\partial t} \psi_\alpha = \frac{1}{\alpha} \Delta_{g(t),h} \psi_\alpha,
	\quad \textnormal{with $g(t) := x(t)^\ast \mathfrak{e}$  
		and $\psi_\alpha(\cdot,0) = id(\cdot)$ on $\M$,}
\end{aligned}
\right.
\]
where $h$ is a fixed yet arbitrary smooth Riemannian metric on $\M$.
We have here introduced the inverse diffusion constant $\alpha > 0$
in the harmonic map heat flow.
As we will see below, this parameter determines the time scale on which
the tangential motions of the reparametrized flow take place. 
Since $\psi_\alpha(\cdot, 0) = id(\cdot)$ is initially the identity, the map $\psi_\alpha(t)$ remains
a diffeomorphism at least for short times. We can thus reparametrize the mean curvature flow by
\begin{equation}
	\hat{x}_\alpha(t) := (\psi_\alpha(t)^{-1})^\ast x(t) := x(t) \circ \psi_\alpha(t)^{-1}.
	\label{pull_back_of_MCF}
\end{equation}
The idea to use this reparametrization is usually called the DeTurck trick. Two properties of this reparametrization are of particular importance:
Firstly, it turns out that $\hat{x}_\alpha(t)$ is the solution to a strongly parabolic PDE, see \cite{Ba10}.
Secondly, this PDE does not depend on the solution $\psi(t)$ of the harmonic map heat flow.
The latter point, in particular, means that it will not be necessary to solve the harmonic map heat flow numerically, although
we use the above reparametrization to derive our numerical schemes. We now state the reparametrized evolution equations.  
\begin{lem}[Mean curvature-DeTurck flow]
\label{Lemma_MCF_DeTurck}
The evolution equation for the reparametrized embedding 
$\hat{x}_\alpha: \M \times [0,T) \rightarrow \mathbb{R}^{n+1}$ defined in 
(\ref{pull_back_of_MCF}) is given by
\begin{equation}
	\label{reparam_equation}
	\frac{\partial}{\partial t} \hat{x}_\alpha =
	\Delta_{\hat{g}_{\alpha}(t)} \hat{x}_\alpha - \frac{1}{\alpha} \nabla_{} 
			\hat{x}_\alpha (V_\alpha).
\end{equation}
In local coordinates this equation looks like
$$
	\frac{\partial}{\partial t} \hat{X}_\alpha =
	\hat{g}_\alpha^{ij} 
	\left( \frac{\partial^2 \hat{X}_\alpha}{\partial \theta^i \partial \theta^j}
	- \Gamma(\hat{g}_\alpha)^k_{ij} \frac{\partial \hat{X}_\alpha}{\partial \theta^k} \right)
	- \frac{1}{\alpha} V^j_\alpha \frac{\partial \hat{X}_\alpha}{\partial \theta^j}.
$$
Here, $\hat{g}_\alpha(t) := (\hat{x}_\alpha (t))^\ast \mathfrak{e}$ is the metric induced by the embedding $\hat{x}_\alpha(t)$,
and $V_\alpha$ is the vector field locally defined by  
\begin{equation}
\label{dual_V_alpha}
	V^j_{\alpha} := \hat{g}_\alpha^{kl} (\Gamma(h)^j_{kl} - \Gamma(\hat{g}_\alpha)^{j}_{kl}).
\end{equation}
\end{lem}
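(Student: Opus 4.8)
\textit{Proof proposal.} The plan is to differentiate the definition $\hat x_\alpha(t) = x(t)\circ\psi_\alpha(t)^{-1}$ directly in $t$, and to identify the two terms that appear with the right-hand side of (\ref{reparam_equation}). The three ingredients are: (i) that $x(t)$ solves the mean curvature flow in the form $\partial_t x = \Delta_{g(t)} x$, which was established in (\ref{Laplace_of_the_identity}); (ii) naturality of the Laplace operator under isometries; and (iii) a short computation showing that the map Laplacian of $\psi_\alpha$ is exactly the DeTurck vector field $V_\alpha$ carried along by $\psi_\alpha$.

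First I would write $\hat x_\alpha(p,t) = x\bigl(\psi_\alpha(t)^{-1}(p),t\bigr)$ and apply the chain rule, obtaining
$$\frac{\partial}{\partial t}\hat x_\alpha = (\partial_t x)\circ\psi_\alpha^{-1} + dx\bigl(\partial_t(\psi_\alpha^{-1})\bigr).$$
Differentiating $\psi_\alpha(\psi_\alpha^{-1}(p,t),t) = p$ in $t$ gives $\partial_t(\psi_\alpha^{-1}) = -(d\psi_\alpha)^{-1}\bigl((\partial_t\psi_\alpha)\circ\psi_\alpha^{-1}\bigr)$; together with the pointwise identity $dx\circ(d\psi_\alpha)^{-1} = d(x\circ\psi_\alpha^{-1}) = d\hat x_\alpha$ (after composition with $\psi_\alpha^{-1}$) the second term becomes $-\,d\hat x_\alpha\bigl((\partial_t\psi_\alpha)\circ\psi_\alpha^{-1}\bigr) = -\tfrac1\alpha\,d\hat x_\alpha\bigl((\Delta_{g(t),h}\psi_\alpha)\circ\psi_\alpha^{-1}\bigr)$, using the second equation of $(P)$. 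For the first term I would use $\partial_t x = \Delta_{g(t)} x$ together with the observation that, since $\hat g_\alpha(t) = (\hat x_\alpha(t))^\ast\mathfrak e = (\psi_\alpha(t)^{-1})^\ast g(t)$, the map $\psi_\alpha(t)^{-1}$ is an isometry from $(\M,\hat g_\alpha(t))$ onto $(\M,g(t))$; applying the coordinate formula (\ref{defi_laplacian}) componentwise and invoking the transformation behaviour of $\Delta$ under isometries yields $(\Delta_{g(t)} x)\circ\psi_\alpha^{-1} = \Delta_{\hat g_\alpha(t)}(x\circ\psi_\alpha^{-1}) = \Delta_{\hat g_\alpha(t)}\hat x_\alpha$.

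It then remains to prove that $(\Delta_{g(t),h}\psi_\alpha)\circ\psi_\alpha^{-1} = V_\alpha$, equivalently $\Delta_{g(t),h}\psi_\alpha = V_\alpha\circ\psi_\alpha$ in local coordinates; granting this, the second term above equals $-\tfrac1\alpha\,d\hat x_\alpha(V_\alpha) = -\tfrac1\alpha\,\nabla\hat x_\alpha(V_\alpha)$ and we are done. To see the identity, note that $g(t) = \psi_\alpha(t)^\ast\hat g_\alpha(t)$ implies, on the overlap of charts $\mathcal{C}_1$ and $\mathcal{C}_2$, the relation $g^{ij}\partial_i\Psi^m_\alpha\partial_j\Psi^n_\alpha = \hat g_\alpha^{mn}\circ\Psi_\alpha$, while naturality of $\Delta_{g(t)}$ applied to the components $\Psi^q_\alpha$ of $\psi_\alpha$ (i.e. to the target coordinate functions composed with $\psi_\alpha$) gives $g^{ij}\bigl(\partial_i\partial_j\Psi^q_\alpha - \Gamma(g)^k_{ij}\partial_k\Psi^q_\alpha\bigr) = -\bigl(\hat g_\alpha^{kl}\Gamma(\hat g_\alpha)^q_{kl}\bigr)\circ\Psi_\alpha$. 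Substituting both into the definition (\ref{map_Laplacian}) collapses $\Delta_{g(t),h}\psi_\alpha$ to $\bigl(\hat g_\alpha^{kl}(\Gamma(h)^q_{kl}-\Gamma(\hat g_\alpha)^q_{kl})\bigr)\circ\Psi_\alpha = V^q_\alpha\circ\Psi_\alpha$, as required. (The same identity can alternatively be read off from the composition rule for map Laplacians applied to $\psi_\alpha = id\circ\psi_\alpha$, with the inner factor regarded as the isometry $(\M,g(t))\to(\M,\hat g_\alpha(t))$, whose map Laplacian vanishes, and the outer factor $id\colon(\M,\hat g_\alpha(t))\to(\M,h)$.)

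The main obstacle I expect is not the algebra but the bookkeeping: keeping precise track of which copy of $\M$ and which metric each object lives on, verifying that $(\partial_t\psi_\alpha)\circ\psi_\alpha^{-1}$, $\Delta_{g(t),h}\psi_\alpha$, and $V_\alpha\circ\psi_\alpha$ are genuinely well-defined (sections of the appropriate bundles), and making sure that the isometry relation $g(t) = \psi_\alpha(t)^\ast\hat g_\alpha(t)$ is applied with the derivatives evaluated at the correct points. Everything is carried out on the short time interval on which $\psi_\alpha(t)$ remains a diffeomorphism, which is guaranteed by the short-time theory for the harmonic map heat flow cited above.
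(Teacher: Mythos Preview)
Your proposal is correct and follows essentially the same route as the paper's proof: differentiate $\hat x_\alpha = x\circ\psi_\alpha^{-1}$ by the chain rule, identify the normal term via $(\Delta_{g(t)}x)\circ\psi_\alpha^{-1}=\Delta_{\hat g_\alpha}\hat x_\alpha$, and show that $(\Delta_{g(t),h}\psi_\alpha)\circ\psi_\alpha^{-1}=V_\alpha$. The only noteworthy difference is organisational: the paper establishes the last identity by first proving the transformation rule $(\Delta_{g(t),h}\psi_\alpha)\circ\psi_\alpha^{-1}=\Delta_{\hat g_\alpha,h}id$ (via a page of explicit coordinate calculation, citing \cite{CLN06}) and then reading off $\Delta_{\hat g_\alpha,h}id=V_\alpha$, whereas you bypass the intermediate object and collapse the map Laplacian directly using the isometry relation $g=\psi_\alpha^\ast\hat g_\alpha$ together with naturality of the scalar Laplacian applied to the target coordinate functions. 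Your shortcut and the paper's detailed verification are two packagings of the same computation; yours is a bit more economical, the paper's is more self-contained for a reader unfamiliar with the naturality statements.
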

\begin{proof}
Below, we will make use of the fact that the pull-back metrics $(\psi_\alpha(t)^{-1})^\ast g(t)$ and $(\hat{x}_\alpha (t))^\ast \mathfrak{e}$
on $\M$ are equal
\begin{equation}
	\label{metric_is_pull_back_metric}
	(\psi_\alpha(t)^{-1})^\ast g(t)
	= (\psi_\alpha(t)^{-1})^\ast ( x(t)^\ast \mathfrak{e} )
	= (x(t) \circ \psi_\alpha(t)^{-1})^\ast \mathfrak{e}
	= (\hat{x}_\alpha(t))^\ast \mathfrak{e} = \hat{g}_\alpha(t).
\end{equation}
For the time derivative of $\hat{x}_\alpha$ we obtain
\begin{align*}
	\frac{\partial}{\partial t} \hat{x}_\alpha(t)
	&= \frac{\partial}{\partial t} x(t) \circ \psi_\alpha(t)^{-1}
		+ (\nabla_{} x \circ \psi_\alpha(t)^{-1})
				\left(\frac{\partial}{\partial t} ( \psi_\alpha(t)^{-1}) \right)
\\		
	&= 	(\Delta_{g(t)} x) \circ \psi_\alpha(t)^{-1}
		+ (\nabla_{} x \circ \psi_\alpha(t)^{-1})
				\left(\frac{\partial}{\partial t} ( \psi_\alpha(t)^{-1}) \right),
\end{align*}
where $\nabla x$ denotes the differential of $x$. 
From the identity 
$\psi_\alpha(t)^{-1} \circ \psi_\alpha(t) = id$,
it follows that
$$
	\frac{\partial (\Psi_\alpha^{-1})^j}{\partial t} \circ \Psi_\alpha  
	= - \left( \frac{\partial (\Psi_\alpha^{-1})^j}{\partial \theta^q}
		\right) \circ \Psi_\alpha
		\frac{\partial \Psi_\alpha^q}{\partial t},
$$
where $\Psi_\alpha = \mathcal{C}_2 \circ \psi_\alpha \circ \mathcal{C}_1^{-1}$.
And hence,
\begin{align*}
	\frac{\partial (\Psi_\alpha^{-1})^j}{\partial t} =
	- \frac{\partial (\Psi_\alpha^{-1})^j}{\partial \theta^q}
	\left(
	\frac{\partial \Psi_\alpha^q}{\partial t} \circ \Psi^{-1}_\alpha
	\right).
\end{align*}
For $X := x \circ \mathcal{C}_1^{-1}$
and $\hat{X}_\alpha := \hat{x}_\alpha \circ \mathcal{C}_2^{-1}
= x \circ \psi_\alpha(t)^{-1} \circ \mathcal{C}_2^{-1} = X \circ \Psi_\alpha^{-1}$,
we then obtain
\begin{align*}
	\frac{\partial (\Psi_\alpha^{-1})^j}{\partial t}
	\left(
	\frac{\partial X}{\partial \theta^j} \circ \Psi_\alpha^{-1}
	\right)
	&= - \left(
	\frac{\partial X}{\partial \theta^j} \circ \Psi_\alpha^{-1}
	\right) \frac{\partial (\Psi_\alpha^{-1})^j}{\partial \theta^q}
	\left(
	\frac{\partial \Psi_\alpha^q}{\partial t} \circ \Psi^{-1}_\alpha
	\right)
	\\
	&= - \frac{\partial \hat{X}_\alpha}{\partial \theta^q} \left(
	\frac{\partial \Psi_\alpha^q}{\partial t} \circ \Psi^{-1}_\alpha
	\right).
\end{align*}
We now conclude that
\begin{align}
	(\nabla_{} x \circ \psi_\alpha(t)^{-1})
				\left(\frac{\partial}{\partial t} ( \psi_\alpha(t)^{-1}) \right)
		&= - (\nabla \hat{x}_\alpha) 
			\left( \left( \frac{\partial}{\partial t} \psi_\alpha(t) \right)	
						\circ \psi_\alpha(t)^{-1} \right)
\nonumber						
\\		
		&= - \frac{1}{\alpha} (\nabla \hat{x}_\alpha)
			\left( (\Delta_{g(t),h} \psi_\alpha(t)) 	
						\circ \psi_\alpha(t)^{-1} \right).	
\label{interim_result}									
\end{align}
According to Remark 2.46 in \cite{CLN06}, the following identity holds
\begin{equation}
\label{transformation_map_Laplacian}
	(\Delta_{g(t),h} \psi_\alpha(t)) 	
						\circ \psi_\alpha(t)^{-1}
					= \Delta_{(\psi_\alpha(t)^{-1})^\ast g(t),h} id.
\end{equation}
Using the fact that $(\psi_\alpha(t)^{-1})^\ast g(t) = (\hat{x}_\alpha(t))^\ast \mathfrak{e} = \hat{g}_\alpha(t)$, we obtain
$$
	(\Delta_{g(t),h} \psi_\alpha(t)) 	
						\circ \psi_\alpha(t)^{-1} = 
						\Delta_{\hat{g}_\alpha(t),h} id.
$$
For the sake of completeness we here give a short proof of the identity
(\ref{transformation_map_Laplacian}). In local coordinates we have
\begin{align*}
	& \frac{\partial (\Psi^q_\alpha \circ \Psi_\alpha^{-1})}{\partial \theta^m}
	=  \frac{\partial \Psi^q_\alpha}{\partial \theta^i} \circ \Psi_\alpha^{-1}
		\frac{\partial (\Psi_\alpha^{-1})^i}{\partial \theta^m}, 
	\\
	& \frac{\partial^2 (\Psi^q_\alpha \circ \Psi_\alpha^{-1})}{\partial \theta^m \partial \theta^n}
	=  \frac{\partial^2 \Psi^q_\alpha}{\partial \theta^i \partial \theta^j}
		 \circ \Psi_\alpha^{-1}
		\frac{\partial (\Psi_\alpha^{-1})^i}{\partial \theta^m} 
		\frac{\partial (\Psi_\alpha^{-1})^j}{\partial \theta^n} 
		+ \frac{\partial \Psi^q_\alpha}{\partial \theta^i} \circ \Psi_\alpha^{-1}
		\frac{\partial^2 (\Psi_\alpha^{-1})^i}{\partial \theta^m \partial \theta^n}, 
	\\
	& \hat{g}_{\alpha mn} = (g_{ij} \circ \Psi_\alpha^{-1})
	 \frac{\partial (\Psi^{-1}_\alpha)^i }{\partial \theta^m}
	 \frac{\partial (\Psi^{-1}_\alpha)^j }{\partial \theta^n},
	\\
	& (g^{ij} \circ \Psi_\alpha^{-1}) = \hat{g}_\alpha^{mn}
		 \frac{\partial (\Psi^{-1}_\alpha)^i }{\partial \theta^m}
	 \frac{\partial (\Psi^{-1}_\alpha)^j }{\partial \theta^n}.
\end{align*}
A straightforward calculation also shows that
\begin{align*}
	\hat{g}_\alpha^{mn} \Gamma(\hat{g}_\alpha)^p_{mn} 
	\frac{\partial (\Psi_\alpha^{-1})^k}{\partial \theta^p}
	= (g^{ij} \circ \Psi_\alpha^{-1}) \Gamma(g)^k_{ij} \circ \Psi_\alpha^{-1}
		+ \hat{g}^{mn}_\alpha 
		\frac{\partial^2 (\Psi_\alpha^{-1})^k}{\partial \theta^m \partial \theta^n}.
\end{align*}
Using these formulas, we obtain
\begin{align*}
& (\mathcal{C}_2 \circ ( \Delta_{\hat{g}_\alpha(t),h} id ) \circ \mathcal{C}_1^{-1})^q
\\
&=	\hat{g}_\alpha^{mn}
	\left( 
	\frac{\partial^2 (\Psi^q_\alpha \circ \Psi_\alpha^{-1})}{\partial \theta^m \theta^n} 
	- \Gamma(\hat{g}_\alpha)^p_{mn} 
		\frac{\partial (\Psi^q_\alpha \circ \Psi_\alpha^{-1})}{\partial \theta^p}
		+   \Gamma(h)^q_{pr} 
			\frac{\partial (\Psi^p_\alpha \circ \Psi_\alpha^{-1})}{\partial \theta^m}
			\frac{\partial (\Psi^r_\alpha \circ \Psi_\alpha^{-1})}{\partial \theta^n}		
		\right)
\\
&= 	(g^{ij} \circ \Psi_\alpha^{-1})  
	\frac{\partial^2 \Psi^q_\alpha}{\partial \theta^i \partial \theta^j}
		 \circ \Psi_\alpha^{-1}	
	+  
	\frac{\partial \Psi^q_\alpha}{\partial \theta^i} \circ \Psi_\alpha^{-1}
	\frac{\partial^2 (\Psi_\alpha^{-1})^i}{\partial \theta^m \partial \theta^n}
	\hat{g}_\alpha^{mn}
\\
& \quad	
	- (g^{ij} \circ \Psi_\alpha^{-1}) \Gamma(g)^k_{ij} \circ \Psi_\alpha^{-1}
		\frac{\partial \Psi^q_\alpha}{\partial \theta^k} \circ \Psi_\alpha^{-1}
	- \hat{g}^{mn}_\alpha 
		\frac{\partial^2 (\Psi_\alpha^{-1})^k}{\partial \theta^m	\partial \theta^n}
		\frac{\partial \Psi^q_\alpha}{\partial \theta^k} \circ \Psi_\alpha^{-1}
\\
& \quad	
	+ \Gamma(h)^q_{pr} 
		\frac{\partial \Psi^p_\alpha}{\partial \theta^i} \circ \Psi_\alpha^{-1}
		\frac{\partial \Psi^r_\alpha}{\partial \theta^j} \circ \Psi_\alpha^{-1}
		(g^{ij} \circ \Psi_\alpha^{-1})
\\
&=	(g^{ij} \circ \Psi_\alpha^{-1}) \left(
	\frac{\partial^2 \Psi^q_\alpha}{\partial \theta^i \partial \theta^j}
	-  \Gamma(g)^k_{ij} 
		\frac{\partial \Psi^q_\alpha}{\partial \theta^k}
	+ ( \Gamma(h)^q_{pr}  \circ \Psi_\alpha )
		\frac{\partial \Psi^p_\alpha}{\partial \theta^i} 
		\frac{\partial \Psi^r_\alpha}{\partial \theta^j} 
	\right)  \circ \Psi_\alpha^{-1}.
\end{align*}
Together with the definition (\ref{map_Laplacian}) of the map Laplacian
this shows (\ref{transformation_map_Laplacian}).
On the other hand the map Laplacian of the identity is just given by
$$
	(\mathcal{C}_2 \circ ( \Delta_{\hat{g}_\alpha(t),h} id ) \circ \mathcal{C}_1^{-1})^j
	= \hat{g}_\alpha^{kl} ( \Gamma(h)^j_{kl} -\Gamma(\hat{g}_\alpha)^{j}_{kl}) 
	= V^j_\alpha,
$$
where we have obtained the components $V_\alpha^j$ of the vector field $V_\alpha$
defined in (\ref{dual_V_alpha}). We summarize that
$$
	(\Delta_{g(t),h} \psi_\alpha(t)) 	
						\circ \psi_\alpha(t)^{-1} = V_\alpha,
$$
and with (\ref{interim_result}) we obtain
$$
   	(\nabla_{} x \circ \psi_\alpha(t)^{-1})
				\left(\frac{\partial}{\partial t} ( \psi_\alpha(t)^{-1}) \right)
	= - \frac{1}{\alpha} (\nabla \hat{x}_\alpha) (V_\alpha).			
$$
From identity (\ref{Laplace_of_the_identity}) it follows that
\begin{align*}
	(\Delta_{g(t)} x) \circ \psi_\alpha(t)^{-1}
	&= - ((H \nu) \circ x ) \circ \psi_\alpha(t)^{-1}	
	= - ( H \nu) \circ (x \circ \psi_\alpha(t)^{-1})
	= - ( H \nu) \circ \hat{x}_{\alpha},
\end{align*}
and thus,
\begin{equation*}
	(\Delta_{g(t)} x) \circ \psi_\alpha(t)^{-1} = \Delta_{\hat{g}_\alpha(t)} \hat{x}_\alpha,
\end{equation*}
where we have used that $g(t) = x(t)^\ast \mathfrak{e}$ and $\hat{g}_\alpha(t) = (\hat{x}_\alpha(t))^\ast \mathfrak{e}$.
\end{proof}
\begin{remark}
Please note that one can, in principle, recover the solution $x(t)$ to (\ref{MCF_equation}) from the solution to
(\ref{reparam_equation}) by solving the ODE 
$\frac{\partial}{\partial t} \psi_\alpha = \frac{1}{\alpha} V_\alpha \circ \psi_\alpha$ and setting 
$x(t) = \hat{x}_\alpha(t) \circ \psi_\alpha(t)$. Actually, this is how one can prove short-time existence of solutions
to the mean curvature flow by using the existence result for the mean curvature-DeTurck flow, see, for example, in \cite{Ba10}.
The reason why it is easier to establish short-time existence for the mean curvature-DeTurck flow than for the mean curvature flow
itself is that the DeTurck flow is strongly parabolic whereas the original flow is not. 
\end{remark}

\subsection{The time separation trick}
We next introduce an idea, which will turn out to be advantageous for the spatial discretization and its numerical analysis. 

In order to motivate this idea we will first apply the results of Lemma \ref{Lemma_MCF_DeTurck} to the curve shortening flow.
In this case the reference manifold $\M$ is one-dimensional. Without loss of generality
we can assume that $\M$ is parametrized in such a way that
$\hat{X}_\alpha: [0, 2\pi] \rightarrow \mathbb{R}^2$ with $\hat{X}_\alpha(0) = \hat{X}_\alpha(2\pi)$.
The metric $\hat{g}_\alpha$ is then given by 
$\hat{g}_{\alpha 11} = |\frac{\partial \hat{X}_\alpha}{\partial \theta}|^2$,
and hence,
$$
	\Gamma(\hat{g})^1_{11} = \frac{1}{2 } \bigg|\frac{\partial \hat{X}_\alpha}{\partial \theta}\bigg|^{-2} \frac{\partial }{\partial \theta} \bigg|\frac{\partial \hat{X}_\alpha}{\partial \theta}\bigg|^2
	= \bigg|\frac{\partial \hat{X}_\alpha}{\partial \theta}\bigg|^{-2} 		\frac{\partial \hat{X}_\alpha}{\partial \theta} 
	\cdot \frac{\partial^2 \hat{X}_\alpha}{\partial \theta^2}.
$$
Since $h$ is a fixed yet arbitrary metric, we are allowed to choose $h$ 
such that $h_{11}$ is constant on $[0,2\pi]$ and thus $\Gamma(h)^1_{11}=0$.
The reparametrized evolution equations of the curve shortening flow
in local coordinates are then given by
\begin{equation}
	\frac{\partial}{\partial t} \hat{X}_\alpha =
	\bigg|\frac{\partial \hat{X}_\alpha}{\partial \theta}\bigg|^{-2}
	\left( \frac{\partial^2 \hat{X}_\alpha}{\partial \theta^2}
		- \bigg|\frac{\partial \hat{X}_\alpha}{\partial \theta}\bigg|^{-2} 			
				\frac{\partial \hat{X}_\alpha}{\partial \theta} 
				\cdot \frac{\partial^2 \hat{X}_\alpha}{\partial \theta^2} 
				\frac{\partial X}{\partial \theta} \right)
		+ \frac{1}{\alpha}
			\bigg|\frac{\partial \hat{X}_\alpha}{\partial \theta}\bigg|^{-4} 
				\frac{\partial \hat{X}_\alpha}{\partial \theta} 
				\cdot \frac{\partial^2 \hat{X}_\alpha}{\partial \theta^2} \frac{\partial 
				\hat{X}_\alpha}{\partial \theta},
\label{intermediate_result_CSF}
\end{equation}
which directly simplifies to
\begin{equation}
	\frac{\partial}{\partial t} \hat{X}_\alpha =
	\bigg|\frac{\partial \hat{X}_\alpha}{\partial \theta}\bigg|^{-2}
	\frac{\partial^2 \hat{X}_\alpha}{\partial \theta^2}
		+ \frac{1 - \alpha}{\alpha}
			\bigg|\frac{\partial \hat{X}_\alpha}{\partial \theta}\bigg|^{-4} 
				\frac{\partial \hat{X}_\alpha}{\partial \theta} 
				\cdot \frac{\partial^2 \hat{X}_\alpha}{\partial \theta^2} \frac{\partial 
				\hat{X}_\alpha}{\partial \theta}.
	\label{intermediate_result_CSF_2}			
\end{equation}
For $\alpha = 1$ 
this equation was the starting point of the analysis in \cite{DD94}. 
In \cite{St14} it was later derived for arbitrary $\alpha > 0$.
However, the author in \cite{St14} did not use the following trick
described below, which is crucial
for two kinds of reasons. Firstly, the trick makes it possible to choose, 
at least formally, $\alpha = 0$ in the reparametrized equations.
This choice unveils the origin of the tangential redistributions 
in the scheme proposed in \cite{BGN11}. 
Secondly, our trick
leads to an equation that can be directly discretized in space. 
In contrast to this result, the author in \cite{St14} 
had to introduce a variable for the curvature vector 
$$
- (H \nu) \circ \hat{X}_\alpha 
= \bigg|\frac{\partial \hat{X}_\alpha}{\partial \theta}\bigg|^{-1}
  \frac{\partial}{\partial \theta} \left( 
  	\bigg|\frac{\partial \hat{X}_\alpha}{\partial \theta}\bigg|^{-1}
  	\frac{\partial \hat{X}_\alpha}{\partial \theta} 
  \right)	
$$ 
in order to be able to discretize the reparametrized equations with piecewise linear
finite elements, see Problem 4.1.7 in \cite{St14}. As a result, the author obtained a system of equations
for the computation of the mean curvature vector and of the curve shortening flow.
However, for the computation of the curve shortening flow,
such a system seems to be a bit exaggerated.
We would like to emphasize that in contrast to the results in \cite{St14}, 
the above derivations are 
also valid for the higher-dimensional case, that is for the mean curvature flow. 

We now continue with the general case. Our trick is based on the following two 
observations. Firstly, as we have seen in (\ref{Laplace_of_the_identity}),
the Laplace operator with respect to metric $\hat{g}_\alpha$ of the map
$\hat{x}_{\alpha}$ is equal to $- (H\nu) \circ \hat{x}_\alpha$. 
Secondly, the Laplace operator with respect to $\hat{g}_\alpha$
satisfies the following identity
$$
	\Delta_{\hat{g}_\alpha} \hat{x}_\alpha
	= \tr_{\hat{g}_\alpha}(\nabla^{\hat{g}_\alpha} \nabla \hat{x}_\alpha)
	= \tr_{\hat{g}_\alpha}(\nabla^{h} \nabla \hat{x}_\alpha)
		+ \nabla_{} \hat{x}_\alpha (V_\alpha),
$$
where $\tr_{\hat{g}_\alpha}$ denotes the trace with respect 
to the metric $\hat{g}_\alpha$.
This easily follows from definition (\ref{dual_V_alpha})
and the formulas for the covariant derivatives
of the differential $\nabla \hat{x}_\alpha$ with
respect to the metrics $\hat{g}_\alpha$ and $h$, which are
\begin{align*}
	& (\nabla^{\hat{g}_\alpha}_i \nabla_j \hat{x}_\alpha) \circ \mathcal{C}_1^{-1}
		= \frac{\partial^2 \hat{X}_\alpha}{\partial \theta^i \partial \theta^j}
	- \Gamma(\hat{g}_\alpha)^k_{ij} \frac{\partial \hat{X}_\alpha}{\partial \theta^k},
	\\
	& (\nabla^{h}_i \nabla_j \hat{x}_\alpha) \circ \mathcal{C}_1^{-1}
		= \frac{\partial^2 \hat{X}_\alpha}{\partial \theta^i \partial \theta^j}
	- \Gamma(h)^k_{ij} \frac{\partial \hat{X}_\alpha}{\partial \theta^k}.
\end{align*} 
Please note that the vector field
$(\nabla_{V_\alpha} \hat{x}_\alpha) \circ \mathcal{C}_1^{-1} 
= V_\alpha^j \frac{\partial \hat{X}_\alpha}{\partial \theta^j}$ 
is tangential 
to the embedded hypersurface $\Gamma(t)$.
We thus obtain the following decomposition of the operator
$\tr_{\hat{g}_\alpha}(\nabla^h \nabla \hat{x}_\alpha)$ in its normal and tangential parts
\begin{align}
&	(\nu \circ \hat{x}_\alpha) \otimes (\nu \circ \hat{x}_\alpha)
	 \tr_{\hat{g}_\alpha}(\nabla^h \nabla \hat{x}_\alpha)
	= \Delta_{\hat{g}_\alpha} \hat{x}_\alpha,
	\label{normal_part}
	\\
&	(P \circ \hat{x}_\alpha) \tr_{\hat{g}_\alpha}(\nabla^h \nabla \hat{x}_\alpha)
	= - \nabla \hat{x}_\alpha (V_\alpha),
	\label{tangential_part}
\end{align}
where $P = \unit - \nu \otimes \nu$ is the projection onto the tangent bundle of $\Gamma$. Inserting this decomposition into (\ref{reparam_equation}) then gives
\begin{align*}
\frac{\partial}{\partial t} \hat{x}_\alpha &=
	(\nu \circ \hat{x}_\alpha) \otimes (\nu \circ \hat{x}_\alpha)
	 \tr_{\hat{g}_\alpha}(\nabla^h \nabla \hat{x}_\alpha) 
	 + \frac{1}{\alpha} 
	 (P \circ \hat{x}_\alpha) \tr_{\hat{g}_\alpha}(\nabla^h \nabla \hat{x}_\alpha)
\\	 
	&= \left( (\nu \circ \hat{x}_\alpha) \otimes (\nu \circ \hat{x}_\alpha)
	   +  \frac{1}{\alpha} P \circ \hat{x}_\alpha \right)
	 \tr_{\hat{g}_\alpha}(\nabla^h \nabla \hat{x}_\alpha).
\end{align*}
Applying the inverse map 
\begin{equation}
\left( (\nu \circ \hat{x}_\alpha) \otimes (\nu \circ \hat{x}_\alpha) + \alpha P \circ \hat{x}_\alpha \right)
 = \left(\alpha \unit + (1 - \alpha) (\nu \circ \hat{x}_\alpha) \otimes (\nu \circ \hat{x}_\alpha) \right)
 \label{inverse_map}
\end{equation}
leads to the following evolution equations.
\begin{thm}
The mean curvature-DeTurck flow $\hat{x}_\alpha: \M \times [0,T) \rightarrow \mathbb{R}^{n+1}$
defined in (\ref{pull_back_of_MCF}) satisfies
\begin{align}
\label{reparam_MCF}
\left( \alpha \unit + (1 - \alpha) (\nu \circ \hat{x}_\alpha) \otimes (\nu \circ \hat{x}_\alpha) \right) 
\frac{\partial}{\partial t}\hat{x}_\alpha
	= \tr_{\hat{g}_\alpha} (\nabla^h \nabla \hat{x}_\alpha),
	\quad \textnormal{with $\hat{x}_\alpha(\cdot,0) = x_0(\cdot)$ on $\M$},
\end{align}
and in local coordinates respectively,
\begin{equation*}
	\left( \alpha \unit + (1 - \alpha) (\nu \circ \hat{X}_\alpha) \otimes (\nu \circ \hat{X}_\alpha) \right) 
	\frac{\partial}{\partial t} \hat{X}_\alpha =
	\hat{g}_\alpha^{ij} 
	\left( \frac{\partial^2 \hat{X}_\alpha}{\partial \theta^i \partial \theta^j}
		- \Gamma(h)^k_{ij} \frac{\partial \hat{X}_\alpha}{\partial \theta^k} \right),
\end{equation*}
with $\hat{X}_\alpha(\cdot, 0) = X_0(\cdot)$.
\end{thm}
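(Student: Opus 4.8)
The plan is to obtain the statement by assembling facts already established in the excerpt together with one pointwise matrix identity; essentially no new geometric computation is needed. The starting point is the reparametrized equation \eqref{reparam_equation} from Lemma \ref{Lemma_MCF_DeTurck}, namely $\frac{\partial}{\partial t}\hat x_\alpha = \Delta_{\hat g_\alpha(t)}\hat x_\alpha - \tfrac1\alpha\nabla\hat x_\alpha(V_\alpha)$, and the goal is to rewrite the right-hand side entirely in terms of the operator $\tr_{\hat g_\alpha}(\nabla^h\nabla\hat x_\alpha)$ and then to invert an endomorphism of $\mathbb{R}^{n+1}$.

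The first step is the identity $\Delta_{\hat g_\alpha}\hat x_\alpha = \tr_{\hat g_\alpha}(\nabla^h\nabla\hat x_\alpha) + \nabla\hat x_\alpha(V_\alpha)$. I would verify this in local coordinates: since $\Delta_{\hat g_\alpha}\hat x_\alpha = \tr_{\hat g_\alpha}(\nabla^{\hat g_\alpha}\nabla\hat x_\alpha)$, the second-order terms cancel in the difference of the two covariant-Hessian formulas displayed above, leaving $\tr_{\hat g_\alpha}(\nabla^{\hat g_\alpha}\nabla\hat x_\alpha) - \tr_{\hat g_\alpha}(\nabla^h\nabla\hat x_\alpha) = \hat g_\alpha^{ij}\bigl(\Gamma(h)^k_{ij} - \Gamma(\hat g_\alpha)^k_{ij}\bigr)\partial_k\hat X_\alpha$, which by the definition \eqref{dual_V_alpha} of $V_\alpha$ is $V_\alpha^k\partial_k\hat X_\alpha = \nabla\hat x_\alpha(V_\alpha)$.

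The second step is to recognise that this identity is exactly the orthogonal decomposition of $\tr_{\hat g_\alpha}(\nabla^h\nabla\hat x_\alpha)$ relative to $\Gamma(t)$: the piece $\nabla\hat x_\alpha(V_\alpha) = V_\alpha^j\partial_j\hat X_\alpha$ is tangential, whereas $\Delta_{\hat g_\alpha}\hat x_\alpha = -(H\nu)\circ\hat x_\alpha$ by \eqref{Laplace_of_the_identity} is normal. Uniqueness of the normal--tangential splitting therefore yields \eqref{normal_part} and \eqref{tangential_part}, that is $(\nu\otimes\nu)\,\tr_{\hat g_\alpha}(\nabla^h\nabla\hat x_\alpha) = \Delta_{\hat g_\alpha}\hat x_\alpha$ and $P\,\tr_{\hat g_\alpha}(\nabla^h\nabla\hat x_\alpha) = -\nabla\hat x_\alpha(V_\alpha)$ with $P = \unit - \nu\otimes\nu$. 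Inserting both into \eqref{reparam_equation} collapses it to
\[
\frac{\partial}{\partial t}\hat x_\alpha = \Bigl((\nu\circ\hat x_\alpha)\otimes(\nu\circ\hat x_\alpha) + \tfrac1\alpha\,P\circ\hat x_\alpha\Bigr)\,\tr_{\hat g_\alpha}(\nabla^h\nabla\hat x_\alpha).
\]

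Finally I would apply to both sides the endomorphism $\nu\otimes\nu + \alpha P$: using $\nu\cdot\nu = 1$, $P\nu = 0$ and $P^2 = P$ one checks $\bigl(\nu\otimes\nu + \alpha P\bigr)\bigl(\nu\otimes\nu + \tfrac1\alpha P\bigr) = \nu\otimes\nu + P = \unit$, so it is the required inverse, and $\nu\otimes\nu + \alpha P = \alpha\unit + (1-\alpha)\nu\otimes\nu$ as in \eqref{inverse_map} (this is where $\alpha>0$ enters). This produces \eqref{reparam_MCF}, and the local-coordinate form follows by writing $\tr_{\hat g_\alpha}(\nabla^h\nabla\hat x_\alpha) = \hat g_\alpha^{ij}\bigl(\partial_i\partial_j\hat X_\alpha - \Gamma(h)^k_{ij}\partial_k\hat X_\alpha\bigr)$. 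I expect the only genuinely delicate point to be the orthogonal splitting \eqref{normal_part}--\eqref{tangential_part}, i.e. that $\Delta_{\hat g_\alpha}\hat x_\alpha$ accounts for the entire normal part and $-\nabla\hat x_\alpha(V_\alpha)$ for the entire tangential part of $\tr_{\hat g_\alpha}(\nabla^h\nabla\hat x_\alpha)$; everything else is the linear algebra of the rank-one projector $\nu\otimes\nu$ and identities already proved in the excerpt.
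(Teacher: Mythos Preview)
Your proposal is correct and follows essentially the same route as the paper: derive the identity $\Delta_{\hat g_\alpha}\hat x_\alpha = \tr_{\hat g_\alpha}(\nabla^h\nabla\hat x_\alpha) + \nabla\hat x_\alpha(V_\alpha)$ from the difference of Christoffel symbols, read off the normal/tangential decomposition \eqref{normal_part}--\eqref{tangential_part} using that $\Delta_{\hat g_\alpha}\hat x_\alpha$ is normal and $\nabla\hat x_\alpha(V_\alpha)$ tangential, insert into \eqref{reparam_equation}, and then apply the inverse map \eqref{inverse_map}. Your identification of the orthogonal splitting as the only non-mechanical step and of $\alpha>0$ as needed for invertibility matches the paper exactly.
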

Please note that the initial values of the reparametrized flow 
are just given by the initial values of the original flow.
The reason for this is that the map $\psi(t)$ satisfies $\psi(\cdot,0) = id(\cdot)$ on $\M$.
For the curve shortening flow with the metric $h$ chosen such that $h_{11}$ is constant, we obtain
\begin{equation}
	\label{reparam_CSF}
	\left( \alpha \unit + (1 - \alpha) (\nu \circ \hat{X}_\alpha) \otimes (\nu \circ \hat{X}_\alpha) \right) 
	\frac{\partial}{\partial t} \hat{X}_\alpha = 
	\bigg|\frac{\partial \hat{X}_\alpha}{\partial \theta}\bigg|^{-2}
	\frac{\partial^2 \hat{X}_\alpha}{\partial \theta^2}.
\end{equation}
This equation will be the basis of the next section.
Although the inverse diffusion constant $\alpha$ has to be positive
in the derivation of (\ref{reparam_MCF}),
it is yet possible to formally choose $\alpha =0$ in this equation.
This would lead to
\begin{align*}
(\nu \circ \hat{x}_\alpha) (\nu \circ \hat{x}_\alpha)\cdot
\frac{\partial}{\partial t}\hat{x}_\alpha
	= \tr_{\hat{g}_\alpha} (\nabla^h \nabla \hat{x}_\alpha).
\end{align*}

\section{The reparametrized curve shortening flow}
\subsection{Weak formulation and discretization}
In this section, we develop an algorithm for the computation of the
reparametrized curve shortening flow (\ref{reparam_CSF}).
Our algorithm is based on a straightforward discretization of the weak formulation
of the reparametrized flow with piecewise linear finite elements.
Henceforward, we omit the subscript $\alpha$ and write $\hat{X}_t$
and $\hat{X}_\theta$ instead of $\frac{\partial \hat{X}}{\partial t}$
and  $\frac{\partial \hat{X}}{\partial \theta}$.
Furthermore, we use the notation $\nu$ for the local parametrization
of the unit normal field $\nu \circ \hat{X}$. 
Multiplying (\ref{reparam_CSF}) by a test function 
$\varphi \in H^{1,2}(\mathbb{R}/2\pi,\mathbb{R}^2)$ as well as by the density function 
$|\hat{X}_\theta|^2$ and integrating by parts
yield 
\begin{equation}
\label{weak_formulation_reparam_CSF}
	\Int \left( \alpha \hat{X}_t \cdot \varphi 
	+ (1 - \alpha) (\hat{X}_t \cdot \nu) (\nu \cdot \varphi) \right) 
	 |\hat{X}_\theta|^2 d\theta 
	+ \Int \hat{X}_\theta \cdot \varphi_\theta d\theta =0,
	\quad 0 < t < T.
\end{equation}
This weak formulation is now discretized in space by linear finite elements.
In the following we consider 
the finite element mesh $\theta_j \in [0, 2\pi)$, $j = 1, \ldots, N$, 
with grid size $h = |\theta_{j+1} - \theta_j|$
where $\theta_{N+1} := 2 \pi + \theta_1$.
The space of continuous 
functions $\varphi_h: \mathbb{R}/ 2\pi \rightarrow \mathbb{R}$ 
that are linear on $[\theta_j, \theta_{j+1}]$, 
$\forall j = 1, \ldots, N$, is denoted by $\mathcal{S}_h$.
The basis functions $\phi_j \in \mathcal{S}_h$ are defined such that
$\phi_j(\theta_i) = \delta_{ij}$, $\forall i,j = 1, \ldots, N$.
The linear Lagrange interpolation for a continuous function $f$ on 
$\mathbb{R}/ 2\pi$ is defined by
$$
	I_h f := \sum_{j=1}^N f(\theta_j) \phi_j.
$$ 
For a curve $\Gamma_h = \hat{X}_h([0,2\pi))$ with $\hat{X}_h \in \mathcal{S}_h^2$, a 
piecewise constant vector field $\nu_h: [0,2\pi] \rightarrow \mathbb{R}^2$ with $|\nu_h| = 1$ and
\begin{equation*}
	\nu_h \cdot (\hat{X}_h(\theta_{j+1}) - \hat{X}_h(\theta_{j})) = 0 \quad \textnormal{on $[\theta_j, \theta_{j+1})$ for $j=1, \ldots, N,$}
\end{equation*}
is called a unit normal vector field to $\hat{X}_h$.
\subsection{ Convergence results }
\begin{thm}
\label{approximation_theorem}
Let $\alpha \in (0,1]$ 
and suppose that $\hat{X} \in C^{2,1}(\mathbb{R}/ 2\pi \times [0,T],\mathbb{R}^2)$ is a solution of
\begin{align*}
	& \alpha \hat{X}_t + (1 - \alpha) ( \nu \cdot \hat{X}_t ) \nu 
	= | \hat{X}_\theta |^{-2} \hat{X}_{\theta \theta},
	\quad \textnormal{in $\mathbb{R}/ 2\pi \times (0,T)$,}
	\\
	& \hat{X}(\cdot, 0) = X_0(\cdot),
	\quad \textnormal{on $\mathbb{R}/ 2\pi$,} 
\end{align*}
with
\begin{align}
	& \hat{X}_t \in L^\infty((0,T), H^{1,2}(\mathbb{R}/ 2 \pi,\mathbb{R}^2))
				\cap L^2((0,T), H^{2,2}(\mathbb{R}/ 2\pi,\mathbb{R}^2)),
	\nonumber			
	\\
	& | \hat{X}_\theta | \geq c_0 > 0,
	  \quad \textnormal{in $\mathbb{R} / 2 \pi \times [0,T]$.}  
	\label{lower_bound}  
\end{align}
Then there exists a constant $h_0 > 0$ depending on $\hat{X}$, $T$ and $\alpha$ such that for
every $0 < h \leq h_0$ there is a unique solution $\hat{X}_h \in H^{1,2}((0,T), \mathcal{S}_h^2)$ of the 
non-linear, semi-discrete problem
\begin{align}
\label{semi_discrete_problem_CSF}
	\Int \left( \alpha \hat{X}_{ht} \cdot \varphi_h 
		+ (1 - \alpha) (\hat{X}_{ht} \cdot \nu_h)(\nu_h \cdot \varphi_h) \right) 
				|\hat{X}_{h\theta}|^2 d\theta 
	+ \Int \hat{X}_{h\theta} \cdot \varphi_{h\theta} d\theta = 0,
\end{align}
$\forall \varphi_h \in \mathcal{S}_h^2, 0 < t < T$, 
with initial data $\hat{X}_h(\cdot ,0) = (I_h X_0)(\cdot)$ on $\mathbb{R} / 2 \pi$,
and
\begin{align*}
	& \alpha \int_0^T \|( \hat{X}_t - \hat{X}_{ht} )(t) \|^2_{L^2(0,2\pi)} dt
	 	+ (1 - \alpha) 	
	 \int_0^T   \| \nu_h \cdot ( \hat{X}_t - \hat{X}_{ht} )(t) \|^2_{L^2(0,2\pi)} dt
\\ 
	& + \max_{t \in [0,T]} \| ( \hat{X} - \hat{X}_h )(t) \|^2_{H^{1,2}(0,2\pi)}  
	\leq C e^{\frac{M}{\alpha}T} h^2.
\end{align*}
The constants $C$ and $M$ depend on the continuous solution $\hat{X}$ and on $T$.
\end{thm}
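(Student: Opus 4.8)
The plan is to follow the standard Galerkin/Ritz-projection strategy for parabolic problems, adapted to the degenerate-looking but in fact nondegenerate (for $\alpha>0$) semilinear equation, closely mirroring the argument in \cite{DD94} (which is the case $\alpha=1$). The key structural observation is that $\alpha\unit+(1-\alpha)\nu\otimes\nu$ is symmetric positive definite with eigenvalues $1$ (in the normal direction) and $\alpha$ (in tangential directions), so for $\alpha\in(0,1]$ it is uniformly elliptic with constants depending on $\alpha$; this is exactly where the factor $e^{M/\alpha}$ will enter, through the inverse of the smallest eigenvalue $\alpha$.

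First I would establish \emph{existence and uniqueness} of $\hat X_h$ for small $h$: writing $\hat X_h(\cdot,t)=\sum_j x_j(t)\phi_j$, the semidiscrete problem \eqref{semi_discrete_problem_CSF} becomes a system of ODEs $\mathbf{M}(\mathbf{x})\,\dot{\mathbf{x}}=-\mathbf{A}\,\mathbf{x}$ where $\mathbf{A}$ is the (constant, positive semidefinite) stiffness matrix and $\mathbf{M}(\mathbf{x})$ is a mass-type matrix depending on $\hat X_{h\theta}$ and $\nu_h$. As long as $\hat X_h$ stays close (in $W^{1,\infty}$) to the interpolant of the continuous solution, $|\hat X_{h\theta}|\geq c_0/2$ and $\mathbf{M}(\mathbf{x})$ is invertible with a lower bound on its eigenvalues proportional to $\alpha$, so Picard–Lindelöf gives a unique local solution; a continuation argument together with the a priori error bound (proved simultaneously via a bootstrap) extends it to $[0,T]$ once $h\le h_0$.

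The core is the \emph{error estimate}. I would write $\hat X-\hat X_h = (\hat X - I_h\hat X) + (I_h\hat X - \hat X_h) =: \rho + e_h$, where $\rho$ is controlled by standard interpolation estimates ($\|\rho\|_{H^{1,2}}\le Ch\|\hat X\|_{H^{2,2}}$, and similarly for $\rho_t$). Subtracting the discrete equation from the weak form of the continuous equation tested against $\varphi_h=e_{ht}\in\mathcal{S}_h^2$ and also against $\varphi_h=e_h$, I get an energy identity of the schematic form
\begin{align*}
\alpha\|e_{ht}\|_{L^2}^2 + (1-\alpha)\|\nu_h\cdot e_{ht}\|_{L^2}^2 + \tfrac12\tfrac{d}{dt}\!\Int |e_{h\theta}|^2\,d\theta \;=\; \text{(consistency terms)}.
\end{align*}
The consistency terms split into: (i) interpolation-error contributions from $\rho,\rho_t,\rho_\theta$, bounded by $Ch^2 + \varepsilon\|e_{ht}\|_{L^2}^2$; (ii) terms coming from the difference of the density factors $|\hat X_\theta|^2$ vs.\ $|\hat X_{h\theta}|^2$ and the normals $\nu$ vs.\ $\nu_h$, which one estimates by writing $|\hat X_\theta|^2 - |\hat X_{h\theta}|^2 = (\hat X_\theta+\hat X_{h\theta})\cdot(\rho_\theta + e_{h\theta})$ and $|\nu-\nu_h|\le C(|\rho_\theta|+|e_{h\theta}|)$, producing bounds of the form $C(h^2 + \|e_h\|_{H^{1,2}}^2) + \varepsilon\|e_{ht}\|_{L^2}^2$, where the constant $C$ depends on the $C^{2,1}$-norm of $\hat X$ and on $1/c_0$. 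Absorbing the $\varepsilon\|e_{ht}\|_{L^2}^2$ terms into the left side (possible because of the coercive $\alpha\|e_{ht}\|_{L^2}^2$ — here is precisely where dividing by $\alpha$ generates a $1/\alpha$ in front of the remaining bad terms), integrating in time, and applying Gronwall's inequality then yields
\[
\max_{[0,T]}\|e_h(t)\|_{H^{1,2}}^2 + \alpha\!\int_0^T\!\|e_{ht}\|_{L^2}^2\,dt + (1-\alpha)\!\int_0^T\!\|\nu_h\cdot e_{ht}\|_{L^2}^2\,dt \;\le\; C\,e^{(M/\alpha)T}h^2,
\]
and combining with the interpolation estimates for $\rho$ gives the stated bound. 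The main obstacle — and the place requiring genuine care rather than routine bookkeeping — is the treatment of the normal-projection term $(1-\alpha)(\hat X_t\cdot\nu)(\nu\cdot\varphi_h)$ against its discrete counterpart with $\nu_h$: the difference involves $\nu-\nu_h$ multiplied by $\hat X_{ht}$, i.e.\ a term one would like to bound by the error's own time derivative, and one must exploit that $\nu_h\cdot e_{h\theta}$ essentially vanishes (by the defining orthogonality of $\nu_h$) together with superconvergence-type cancellations, exactly as in \cite{DD94}; the bootstrap hypothesis $\|\hat X_h - I_h\hat X\|_{W^{1,\infty}}\le Ch^{1/2}$ (recovered a posteriori from the $H^{1,2}$-bound via an inverse inequality) is what keeps $\nu_h$ close to $\nu$ and makes the closed argument go through.
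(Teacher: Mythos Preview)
Your proposal is essentially correct and close in spirit to the paper's argument, with one structural difference in the existence step and one point where your description of the critical term is off.

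\textbf{Existence.} The paper does not use Picard--Lindel\"of plus a bootstrap/continuation; instead it runs a Schauder fixed-point argument on the closed convex set
\[
\mathcal{B}_h=\Bigl\{\hat X_h\in C^0([0,T],\mathcal{S}_h^2):\ \sup_{t}e^{-Mt/\alpha}\|(\hat X_\theta-\hat X_{h\theta})(t)\|_{L^2}^2\le K^2h^2,\ \hat X_h(\cdot,0)=I_hX_0\Bigr\},
\]
defining $F(\hat X_h)=\hat Y_h$ as the solution of the \emph{linear} ODE obtained by freezing the coefficients $|\hat X_{h\theta}|^2$ and $\nu_h$. The error estimate is then proved for $\hat Y_h$ (not $\hat X_h$) and serves double duty: it shows $F(\mathcal{B}_h)\subset\mathcal{B}_h$, and at the fixed point it is the desired bound. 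Your direct ODE approach also works---indeed the paper invokes it for uniqueness---and is arguably more elementary; the Schauder route has the advantage that the bootstrap hypothesis is built into the definition of $\mathcal{B}_h$ rather than carried as a separate assumption to be closed a posteriori. Testing with $\varphi_h=e_h$ is not used; only the test function $I_h\hat X_t-\hat Y_{ht}$ (your $e_{ht}$, at the fixed point) appears.

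\textbf{The critical term.} Your account of how the $(1-\alpha)$ normal-projection consistency term is handled is not accurate. You invoke ``$\nu_h\cdot e_{h\theta}$ essentially vanishes'' and ``superconvergence-type cancellations, exactly as in \cite{DD94}'', but in \cite{DD94} one has $\alpha=1$ and this term is simply absent, and no superconvergence or orthogonality of $\nu_h$ against $e_{h\theta}$ is used here. The paper's treatment is more direct: after isolating the dangerous contribution $(1-\alpha)\int|\hat X_\theta|^2(\hat X_t\cdot\nu)\,(\nu_h-\nu)\cdot(I_h\hat X_t-\hat Y_{ht})\,d\theta$, one decomposes the velocity error in the orthonormal frame $\{\nu_h,\tau_h\}$. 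The $\nu_h$-component is absorbed by the good term $(1-\alpha)\|\nu_h\cdot(\hat X_t-\hat Y_{ht})\|_{L^2}^2$ on the left. The remaining tangential piece
\[
C(1-\alpha)\,\|(\nu_h-\nu)\cdot\tau_h\|_{L^2}\,\|\tau_h\cdot(\hat X_t-\hat Y_{ht})\|_{L^2}
\le C(1-\alpha)Ke^{Mt/(2\alpha)}h\,\|\hat X_t-\hat Y_{ht}\|_{L^2}
\]
can only be absorbed into the \emph{full} $\alpha\|\hat X_t-\hat Y_{ht}\|_{L^2}^2$, and Young's inequality with weight $\alpha$ is exactly what produces $\tfrac{C}{\alpha}h^2K^2e^{Mt/\alpha}$ on the right and hence the $e^{MT/\alpha}$ after Gronwall. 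So your identification of \emph{where} the $1/\alpha$ enters is correct, but the mechanism is a plain frame decomposition plus Young, not a cancellation argument.
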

\proof

The following proof is adopted from \cite{DD94},
where the special case $\alpha = 1$ is considered.
It is based on the Schauder fixed point theorem. For this,
we first introduce the Banach space $\mathcal{Z}_h := C^0([0,T], \mathcal{S}^2_h)$
of time-continuous functions with values in $\mathcal{S}^2_h$ equipped
with the norm 
$$
	\| \hat{X}_h \|_{\mathcal{Z}_h}
	:= \sup_{t \in [0,T]} \| \hat{X}_h(t) \|_{L^2},
$$  
and the convex subset $\mathcal{B}_h$ defined by
\begin{equation}
	\mathcal{B}_h :=
	\left\{ \hat{X}_h \in \mathcal{Z}_h ~|~
		\sup_{t \in [0,T]} e^{- \frac{M}{\alpha} t} 
		\| (\hat{X}_\theta - \hat{X}_{h \theta})(t) \|^2_{L^2}
		\leq K^2 h^2	
		\ \textnormal{and $\hat{X}_h(\cdot,0) = (I_h X_0)(\cdot)$}
	 \right\},
\end{equation}
where we will choose the constants $K, M > 0$ below.
Please note that the space $\mathcal{S}_h$ is finite-dimensional.
For sufficiently large $K$ the subset $\mathcal{B}_h$ is non-empty, since then
$I_{h} \hat{X} \in \mathcal{B}_h$. Moreover, $\mathcal{B}_h$ is closed.
For $\hat{X}_h \in \mathcal{B}_h$, $0 \leq t \leq T$, we obtain
\begin{align*}
	\| (\hat{X}_\theta - \hat{X}_{h\theta})(t) \|_{L^\infty} &\leq
	\| (\hat{X}_\theta - (I_h \hat{X})_\theta )(t) \|_{L^\infty} 
	+ \| ((I_h \hat{X})_\theta - \hat{X}_{h \theta})(t) \|_{L^\infty}
	\\
	&\leq Ch 
	+ \frac{C}{\sqrt{h}} \| ((I_h \hat{X})_\theta - \hat{X}_{h \theta})(t) \|_{L^2} 
	\\
	&\leq Ch 
	+ \frac{C}{\sqrt{h}} ( \| ((I_h \hat{X})_\theta - \hat{X}_{\theta})(t) \|_{L^2} 
	+  \| (\hat{X}_\theta - \hat{X}_{h \theta})(t) \|_{L^2} )
	\\
	&\leq C \sqrt{h}(1 + e^{\frac{M}{2\alpha}T} K),
\end{align*}
where we have used interpolation and inverse inequalities.
Because of assumption (\ref{lower_bound}), we can thus assume
that for $h_0 = h_0(M,K,\hat{X},T,\alpha)$ sufficiently small the following lower and upper bounds hold
\begin{align}
\label{lower_upper_bounds}
	|\hat{X}_{h \theta}| \geq \frac{1}{2} c_0,
	\quad \textnormal{and} \quad
	| \hat{X}_{h \theta} | \leq C
	\quad \textnormal{in $\mathbb{R} / 2\pi \times [0,T]$.}
\end{align}
We now consider the operator $F$ defined by
$$
	F: \hat{X}_h \in \mathcal{B}_h \mapsto \hat{Y}_h \in \mathcal{Z}_h, 
$$
where $\hat{Y}_h \in \mathcal{Z}_h$ is the unique solution of the following
linear system of ODEs
\begin{align}
\label{ODE}
	\Int \left( \alpha \hat{Y}_{ht} \cdot \varphi_h 
		+ (1 - \alpha) (\hat{Y}_{ht} \cdot \nu_h)(\nu_h \cdot \varphi_h) \right) 
				|\hat{X}_{h\theta}|^2 d\theta 
	+ \Int \hat{Y}_{h\theta} \cdot \varphi_{h\theta} d\theta = 0
\end{align}
$\forall \varphi_h \in \mathcal{S}^2_h, 0 < t < T$, 
with initial data $\hat{Y}_h(\cdot ,0) = (I_h X_0)(\cdot)$.
Here, $\nu_h$ is a piecewise constant unit normal vector field to
the embedding $\hat{X}_h$.
The operator $F$ is continuous, since the solution $\hat{Y}_h$ continuously depends on 
$\hat{X}_h$.
Furthermore, we show below that $\hat{Y}_h \in \mathcal{B}_h$
and that $\| \hat{Y}_h \|_{H^{1,2}((0,T), \mathcal{S}^2_h)} \leq C$.
Hence, we have $F(\mathcal{B}_h) \subset \mathcal{B}_h$, and since the embedding 
$H^{1,2}(0,T) \hookrightarrow C^0([0,T])$ is compact,
$F(\mathcal{B}_h) \subset H^{1,2}((0,T), \mathcal{S}^2_h)$ is a precompact
subset of $\mathcal{B}_h \subset C^0([0,T], \mathcal{S}^2_h)$. 
The Schauder fixed point theorem therefore implies the existence of a fixed point
$F(\hat{X}_h) = \hat{X}_h$, that is a solution of (\ref{semi_discrete_problem_CSF}).
The uniqueness of solutions to ODEs of the form
$ x_t = f(x,t)$, for $f : G \subset \mathbb{R}^{m+1} \rightarrow \mathbb{R}^m$
locally Lipschitz continuous with respect to the variable $x$, 
implies that the solution $\hat{X}_h$ to (\ref{semi_discrete_problem_CSF})
is unique. In order to see that (\ref{semi_discrete_problem_CSF}) is equivalent
to such an equation, one chooses $\varphi_h = \phi_i e_\beta$, $i=1,\ldots,N$,
$\beta =1,2$, with $e_1 = (1,0)^T$ and $e_2 = (0,1)^T$,
and inserts 
$\hat{X}_h = \sum_{\gamma=1}^2 \sum_{j=1}^N \mathbf{\hat{X}}^{j\gamma} \phi_j e_\gamma$.
The resulting non-linear ODE then is
$$
	\sum_{\gamma=1}^2 \sum_{j=1}^N M_{ij \beta \gamma}(\mathbf{\hat{X}}) \mathbf{\hat{X}}^{j\gamma}_t 
	+ \sum_{\gamma=1}^2 \sum_{j=1}^N S_{ij \beta \gamma} \mathbf{\hat{X}}^{j\gamma} = 0,
	\quad
	\forall i=1, \ldots, N, \beta = 1,2,
$$
where the mass matrix 
$M := (M_{ij \beta \gamma}(\mathbf{\hat{X}}) ) \in \mathbb{R}^{(2N) \times (2N) }$ 
non-linearly depends on the vector 
$\mathbf{\hat{X}} := (\mathbf{\hat{X}}^{j\gamma}) \in \mathbb{R}^{2N}$,
whereas the components of the stiffness matrix 
$S := (S_{ij \beta \gamma} ) \in \mathbb{R}^{(2N) \times (2N) }$ are constants. 
The mass matrix is invertible
if $(\mathbf{\hat{X}}^{j+1, \gamma})_{\gamma=1,2} 
\neq (\mathbf{\hat{X}}^{j, \gamma})_{\gamma=1,2}$, 
for all $j= 1, \ldots, N$, 
where 
$(\mathbf{\hat{X}}^{N+1, \gamma})_{\gamma=1,2} 
:= (\mathbf{\hat{X}}^{1, \gamma})_{\gamma=1,2}$.
Furthermore, its inverse is locally Lipschitz continuous
in $G^\ast := \{ \mathbf{\hat{X}} \in \mathbb{R}^{2N} |~ 
(\mathbf{\hat{X}}^{j+1, \gamma} )_{\gamma=1,2} 
\neq (\mathbf{\hat{X}}^{j \gamma})_{\gamma=1,2} \}$.
Since $X_0$ is an embedding and $\hat{X}_h(\cdot,0) := (I_h X_0)(\cdot)$,
we have $\mathbf{\hat{X}}(0) \in G^\ast$.
Hence, the standard uniqueness theorem
for non-linear ODEs applies. 

We now show that $\hat{Y}_h \in \mathcal{B}_h$. First, we obviously have 
$\hat{Y}_h \in \mathcal{Z}_h$, and from the weak formulations
(\ref{weak_formulation_reparam_CSF}) and (\ref{ODE}) we obtain
\begin{align*}
  &\Int \left( \alpha (\hat{X}_t - \hat{Y}_{ht}) \cdot \varphi_h 
  		+ (1 - \alpha) (\hat{X}_t - \hat{Y}_{ht}) \cdot \nu_h (\nu_h \cdot \varphi_h) \right)
  				|\hat{X}_{h\theta}|^2 d\theta
  + \Int (\hat{X}_\theta - \hat{Y}_{h\theta}) \cdot \varphi_{h\theta} d\theta
  \\
  &=	 \Int \left( \alpha \hat{X}_t \cdot \varphi_h + (1- \alpha) (\hat{X}_t \cdot \nu_h)(\nu_h \cdot \varphi_h) \right) 
  				|\hat{X}_{h\theta}|^2 d\theta	
  \\
  & \quad - \Int \left( \alpha \hat{X}_t \cdot \varphi_h + (1- \alpha) (\hat{X}_t \cdot \nu)(\nu \cdot \varphi_h) \right) 
  				|\hat{X}_{\theta}|^2 d\theta		
  \\
  &=	 \Int (|\hat{X}_{h\theta}|^2 - |\hat{X}_{\theta}|^2) \left( \alpha \hat{X}_t \cdot \varphi_h + (1- \alpha) (\hat{X}_t \cdot \nu_h)(\nu_h \cdot \varphi_h) \right) 
  				 d\theta	
  \\
  & \quad + \Int |\hat{X}_{\theta}|^2 (1- \alpha) \left(
  			(\hat{X}_t \cdot \nu_h)(\nu_h \cdot \varphi_h) - (\hat{X}_t \cdot \nu)(\nu \cdot \varphi_h) \right) d\theta	
   \\
  &=	 \Int (|\hat{X}_{h\theta}|^2 - |\hat{X}_{\theta}|^2) \left( \alpha \hat{X}_t \cdot \varphi_h + (1- \alpha) (\hat{X}_t \cdot \nu_h)(\nu_h \cdot \varphi_h) \right) 
  				 d\theta	
  \\
  & \quad +  (1- \alpha) \Int |\hat{X}_{\theta}|^2 \left(
  			\hat{X}_t \cdot (\nu_h - \nu)(\nu_h \cdot \varphi_h) 
  						+ (\hat{X}_t \cdot \nu)(\nu_h - \nu) \cdot \varphi_h \right) d\theta.
\end{align*}
We now choose $\varphi_h = I_h \hat{X}_t - \hat{Y}_{ht}$ and get
\begin{align*}
	&\Int \left( \alpha |\hat{X}_t - \hat{Y}_{ht}|^2 
	+ (1 - \alpha) |\nu_h \cdot (\hat{X}_t - \hat{Y}_{ht})|^2 \right) |\hat{X}_{h\theta}|^2 d\theta
	+ \Int (\hat{X}_\theta - \hat{Y}_{h\theta}) 
						\cdot (\hat{X}_{t\theta} - \hat{Y}_{ht\theta}) d\theta
\\
	&= \Int \left( \alpha (\hat{X}_t - \hat{Y}_{ht}) \cdot (\hat{X}_t - I_h \hat{X}_t)
	 	+ (1-\alpha)(\hat{X}_t - \hat{Y}_{ht}) \cdot \nu_h 
	 	\nu_h \cdot (\hat{X}_t - I_h \hat{X}_t) \right) |\hat{X}_{h\theta}|^2 d\theta	
	\\ 			
	& \quad + \Int (\hat{X}_\theta - \hat{Y}_{h\theta})
				\cdot(\hat{X}_{t\theta} - (I_h \hat{X}_t)_\theta) d\theta			
	\\
	& \quad + \Int (|\hat{X}_{h\theta}|^2 - |\hat{X}_{\theta}|^2) 
		\left( \alpha \hat{X}_t \cdot (I_h \hat{X}_t - \hat{Y}_{ht}) 
	+ (1- \alpha) (\hat{X}_t \cdot \nu_h) \nu_h \cdot (I_h \hat{X}_t - \hat{Y}_{ht}) \right) 
  	d\theta	
  \\
  & \quad +  (1- \alpha) \Int |\hat{X}_{\theta}|^2 \left(
  			\hat{X}_t \cdot (\nu_h - \nu) \nu_h \cdot (I_h \hat{X}_t - \hat{Y}_{ht}) 
+ (\hat{X}_t \cdot \nu)(\nu_h - \nu) \cdot (I_h \hat{X}_t - \hat{Y}_{ht}) \right) d\theta	.
\end{align*}
Using the lower and upper bounds in (\ref{lower_upper_bounds}), we estimate
\begin{align*}
	&\frac{c_0^2}{4} \Int \alpha |\hat{X}_t - \hat{Y}_{ht}|^2 
		+ (1 - \alpha) |\nu_h \cdot (\hat{X}_t - \hat{Y}_{ht})|^2 d\theta
		+ \frac{1}{2} \frac{d}{dt} \Int |\hat{X}_\theta - \hat{Y}_{h\theta} |^2 d\theta
	\\
	&\leq C \Int \alpha |\hat{X}_t - \hat{Y}_{ht}||\hat{X}_t - I_h \hat{X}_t| 
			+ (1 - \alpha) |\nu_h \cdot (\hat{X}_t - \hat{Y}_{ht})|
					|\nu_h \cdot (\hat{X}_t - I_h \hat{X}_t)| d\theta	
	\\
	& \quad + \|(\hat{X}_t - I_h \hat{X}_t)_\theta \|_{L^2} 
				\| \hat{X}_\theta - \hat{Y}_{h\theta} \|_{L^2}		
		+ \alpha \Int ||\hat{X}_{h\theta}| - |\hat{X}_\theta|| 
					(|\hat{X}_{h\theta}| + |\hat{X}_\theta|) |\hat{X}_t| 
					|I_h \hat{X}_t - \hat{Y}_{ht}| d\theta
	\\
	& \quad + (1 - \alpha)	\Int ||\hat{X}_{h\theta}| - |\hat{X}_\theta|| 
					(|\hat{X}_{h\theta}| + |\hat{X}_\theta|) |\hat{X}_t| 
					|\nu_h \cdot (I_h \hat{X}_t - \hat{Y}_{ht})| d\theta						
	\\
	& \quad + C (1 - \alpha) \Int |\hat{X}_t \cdot (\nu_h - \nu)| 
				|\nu_h \cdot (I_h \hat{X}_t - \hat{Y}_{ht})| d\theta	
	\\
	& \quad			
			+ C (1 - \alpha) \Int |\hat{X}_t \cdot \nu| 
				|(\nu_h - \nu) \cdot (I_h \hat{X}_t - \hat{Y}_{ht})| d\theta	
	\\
	&\leq C \alpha  \|\hat{X}_t - I_h \hat{X}_t \|_{L^2} \|\hat{X}_t - \hat{Y}_{ht}\|_{L^2}
			+ C (1 - \alpha) \|\nu_h \cdot (\hat{X}_t - I_h \hat{X}_t)\|_{L^2} 
							\|\nu_h \cdot (\hat{X}_t - \hat{Y}_{ht})\|_{L^2}	
	\\
	& \quad + Ch \|\hat{X}_t \|_{H^{2,2}} 
				\| \hat{X}_\theta - \hat{Y}_{h\theta} \|_{L^2}		
		+ C \alpha \| \hat{X}_t \|_{L^\infty}	 
				\Int |\hat{X}_{h\theta} - \hat{X}_\theta| 
					|I_h \hat{X}_t - \hat{Y}_{ht}| d\theta
	\\
	& \quad + C (1 - \alpha) \| \hat{X}_t \|_{L^\infty}	
					\Int |\hat{X}_{h\theta} - \hat{X}_\theta| 
					|\nu_h \cdot (I_h \hat{X}_t - \hat{Y}_{ht})| d\theta						
	\\
	& \quad + C (1 - \alpha) \Int |\hat{X}_t \cdot (\nu_h - \nu)| 
				|\nu_h \cdot (I_h \hat{X}_t - \hat{Y}_{ht})| d\theta	
	\\
	& \quad			
			+ C (1 - \alpha) \Int |\hat{X}_t \cdot \nu| 
				|(\nu_h - \nu) \cdot (I_h \hat{X}_t - \hat{Y}_{ht})| d\theta.		
\end{align*}
Applying the interpolation estimate
$\| \hat{X}_t - I_h \hat{X}_t \|_{L^2} \leq Ch \| \hat{X}_t \|_{H^{1,2}} \leq C h$ then gives
\begin{align*}
&A :=\frac{c_0^2}{4} \alpha \|\hat{X}_t - \hat{Y}_{ht}\|^2_{L^2} 
		+ \frac{c_0^2}{4} (1 - \alpha) \|\nu_h \cdot (\hat{X}_t - \hat{Y}_{ht})\|^2_{L^2}
		+ \frac{1}{2} \frac{d}{dt} \|\hat{X}_\theta - \hat{Y}_{h\theta} \|^2_{L^2}
	\\
	&\leq Ch \alpha \| \hat{X}_t \|_{H^{1,2}}\|\hat{X}_t - \hat{Y}_{ht}\|_{L^2}
			+ Ch (1 - \alpha) \| \hat{X}_t \|_{H^{1,2}}
				 \|\nu_h \cdot (\hat{X}_t - \hat{Y}_{ht})\|_{L^2}	
	\\
	& \quad + Ch \|\hat{X}_t \|_{H^{2,2}} 
		 \| \hat{X}_\theta - \hat{Y}_{h\theta} \|_{L^2}		
		+ C \alpha \|\hat{X}_{h\theta} - \hat{X}_\theta \|_{L^2} 
				   \| I_h \hat{X}_t - \hat{Y}_{ht} \|_{L^2}
	\\
	& \quad + C (1 - \alpha) \|\hat{X}_{h\theta} - \hat{X}_\theta \|_{L^2} 
			\| \nu_h \cdot (I_h \hat{X}_t - \hat{Y}_{ht}) \|_{L^2}						
	\\
	& \quad + C (1 - \alpha) \| \hat{X}_t \|_{L^\infty} \| \nu_h - \nu \|_{L^2} 
			\| \nu_h \cdot (I_h \hat{X}_t - \hat{Y}_{ht}) \|_{L^2}	
	\\
	& \quad			
			+ C (1 - \alpha) \| \hat{X}_t \|_{L^\infty}
			\|(\nu_h - \nu) \cdot (I_h \hat{X}_t - \hat{Y}_{ht})\|_{L^1}
	\\		
	& \leq C h \alpha \|\hat{X}_t - \hat{Y}_{ht}\|_{L^2}
			+ C h (1 - \alpha) \|\nu_h \cdot (\hat{X}_t - \hat{Y}_{ht})\|_{L^2}	
	\\
	& \quad + Ch \|\hat{X}_t \|_{H^{2,2}} 
				\| \hat{X}_\theta - \hat{Y}_{h\theta} \|_{L^2}		
	    + C h \alpha \|\hat{X}_t \|_{H^{1,2}} 
	    			\|\hat{X}_{h\theta} - \hat{X}_\theta \|_{L^2} 
		+ C \alpha \|\hat{X}_{h\theta} - \hat{X}_\theta \|_{L^2} 
				   \|\hat{X}_t - \hat{Y}_{ht} \|_{L^2}
	\\
	& \quad + C h (1 - \alpha) \|\hat{X}_t \|_{H^{1,2}}
			\|\hat{X}_{h\theta} - \hat{X}_\theta \|_{L^2} 
		+ C (1 - \alpha) \|\hat{X}_{h\theta} - \hat{X}_\theta \|_{L^2} 
			\| \nu_h \cdot (\hat{X}_t - \hat{Y}_{ht}) \|_{L^2}						
	\\
	& \quad + C h (1 - \alpha) \|\hat{X}_t \|_{H^{1,2}} \| \nu_h - \nu \|_{L^2} 
		+ C (1 - \alpha) \| \nu_h - \nu \|_{L^2} 
			\| \nu_h \cdot (\hat{X}_t - \hat{Y}_{ht}) \|_{L^2}	
	\\
	& \quad	 + Ch (1 - \alpha) \|\hat{X}_t \|_{H^{1,2}}
			\|\nu_h - \nu \|_{L^2}					
			+ C (1 - \alpha)
			\|(\nu_h - \nu) \cdot (\hat{X}_t - \hat{Y}_{ht})\|_{L^1}.
\end{align*}
Hence,
\begin{align*}
	&A \leq C h \alpha \|\hat{X}_t - \hat{Y}_{ht}\|_{L^2}
			+ C h (1 - \alpha) \|\nu_h \cdot (\hat{X}_t - \hat{Y}_{ht})\|_{L^2}	
	\\
	& \quad + Ch \|\hat{X}_t \|_{H^{2,2}} 
					\| \hat{X}_\theta - \hat{Y}_{h\theta} \|_{L^2}		
	    + C h \alpha \|\hat{X}_{h\theta} - \hat{X}_\theta \|_{L^2} 
		+ C \alpha \|\hat{X}_{h\theta} - \hat{X}_\theta \|_{L^2} 
				   \|\hat{X}_t - \hat{Y}_{ht} \|_{L^2}
	\\
	& \quad + C h (1 - \alpha) \|\hat{X}_{h\theta} - \hat{X}_\theta \|_{L^2} 
		+ C (1 - \alpha) \|\hat{X}_{h\theta} - \hat{X}_\theta \|_{L^2} 
			\| \nu_h \cdot (\hat{X}_t - \hat{Y}_{ht}) \|_{L^2}						
	\\
	& \quad + C h (1 - \alpha) \| \nu_h - \nu \|_{L^2} 
		+ C (1 - \alpha) \| \nu_h - \nu \|_{L^2} 
			\| \nu_h \cdot (\hat{X}_t - \hat{Y}_{ht}) \|_{L^2}	
	\\
	& \quad	 + C (1 - \alpha)
	\|(\nu_h - \nu) \cdot \tau_h \|_{L^2}  \| \tau_h \cdot (\hat{X}_t - \hat{Y}_{ht})\|_{L^2},			
\end{align*}
where $\tau_h$ denotes the vector field given by $\tau_h := \hat{X}_{h \theta}/ |\hat{X}_{h\theta}|$.
Since $\hat{X}_h \in \mathcal{B}_h$, the estimate  
$\| \hat{X}_{\theta} - \hat{X}_{h \theta} \|_{L^2} \leq K e^{\frac{M}{2\alpha}t} h$
holds for all $0 \leq t \leq T$. It is then not difficult to estimate
$\| \nu_h - \nu \|_{L^2} \leq C K e^{\frac{M}{2\alpha}t} h$.
Using Young's inequality we deduce
\begin{align*}
A &\leq C h \alpha \|\hat{X}_t - \hat{Y}_{ht}\|_{L^2}
			+ C h (1 - \alpha) \|\nu_h \cdot (\hat{X}_t - \hat{Y}_{ht})\|_{L^2}	
	\\
	& \quad + Ch \|\hat{X}_t \|_{H^{2,2}}  
				\| \hat{X}_\theta - \hat{Y}_{h\theta} \|_{L^2}		
	    + C h^2 K e^{\frac{M}{2\alpha}t} 
		+ C h \alpha K e^{\frac{M}{2\alpha}t}
				   \|\hat{X}_t - \hat{Y}_{ht} \|_{L^2}
	\\
	& \quad + C h (1 - \alpha) K e^{\frac{M}{2\alpha}t}
			\| \nu_h \cdot (\hat{X}_t - \hat{Y}_{ht}) \|_{L^2}						
	\\
	& \quad	 + C (1 - \alpha)
	\|(\nu_h - \nu) \cdot \tau_h \|_{L^2}  \| \tau_h \cdot (\hat{X}_t - \hat{Y}_{ht})\|_{L^2}		\\
	&\leq \frac{C h^2 \alpha}{2 \delta_1} 
	+ \frac{\delta_1}{2} \alpha  \|\hat{X}_t - \hat{Y}_{ht}\|^2_{L^2}
			+ \frac{C h^2 (1 - \alpha)}{2 \delta_2}
			+ \frac{\delta_2 }{2} (1 - \alpha)
					\|\nu_h \cdot (\hat{X}_t - \hat{Y}_{ht})\|^2_{L^2}	
	\\
	& \quad + \frac{Ch^2}{2 \delta_3} \|\hat{X}_t \|^2_{H^{2,2}} 
		+ \frac{\delta_3}{2} \| \hat{X}_\theta - \hat{Y}_{h\theta} \|^2_{L^2}		
	    + C h^2 K e^{\frac{M}{2\alpha}t} 
		+ \frac{C h^2 \alpha K^2 e^{\frac{M}{\alpha} t }}{2 \delta_4} 
		+ \frac{\delta_4}{2} \alpha \|\hat{X}_t - \hat{Y}_{ht} \|^2_{L^2}
	\\
	& \quad + \frac{C h^2 (1 - \alpha) K^2 e^{\frac{M}{\alpha} t}}{ 2 \delta_5}
	+ \frac{\delta_5}{2} (1- \alpha) \| \nu_h \cdot (\hat{X}_t - \hat{Y}_{ht}) \|^2_{L^2}		
	\\
	& \quad	 + C (1 - \alpha)
	\|(\nu_h - \nu) \cdot \tau_h \|_{L^2}  \| \tau_h \cdot (\hat{X}_t - \hat{Y}_{ht})\|_{L^2}.	
\end{align*}
By choosing the constants $\delta_i > 0$, for $i=1, \ldots, 5$, appropriately,
it follows that
\begin{align*}
&\frac{c_0^2}{8} \alpha \|\hat{X}_t - \hat{Y}_{ht}\|^2_{L^2} 
		+ \frac{c_0^2}{8} (1 - \alpha) \|\nu_h \cdot (\hat{X}_t - \hat{Y}_{ht})\|^2_{L^2}
		+ \frac{1}{2} \frac{d}{dt} \|\hat{X}_\theta - \hat{Y}_{h\theta} \|^2_{L^2}
	\\	
	&\leq Ch^2 ( 1 + \|\hat{X}_t \|^2_{H^{2,2}} ) 
		+ \frac{1}{2} \| \hat{X}_\theta - \hat{Y}_{h\theta} \|^2_{L^2}		
	  + C h^2 K^2 e^{\frac{M}{\alpha} t} 
	\\  
	  & \quad + C (1 - \alpha)
	\|(\nu_h - \nu) \cdot \tau_h \|_{L^2}  \| \tau_h \cdot (\hat{X}_t - \hat{Y}_{ht})\|_{L^2} .
\end{align*}
We now estimate the last term on the right hand side.
This term is the reason for the dependence of the approximation error on $\alpha$.
Using the estimate $\| \nu_h - \nu \|_{L^2} \leq C K e^{\frac{M}{2\alpha}t} h$
and Young's inequality, we obtain
\begin{align*}
&\frac{c_0^2}{4} \alpha \|\hat{X}_t - \hat{Y}_{ht}\|^2_{L^2} 
		+ \frac{c_0^2}{4} (1 - \alpha) \|\nu_h \cdot (\hat{X}_t - \hat{Y}_{ht})\|^2_{L^2}
		+ \frac{d}{dt} \|\hat{X}_\theta - \hat{Y}_{h\theta} \|^2_{L^2}
	\\	
	&\leq Ch^2 ( 1 + \|\hat{X}_t \|^2_{H^{2,2}} ) 
		 + \| \hat{X}_\theta - \hat{Y}_{h\theta} \|^2_{L^2}		
	  + C h^2 K^2 e^{\frac{M}{\alpha} t} 
	  + C (1 - \alpha) K e^{\frac{M}{2\alpha}t} h \| \hat{X}_t - \hat{Y}_{ht} \|_{L^2}
	\\
	&\leq Ch^2 ( 1 + \|\hat{X}_t \|^2_{H^{2,2}} ) 
		 + \| \hat{X}_\theta - \hat{Y}_{h\theta} \|^2_{L^2}		
	  + C h^2 K^2 e^{\frac{M}{\alpha} t} 
	  + \frac{C h^2 K^2 e^{\frac{M}{\alpha} t}}{\alpha} 
	  + \frac{c_0^2}{8} \alpha \| \hat{X}_t - \hat{Y}_{ht} \|^2_{L^2} . 
\end{align*}
Hence, we have 	
\begin{align}
&\frac{c_0^2}{8} \alpha \|\hat{X}_t - \hat{Y}_{ht}\|^2_{L^2} 
		+ \frac{c_0^2}{4} (1 - \alpha) \|\nu_h \cdot (\hat{X}_t - \hat{Y}_{ht})\|^2_{L^2}
		+ \frac{d}{dt} \|\hat{X}_\theta - \hat{Y}_{h\theta} \|^2_{L^2}
	\nonumber	
	\\
	&\leq Ch^2 ( 1 + \|\hat{X}_t \|^2_{H^{2,2}} ) 
		 + \| \hat{X}_\theta - \hat{Y}_{h\theta} \|^2_{L^2}		
	  + \frac{C h^2 K^2 e^{\frac{M}{\alpha} t}}{\alpha} .
	\label{error_estimate_intermediate_result}  
\end{align}
Integrating with respect to time and using the fact that 
$\hat{X}_t \in L^2((0,T), H^{2,2}(\mathbb{R}/ 2\pi))$
leads to
\begin{align*}
	\| (\hat{X}_\theta - \hat{Y}_\theta)(t)\|^2_{L^2}
	&\leq \| (\hat{X}_\theta - \hat{Y}_\theta)(0) \|^2_{L^2} + Ch^2 
		+ \frac{C h^2 K^2 e^{\frac{M}{\alpha} t}}{M}
	  	+ \int_{0}^{t} \| (\hat{X}_\theta - \hat{Y}_{h\theta})(s) \|^2 ds 
	\\
	&\leq  Ch^2 
		+ \frac{C h^2 K^2 e^{\frac{M}{\alpha} t}}{M}
	  	+ \int_{0}^{t} \| (\hat{X}_\theta - \hat{Y}_{h\theta})(s) \|^2 ds, 
\end{align*}
where we have used $X(\cdot,0) = X_0(\cdot)$ 
and $\hat{Y}_h(\cdot, 0) = (I_h X_0) (\cdot)$
as well as $\| (X_0 - (I_h X_0))_\theta \|_{L^2} \leq C h$.
We infer from Gronwall's lemma that  
\begin{align*}
	\| (\hat{X}_\theta - \hat{Y}_\theta)(t)\|^2_{L^2}
	&\leq Ch^2 e^t
		+ \frac{C h^2 K^2 e^{(\frac{M}{\alpha} + 1) t}}{M},
\end{align*}
and hence,
\begin{align*}
	\sup_{t \in [0,T]} e^{- \frac{M}{\alpha} t} 
			\| (\hat{X}_\theta - \hat{Y}_{h\theta})(t) \|_{L^2}^2
	&\leq C h^2 + \frac{C h^2 K^2}{M},
\end{align*}
where $C$ depends on $\hat{X}$ and $T$. 
Choosing 
$K^2 \geq 2C$ and $M \geq 2C$
we can finally conclude that
\begin{align*}
	\sup_{t \in [0,T]} e^{- \frac{M}{\alpha} t} 
			\| (\hat{X}_\theta - \hat{Y}_{h\theta})(t) \|_{L^2}^2
	&\leq \frac{K^2 h^2}{2} + \frac{C}{M} K^2 h^2 \leq K^2 h^2
\end{align*}
and thus, $\hat{Y}_h \in \mathcal{B}_h$.
The maximal grid size $h_0$ then only depends on $\hat{X},T$ and $\alpha$.
Inserting the estimate $\| (\hat{X}_\theta - \hat{Y}_{h\theta})(t) \|_{L^2}^2
\leq K^2 h^2 e^{\frac{M}{\alpha} t}$ into (\ref{error_estimate_intermediate_result})
gives
\begin{align*}
&\frac{c_0^2}{8} \alpha \|\hat{X}_t - \hat{Y}_{ht}\|^2_{L^2} 
		+ \frac{c_0^2}{4} (1 - \alpha) \|\nu_h \cdot (\hat{X}_t - \hat{Y}_{ht})\|^2_{L^2}
		+ \frac{d}{dt} \|\hat{X}_\theta - \hat{Y}_{h\theta} \|^2_{L^2}	
	\\
	&\leq Ch^2 ( 1 + \|\hat{X}_t \|^2_{H^{2,2}} ) 		
	  + \frac{C h^2 K^2 e^{\frac{M}{\alpha} t}}{\alpha} .
\end{align*}
The same procedure as above then shows that
\begin{align*}
&\frac{c_0^2}{8} \alpha \int_0^t \|(\hat{X}_t - \hat{Y}_{ht})(t)\|^2_{L^2}  dt
		+ \frac{c_0^2}{4} (1 - \alpha)
		\int_0^t \|\nu_h \cdot (\hat{X}_t - \hat{Y}_{ht})(t)\|^2_{L^2} dt
		+  \| (\hat{X}_\theta - \hat{Y}_{h\theta})(t) \|^2_{L^2}	
	\\
	&\leq Ch^2	
	  + \frac{C h^2 K^2 e^{\frac{M}{\alpha} t}}{M} .
\end{align*}
and hence,
\begin{align}
& \alpha \int_0^T \|(\hat{X}_t - \hat{Y}_{ht})(t)\|^2_{L^2}  dt
		+  (1 - \alpha)
		\int_0^T \|\nu_h \cdot (\hat{X}_t - \hat{Y}_{ht})(t)\|^2_{L^2} dt
		+ \max_{t \in [0,T]} \| (\hat{X}_\theta - \hat{Y}_{h\theta})(t) \|^2_{L^2}	
	\nonumber \\
	&\leq C e^{\frac{M}{\alpha} T} h^2,
	\label{final_error_estimate}
\end{align}
where $C$ and $M$ depends on $\hat{X}$ and $T$.
In particular, we have 
$\int_0^T \| \hat{Y}_{ht} \|^2_{L^2} dt \leq C(\alpha, h, T, \hat{X})$.
Together with $\hat{Y}_h(\cdot ,0) = (I_h X_0)(\cdot)$ on $[0,2\pi]$,
this implies that
$\| \hat{Y}_h \|_{H^{1,2}( (0,T), \mathcal{S}_h^2 )} \leq C(\alpha, h, T, \hat{X})$.

This completes the proof of the fact that $F(\mathcal{B}_h) \subset \mathcal{B}_h$
is precompact. We can now apply the Schauder fixed point theorem.
The error estimate for the solution to the semi-discrete flow 
(\ref{semi_discrete_problem_CSF}), 
that is for the fixed point $\hat{X}_h = F(\hat{X}_h)$, finally infers from
(\ref{final_error_estimate}) and the fact that
\begin{align*}
	\max_{t \in [0,T]} \| (\hat{X} - \hat{X}_h)(t) \|^2_{L^2}
	&\leq C \| (\hat{X} - \hat{X}_h)(0) \|^2_{L^2}
		+ C T \int_{0}^{T} \| (\hat{X}_t - \hat{X}_{ht})(t) \|_{L^2}^2 dt
	\\
	&\leq C \| (\hat{X} - I_h \hat{X})(0) \|^2_{L^2}
		+ C T \int_{0}^{T} \| (\hat{X}_t - \hat{X}_{ht})(t) \|_{L^2}^2 dt	
	\\
	&\leq C h^4 + C T \int_{0}^{T} \| (\hat{X}_t - \hat{X}_{ht})(t) \|_{L^2}^2 dt	.
\end{align*}
\hfill $\Box$

\begin{remark}
In the above theorem we have excluded the case $\alpha > 1$, since we are mainly interested
in the behaviour of the reparametrized flow for small $\alpha$ anyway.
However, this restriction is clearly only a formal one.
To be more precise, the proof for $\alpha > 1$ works by writing the map $\alpha \unit + (1 - \alpha) \nu_h \otimes \nu_h$ as
$\alpha \tau_h \otimes \tau_h + \nu_h \otimes \nu_h$, where $\tau_h = \hat{X}_{h\theta}/|\hat{X}_{h\theta}|$ is a unit tangential vector field.
\end{remark}
\begin{remark}
Unfortunately, for the case $\alpha \searrow 0$ the error estimate in Theorem \ref{approximation_theorem}
becomes unbounded. It is therefore not clear whether for the choice $\alpha =0$, the approximation error still converges to zero for $h \searrow 0$.  
This is an open question.
\end{remark}
\begin{remark}
If we choose $\varphi_h = \hat{X}_{ht}$ in (\ref{semi_discrete_problem_CSF}) and integrate in time,
we obtain the following stability estimate for the semi-discrete scheme
$$
	\frac{1}{2}\Int | \hat{X}_{h \theta}(\cdot, T) |^2 d\theta
	+ \int_0^T \Int \left( \alpha |\hat{X}_{ht}|^2 + (1 - \alpha) |\hat{X}_{ht} \cdot \nu_h|^2 \right) | \hat{X}_{h \theta} |^2 d\theta
	= \frac{1}{2} \Int |\hat{X}_{h \theta}(\cdot, 0)|^2 d\theta.
$$
\end{remark}

\subsection{Numerical scheme for the curve shortening flow}
Time discretization of the semi-discrete scheme (\ref{semi_discrete_problem_CSF})
leads to a family of algorithms for the computation of the curve shortening. 
Using the notation $f^m = f(\cdot,m\tau)$ for the discrete time levels
$\{ m\tau ~|~ m = 0 , \ldots, M_\tau \in \mathbb{N} \}$ with time step size
$\tau > 0$ and $M_\tau \tau < T$, we propose the following semi-implicit schemes.
\begin{alg}
\label{algo_CSF}
Let $\alpha \in (0,\infty)$. For a given initial polygonal curve 
$\Gamma_h^0 = \hat{X}_h^0([0,2\pi])$ with $\hat{X}^0_h \in \mathcal{S}_h^2$,
determine for $m=0, \ldots, M_\tau -1$ the periodic solution 
$\hat{X}_h^{m+1} \in \mathcal{S}_h^2$ of 
\begin{align*}
	&\Int \left( \frac{\alpha}{\tau} \hat{X}^{m+1}_{h} \cdot \varphi_h 
		+ \frac{1 - \alpha}{\tau} 
			(\hat{X}^{m+1}_{h} \cdot \nu^m_h)(\nu^m_h \cdot \varphi_h) \right) 
				|\hat{X}^m_{h\theta}|^2 d\theta 
	+ \Int \hat{X}^{m+1}_{h\theta} \cdot \varphi_{h\theta} d\theta 
\\	
	&= \Int \left( \frac{\alpha}{\tau} \hat{X}^m_{h} \cdot \varphi_h 
		+ \frac{1 - \alpha}{\tau} 
			(\hat{X}^m_{h} \cdot \nu^m_h)(\nu^m_h \cdot \varphi_h) \right) 
				|\hat{X}^m_{h\theta}|^2 d\theta ,
	\quad \forall \varphi \in \mathcal{S}_h^2,			
\end{align*}
where $\nu_h^m$ is a piecewise constant unit normal field to the polygonal curve $\Gamma_h^m$,
and set
$$
	\Gamma_h^{m+1} := \hat{X}_h^{m+1}([0,2\pi]).
$$
\end{alg}
\begin{remark}
If we choose $\varphi_h = \hat{X}^{m+1}_h - \hat{X}^m_h$, we obtain after a short calculation
\begin{align*}
&\frac{1}{2} \Int |\hat{X}^{M}_{h\theta}|^2 d\theta
+ \sum_{m=0}^{M-1} \Int \left( 
\frac{\alpha}{\tau} |\hat{X}^{m+1}_h - \hat{X}^m_h|^2
+ \frac{1 - \alpha}{\tau} |(\hat{X}^{m+1}_h - \hat{X}^m_h) \cdot \nu_h^m|^2
\right) |\hat{X}^m_{h \theta}|^2 d\theta
\\
&\leq 
\frac{1}{2} \Int |\hat{X}^0_{h\theta}|^2 d\theta
\end{align*}
This holds for all mesh sizes $h > 0$ and time steps $\tau>0$.
\end{remark}

It is formally possible to choose
$\alpha = 0$ in Algorithm \ref{algo_CSF},
although this case was excluded in the derivation of the reparametrized
curve shortening flow. The algorithm (2.16a)
in \cite{BGN11}, which we have cited in the introduction, is in this spirit. 

\section{Numerical results for the curve shortening flow}
\label{Numerical_results_CSF}

In order to implement Algorithm \ref{algo_CSF} we solve the following linear system
of equations within the Finite Element Toolbox ALBERTA, see \cite{SS05},
\begin{equation}
  \frac{1}{\tau}\sum_{\gamma = 1}^2 \sum_{j=1}^N M_{ij \beta\gamma} \mathbf{\hat{X}}^{j\gamma}
  + \sum_{\gamma = 1}^2 \sum_{j=1}^N S_{ij \beta\gamma} \mathbf{\hat{X}}^{j\gamma}
  = \frac{1}{\tau} \sum_{\gamma = 1}^2 \sum_{j=1}^N M_{ij \beta\gamma} \mathbf{\hat{X}}^{j\gamma}_{old},
  \quad \forall i = 1, \ldots, N, \beta = 1, 2,
  \label{algebraic_equation_CSF}
\end{equation}
where $\hat{X}^{m+1}_h = \sum_{\gamma =1}^2 \sum_{j=1}^N \mathbf{\hat{X}}^{j \gamma} \phi_j e_\gamma$
is the unknown parametrization of the polygonal curve
and $\hat{X}^{m}_h = \sum_{\gamma =1}^2 \sum_{j=1}^N \mathbf{\hat{X}}^{j \gamma}_{old} \phi_j e_\gamma$
is the solution from the previous time step. The mass matrix 
$M := (M_{ij \beta \gamma}) \in \mathbb{R}^{(2N) \times (2N)}$ and the stiffness matrix 
$S := (S_{ij \beta \gamma}) \in \mathbb{R}^{(2N) \times (2N)}$ are assembled by summing up
all simplex matrices $(M_{ij \beta \gamma}(T))$
and $(S_{ij \beta \gamma}(T))$  given by
\begin{align*}
& M_{ij \beta \gamma}(T) = 
	(\alpha \delta_{\beta \gamma} |\rho_h^m(T)|^2 
		+ (1 - \alpha) \rho_{h\beta}^m(T) \rho^m_{h\gamma}(T)) 
	\int_{T} \phi_i(\theta) \phi_j(\theta) d\theta,
	\\
& S_{ij \beta \gamma}(T) 
	= \delta_{\beta \gamma} \int_{T} \phi_{i\theta}(\theta) \phi_{j\theta}(\theta) d\theta.
\end{align*}
Here, $\phi_{i}$ and $\phi_j$ denote the local basis functions of the simplex $T$
and $\rho_h^m(T)$ is the constant vector field $\hat{X}_{h\theta|T}^m$ rotated by $90$ degrees.
The linear system (\ref{algebraic_equation_CSF}) can be solved by the conjugate gradient method.
The initial polygonal curve of the simulation
is constructed by mapping the vertices of a triangulation of the unit sphere
onto an initial smooth curve via a problem dependent map. 
We compare the performance of Algorithm \ref{algo_CSF}
to the benchmark scheme (2.16a) of \cite{BGN11}. This fully-implicit scheme is solved by the fixed point iteration defined in (3.3a) of \cite{BGN11}
with the suggested stopping criteria $\| \hat{X}^{m+1,i+1}_h - \hat{X}^{m+1,i}_h \|_\infty < 10^{-8}$.
In order to solve the linear system (3.3a) in \cite{BGN11}, we apply the conjugate gradient method.

\subsection*{Example 1:}
The first example is presented in Figure \ref{Figure_curve_CSF_example_1}. 
The initial curve is given by the parametrization
\begin{equation*}
	X_0(\theta) := \left(
	\begin{array}{c}
	 \cos \theta \\
	 (0.9 \cos^2 \theta + 0.1)\sin \theta
	\end{array}
	\right), \quad \theta \in [0, 2\pi).
\end{equation*}
Under the curve shortening flow, this curve shrinks to a round circle, which is clearly visible
in Figure \ref{CSF_round_curve_example_1}. 
We here want to demonstrate that our scheme is not only able to maintain the mesh quality but also to improve it significantly.
We therefore start with non-equidistributed meshes. Whether it is appropriate for the BGN-scheme (2.16a) of \cite{BGN11} to start
with a non-equidistributed mesh is discussed in Example 3.
Figure \ref{Figure_length_decrease_CSF_example_1}
shows the decrease of the length of the evolving curve. 
The BGN-scheme (2.16a) of \cite{BGN11} seems to lead to a slightly stronger 
drop of the curve length 
in the first time step. This is probably due to the fact that 
the BGN-scheme changes the mesh in the first time step
such that the segments of the polygonal curve are equally long. 
The ratio of the maximal to the minimal segment length
is therefore equal to $1$ after the first time step.
However, also Algorithm \ref{algo_CSF} leads to length ratios that are close to $1$,
provided that the parameter $\alpha$ is chosen sufficiently small, see Figure \ref{Figure_length_ratio_CSF_example_1}. 
The main difference between the BGN-scheme and the $\alpha$-scheme is
that the redistributions of the mesh vertices 
do not occur instantaneously under Algorithm \ref{algo_CSF}.
Moreover, the mesh ratio does not exactly stick to $1$, which gives Algorithm \ref{algo_CSF}
a bit more flexibility. In Example 2, it will turn out that this can be 
advantageous in certain circumstances. Since the BGN-scheme is a fully-implicit scheme,
we have to solve a non-linear system of algebraic equations in each time step.
We solve this system by the fixed point iteration proposed in (3.3a) of \cite{BGN11}. Apart from the first time step
and the time step at the end of the simulation, 
where the round circle in Figure \ref{CSF_round_curve_example_1}
actually drops to a point, the fixed point iteration converges rather fast, see Figure \ref{number_of_iteration_steps_BGN_example_1}.
The initial redistribution of the vertices in both schemes is associated with a large initial
(tangential) velocity. Since for Algorithm \ref{algo_CSF}
the redistribution of the mesh vertices occurs on a
time scale determined by the parameter $\alpha$,
smaller values of $\alpha$ lead to a larger maximal initial velocity, 
see Figure \ref{Figure_maximal_velocity_CSF_example_1}. 
However, in contrast to the BGN-scheme, the maximal initial velocity of Algorithm
\ref{algo_CSF} is bounded for different choices of the time step size $\tau$,
see Figure \ref{Figure_maximal_velocity_BGN_scheme}. The fact that the
maximal initial velocity in the BGN-scheme depends linearly 
on the inverse time step size $\tau^{-1}$ is associated with the relatively large jumps of the mesh vertices 
in the first time step, see Figure \ref{Figure_motion_of_the_curve_vertices_CSF}.
Since these jumps must lead to an equidistributed mesh, their size cannot become small even for small
time step sizes $\tau$. This issue will be further discussed in Example 3.
Interestingly, the $\alpha$-scheme seems to interpolate the initial jump of the vertices 
in the BGN-scheme, see Figure \ref{Figure_motion_of_the_curve_vertices_CSF}.

\begin{figure}
\begin{center}
\subfloat[][Time $t=0.0$. The curve length is $5.44$.]
{\includegraphics[width=0.4\textwidth]{./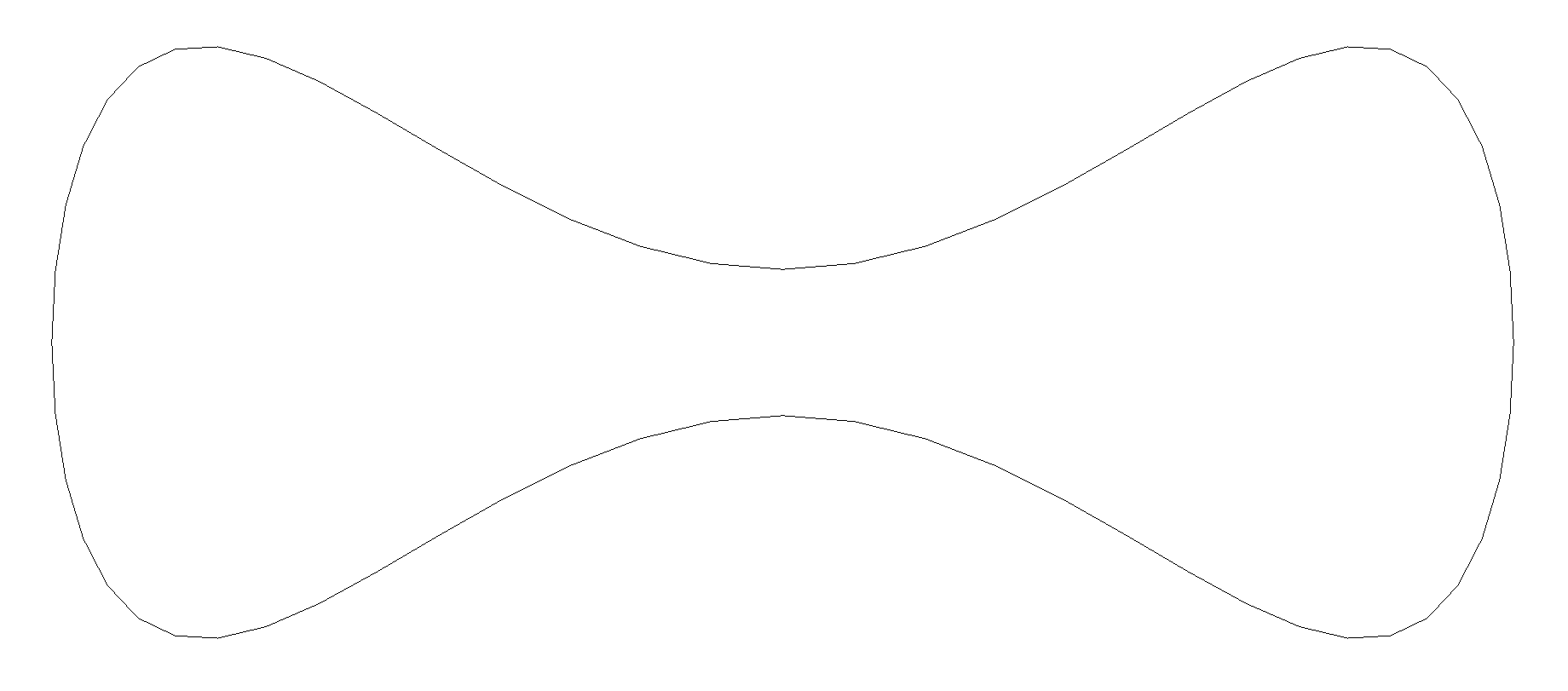}
\label{CSF_initial_curve_example_1}}
\subfloat[][Time $t=0.05$. The curve length is $4.05$.]
{\includegraphics[width=0.4\textwidth]{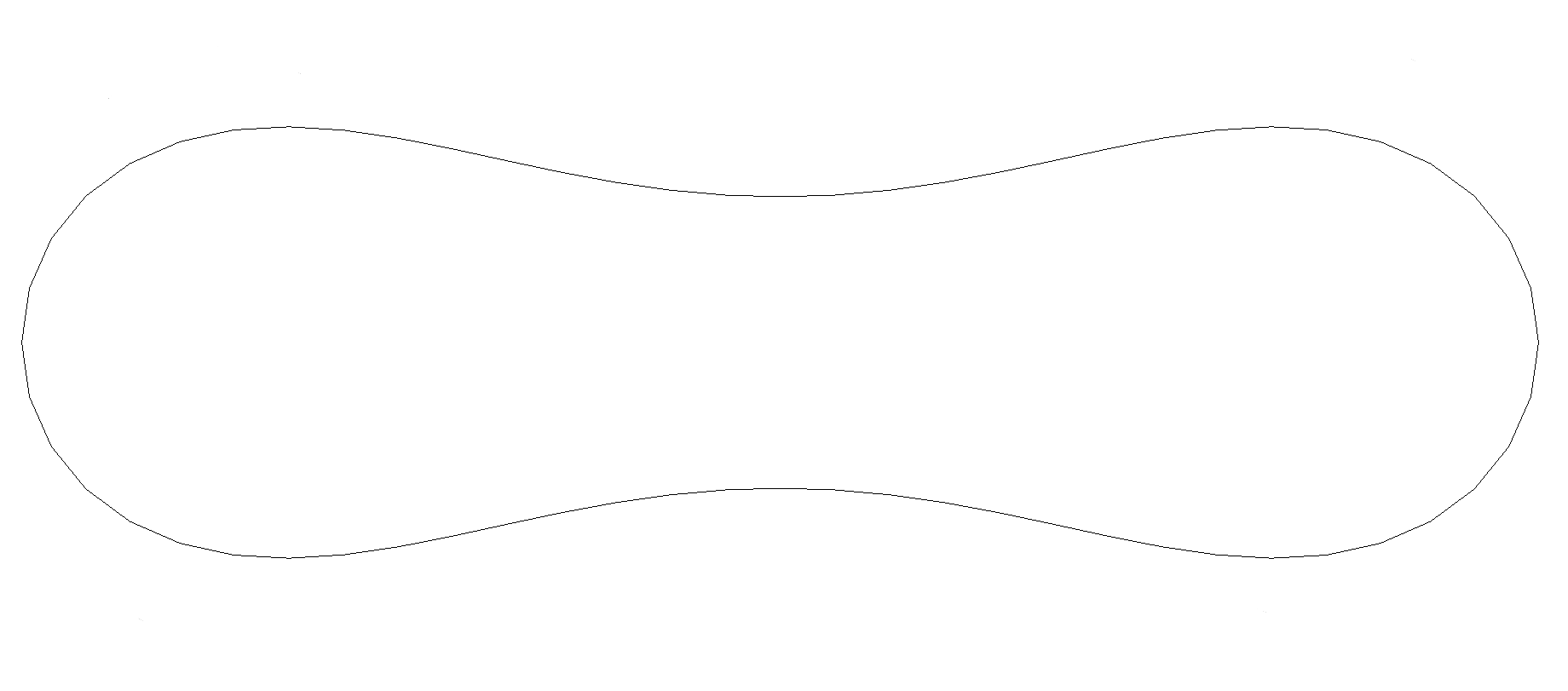}} \\
\subfloat[][Time $t=0.1$. The curve length is $2.64$.]
{\includegraphics[width=0.4\textwidth]{./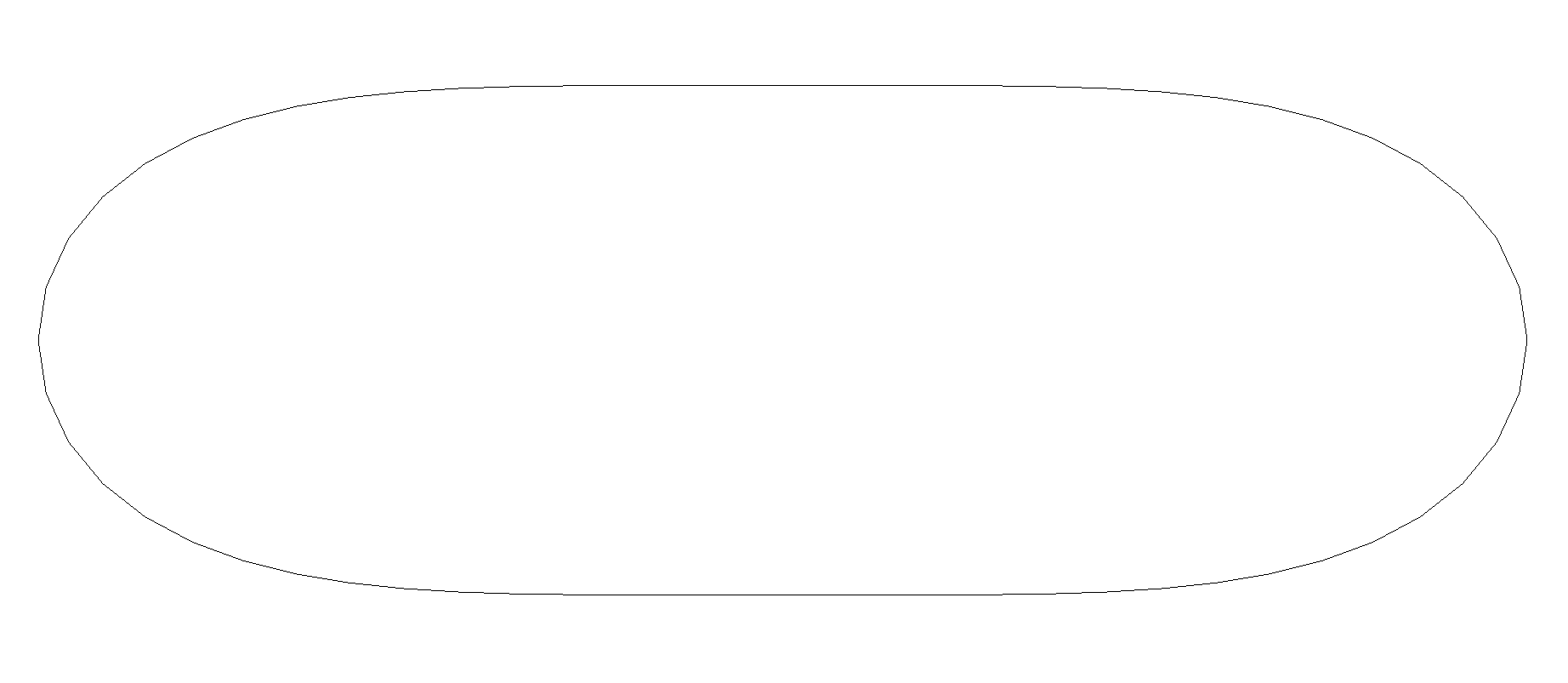}} 
\subfloat[][Time $t=0.15$. The curve length is $0.98$.]{
\includegraphics[width=0.4\textwidth]{./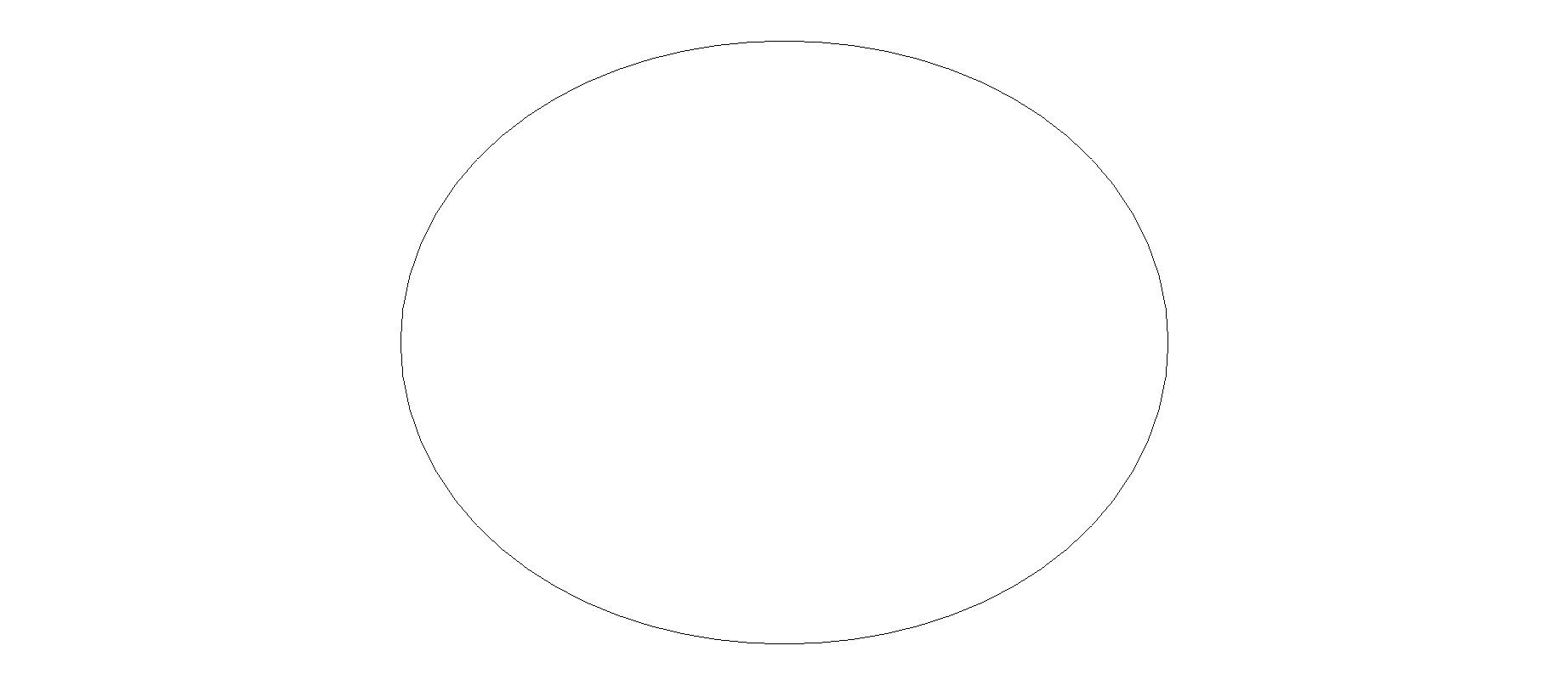}
\label{CSF_round_curve_example_1}
}
\caption{Simulation of the curve shortening flow with Algorithm \ref{algo_CSF} for $\alpha = 10^{-3}$
and time step size $\tau = 10^{-4}$. The computational mesh had $64$ vertices. 
The simulation clearly shows that the curve shrinks to a round 
circle whose length converges to zero. The images are rescaled.
See Example 1 of Section \ref{Numerical_results_CSF} for further details.}
\label{Figure_curve_CSF_example_1}
\end{center}
\end{figure}

\begin{figure}
\centering
 \begin{picture}(7936.00,4534.00)%
    \gplgaddtomacro\gplbacktext{%
      \csname LTb\endcsname%
      \put(660,110){\makebox(0,0)[r]{\strut{} 0}}%
      \csname LTb\endcsname%
      \put(660,903){\makebox(0,0)[r]{\strut{} 1}}%
      \csname LTb\endcsname%
      \put(660,1696){\makebox(0,0)[r]{\strut{} 2}}%
      \csname LTb\endcsname%
      \put(660,2489){\makebox(0,0)[r]{\strut{} 3}}%
      \csname LTb\endcsname%
      \put(660,3282){\makebox(0,0)[r]{\strut{} 4}}%
      \csname LTb\endcsname%
      \put(660,4075){\makebox(0,0)[r]{\strut{} 5}}%
      \csname LTb\endcsname%
      \put(792,-110){\makebox(0,0){\strut{}0.0}}%
      \csname LTb\endcsname%
      \put(1722,-110){\makebox(0,0){\strut{}0.05}}%
      \csname LTb\endcsname%
      \put(2652,-110){\makebox(0,0){\strut{}0.10}}%
      \csname LTb\endcsname%
      \put(3582,-110){\makebox(0,0){\strut{}0.15}}%
      \put(154,2310){\rotatebox{-270}{\makebox(0,0){\strut{}Length}}}%
      \put(2373,-440){\makebox(0,0){\strut{}Time}}%
    }%
    \gplgaddtomacro\gplfronttext{%
      \csname LTb\endcsname%
      \put(1848,1383){\makebox(0,0)[r]{\strut{}BGN}}%
      \csname LTb\endcsname%
      \put(1848,1163){\makebox(0,0)[r]{\strut{}$\alpha =1.0$}}%
      \csname LTb\endcsname%
      \put(1848,943){\makebox(0,0)[r]{\strut{}$\alpha =10^{-1}$}}%
      \csname LTb\endcsname%
      \put(1848,723){\makebox(0,0)[r]{\strut{}$\alpha =10^{-2}$}}%
      \csname LTb\endcsname%
      \put(1848,503){\makebox(0,0)[r]{\strut{}$\alpha =10^{-3}$}}%
      \csname LTb\endcsname%
      \put(1848,283){\makebox(0,0)[r]{\strut{}$\alpha =10^{-4}$}}%
    }%
    \gplgaddtomacro\gplbacktext{%
      \csname LTb\endcsname%
      \put(4628,110){\makebox(0,0)[r]{\strut{} 5.15}}%
      \csname LTb\endcsname%
      \put(4628,843){\makebox(0,0)[r]{\strut{} 5.2}}%
      \csname LTb\endcsname%
      \put(4628,1577){\makebox(0,0)[r]{\strut{} 5.25}}%
      \csname LTb\endcsname%
      \put(4628,2310){\makebox(0,0)[r]{\strut{} 5.3}}%
      \csname LTb\endcsname%
      \put(4628,3044){\makebox(0,0)[r]{\strut{} 5.35}}%
      \csname LTb\endcsname%
      \put(4628,3777){\makebox(0,0)[r]{\strut{} 5.4}}%
      \csname LTb\endcsname%
      \put(4628,4511){\makebox(0,0)[r]{\strut{} 5.45}}%
      \csname LTb\endcsname%
      \put(4760,-110){\makebox(0,0){\strut{}0.0}}%
      \csname LTb\endcsname%
      \put(5814,-110){\makebox(0,0){\strut{}0.002}}%
      \csname LTb\endcsname%
      \put(6867,-110){\makebox(0,0){\strut{}0.004}}%
      \csname LTb\endcsname%
      \put(7921,-110){\makebox(0,0){\strut{}0.006}}%
      \put(6340,-440){\makebox(0,0){\strut{}Time}}%
    }%
    \gplgaddtomacro\gplfronttext{%
      \csname LTb\endcsname%
      \put(5816,1383){\makebox(0,0)[r]{\strut{}BGN}}%
      \csname LTb\endcsname%
      \put(5816,1163){\makebox(0,0)[r]{\strut{}$\alpha =1.0$}}%
      \csname LTb\endcsname%
      \put(5816,943){\makebox(0,0)[r]{\strut{}$\alpha =10^{-1}$}}%
      \csname LTb\endcsname%
      \put(5816,723){\makebox(0,0)[r]{\strut{}$\alpha =10^{-2}$}}%
      \csname LTb\endcsname%
      \put(5816,503){\makebox(0,0)[r]{\strut{}$\alpha =10^{-3}$}}%
      \csname LTb\endcsname%
      \put(5816,283){\makebox(0,0)[r]{\strut{}$\alpha =10^{-4}$}}%
    }%
    \gplbacktext
    \put(0,0){\includegraphics{./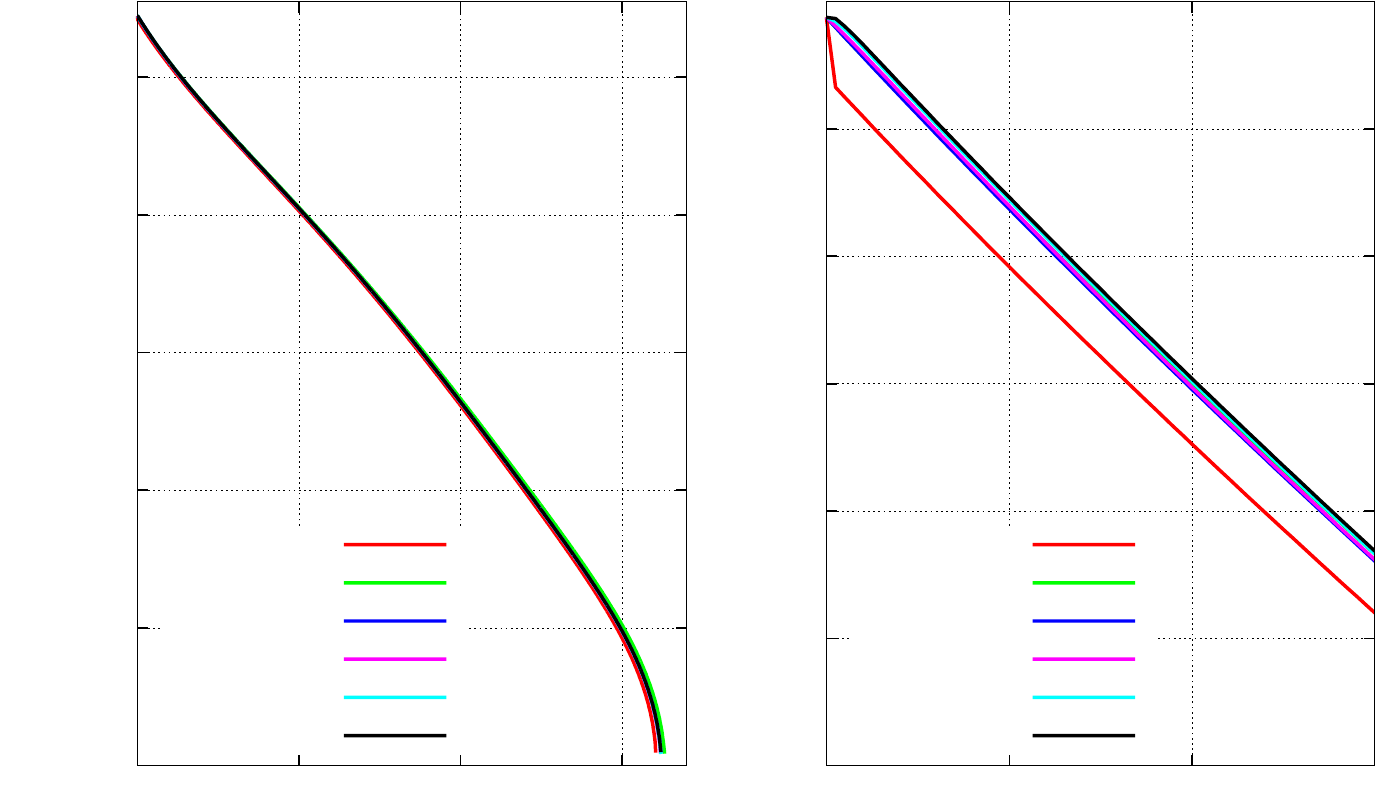}}%
    \gplfronttext
  \end{picture}%
  \vspace*{12mm}
  \caption{The images show the decrease of the curve length under the curve shortening flow
  for the BGN-scheme (2.16a) in \cite{BGN11} and for Algorithm \ref{algo_CSF} 
  for different choices of $\alpha$. 
  The initial curve is shown in Figure \ref{CSF_initial_curve_example_1}.  
  The time step size was chosen as 
  $\tau = 10^{-4}$. The right image shows an enlarged section for small times $t$.
  Due to the equidistribution property of the BGN-scheme, this scheme leads to 
  a slightly stronger drop of 
  the curve length in the first time step. See Example 1 of Section \ref{Numerical_results_CSF}
  for further details.}
  \label{Figure_length_decrease_CSF_example_1}
\end{figure}
\gdef\gplbacktext{}%
\gdef\gplfronttext{}%
\begin{figure}
\centering
 \setlength{\unitlength}{0.0500bp}%
  \begin{picture}(7936.00,4534.00)%
    \gplgaddtomacro\gplbacktext{%
      \csname LTb\endcsname%
      \put(660,110){\makebox(0,0)[r]{\strut{} 1}}%
      \csname LTb\endcsname%
      \put(660,990){\makebox(0,0)[r]{\strut{} 2}}%
      \csname LTb\endcsname%
      \put(660,1870){\makebox(0,0)[r]{\strut{} 3}}%
      \csname LTb\endcsname%
      \put(660,2751){\makebox(0,0)[r]{\strut{} 4}}%
      \csname LTb\endcsname%
      \put(660,3631){\makebox(0,0)[r]{\strut{} 5}}%
      \csname LTb\endcsname%
      \put(660,4511){\makebox(0,0)[r]{\strut{} 6}}%
      \csname LTb\endcsname%
      \put(792,-110){\makebox(0,0){\strut{}0.0}}%
      \csname LTb\endcsname%
      \put(1722,-110){\makebox(0,0){\strut{}0.05}}%
      \csname LTb\endcsname%
      \put(2652,-110){\makebox(0,0){\strut{}0.10}}%
      \csname LTb\endcsname%
      \put(3582,-110){\makebox(0,0){\strut{}0.15}}%
      \put(154,2310){\rotatebox{-270}{\makebox(0,0){\strut{}Ratio of maximal to minimal segment length}}}%
      \put(2373,-440){\makebox(0,0){\strut{}Time}}%
    }%
    \gplgaddtomacro\gplfronttext{%
      \csname LTb\endcsname%
      \put(1848,4338){\makebox(0,0)[r]{\strut{}BGN}}%
      \csname LTb\endcsname%
      \put(1848,4118){\makebox(0,0)[r]{\strut{}$\alpha =1.0$}}%
      \csname LTb\endcsname%
      \put(1848,3898){\makebox(0,0)[r]{\strut{}$\alpha =10^{-1}$}}%
      \csname LTb\endcsname%
      \put(1848,3678){\makebox(0,0)[r]{\strut{}$\alpha =10^{-2}$}}%
      \csname LTb\endcsname%
      \put(1848,3458){\makebox(0,0)[r]{\strut{}$\alpha =10^{-3}$}}%
      \csname LTb\endcsname%
      \put(1848,3238){\makebox(0,0)[r]{\strut{}$\alpha =10^{-4}$}}%
    }%
    \gplgaddtomacro\gplbacktext{%
      \csname LTb\endcsname%
      \put(4628,110){\makebox(0,0)[r]{\strut{} 1}}%
      \csname LTb\endcsname%
      \put(4628,660){\makebox(0,0)[r]{\strut{} 1.05}}%
      \csname LTb\endcsname%
      \put(4628,1210){\makebox(0,0)[r]{\strut{} 1.1}}%
      \csname LTb\endcsname%
      \put(4628,1760){\makebox(0,0)[r]{\strut{} 1.15}}%
      \csname LTb\endcsname%
      \put(4628,2311){\makebox(0,0)[r]{\strut{} 1.2}}%
      \csname LTb\endcsname%
      \put(4628,2861){\makebox(0,0)[r]{\strut{} 1.25}}%
      \csname LTb\endcsname%
      \put(4628,3411){\makebox(0,0)[r]{\strut{} 1.3}}%
      \csname LTb\endcsname%
      \put(4628,3961){\makebox(0,0)[r]{\strut{} 1.35}}%
      \csname LTb\endcsname%
      \put(4628,4511){\makebox(0,0)[r]{\strut{} 1.4}}%
      \csname LTb\endcsname%
      \put(4760,-110){\makebox(0,0){\strut{}0.0}}%
      \csname LTb\endcsname%
      \put(5690,-110){\makebox(0,0){\strut{}0.05}}%
      \csname LTb\endcsname%
      \put(6619,-110){\makebox(0,0){\strut{}0.10}}%
      \csname LTb\endcsname%
      \put(7549,-110){\makebox(0,0){\strut{}0.15}}%
      \put(6340,-440){\makebox(0,0){\strut{}Time}}%
    }%
    \gplgaddtomacro\gplfronttext{%
      \csname LTb\endcsname%
      \put(6375,4338){\makebox(0,0)[r]{\strut{}BGN}}%
      \csname LTb\endcsname%
      \put(6375,4118){\makebox(0,0)[r]{\strut{}$\alpha =1.0$}}%
      \csname LTb\endcsname%
      \put(6375,3898){\makebox(0,0)[r]{\strut{}$\alpha =10^{-1}$}}%
      \csname LTb\endcsname%
      \put(6375,3678){\makebox(0,0)[r]{\strut{}$\alpha =10^{-2}$}}%
      \csname LTb\endcsname%
      \put(6375,3458){\makebox(0,0)[r]{\strut{}$\alpha =10^{-3}$}}%
      \csname LTb\endcsname%
      \put(6375,3238){\makebox(0,0)[r]{\strut{}$\alpha =10^{-4}$}}%
    }%
    \gplbacktext
    \put(0,0){\includegraphics{./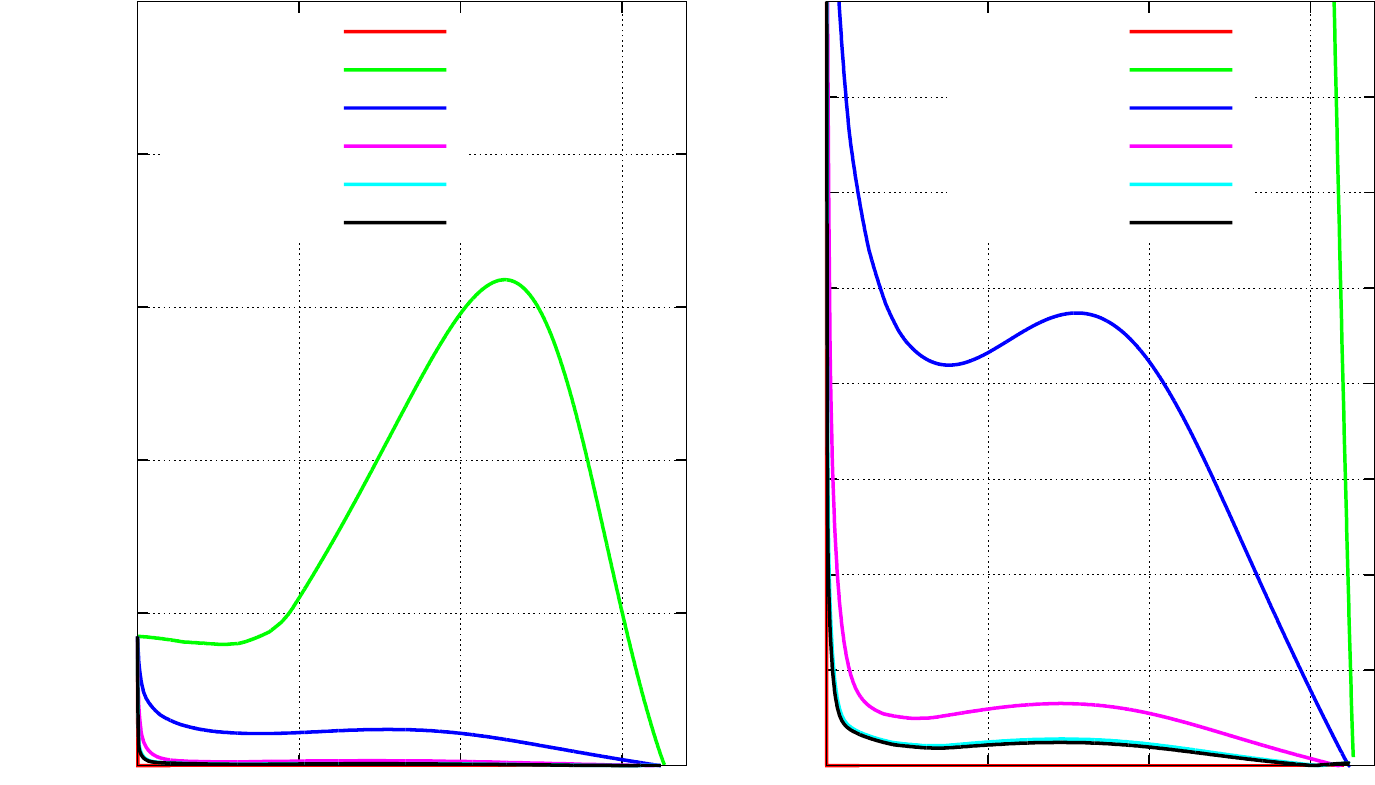}}%
    \gplfronttext
  \end{picture}%
\vspace*{12mm}
\caption{The images show the ratio of the maximal to the minimal segment length 
for the BGN-scheme (2.16a) in \cite{BGN11} and for Algorithm \ref{algo_CSF} for different choices
of $\alpha$. The initial curve is shown in Figure \ref{CSF_initial_curve_example_1}.
The right image shows an enlarged section.
Due to the equidistribution property of the BGN-scheme, the ratio of the
maximal to the minimal segment length is equal to $1$ after the first time step. 
However, the images clearly show that also Algorithm \ref{algo_CSF} has good properties
with respect to the mesh quality provided that $\alpha$ is chosen sufficiently small. 
The mesh properties of Algorithm \ref{algo_CSF} seemed to be (almost) independent of the choice of $\tau$
if $\tau \leq 10^{-4}$ (not shown in the picture).
See Example 1 of Section \ref{Numerical_results_CSF} for further details.}
\label{Figure_length_ratio_CSF_example_1}
\end{figure}

\gdef\gplbacktext{}%
\gdef\gplfronttext{}%
\begin{figure}
\centering
\begin{picture}(5102.00,3400.00)%
    \gplgaddtomacro\gplbacktext{%
      \csname LTb\endcsname%
      \put(396,110){\makebox(0,0)[r]{\strut{} 0}}%
      \csname LTb\endcsname%
      \put(396,763){\makebox(0,0)[r]{\strut{} 10}}%
      \csname LTb\endcsname%
      \put(396,1417){\makebox(0,0)[r]{\strut{} 20}}%
      \csname LTb\endcsname%
      \put(396,2070){\makebox(0,0)[r]{\strut{} 30}}%
      \csname LTb\endcsname%
      \put(396,2724){\makebox(0,0)[r]{\strut{} 40}}%
      \csname LTb\endcsname%
      \put(396,3377){\makebox(0,0)[r]{\strut{} 50}}%
      \csname LTb\endcsname%
      \put(528,-110){\makebox(0,0){\strut{} 0}}%
      \csname LTb\endcsname%
      \put(1064,-110){\makebox(0,0){\strut{} 0.02}}%
      \csname LTb\endcsname%
      \put(1601,-110){\makebox(0,0){\strut{} 0.04}}%
      \csname LTb\endcsname%
      \put(2137,-110){\makebox(0,0){\strut{} 0.06}}%
      \csname LTb\endcsname%
      \put(2673,-110){\makebox(0,0){\strut{} 0.08}}%
      \csname LTb\endcsname%
      \put(3210,-110){\makebox(0,0){\strut{} 0.1}}%
      \csname LTb\endcsname%
      \put(3746,-110){\makebox(0,0){\strut{} 0.12}}%
      \csname LTb\endcsname%
      \put(4282,-110){\makebox(0,0){\strut{} 0.14}}%
      \csname LTb\endcsname%
      \put(4819,-110){\makebox(0,0){\strut{} 0.16}}%
      \put(-242,1743){\rotatebox{-270}{\makebox(0,0){\strut{}Number of iteration steps}}}%
      \put(2807,-440){\makebox(0,0){\strut{}Time}}%
    }%
    \gplgaddtomacro\gplfronttext{%
      \csname LTb\endcsname%
      \put(4100,3204){\makebox(0,0)[r]{\strut{}BGN}}%
    }%
    \gplbacktext
    \put(0,0){\includegraphics{./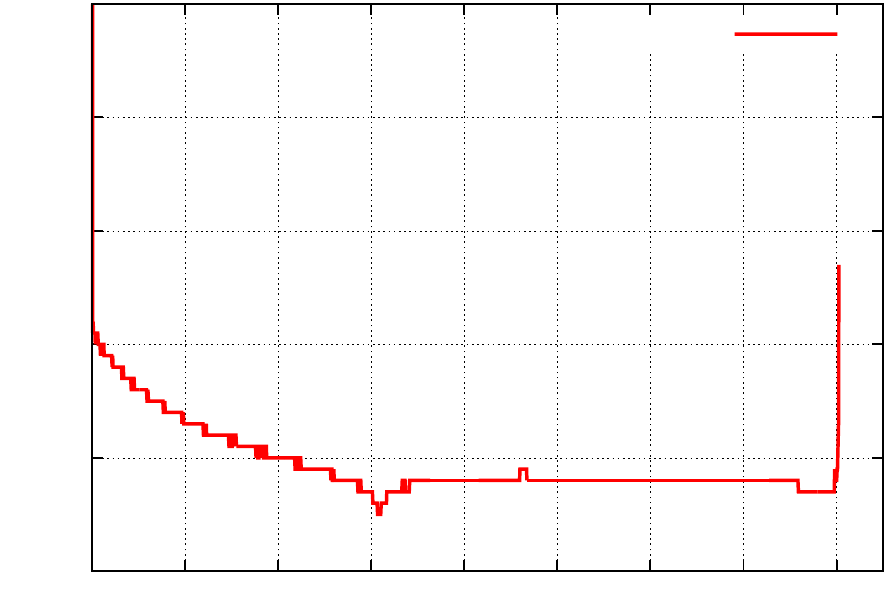}}%
    \gplfronttext
  \end{picture}%
\vspace*{12mm}
\caption{The image shows the number of iteration steps of the fixed point iteration 
that are necessary to solve the non-linear system of equations which arises in 
the fully-implicit BGN-scheme (2.16) in \cite{BGN11}.
The fixed point iteration was stopped if for all vertices of the discrete curve
the distance between the position vectors does not change more than $10^{-8}$
in one iteration step. The image shows that apart from the first time step
and the time step at the end of the simulation the fixed point iteration converges rather fast.
See Example 1 of Section \ref{Numerical_results_CSF} for further details.}
\label{number_of_iteration_steps_BGN_example_1}
\end{figure}

\gdef\gplbacktext{}%
\gdef\gplfronttext{}%
\begin{figure}
\centering
\begin{picture}(7936.00,4534.00)%
    \gplgaddtomacro\gplbacktext{%
      \csname LTb\endcsname%
      \put(660,110){\makebox(0,0)[r]{\strut{} 0}}%
      \csname LTb\endcsname%
      \put(660,990){\makebox(0,0)[r]{\strut{} 10}}%
      \csname LTb\endcsname%
      \put(660,1870){\makebox(0,0)[r]{\strut{} 20}}%
      \csname LTb\endcsname%
      \put(660,2751){\makebox(0,0)[r]{\strut{} 30}}%
      \csname LTb\endcsname%
      \put(660,3631){\makebox(0,0)[r]{\strut{} 40}}%
      \csname LTb\endcsname%
      \put(660,4511){\makebox(0,0)[r]{\strut{} 50}}%
      \csname LTb\endcsname%
      \put(792,-110){\makebox(0,0){\strut{}0.0}}%
      \csname LTb\endcsname%
      \put(1722,-110){\makebox(0,0){\strut{}0.05}}%
      \csname LTb\endcsname%
      \put(2652,-110){\makebox(0,0){\strut{}0.10}}%
      \csname LTb\endcsname%
      \put(3582,-110){\makebox(0,0){\strut{}0.15}}%
      \put(22,2310){\rotatebox{-270}{\makebox(0,0){\strut{}Maximal velocity}}}%
      \put(2373,-440){\makebox(0,0){\strut{}Time}}%
    }%
    \gplgaddtomacro\gplfronttext{%
      \csname LTb\endcsname%
      \put(1848,4338){\makebox(0,0)[r]{\strut{}BGN}}%
      \csname LTb\endcsname%
      \put(1848,4118){\makebox(0,0)[r]{\strut{}$\alpha =1.0$}}%
      \csname LTb\endcsname%
      \put(1848,3898){\makebox(0,0)[r]{\strut{}$\alpha =10^{-1}$}}%
      \csname LTb\endcsname%
      \put(1848,3678){\makebox(0,0)[r]{\strut{}$\alpha =10^{-2}$}}%
      \csname LTb\endcsname%
      \put(1848,3458){\makebox(0,0)[r]{\strut{}$\alpha =10^{-3}$}}%
      \csname LTb\endcsname%
      \put(1848,3238){\makebox(0,0)[r]{\strut{}$\alpha =10^{-4}$}}%
    }%
    \gplgaddtomacro\gplbacktext{%
      \csname LTb\endcsname%
      \put(4628,110){\makebox(0,0)[r]{\strut{} 0}}%
      \csname LTb\endcsname%
      \put(4628,910){\makebox(0,0)[r]{\strut{} 200}}%
      \csname LTb\endcsname%
      \put(4628,1710){\makebox(0,0)[r]{\strut{} 400}}%
      \csname LTb\endcsname%
      \put(4628,2511){\makebox(0,0)[r]{\strut{} 600}}%
      \csname LTb\endcsname%
      \put(4628,3311){\makebox(0,0)[r]{\strut{} 800}}%
      \csname LTb\endcsname%
      \put(4628,4111){\makebox(0,0)[r]{\strut{} 1000}}%
      \csname LTb\endcsname%
      \put(5023,-110){\makebox(0,0){\strut{}0.0}}%
      \csname LTb\endcsname%
      \put(5682,-110){\makebox(0,0){\strut{}0.25}}%
      \csname LTb\endcsname%
      \put(6341,-110){\makebox(0,0){\strut{}0.50}}%
      \csname LTb\endcsname%
      \put(6999,-110){\makebox(0,0){\strut{}0.75}}%
      \csname LTb\endcsname%
      \put(7658,-110){\makebox(0,0){\strut{}1.00}}%
      \put(6340,-440){\makebox(0,0){\strut{}$\alpha$}}%
    }%
    \gplgaddtomacro\gplfronttext{%
      \csname LTb\endcsname%
      \put(6934,4338){\makebox(0,0)[r]{\strut{}BGN}}%
      \csname LTb\endcsname%
      \put(6934,4118){\makebox(0,0)[r]{\strut{}$\alpha =1.0$}}%
      \csname LTb\endcsname%
      \put(6934,3898){\makebox(0,0)[r]{\strut{}$\alpha =10^{-1}$}}%
      \csname LTb\endcsname%
      \put(6934,3678){\makebox(0,0)[r]{\strut{}$\alpha =10^{-2}$}}%
      \csname LTb\endcsname%
      \put(6934,3458){\makebox(0,0)[r]{\strut{}$\alpha =10^{-3}$}}%
      \csname LTb\endcsname%
      \put(6934,3238){\makebox(0,0)[r]{\strut{}$\alpha =10^{-4}$}}%
      \csname LTb\endcsname%
      \put(6934,3018){\makebox(0,0)[r]{\strut{}$\alpha =10^{-5}$}}%
      \csname LTb\endcsname%
      \put(6934,2798){\makebox(0,0)[r]{\strut{}$\alpha =0.0$}}%
    }%
    \gplbacktext
    \put(0,0){\includegraphics{./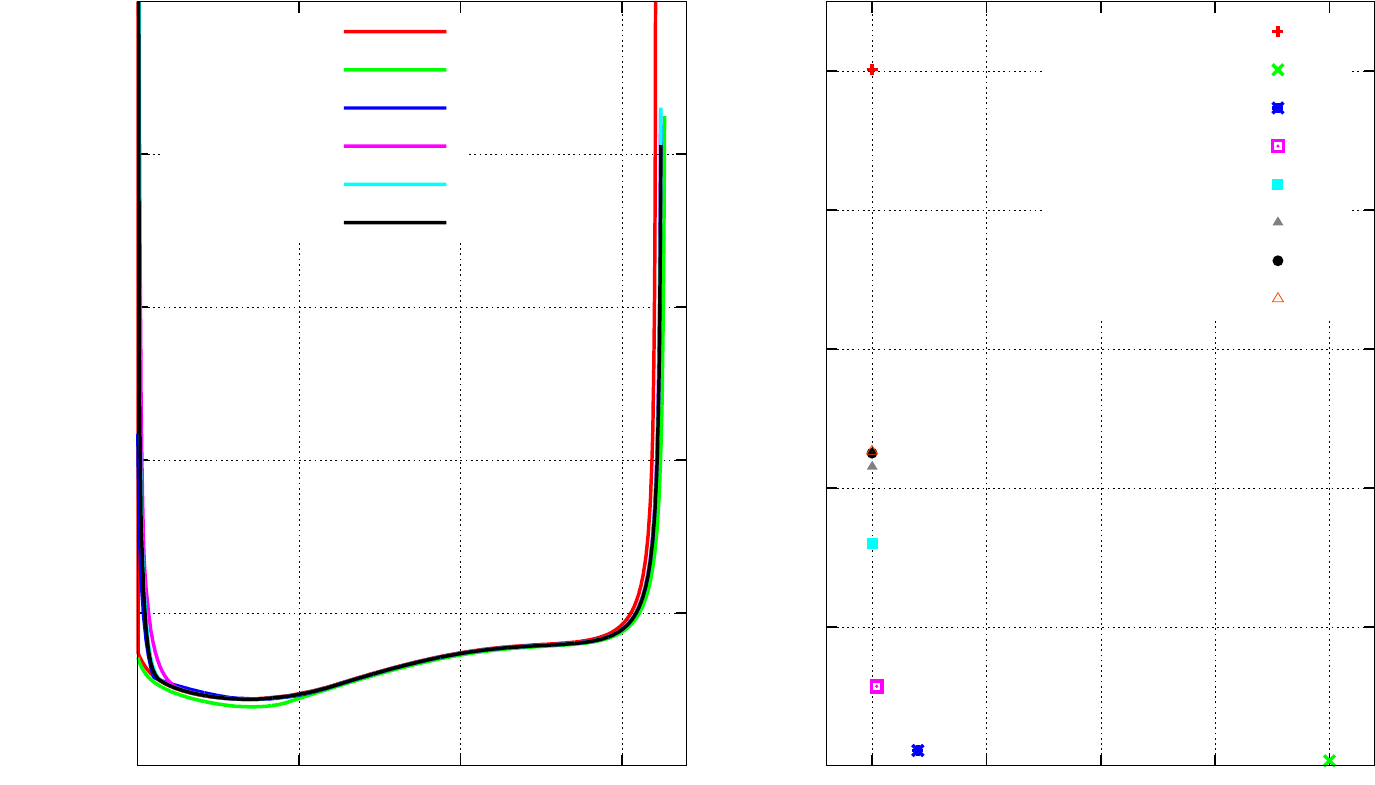}}%
    \gplfronttext
  \end{picture}%
\vspace*{12mm}
\caption{Comparison of the maximal velocity of the vertices for the BGN-scheme 
  (2.16a) in \cite{BGN11} and for Algorithm \ref{algo_CSF} for different choices of $\alpha$. 
  The initial curve of the simulation is shown in Figure \ref{CSF_initial_curve_example_1}.
  The time step size was $\tau = 10^{-4}$.  
  The left image shows the maximal velocity of all vertices as a function of time $t$.
  The right image shows how the maximal velocity of the vertices at time $t=0$ depends on
  the parameter $\alpha$. The maximal initial velocity of the BGN-scheme is clearly
  higher than the maximal initial velocity of the $\alpha$-schemes. 
  In fact, Figure \ref{Figure_maximal_velocity_BGN_scheme} shows
  that the maximal initial velocity of the BGN-scheme 
  as a function of the inverse time step size $\tau^{-1}$ is unbounded.
  See Example 1 of Section \ref{Numerical_results_CSF} for further details.}
  \label{Figure_maximal_velocity_CSF_example_1}
\end{figure}

\gdef\gplbacktext{}%
\gdef\gplfronttext{}%
\begin{figure}
\centering
\begin{picture}(5102.00,3400.00)%
    \gplgaddtomacro\gplbacktext{%
      \csname LTb\endcsname%
      \put(396,110){\makebox(0,0)[r]{\strut{} 10}}%
      \csname LTb\endcsname%
      \put(396,927){\makebox(0,0)[r]{\strut{} 100}}%
      \csname LTb\endcsname%
      \put(396,1744){\makebox(0,0)[r]{\strut{} 1000}}%
      \csname LTb\endcsname%
      \put(396,2560){\makebox(0,0)[r]{\strut{} 10000}}%
      \csname LTb\endcsname%
      \put(396,3377){\makebox(0,0)[r]{\strut{} 100000}}%
      \csname LTb\endcsname%
      \put(528,-110){\makebox(0,0){\strut{}100}}%
      \csname LTb\endcsname%
      \put(1668,-110){\makebox(0,0){\strut{}1000}}%
      \csname LTb\endcsname%
      \put(2807,-110){\makebox(0,0){\strut{}10000}}%
      \csname LTb\endcsname%
      \put(3947,-110){\makebox(0,0){\strut{}100000}}%
      \csname LTb\endcsname%
      \put(5087,-110){\makebox(0,0){\strut{}1000000}}%
      \put(-770,1743){\rotatebox{-270}{\makebox(0,0){\strut{}Maximal velocity}}}%
      \put(2807,-440){\makebox(0,0){\strut{}$\tau^{-1}$}}%
    }%
    \gplgaddtomacro\gplfronttext{%
      \csname LTb\endcsname%
      \put(1584,3204){\makebox(0,0)[r]{\strut{}BGN}}%
      \csname LTb\endcsname%
      \put(1584,2984){\makebox(0,0)[r]{\strut{}$\alpha = 10^{-2}$}}%
    }%
    \gplbacktext
    \put(0,0){\includegraphics{./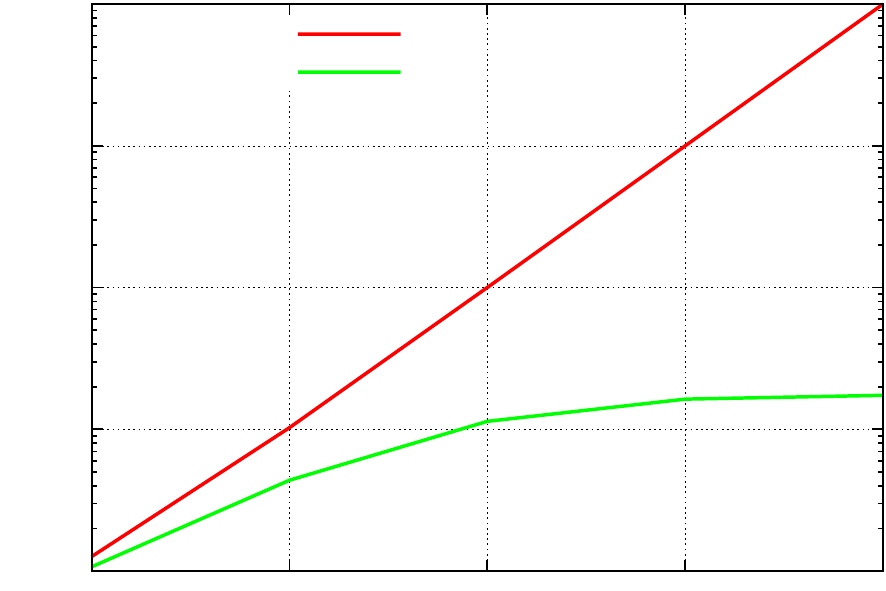}}%
    \gplfronttext
  \end{picture}%
  \vspace*{12mm}
\caption{The image shows the maximal initial velocity of the BGN-scheme (2.16a) in \cite{BGN11} 
	and of Algorithm \ref{algo_CSF} with $\alpha = 10^{-2}$ as a function of the inverse 
	time step size $\tau^{-1}$. 
	The linear growth of the maximal initial velocity of the BGN-scheme is due to the 
	finite jumps of the vertices in the first time step, 
	see Figure \ref{Figure_motion_of_the_curve_vertices_CSF}.
	In contrast, the maximal initial velocity of the $\alpha$-scheme is bounded.
	See Example 1 of Section \ref{Numerical_results_CSF} for further details.
	}
\label{Figure_maximal_velocity_BGN_scheme}
\end{figure}

\gdef\gplbacktext{}%
\gdef\gplfronttext{}%
\begin{figure}
\centering
 \begin{picture}(5102.00,3400.00)%
    \gplgaddtomacro\gplbacktext{%
      \csname LTb\endcsname%
      \put(396,186){\makebox(0,0)[r]{\strut{} 0}}%
      \csname LTb\endcsname%
      \put(396,566){\makebox(0,0)[r]{\strut{} 0.05}}%
      \csname LTb\endcsname%
      \put(396,946){\makebox(0,0)[r]{\strut{} 0.1}}%
      \csname LTb\endcsname%
      \put(396,1326){\makebox(0,0)[r]{\strut{} 0.15}}%
      \csname LTb\endcsname%
      \put(396,1706){\makebox(0,0)[r]{\strut{} 0.2}}%
      \csname LTb\endcsname%
      \put(396,2085){\makebox(0,0)[r]{\strut{} 0.25}}%
      \csname LTb\endcsname%
      \put(396,2465){\makebox(0,0)[r]{\strut{} 0.3}}%
      \csname LTb\endcsname%
      \put(396,2845){\makebox(0,0)[r]{\strut{} 0.35}}%
      \csname LTb\endcsname%
      \put(396,3225){\makebox(0,0)[r]{\strut{} 0.4}}%
      \csname LTb\endcsname%
      \put(571,-110){\makebox(0,0){\strut{} 0}}%
      \csname LTb\endcsname%
      \put(1431,-110){\makebox(0,0){\strut{} 0.2}}%
      \csname LTb\endcsname%
      \put(2291,-110){\makebox(0,0){\strut{} 0.4}}%
      \csname LTb\endcsname%
      \put(3152,-110){\makebox(0,0){\strut{} 0.6}}%
      \csname LTb\endcsname%
      \put(4012,-110){\makebox(0,0){\strut{} 0.8}}%
      \csname LTb\endcsname%
      \put(4872,-110){\makebox(0,0){\strut{} 1}}%
      \put(-506,1743){\rotatebox{-270}{\makebox(0,0){\strut{}$x_2$}}}%
      \put(2807,-440){\makebox(0,0){\strut{}$x_1$}}%
    }%
    \gplgaddtomacro\gplfronttext{%
      \csname LTb\endcsname%
      \put(1716,3204){\makebox(0,0)[r]{\strut{}BGN}}%
      \csname LTb\endcsname%
      \put(1716,2984){\makebox(0,0)[r]{\strut{}$\alpha = 10^{-2}$}}%
    }%
    \gplbacktext
    \put(0,0){\includegraphics{./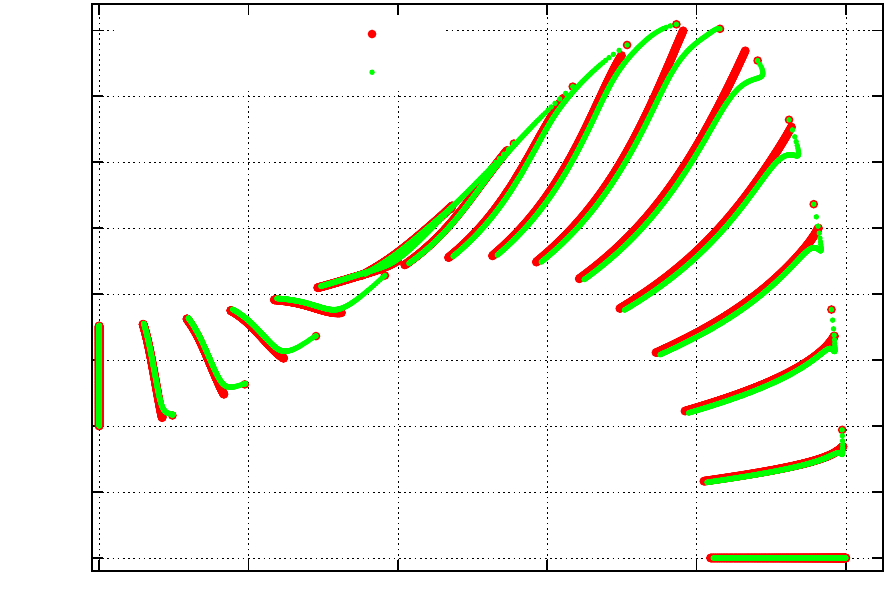}}%
    \gplfronttext
  \end{picture}%
\vspace*{12mm}
\caption{The image shows the motion of the curve vertices for the BGN-scheme (2.16a) in \cite{BGN11} 
	and for Algorithm \ref{algo_CSF} with $\alpha = 10^{-2}$. 
	Only the first quadrant is shown for the first time steps of the simulation.
	The whole initial curve is shown in Figure \ref{CSF_initial_curve_example_1}.
    The time step size was chosen as $\tau = 10^{-4}$. 
    One can clearly see that the BGN-scheme leads to large jumps of the vertices 
    in the first time step.
    The $\alpha$-scheme seems to interpolate between these jumps.
    See Example 1 of Section \ref{Numerical_results_CSF} for further details.}
\label{Figure_motion_of_the_curve_vertices_CSF}
\end{figure}

\subsection*{Example 2:}
We now consider the curve shortening flow for the initial curve given by the parametrization
\begin{equation*}
	X_0(\theta):= \left(
		\begin{array}{c}
			\cos(2\theta) \cos \theta
			\\
			\cos(2\theta) \sin \theta 
		\end{array}
	\right),
	\quad \theta \in [0,2\pi),
\end{equation*}
see Figure \ref{CSF_initial_curve_example_2} for a visualization.
The simulation based on Algorithm $1$ with $\alpha = 10^{-3}$ shows that the curve shortening 
flow develops a singularity, see Figure \ref{CSF_singularity}, at time $t \approx 0.0828$. 
Beyond this singularity,
see Figures \ref{CSF_not_yet_a_round_circle_example_2} and \ref{CSF_round_circle_example_2},
the curve shrinks to a round circle. 
Note that this example does not satisfy the regularity assumptions
made in Theorem \ref{approximation_theorem}.
Since the fixed point iteration used
to solve the non-linear system of equations arising in the BGN-scheme stops to converge at
the curve singularity shown in Figure \ref{CSF_singularity}, see Figure \ref{Figure_iteration_steps_CSF_example_2} for the number of iteration steps in
the fixed point iteration, it is not possible to compute the curve shortening flow through this
singularity by the fixed point iteration we used for the BGN-scheme; also note the remark below.
Figure \ref{Figure_length_ratio_CSF_example_2} shows that at this singularity
the ratio between the maximal and minimal segment length of the $\alpha$-scheme increases
very fast before it decreases again. It is this flexibility which seems to be advantageous 
in this example. Apart from the singularities the $\alpha$-scheme shows again good mesh
properties provided that $\alpha$ is sufficiently small. 
The decrease of the curve length is presented in Figure \ref{Figure_decrease_curve_length_CSF_example_2}.    
We here only report that by using a damped fixed point iteration in the BGN-scheme, the
solver converges for mild damping parameters also at the singularity of the curve shortening
flow; see also the remark at the end of Example 3.
\begin{figure}
\begin{center}
\subfloat[][Time $t = 0.0$. The curve length is $9.66$.]
{\includegraphics[width=0.4\textwidth]{./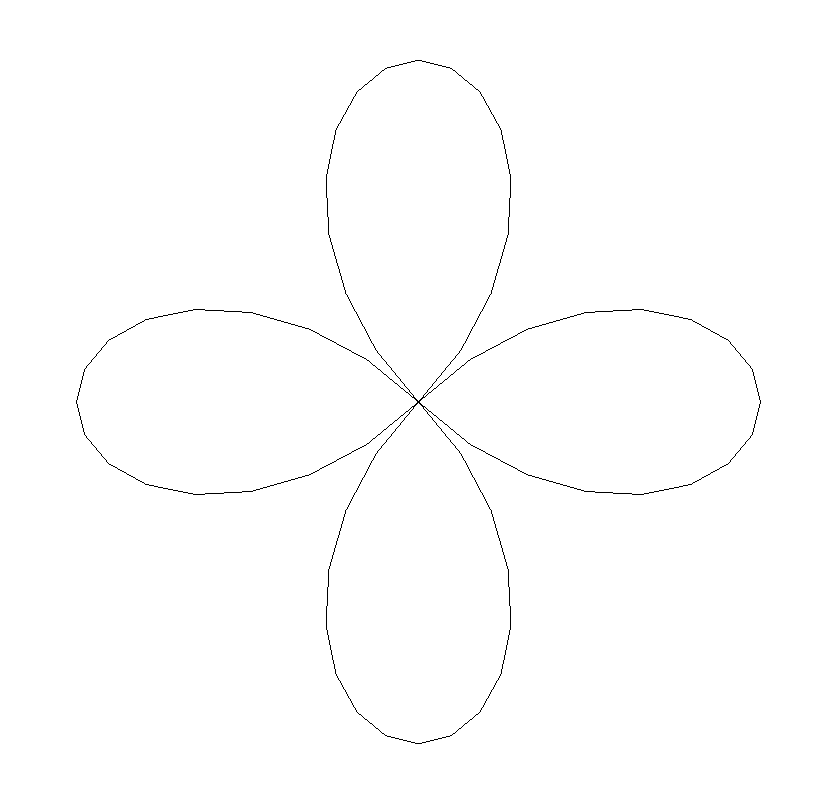}
\label{CSF_initial_curve_example_2}} 
\subfloat[][Time $t = 0.02$. The curve length is $8.56$.]
{\includegraphics[width=0.4\textwidth]{./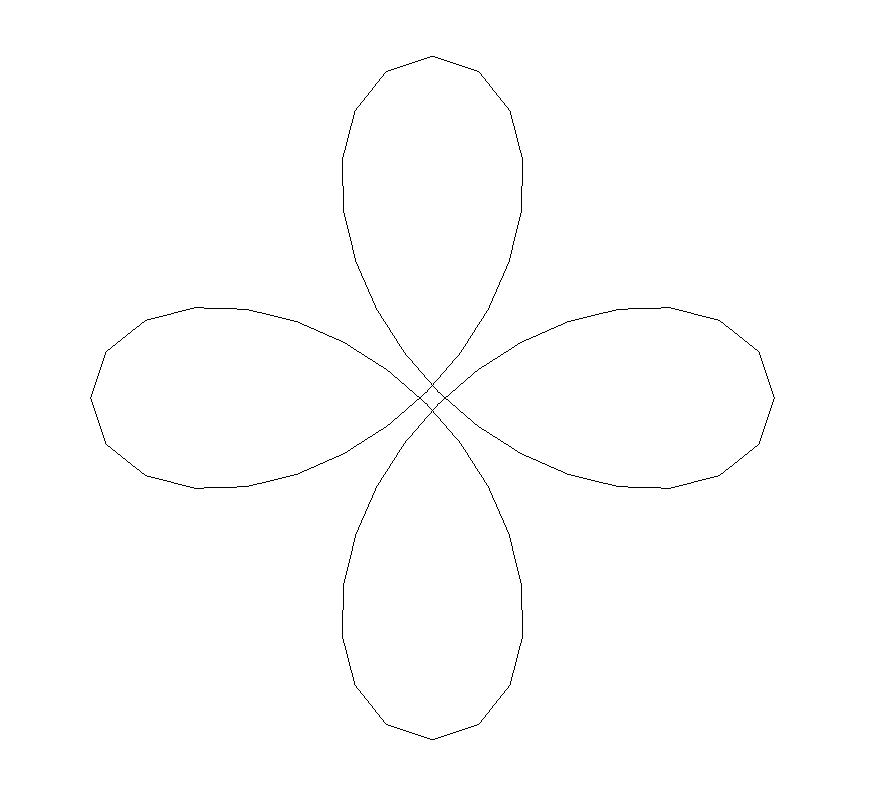}} \\
\subfloat[][Time $t = 0.08$. The curve length is $2.66$.]
{\includegraphics[width=0.4\textwidth]{./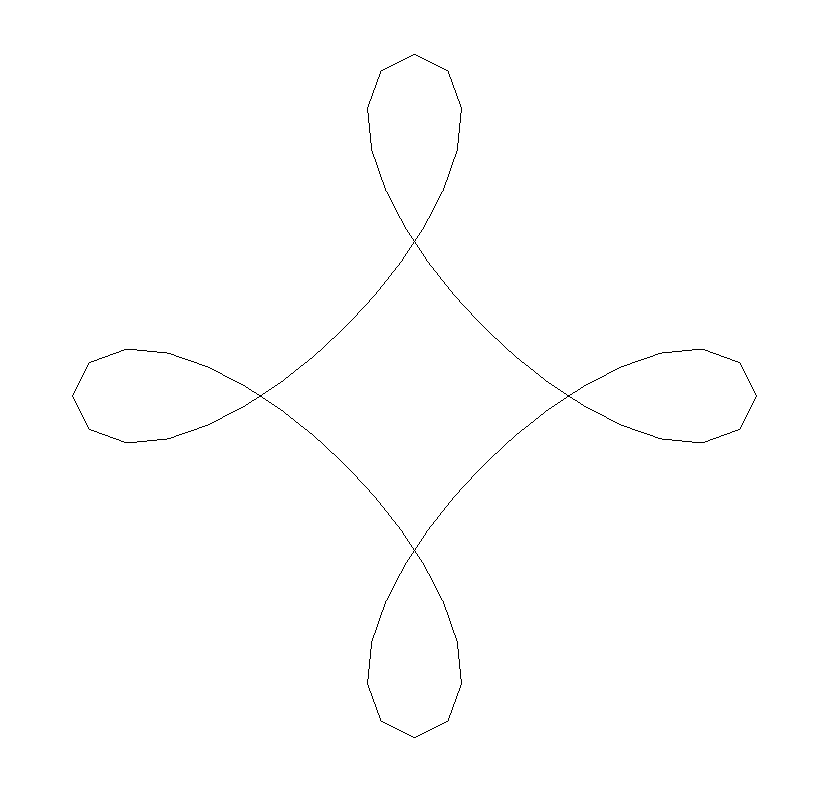}} 
\subfloat[][Time $t = 0.0828$. The curve length is $1.12$.]
{\includegraphics[width=0.4\textwidth]{./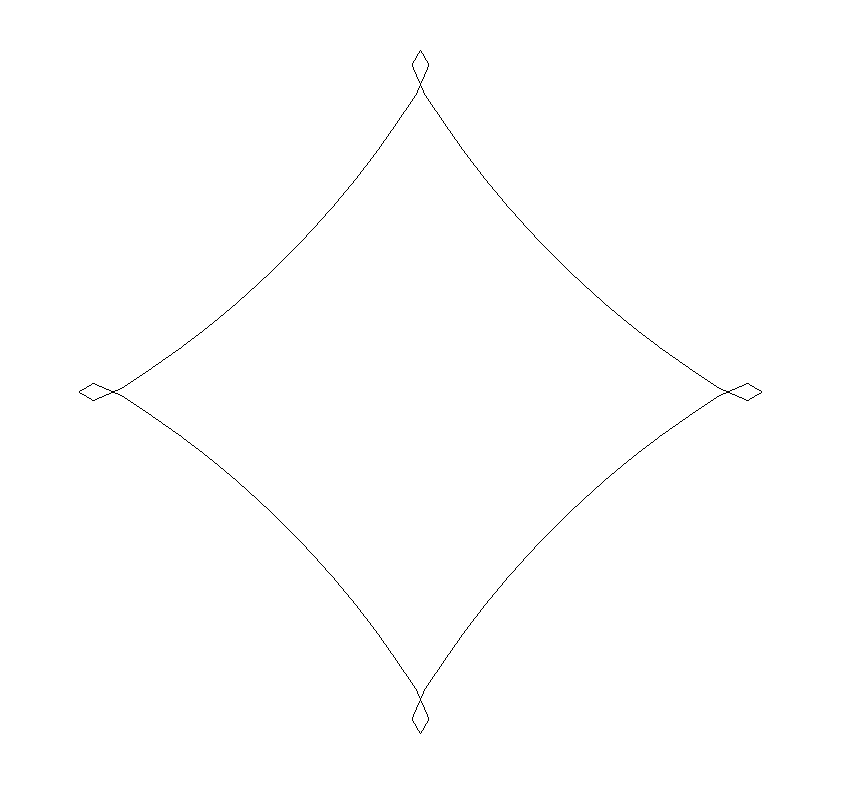}
\label{CSF_singularity}}     \\
\subfloat[][Time $t = 0.0829$. The curve length is $0.86$.]
{\includegraphics[width=0.4\textwidth]{./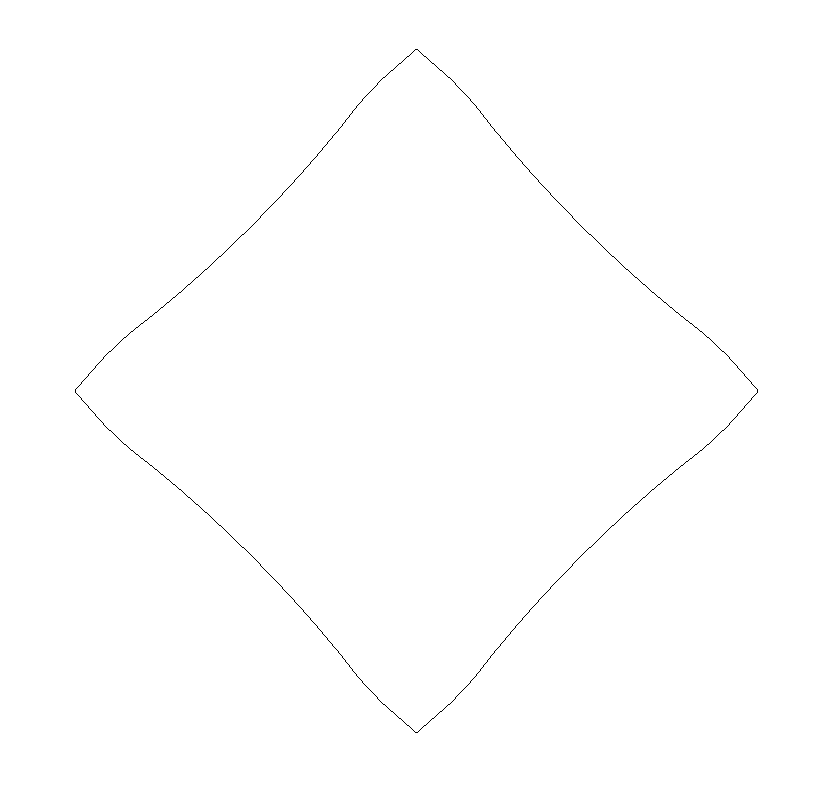}
\label{CSF_not_yet_a_round_circle_example_2}
}
\subfloat[][Time $t = 0.086$. The curve length is $0.56$.]
{\includegraphics[width=0.4\textwidth]{./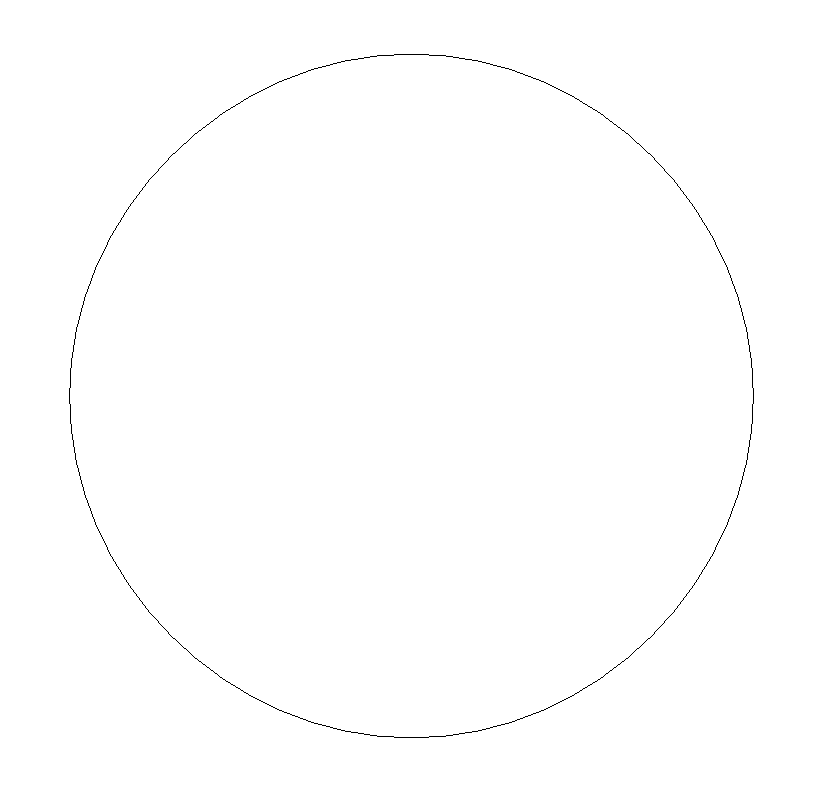}
\label{CSF_round_circle_example_2}
}
\caption{Simulation of the curve shortening flow with Algorithm \ref{algo_CSF} for $\alpha = 10^{-3}$
and time step size $\tau = 10^{-4}$. The computational mesh had $64$ vertices. 
The images are rescaled. A comparison with the BGN-scheme (2.16a) in \cite{BGN11} shows that the
fixed point iteration used to solve the fully-implicit BGN-scheme stops to converge at the singularity
shown in Figure \ref{CSF_singularity}.}  
\end{center}
\label{Figure_curve_evolution_CSF_example_2}
\end{figure}

\gdef\gplbacktext{}%
\gdef\gplfronttext{}%
\begin{figure}
\centering
 \begin{picture}(7936.00,4534.00)%
    \gplgaddtomacro\gplbacktext{%
      \csname LTb\endcsname%
      \put(660,110){\makebox(0,0)[r]{\strut{} 0}}%
      \csname LTb\endcsname%
      \put(660,990){\makebox(0,0)[r]{\strut{} 2}}%
      \csname LTb\endcsname%
      \put(660,1870){\makebox(0,0)[r]{\strut{} 4}}%
      \csname LTb\endcsname%
      \put(660,2751){\makebox(0,0)[r]{\strut{} 6}}%
      \csname LTb\endcsname%
      \put(660,3631){\makebox(0,0)[r]{\strut{} 8}}%
      \csname LTb\endcsname%
      \put(660,4511){\makebox(0,0)[r]{\strut{} 10}}%
      \csname LTb\endcsname%
      \put(792,-110){\makebox(0,0){\strut{}0.0}}%
      \csname LTb\endcsname%
      \put(1424,-110){\makebox(0,0){\strut{}0.02}}%
      \csname LTb\endcsname%
      \put(2057,-110){\makebox(0,0){\strut{}0.04}}%
      \csname LTb\endcsname%
      \put(2689,-110){\makebox(0,0){\strut{}0.06}}%
      \csname LTb\endcsname%
      \put(3322,-110){\makebox(0,0){\strut{}0.08}}%
      \csname LTb\endcsname%
      \put(3954,-110){\makebox(0,0){\strut{}0.1}}%
      \put(22,2310){\rotatebox{-270}{\makebox(0,0){\strut{}Length}}}%
      \put(2373,-440){\makebox(0,0){\strut{}Time}}%
    }%
    \gplgaddtomacro\gplfronttext{%
      \csname LTb\endcsname%
      \put(1848,1383){\makebox(0,0)[r]{\strut{}BGN}}%
      \csname LTb\endcsname%
      \put(1848,1163){\makebox(0,0)[r]{\strut{}$\alpha =1.0$}}%
      \csname LTb\endcsname%
      \put(1848,943){\makebox(0,0)[r]{\strut{}$\alpha =10^{-1}$}}%
      \csname LTb\endcsname%
      \put(1848,723){\makebox(0,0)[r]{\strut{}$\alpha =10^{-2}$}}%
      \csname LTb\endcsname%
      \put(1848,503){\makebox(0,0)[r]{\strut{}$\alpha =10^{-3}$}}%
      \csname LTb\endcsname%
      \put(1848,283){\makebox(0,0)[r]{\strut{}$\alpha =10^{-4}$}}%
    }%
    \gplgaddtomacro\gplbacktext{%
      \csname LTb\endcsname%
      \put(4628,449){\makebox(0,0)[r]{\strut{} 8.6}}%
      \csname LTb\endcsname%
      \put(4628,1126){\makebox(0,0)[r]{\strut{} 8.8}}%
      \csname LTb\endcsname%
      \put(4628,1803){\makebox(0,0)[r]{\strut{} 9}}%
      \csname LTb\endcsname%
      \put(4628,2480){\makebox(0,0)[r]{\strut{} 9.2}}%
      \csname LTb\endcsname%
      \put(4628,3157){\makebox(0,0)[r]{\strut{} 9.4}}%
      \csname LTb\endcsname%
      \put(4628,3834){\makebox(0,0)[r]{\strut{} 9.6}}%
      \csname LTb\endcsname%
      \put(4628,4511){\makebox(0,0)[r]{\strut{} 9.8}}%
      \csname LTb\endcsname%
      \put(4760,-110){\makebox(0,0){\strut{}0.0}}%
      \csname LTb\endcsname%
      \put(5392,-110){\makebox(0,0){\strut{}0.002}}%
      \csname LTb\endcsname%
      \put(6024,-110){\makebox(0,0){\strut{}0.004}}%
      \csname LTb\endcsname%
      \put(6657,-110){\makebox(0,0){\strut{}0.006}}%
      \csname LTb\endcsname%
      \put(7289,-110){\makebox(0,0){\strut{}0.008}}%
      \csname LTb\endcsname%
      \put(7921,-110){\makebox(0,0){\strut{}0.01}}%
      \put(6340,-440){\makebox(0,0){\strut{}Time}}%
    }%
    \gplgaddtomacro\gplfronttext{%
      \csname LTb\endcsname%
      \put(5816,1383){\makebox(0,0)[r]{\strut{}BGN}}%
      \csname LTb\endcsname%
      \put(5816,1163){\makebox(0,0)[r]{\strut{}$\alpha =1.0$}}%
      \csname LTb\endcsname%
      \put(5816,943){\makebox(0,0)[r]{\strut{}$\alpha =10^{-1}$}}%
      \csname LTb\endcsname%
      \put(5816,723){\makebox(0,0)[r]{\strut{}$\alpha =10^{-2}$}}%
      \csname LTb\endcsname%
      \put(5816,503){\makebox(0,0)[r]{\strut{}$\alpha =10^{-3}$}}%
      \csname LTb\endcsname%
      \put(5816,283){\makebox(0,0)[r]{\strut{}$\alpha =10^{-4}$}}%
    }%
    \gplbacktext
    \put(0,0){\includegraphics{./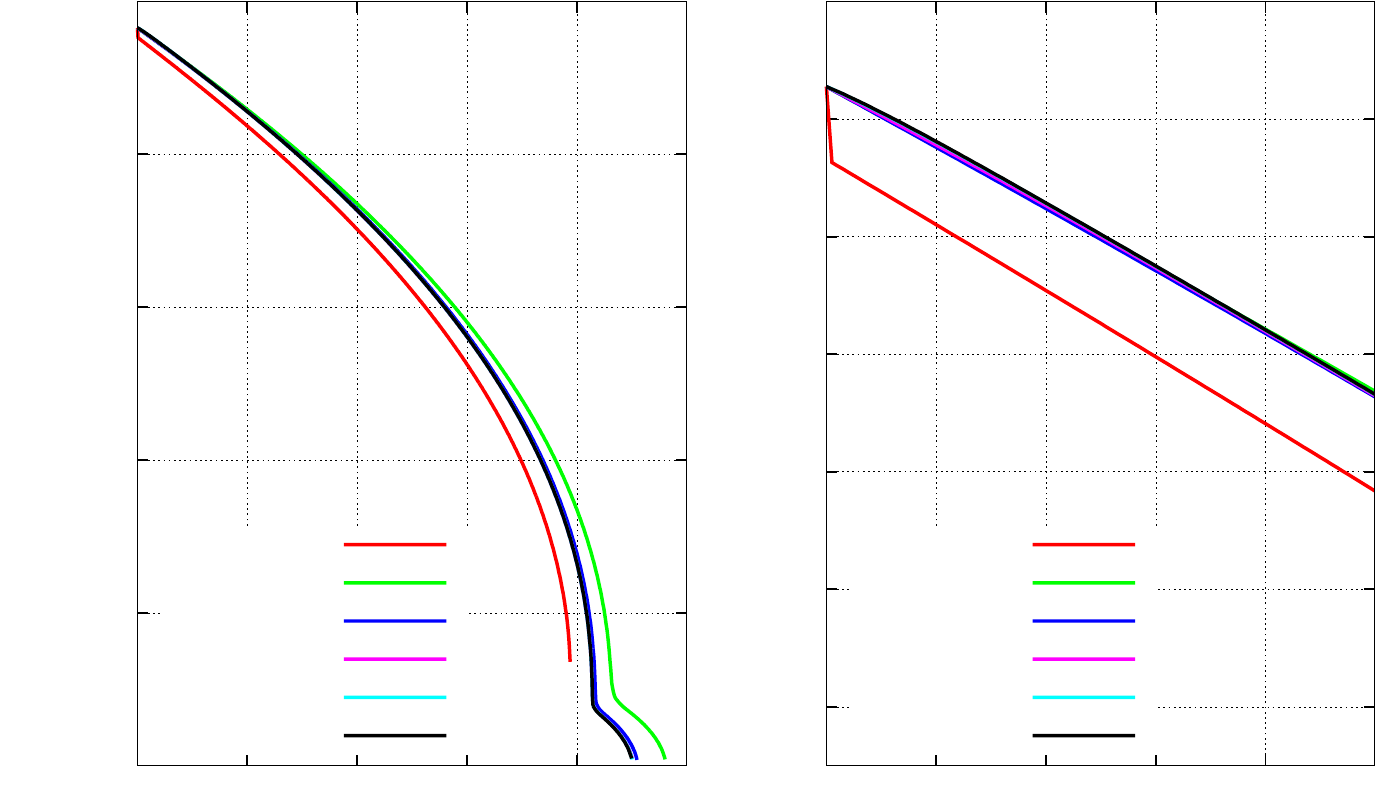}}%
    \gplfronttext
  \end{picture}%
\vspace*{12mm}
\caption{The images show the decrease of the curve length under the curve shortening flow
  for the BGN-scheme (2.16a) in \cite{BGN11} and for Algorithm \ref{algo_CSF} 
  for different choices of $\alpha$. 
  The initial curve is shown in Figure \ref{CSF_initial_curve_example_2}.   
  The time step size was chosen as $\tau = 10^{-4}$. 
  The fixed point iteration used to solve the BGN-scheme stops to converge
  at time $t= 0.0789$, see Figure \ref{Figure_iteration_steps_CSF_example_2}. 
  The right image shows an enlarged section for small times $t$.
  See Example 2 of Section \ref{Numerical_results_CSF}
  for further details.}  
  \label{Figure_decrease_curve_length_CSF_example_2}
\end{figure}

\gdef\gplbacktext{}%
\gdef\gplfronttext{}%
\begin{figure}
\centering
\begin{picture}(7936.00,4534.00)%
    \gplgaddtomacro\gplbacktext{%
      \csname LTb\endcsname%
      \put(660,510){\makebox(0,0)[r]{\strut{} 2}}%
      \csname LTb\endcsname%
      \put(660,1310){\makebox(0,0)[r]{\strut{} 4}}%
      \csname LTb\endcsname%
      \put(660,2110){\makebox(0,0)[r]{\strut{} 6}}%
      \csname LTb\endcsname%
      \put(660,2911){\makebox(0,0)[r]{\strut{} 8}}%
      \csname LTb\endcsname%
      \put(660,3711){\makebox(0,0)[r]{\strut{} 10}}%
      \csname LTb\endcsname%
      \put(660,4511){\makebox(0,0)[r]{\strut{} 12}}%
      \csname LTb\endcsname%
      \put(792,-110){\makebox(0,0){\strut{}0.0}}%
      \csname LTb\endcsname%
      \put(1424,-110){\makebox(0,0){\strut{}0.02}}%
      \csname LTb\endcsname%
      \put(2057,-110){\makebox(0,0){\strut{}0.04}}%
      \csname LTb\endcsname%
      \put(2689,-110){\makebox(0,0){\strut{}0.06}}%
      \csname LTb\endcsname%
      \put(3322,-110){\makebox(0,0){\strut{}0.08}}%
      \csname LTb\endcsname%
      \put(3954,-110){\makebox(0,0){\strut{}0.1}}%
      \put(22,2310){\rotatebox{-270}{\makebox(0,0){\strut{}Ratio of maximal to minimal segment length}}}%
      \put(2373,-440){\makebox(0,0){\strut{}Time}}%
    }%
    \gplgaddtomacro\gplfronttext{%
      \csname LTb\endcsname%
      \put(1848,4338){\makebox(0,0)[r]{\strut{}BGN}}%
      \csname LTb\endcsname%
      \put(1848,4118){\makebox(0,0)[r]{\strut{}$\alpha =1.0$}}%
      \csname LTb\endcsname%
      \put(1848,3898){\makebox(0,0)[r]{\strut{}$\alpha =10^{-1}$}}%
      \csname LTb\endcsname%
      \put(1848,3678){\makebox(0,0)[r]{\strut{}$\alpha =10^{-2}$}}%
      \csname LTb\endcsname%
      \put(1848,3458){\makebox(0,0)[r]{\strut{}$\alpha =10^{-3}$}}%
      \csname LTb\endcsname%
      \put(1848,3238){\makebox(0,0)[r]{\strut{}$\alpha =10^{-4}$}}%
    }%
    \gplgaddtomacro\gplbacktext{%
      \csname LTb\endcsname%
      \put(4628,110){\makebox(0,0)[r]{\strut{} 1}}%
      \csname LTb\endcsname%
      \put(4628,697){\makebox(0,0)[r]{\strut{} 1.2}}%
      \csname LTb\endcsname%
      \put(4628,1284){\makebox(0,0)[r]{\strut{} 1.4}}%
      \csname LTb\endcsname%
      \put(4628,1870){\makebox(0,0)[r]{\strut{} 1.6}}%
      \csname LTb\endcsname%
      \put(4628,2457){\makebox(0,0)[r]{\strut{} 1.8}}%
      \csname LTb\endcsname%
      \put(4628,3044){\makebox(0,0)[r]{\strut{} 2}}%
      \csname LTb\endcsname%
      \put(4628,3631){\makebox(0,0)[r]{\strut{} 2.2}}%
      \csname LTb\endcsname%
      \put(4628,4218){\makebox(0,0)[r]{\strut{} 2.4}}%
      \csname LTb\endcsname%
      \put(4760,-110){\makebox(0,0){\strut{}0.0}}%
      \csname LTb\endcsname%
      \put(5392,-110){\makebox(0,0){\strut{}0.02}}%
      \csname LTb\endcsname%
      \put(6024,-110){\makebox(0,0){\strut{}0.04}}%
      \csname LTb\endcsname%
      \put(6657,-110){\makebox(0,0){\strut{}0.06}}%
      \csname LTb\endcsname%
      \put(7289,-110){\makebox(0,0){\strut{}0.08}}%
      \csname LTb\endcsname%
      \put(7921,-110){\makebox(0,0){\strut{}0.1}}%
      \put(6340,-440){\makebox(0,0){\strut{}Time}}%
    }%
    \gplgaddtomacro\gplfronttext{%
      \csname LTb\endcsname%
      \put(6375,4338){\makebox(0,0)[r]{\strut{}BGN}}%
      \csname LTb\endcsname%
      \put(6375,4118){\makebox(0,0)[r]{\strut{}$\alpha =1.0$}}%
      \csname LTb\endcsname%
      \put(6375,3898){\makebox(0,0)[r]{\strut{}$\alpha =10^{-1}$}}%
      \csname LTb\endcsname%
      \put(6375,3678){\makebox(0,0)[r]{\strut{}$\alpha =10^{-2}$}}%
      \csname LTb\endcsname%
      \put(6375,3458){\makebox(0,0)[r]{\strut{}$\alpha =10^{-3}$}}%
      \csname LTb\endcsname%
      \put(6375,3238){\makebox(0,0)[r]{\strut{}$\alpha =10^{-4}$}}%
    }%
    \gplbacktext
    \put(0,0){\includegraphics{./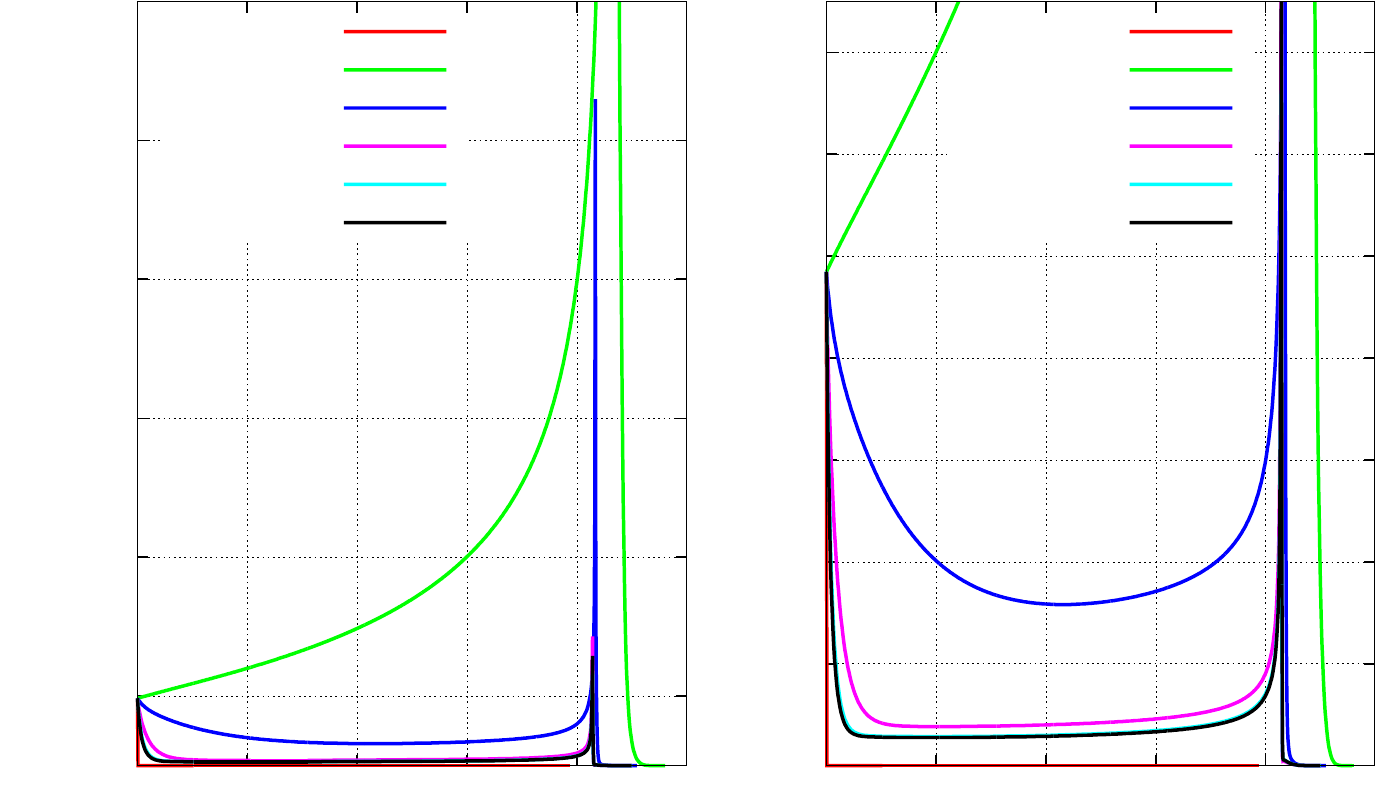}}%
    \gplfronttext
  \end{picture}%
  \vspace*{12mm}
\caption{The images show the ratio of the maximal to the minimal segment length 
for the BGN-scheme (2.16a) in \cite{BGN11} and for Algorithm \ref{algo_CSF} for different choices
of $\alpha$. The initial curve is shown in Figure \ref{CSF_initial_curve_example_2}.
The right image shows an enlarged section.
The simulation shows that Algorithm \ref{algo_CSF} has good mesh properties 
for $\alpha$ sufficiently small. 
See Example 2 of Section \ref{Numerical_results_CSF} for further details.}
\label{Figure_length_ratio_CSF_example_2}
\end{figure}

\gdef\gplbacktext{}%
\gdef\gplfronttext{}%
\begin{figure}
\centering
\begin{picture}(5102.00,3400.00)%
    \gplgaddtomacro\gplbacktext{%
      \csname LTb\endcsname%
      \put(396,110){\makebox(0,0)[r]{\strut{} 0}}%
      \csname LTb\endcsname%
      \put(396,763){\makebox(0,0)[r]{\strut{} 10}}%
      \csname LTb\endcsname%
      \put(396,1417){\makebox(0,0)[r]{\strut{} 20}}%
      \csname LTb\endcsname%
      \put(396,2070){\makebox(0,0)[r]{\strut{} 30}}%
      \csname LTb\endcsname%
      \put(396,2724){\makebox(0,0)[r]{\strut{} 40}}%
      \csname LTb\endcsname%
      \put(396,3377){\makebox(0,0)[r]{\strut{} 50}}%
      \csname LTb\endcsname%
      \put(528,-110){\makebox(0,0){\strut{}0.0}}%
      \csname LTb\endcsname%
      \put(1440,-110){\makebox(0,0){\strut{}0.02}}%
      \csname LTb\endcsname%
      \put(2352,-110){\makebox(0,0){\strut{}0.04}}%
      \csname LTb\endcsname%
      \put(3263,-110){\makebox(0,0){\strut{}0.06}}%
      \csname LTb\endcsname%
      \put(4175,-110){\makebox(0,0){\strut{}0.08}}%
      \csname LTb\endcsname%
      \put(5087,-110){\makebox(0,0){\strut{}0.1}}%
      \put(-242,1743){\rotatebox{-270}{\makebox(0,0){\strut{}Number of iteration steps}}}%
      \put(2807,-440){\makebox(0,0){\strut{}Time}}%
    }%
    \gplgaddtomacro\gplfronttext{%
      \csname LTb\endcsname%
      \put(1056,3204){\makebox(0,0)[r]{\strut{}BGN}}%
    }%
    \gplbacktext
    \put(0,0){\includegraphics{./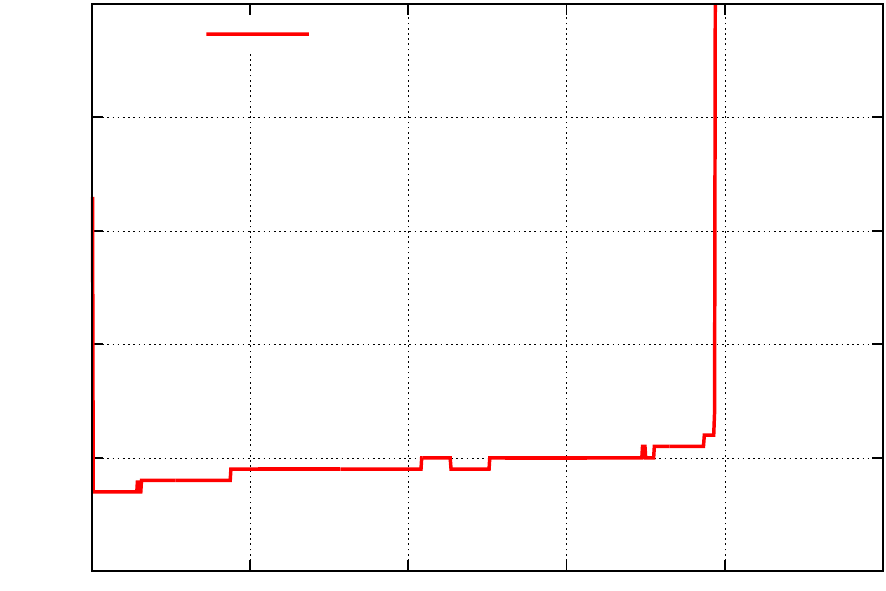}}%
    \gplfronttext
  \end{picture}%
\vspace*{12mm}
\caption{The figure shows the number of iteration steps of the fixed point iteration
that are necessary
to solve the non-linear system of equations which arises in 
the fully-implicit BGN-scheme (2.16) in \cite{BGN11}.
The fixed point iteration was stopped if for all vertices 
of the discrete curve the distance between 
the position vectors does not change more than $10^{-8}$ in one iteration step.
The initial curve of the simulation is shown in Figure \ref{CSF_initial_curve_example_2}.
The fixed point iteration stops to converge at time $t=0.0789$ 
which corresponds to the situation shown in Figure \ref{CSF_singularity}.
In Example 3, the fixed point iteration also does not converge
for a non-singular curve if the time step size is below a critical value.  
See Example 2 of Section \ref{Numerical_results_CSF} for further details. } 
\label{Figure_iteration_steps_CSF_example_2} 
\end{figure}

\subsection*{Example 3:}
In this example, we demonstrate that the employed fixed point iteration for the BGN-scheme
might not only fail at curve singularities as in Example 2. 
In fact, the problem also occurs for the most simple case, that is the unit circle, 
if the segment length of the initial triangulation is not constant 
and if the time step size $\tau$ is smaller than a critical value.
To start with non-equidistributed meshes might be indeed desirable for certain applications.
Other algorithms like the scheme in \cite{DD94} and the scheme (2.3) in \cite{BGN07},
are expected to be more appropriate to handle non-equidistributed initial meshes than the BGN-scheme (2.16a) from \cite{BGN11}.
In fact, it is an interesting question how the latter scheme behaves for initially non-equidistributed meshes.
Since the BGN-scheme (2.16a) plays an important role in this paper, we will address this question now in more detail.
In this example, the curve shortening flow is computed for the initial triangulation shown in 
Figure \ref{Figure_initial_triangulation_of_the_unit_sphere}. The vertices of this
triangulation are distributed in such a way that the segment length 
slowly decreases anti-clockwise.
Figure \ref{Figure_motion_of_the_vertices_CSF_unit_circle} shows
that the initial jumps of the mesh vertices under the BGN-scheme 
strongly depend on the time step size $\tau$.
We experimentally observed that 
for time step sizes $\tau \leq 10^{-4}$ the fixed point iteration
for solving the first time step of the BGN-scheme does not converge any more.
Interestingly, Algorithm \ref{algo_CSF} with $\alpha = 10^{-2}$ seems to interpolate
the motion of the vertices computed by the BGN-scheme with $\tau = 10^{-2}$,
see also Figure \ref{Figure_motion_of_the_curve_vertices_CSF} for a similar behaviour.
We finally note that by employing a damped fixed point iteration for the
BGN-scheme it is possible to circumvent the reported difficulties. However, we have observed that the
solution then strongly depends on the damping parameter.

\gdef\gplbacktext{}%
\gdef\gplfronttext{}%
\begin{figure}
\centering
\begin{picture}(5102.00,5102.00)%
    \gplgaddtomacro\gplbacktext{%
      \csname LTb\endcsname%
      \put(396,336){\makebox(0,0)[r]{\strut{}-1}}%
      \csname LTb\endcsname%
      \put(396,1465){\makebox(0,0)[r]{\strut{}-0.5}}%
      \csname LTb\endcsname%
      \put(396,2595){\makebox(0,0)[r]{\strut{} 0}}%
      \csname LTb\endcsname%
      \put(396,3724){\makebox(0,0)[r]{\strut{} 0.5}}%
      \csname LTb\endcsname%
      \put(396,4853){\makebox(0,0)[r]{\strut{} 1}}%
      \csname LTb\endcsname%
      \put(735,-110){\makebox(0,0){\strut{}-1}}%
      \csname LTb\endcsname%
      \put(1771,-110){\makebox(0,0){\strut{}-0.5}}%
      \csname LTb\endcsname%
      \put(2808,-110){\makebox(0,0){\strut{} 0}}%
      \csname LTb\endcsname%
      \put(3844,-110){\makebox(0,0){\strut{} 0.5}}%
      \csname LTb\endcsname%
      \put(4880,-110){\makebox(0,0){\strut{} 1}}%
      \put(-374,2594){\rotatebox{-270}{\makebox(0,0){\strut{}$x_2$}}}%
      \put(2807,-440){\makebox(0,0){\strut{}$x_1$}}%
    }%
    \gplgaddtomacro\gplfronttext{%
      \csname LTb\endcsname%
      \put(3766,2594){\makebox(0,0)[r]{\strut{}initial triangulation}}%
    }%
    \gplbacktext
    \put(0,0){\includegraphics{./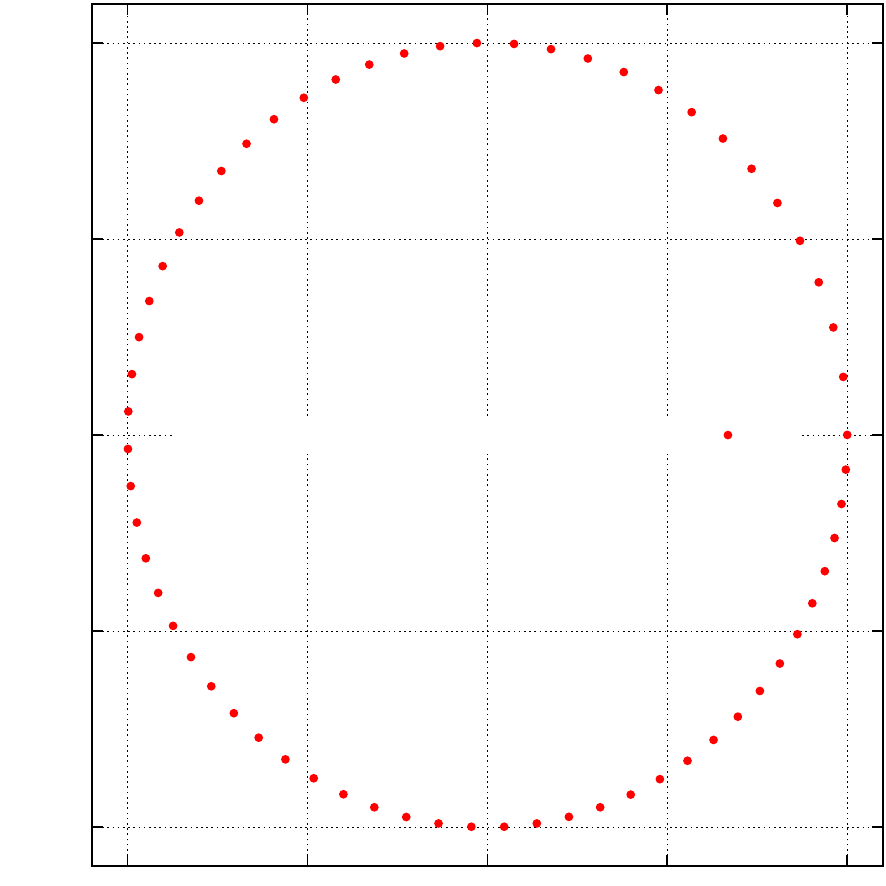}}%
    \gplfronttext
  \end{picture}%
\vspace*{12mm}
\caption{
The image shows the $64$ vertices of a triangulation of the unit circle.
Starting at the point $(x_1,x_2) = (1,0)$, 
the segment length, that is the length between two vertices, is slowly decreasing anti-clockwise.
This leads to a relatively large jump of the segment length between 
the two segments belonging to the point $(x_1, x_2) = (1,0)$. 
See Example 3 of Section \ref{Numerical_results_CSF} for further details.}  
\label{Figure_initial_triangulation_of_the_unit_sphere}
\end{figure}

\gdef\gplbacktext{}%
\gdef\gplfronttext{}%
\begin{figure}
\centering
\begin{picture}(5102.00,5102.00)%
    \gplgaddtomacro\gplbacktext{%
      \csname LTb\endcsname%
      \put(396,155){\makebox(0,0)[r]{\strut{} 0}}%
      \csname LTb\endcsname%
      \put(396,1050){\makebox(0,0)[r]{\strut{} 0.2}}%
      \csname LTb\endcsname%
      \put(396,1945){\makebox(0,0)[r]{\strut{} 0.4}}%
      \csname LTb\endcsname%
      \put(396,2841){\makebox(0,0)[r]{\strut{} 0.6}}%
      \csname LTb\endcsname%
      \put(396,3736){\makebox(0,0)[r]{\strut{} 0.8}}%
      \csname LTb\endcsname%
      \put(396,4631){\makebox(0,0)[r]{\strut{} 1}}%
      \csname LTb\endcsname%
      \put(569,-110){\makebox(0,0){\strut{} 0}}%
      \csname LTb\endcsname%
      \put(1391,-110){\makebox(0,0){\strut{} 0.2}}%
      \csname LTb\endcsname%
      \put(2212,-110){\makebox(0,0){\strut{} 0.4}}%
      \csname LTb\endcsname%
      \put(3033,-110){\makebox(0,0){\strut{} 0.6}}%
      \csname LTb\endcsname%
      \put(3855,-110){\makebox(0,0){\strut{} 0.8}}%
      \csname LTb\endcsname%
      \put(4676,-110){\makebox(0,0){\strut{} 1}}%
      \put(-374,2594){\rotatebox{-270}{\makebox(0,0){\strut{}$x_2$}}}%
      \put(2807,-440){\makebox(0,0){\strut{}$x_1$}}%
    }%
    \gplgaddtomacro\gplfronttext{%
      \csname LTb\endcsname%
      \put(4100,4906){\makebox(0,0)[r]{\strut{}BGN, $\tau = 10^{-2}$}}%
      \csname LTb\endcsname%
      \put(4100,4686){\makebox(0,0)[r]{\strut{}BGN, $\tau = 10^{-3}$}}%
      \csname LTb\endcsname%
      \put(4100,4466){\makebox(0,0)[r]{\strut{}$\alpha = 10^{-2}, \tau = 10^{-3}$}}%
    }%
    \gplbacktext
    \put(0,0){\includegraphics{./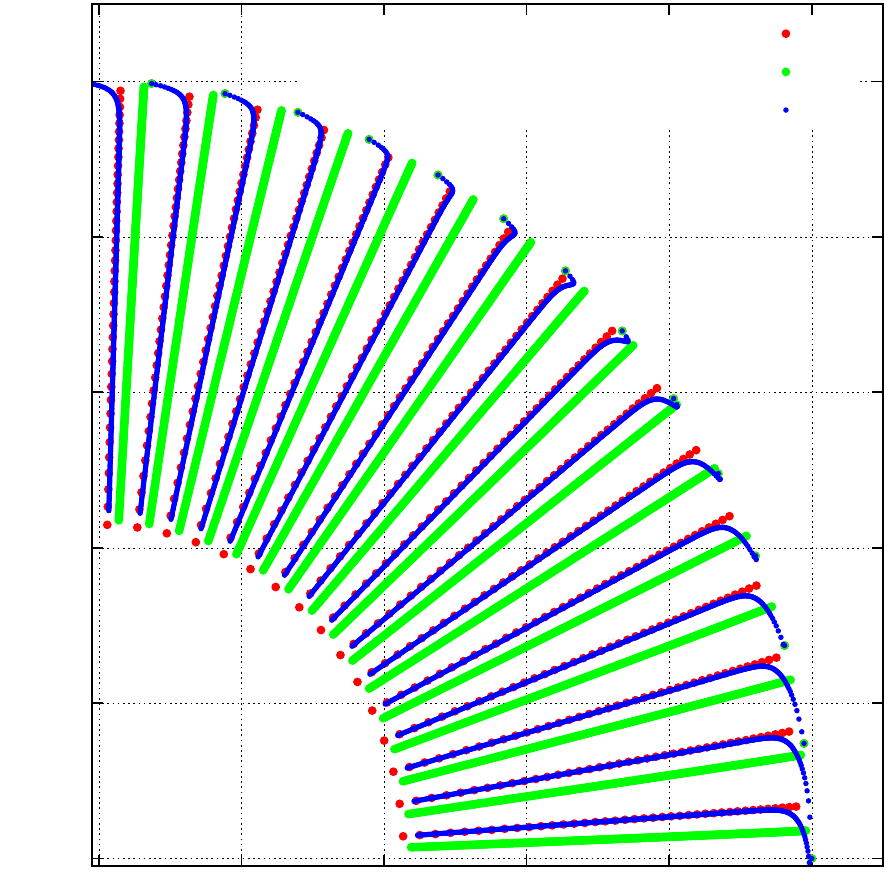}}%
    \gplfronttext
  \end{picture}%
\vspace*{12mm}
\caption{The image shows the motion of the triangulation vertices 
	for the BGN-scheme (2.16a) in \cite{BGN11} 
	and for Algorithm \ref{algo_CSF} with $\alpha = 10^{-2}$. 
	Only the first quadrant is shown.
	The vertices of the initial triangulation are presented in Figure
	\ref{Figure_initial_triangulation_of_the_unit_sphere}.
    The time step size was chosen as $\tau = 10^{-2}$ and $\tau = 10^{-3}$. 
    The BGN-scheme leads to relatively large jumps of the vertices in the first time step.
    The direction of these jumps seems to depend on the time step size (red and green dots).
    We also observed numerically that
    the fixed point iteration for the first time step of the BGN-scheme does not converge
    when the time step size is smaller than a critical value, 
    here for $\tau \leq 10^{-4}$.
    Compare to Figure \ref{Figure_iteration_steps_CSF_example_2}, where the fixed point iteration
    stops to converge at a singularity of the curve.
    See Example 3 of Section \ref{Numerical_results_CSF} for further details.} 
    \label{Figure_motion_of_the_vertices_CSF_unit_circle} 
\end{figure}

\section{The reparametrized mean curvature flow}
\label{reparametrizations_by_the_original_DeTurck_trick}

\subsection{Weak formulation on the reference manifold $\M$}
In this section, we derive a weak formulation of the reparametrized mean curvature
flow (\ref{reparam_MCF}).
Unfortunately, this flow is not in divergence form. We hence decompose the elliptic operator of (\ref{reparam_MCF})
into a divergence and into a non-divergence part.
This can be simply achieved by using identity (\ref{normal_part})
\begin{align*}
\left( \alpha \unit + (1 - \alpha) (\nu \circ \hat{x}_\alpha) \otimes (\nu \circ \hat{x}_\alpha) \right) 
\frac{\partial}{\partial t}\hat{x}_\alpha
	= \Delta_{\hat{g}_\alpha} \hat{x}_\alpha
	+ (P \circ \hat{x}_\alpha) \tr_{\hat{g}_\alpha} (\nabla^h \nabla \hat{x}_\alpha).
\end{align*}
The reason why the second term on the right hand side is not in divergence form
is that the trace $\tr_{\hat{g}_\alpha}$ 
has to be taken with respect to the metric $\hat{g}_\alpha(t)$,
whereas $\nabla^h$ is the covariant derivative with respect to the metric $h$.
By choosing $h$ appropriately, it is possible to derive an expression
for $(P \circ \hat{x}_\alpha) \tr_{\hat{g}_\alpha} (\nabla^h \nabla \hat{x}_\alpha)$,
which is the product of a first order term and of a second order term
that is in divergence form. In this section, we will assume that the reference manifold $\M$ is
an $n$-dimensional hypersurface in $\mathbb{R}^{n+1}$.

In our weak formulation of the mean curvature-DeTurck flow we will make use of the following representation of the metric $\hat{g}_\alpha(t)$.
We define $\hat{G}_\alpha : \M \times [0,T) \rightarrow \mathbb{R}^{(n+1) \times (n+1)}$ by 
\begin{equation}
	\label{defi_G}
	\hat{G}_\alpha(t) :=
	(\nabla_\M \hat{x}_\alpha(t))^T \nabla_\M \hat{x}_\alpha(t)
	+ \mu \otimes \mu,
\end{equation}
where $\mu = (\mu_1, \ldots, \mu_{n+1})^T$ is a unit normal field to $\M$.
That this map is indeed a representation of the metric $\hat{g}_\alpha(t)$ is stated in (\ref{G_beta_gamma}) below. 
The reason, why we have introduced $\hat{G}_\alpha(t)$, is the fact that it allows us to represent the metric $\hat{g}_\alpha(t)$ in global coordinates instead of local ones;
see \cite{Fr13} for more details on this kind of representation of metric tensors. Using the global coordinates system of the ambient space
will make the spatial discretization of the weak formulation much easier. 

\begin{lem}
\label{Lemma_weak_formulation_on_M}
Let $\alpha \in (0, \infty)$.
Suppose $\M$ is a smooth, $n$-dimensional, closed, connected hypersurface in $\mathbb{R}^{n+1}$.
Furthermore, let $h$ be the metric on $\M \subset \mathbb{R}^{n+1}$ induced by the Euclidean metric $\mathfrak{e}$ of the ambient space, that is
$$
	h_{ij} := \frac{\partial \mathcal{C}^{-1}_1}{\partial \theta^i} 
				\cdot \frac{\partial \mathcal{C}^{-1}_1}{\partial \theta^j},
	\quad
	\textnormal{and}
	\quad
	(h^{ij}) := (h_{ij})^{-1}_{i,j = 1, \ldots, n},			
$$
where $\mathcal{C}_1$ is a local coordinate chart of $\M$.
The mean curvature-DeTurck flow (\ref{pull_back_of_MCF}) then satisfies the following weak formulation
\begin{align}
0 &= \alpha \int_\M \hat{x}_{\alpha t} \cdot \chi \sqrt{\det \hat{G}_\alpha} do_h
	 + (1- \alpha) \int_\M 
	(\hat{x}_{\alpha t} \cdot (\nu \circ \hat{x}_\alpha))
	( (\nu \circ \hat{x}_\alpha) \cdot \chi) \sqrt{\det \hat{G}_\alpha} do_{h}	
\nonumber
\\	
&  \quad + \int_{\M} \hat{G}_\alpha^{-1} \nabla_\M \hat{x}_\alpha : \nabla_\M \chi
 \sqrt{\det \hat{G}_\alpha} do_{h}
  +	\int_\M \big( (\nabla_\M \hat{x}_\alpha) v_\alpha \big) \cdot \chi 
  \sqrt{\det \hat{G}_\alpha} do_{h},
\label{weak_formula_MCF_reference_M_1}  
  \\
0 &= \int_\M v_\alpha \cdot \xi \sqrt{ \det \hat{G}_\alpha} do_{h}
  + \int_\M  \hat{G}_\alpha^{-1} \nabla_\M id : \nabla_\M \xi 
  \sqrt{\det \hat{G}_\alpha} do_{h},
\label{weak_formula_MCF_reference_M_2}    
\end{align}
for all $\chi, \xi \in H^{1,2}(\M,\mathbb{R}^{n+1})$ and $t \in (0,T)$.
\end{lem}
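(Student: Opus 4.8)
The plan is to start from the decomposed form of the mean curvature-DeTurck flow derived just above the lemma, namely
\[
\left( \alpha \unit + (1 - \alpha) (\nu \circ \hat{x}_\alpha) \otimes (\nu \circ \hat{x}_\alpha) \right)
\frac{\partial}{\partial t}\hat{x}_\alpha
	= \Delta_{\hat{g}_\alpha} \hat{x}_\alpha
	+ (P \circ \hat{x}_\alpha) \tr_{\hat{g}_\alpha} (\nabla^h \nabla \hat{x}_\alpha),
\]
and to rewrite every term in global coordinates of the ambient space $\mathbb{R}^{n+1}$ using the matrix $\hat G_\alpha$ from (\ref{defi_G}). First I would establish the bookkeeping identity announced in the text: writing $\hat g_{\alpha\,ij}$ in local coordinates and comparing with $\hat G_\alpha$, one checks that $\hat G_\alpha$ restricted to $T_p\M$ agrees with the pulled-back metric $\hat g_\alpha$, that $\sqrt{\det\hat G_\alpha}$ is the density relating $do_{\hat g_\alpha}$ to $do_h$ (so $do_{\hat g_\alpha} = \sqrt{\det\hat G_\alpha}\, do_h$ since $h$ is the induced metric and hence $\sqrt{\det(h_{ij})}$ is the Euclidean surface element), and that $\hat G_\alpha^{-1}$ acts as the inverse metric $\hat g_\alpha^{ij}$ on tangential directions while killing the normal direction $\mu$. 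This is the content of the reference (\ref{G_beta_gamma}) that the lemma cites; I would state it and verify it by a short local-coordinate computation, using that $\nabla_\M \hat x_\alpha$ has columns spanning the tangent space of $\Gamma(t)$ and that $\mu\otimes\mu$ supplies the missing rank-one piece to make $\hat G_\alpha$ invertible on all of $\mathbb{R}^{n+1}$.

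With that dictionary in hand, the two weak equations follow by testing the strong equation against $\chi\in H^{1,2}(\M,\mathbb{R}^{n+1})$, multiplying by the density $\sqrt{\det\hat G_\alpha}$, and integrating over $\M$. The left-hand side directly produces the $\alpha$-term and the $(1-\alpha)$-term in (\ref{weak_formula_MCF_reference_M_1}). For the right-hand side, the key point is that $\Delta_{\hat g_\alpha}\hat x_\alpha = \mathrm{div}_{\hat g_\alpha}(\nabla\hat x_\alpha)$ is in divergence form with respect to $\hat g_\alpha$, so integration by parts against $\chi$ turns $\int_\M \Delta_{\hat g_\alpha}\hat x_\alpha\cdot\chi\, do_{\hat g_\alpha}$ into $-\int_\M \hat g_\alpha^{ij}\,\partial_i\hat x_\alpha\cdot\partial_j\chi\, do_{\hat g_\alpha}$, which in the global notation is exactly $-\int_\M \hat G_\alpha^{-1}\nabla_\M\hat x_\alpha : \nabla_\M\chi\,\sqrt{\det\hat G_\alpha}\,do_h$ (no boundary term since $\M$ is closed). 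The remaining term $(P\circ\hat x_\alpha)\tr_{\hat g_\alpha}(\nabla^h\nabla\hat x_\alpha)$ equals $-\nabla\hat x_\alpha(V_\alpha)$ by (\ref{tangential_part}); here one introduces the auxiliary vector field $v_\alpha$ as the ambient-coordinate representative of $V_\alpha$, i.e. $(\nabla_\M\hat x_\alpha)v_\alpha = \nabla\hat x_\alpha(V_\alpha)$, giving the last term of (\ref{weak_formula_MCF_reference_M_1}). The second equation (\ref{weak_formula_MCF_reference_M_2}) is then just the defining relation for $v_\alpha$: from (\ref{dual_V_alpha}), $V_\alpha = \mathrm{div}_{\hat g_\alpha}(\nabla_\M id) = -$ (the divergence-form part coming from $\hat g_\alpha$ acting on the identity map), and testing this against $\xi$ with the density $\sqrt{\det\hat G_\alpha}$ and integrating by parts yields precisely $\int_\M v_\alpha\cdot\xi\sqrt{\det\hat G_\alpha}\,do_h + \int_\M \hat G_\alpha^{-1}\nabla_\M id:\nabla_\M\xi\,\sqrt{\det\hat G_\alpha}\,do_h = 0$.

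The main obstacle I expect is not analytic but representational: carefully justifying that the local-coordinate trace-and-Christoffel expression $\hat g_\alpha^{kl}(\Gamma(h)^j_{kl}-\Gamma(\hat g_\alpha)^j_{kl})$ in (\ref{dual_V_alpha}) is globally the divergence (with respect to $\hat g_\alpha$, density $\sqrt{\det\hat G_\alpha}\,do_h$) of the ambient gradient $\nabla_\M id$, and likewise that $\hat G_\alpha^{-1}$ genuinely implements $\tr_{\hat g_\alpha}$ — one must be sure the normal component $\mu\otimes\mu$ added in (\ref{defi_G}) does not leak into these formulas. Concretely this requires verifying $\nabla_\M\xi$ (which is a priori an arbitrary $\mathbb{R}^{(n+1)\times(n+1)}$-valued field, the ambient gradient of a map on $\M$) gets correctly projected, using that $\nabla_\M$ of any function produces a tangential vector, so contracting with $\hat G_\alpha^{-1}$ automatically discards the $\mu$-direction. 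Once this is pinned down, the integration-by-parts steps and the identification with (\ref{normal_part})–(\ref{tangential_part}) are routine, and the density $\sqrt{\det\hat G_\alpha}$ is carried along unchanged because $h$ is time-independent. I would also remark that $\alpha>0$ is used only to keep the problem non-degenerate in the analytic interpretation, but the weak identities themselves are purely formal manipulations valid for all $\alpha$.
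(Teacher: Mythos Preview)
Your overall architecture is the same as the paper's: test the strong equation with $\chi$, integrate by parts using the $\hat G_\alpha$--dictionary (density $do_{\hat g_\alpha}=\sqrt{\det\hat G_\alpha}\,do_h$ and $\hat G_\alpha^{-1}$ implementing $\hat g_\alpha^{ij}$ on tangential vectors), and handle the tangential piece via an auxiliary field $v_\alpha$. The Laplacian term and the bookkeeping for $\hat G_\alpha$ are exactly what the paper does.

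The gap is in your treatment of $v_\alpha$. You introduce it only implicitly through the requirement $(\nabla_\M\hat x_\alpha)v_\alpha=\nabla\hat x_\alpha(V_\alpha)$, which does not pin $v_\alpha$ down (the $\mu$-component is free), and then try to derive (\ref{weak_formula_MCF_reference_M_2}) by asserting ``$V_\alpha=\mathrm{div}_{\hat g_\alpha}(\nabla_\M id)$'' from (\ref{dual_V_alpha}). That statement is not well-typed: $V_\alpha$ from (\ref{dual_V_alpha}) is an \emph{intrinsic} tangent vector with components $\hat g_\alpha^{kl}(\Gamma(h)^j_{kl}-\Gamma(\hat g_\alpha)^j_{kl})$, which is not in divergence form, whereas $\mathrm{div}_{\hat g_\alpha}(\nabla_\M id)$ is an ambient $\mathbb{R}^{n+1}$-vector. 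So you cannot integrate by parts against $\xi$ starting from (\ref{dual_V_alpha}) alone.

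What the paper does --- and what you are missing --- is to \emph{define} $v_\alpha:=\Delta_{\hat g_\alpha}\,id$, the Laplace--Beltrami operator of $(\M,\hat g_\alpha)$ applied componentwise to the inclusion $id:\M\hookrightarrow\mathbb{R}^{n+1}$. A short local-coordinate computation (using $\hat g_\alpha^{ij}\Gamma(h)^k_{ij}=\hat g_\alpha^{ij}h^{kl}\partial_l\mathcal C_1^{-1}\cdot\partial^2_{ij}\mathcal C_1^{-1}$ and (\ref{formula_Christoffel_symbols})) shows directly that
\[
(P\circ\hat x_\alpha)\,\tr_{\hat g_\alpha}(\nabla^h\nabla\hat x_\alpha)
= -(\nabla_\M\hat x_\alpha)\,\Delta_{\hat g_\alpha}\,id
= -(\nabla_\M\hat x_\alpha)\,v_\alpha,
\]
which gives the last term of (\ref{weak_formula_MCF_reference_M_1}). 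With this concrete $v_\alpha$, equation (\ref{weak_formula_MCF_reference_M_2}) is immediate: it is just the weak form of $v_\alpha=\Delta_{\hat g_\alpha}\,id$ tested against $\xi$ and integrated by parts (the operator $\Delta_{\hat g_\alpha}$ acting on scalars \emph{is} in divergence form). Note that $v_\alpha$ is not simply the ambient pushforward of $V_\alpha$; it differs from it by a piece normal to $\M$ (coming from the second fundamental form of $\M$ traced against $\hat g_\alpha$), which is harmless in the first equation because $\nabla_\M\hat x_\alpha$ annihilates $\mu$, but is essential for making the second equation integrable by parts.
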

\begin{proof}
A short calculation shows that
\begin{align*}
	\hat{g}^{ij}_\alpha \Gamma(h)^k_{ij}
	= \hat{g}^{ij}_\alpha h^{kl} \left(
		\frac{\partial h_{li}}{\partial \theta^j}
		- \frac{1}{2} \frac{\partial h_{ij}}{\partial \theta^l}
		\right)
	= \hat{g}_\alpha^{ij} h^{kl} \frac{\partial \mathcal{C}_1^{-1}}{\partial \theta^l}
	\cdot \frac{\partial^2 \mathcal{C}^{-1}_1}{\partial \theta^i \partial \theta^j}	.
\end{align*}
From the decomposition (\ref{decomposition}) we infer that
\begin{equation}
	( P \circ \hat{X}_\alpha ) 
	\frac{\partial^2 \hat{X}_\alpha}{\partial \theta^i \partial \theta^j}
	= \Gamma(\hat{g}_\alpha)^k_{ij} \frac{\partial \hat{X}_\alpha}{\partial \theta^k} , 
	\label{formula_Christoffel_symbols}
\end{equation}
and hence,
\begin{align*}
	( P \circ \hat{X}_\alpha ) 
	\hat{g}_\alpha^{ij} 
	\frac{\partial^2 \hat{X}_\alpha}{\partial \theta^i \partial \theta^j}
	= \hat{g}_\alpha^{ij} 
	\Gamma(\hat{g}_\alpha)^m_{ij} \delta^k_m
		\frac{\partial \hat{X}_\alpha}{\partial \theta^k}
	= \hat{g}_\alpha^{ij} 
	\Gamma(\hat{g}_\alpha)^m_{ij} h^{kl} 
		\frac{\partial \mathcal{C}^{-1}_1}{\partial \theta^l} \cdot
		\frac{\partial \mathcal{C}^{-1}_1}{\partial \theta^m}
		\frac{\partial \hat{X}_\alpha}{\partial \theta^k}.
\end{align*}
Altogether, it follows that
\begin{align*}
	(P \circ \hat{X}_\alpha) (\tr_{\hat{g}_\alpha} (\nabla^h \nabla \hat{x}_\alpha))
	\circ \mathcal{C}_1^{-1}
	&= (P \circ \hat{X}_\alpha) \hat{g}_\alpha^{ij}
	\left( 
		\frac{\partial^2 \hat{X}_\alpha}{\partial \theta^i \partial \theta^j}
		- \Gamma(h)^k_{ij} \frac{\partial \hat{X}_\alpha}{\partial \theta^k}	
	\right)
	\\	
	&= \hat{g}_\alpha^{ij} \frac{\partial \hat{X}_\alpha}{\partial \theta^k} h^{kl}
		\frac{\partial \mathcal{C}^{-1}_1}{\partial \theta^l} \cdot
		\left(
			\Gamma(\hat{g}_\alpha)^m_{ij} 
			\frac{\partial \mathcal{C}^{-1}_1}{\partial \theta^m}
			- \frac{\partial^2 \mathcal{C}^{-1}_1}{\partial \theta^i \partial \theta^j}
		\right)	
	\\
	&= 	- \frac{\partial \hat{X}_\alpha}{\partial \theta^k} h^{kl}
		\frac{\partial \mathcal{C}^{-1}_1}{\partial \theta^l} \cdot
		(\Delta_{\hat{g}_\alpha} id) \circ \mathcal{C}^{-1}_1.
\end{align*}
The tangential gradient of a function $f$ on $\M$
satisfies the formula
\begin{align}
	(\nabla_\M f) \circ \mathcal{C}^{-1}_1 := h^{ij} 
		\frac{\partial F}{\partial \theta^i} 
		\frac{\partial \mathcal{C}^{-1}_1}{\partial \theta^j},	
\label{tangential_gradient_coords}		
\end{align}
where $F$ denotes $F:= f \circ \mathcal{C}^{-1}_1$, see \cite{DE13} for more details.
Please note that a similar identity also holds for the tangential gradient on $\Gamma(t)$
and the Riemannian metric $\hat{g}_\alpha(t)$.
We can now deduce that
$$
	((P \circ \hat{x}_\alpha) \tr_{\hat{g}_\alpha} (\nabla^h \nabla \hat{x}_\alpha))
	\circ \mathcal{C}_1^{-1}
	= - (\nabla_\M \hat{x}_\alpha) \circ \mathcal{C}^{-1}_1 
	(\Delta_{\hat{g}_\alpha} id) \circ \mathcal{C}^{-1}_1,
$$
and thus,
$$
	\left( \alpha \unit 
	+ (1 - \alpha) (\nu \circ \hat{x}_\alpha) \otimes (\nu \circ \hat{x}_\alpha) \right) 
	\frac{\partial}{\partial t}\hat{x}_\alpha
	= \Delta_{\hat{g}_\alpha} \hat{x}_\alpha
	- (\nabla_\M \hat{x}_\alpha) (\Delta_{\hat{g}_\alpha} id).
$$
We introduce the vector field $v_\alpha := \Delta_{\hat{g}_\alpha} id$ on $\M$
and get
\begin{equation}
\label{reparam_MCF_2}
	\left( \alpha \unit 
	+ (1 - \alpha) (\nu \circ \hat{x}_\alpha) \otimes (\nu \circ \hat{x}_\alpha) \right) 
	\frac{\partial}{\partial t}\hat{x}_\alpha
	= \Delta_{\hat{g}_\alpha} \hat{x}_\alpha
	- (\nabla_\M \hat{x}_\alpha) v_\alpha.
\end{equation}
Henceforward, we write $\hat{x}_{\alpha t}$ instead of 
$\frac{\partial}{\partial t} \hat{x}_\alpha$. 
We now multiply (\ref{reparam_MCF_2}) and the definition 
$v_\alpha := \Delta_{\hat{g}_\alpha} id$ on $\M$ 
by test functions $\chi, \xi \in H^{1,2}(\M,\mathbb{R}^{n+1})$
and integrate with respect to the volume form $do_{\hat{g}_\alpha}$.
Integration by parts directly yields
\begin{align}
0 &= \alpha \int_\M \hat{x}_{\alpha t} \cdot \chi do_{\hat{g}_\alpha} 
	+ (1- \alpha) \int_\M 
	(\hat{x}_{\alpha t} \cdot (\nu \circ \hat{x}_\alpha))
	( (\nu \circ \hat{x}_\alpha) \cdot \chi ) do_{\hat{g}_\alpha} 	
\nonumber	
\\	
&  \quad + \int_{\M} \hat{G}_\alpha^{-1} \nabla_\M \hat{x}_\alpha : \nabla_\M \chi do_{\hat{g}_\alpha}
  +	\int_\M \big( (\nabla_\M \hat{x}_\alpha) v_\alpha \big)
  				\cdot \chi  do_{\hat{g}_\alpha},
\label{weak_formulation_reparam_MCF_1}  				
  \\
0 &= \int_\M v_\alpha \cdot \xi do_{\hat{g}_\alpha}
  + \int_\M \hat{G}_\alpha^{-1} \nabla_\M id : \nabla_\M \xi do_{\hat{g}_\alpha},
\label{weak_formulation_reparam_MCF_2}  
\end{align}
where we have made use of the following identity
\begin{equation}
\label{identity_1}
	\hat{g}_\alpha^{ij} \frac{\partial F}{\partial \theta^i}
	 \frac{\partial W}{\partial \theta^j}
	 = (\hat{G}_\alpha^{-1} \nabla_\M f \cdot \nabla_\M w) \circ \mathcal{C}_1^{-1}.
\end{equation}
Here, $f,w: \M \rightarrow \mathbb{R}$ denote differentiable functions on $\M$,
and $F,W$ are given by $F := f \circ \mathcal{C}^{-1}_1$ and 
$W := w \circ \mathcal{C}^{-1}_1$, respectively. 
From (\ref{defi_G}) and (\ref{tangential_gradient_coords}) we immediately see that
\begin{align}
	(\hat{G}_\alpha)_{\beta \gamma} \circ \mathcal{C}^{-1}_1
	&= \frac{\partial (\mathcal{C}^{-1}_{1})_\beta}{\partial \theta^i} h^{ij} 
		\frac{\partial \hat{X}_\alpha}{\partial \theta^j}
		\cdot \frac{\partial \hat{X}_\alpha}{\partial \theta^k} h^{kl} 
		\frac{\partial (\mathcal{C}^{-1}_{1})_\gamma}{\partial \theta^l} 
	+ (\mu_\beta  \mu_\gamma) \circ \mathcal{C}^{-1}_1 	
	\nonumber
	\\
	&= \frac{\partial (\mathcal{C}^{-1}_{1})_\beta}{\partial \theta^i} h^{ij} 
		\hat{g}_{\alpha jk} h^{kl} 
		\frac{\partial (\mathcal{C}^{-1}_{1})_\gamma}{\partial \theta^l} 
	+ (\mu_\beta  \mu_\gamma) \circ \mathcal{C}^{-1}_1 	
	\label{G_beta_gamma}
\end{align}
and hence,
\begin{equation}
	(\hat{G}^{-1}_\alpha)^{\beta \gamma} \circ \mathcal{C}^{-1}_1
	= \frac{\partial (\mathcal{C}^{-1}_{1})^\beta}{\partial \theta^i} 
		\hat{g}_{\alpha}^{ij} 
		\frac{\partial (\mathcal{C}^{-1}_{1})^\gamma}{\partial \theta^j} 
	+ (\mu_\beta  \mu_\gamma) \circ \mathcal{C}^{-1}_1. 	
	\label{G_inv_beta_gamma}
\end{equation}
The identity (\ref{identity_1}) can then be obtained as follows
\begin{align*}
 	(\hat{G}_\alpha^{-1} \nabla_\M f \cdot \nabla_\M w) \circ \mathcal{C}_1^{-1}
 	&= (\hat{G}^{-1}_\alpha)^{\beta \gamma} \circ \mathcal{C}^{-1}_1
 	\frac{\partial (\mathcal{C}^{-1}_{1})_\beta}{\partial \theta^k} 
 	h^{kl} \frac{\partial F}{\partial \theta^l}
 	\frac{\partial (\mathcal{C}^{-1}_{1})_\gamma}{\partial \theta^s} 
 	h^{st} \frac{\partial W}{\partial \theta^t}
 	\\
 	&= 	\hat{g}_{\alpha}^{ij} 
		\frac{\partial (\mathcal{C}^{-1}_{1})}{\partial \theta^i} \cdot 
		\frac{\partial (\mathcal{C}^{-1}_{1})}{\partial \theta^k} 
 	h^{kl} \frac{\partial F}{\partial \theta^l}
 	\frac{\partial (\mathcal{C}^{-1}_{1})}{\partial \theta^j} \cdot
 	\frac{\partial (\mathcal{C}^{-1}_{1})}{\partial \theta^s} 
 	h^{st} \frac{\partial W}{\partial \theta^t}
 	\\
 	&= \hat{g}_{\alpha}^{ij} h_{ik}
 	h^{kl} \frac{\partial F}{\partial \theta^l} h_{js}
 	h^{st} \frac{\partial W}{\partial \theta^t}
 	\\
 	&= \hat{g}_\alpha^{ij} \frac{\partial F}{\partial \theta^i}
	 \frac{\partial W}{\partial \theta^j}.
\end{align*}
Please be aware that the expressions
$\hat{G}_\alpha^{-1} \nabla_\M \hat{x}_\alpha : \nabla_\M \chi$
in (\ref{weak_formulation_reparam_MCF_1}) 
and $\hat{G}_\alpha^{-1} \nabla_\M id : \nabla_\M \xi$
in (\ref{weak_formulation_reparam_MCF_2})
are meant to be the following sums
\begin{align*}
	&\hat{G}_\alpha^{-1} \nabla_\M \hat{x}_\alpha : \nabla_\M \chi
	= (\hat{G}_\alpha^{-1})^{\beta \gamma}
		\D \beta \hat{x}_\alpha \cdot \D \gamma \chi,
\\		
	&\hat{G}_\alpha^{-1} \nabla_\M id : \nabla_\M \xi
	= (\hat{G}_\alpha^{-1})^{\beta \gamma}
		\D \beta id \cdot \D \gamma \xi.		
\end{align*}
It is not difficult to show that the volume form $do_{\hat{g}_\alpha}$ satisfy the following identity
\begin{equation}
	do_{\hat{g}_{\alpha}} = \sqrt{\det \hat{G}_\alpha} do_h.
	\label{identity_of_volume_forms}
\end{equation}
Without loss of generality we can suppose that $\mu = (0, \ldots, 0, 1)^T$.
It then follows that
$$
	\mathbb{R}^{(n+1) \times n} \ni \left(
		\frac{\partial (\mathcal{C}^{-1}_1)}{\partial \theta^i}
	\right)_{i= 1, \ldots, n}
	=
	\left(
	\begin{array}{c}
		S  
		\\
		0  
	\end{array}
	\right)
$$
for some $S \in \mathbb{R}^{n \times n}$, and together with (\ref{G_beta_gamma}),
\begin{equation}
	\det \hat{G}_\alpha \circ \mathcal{C}^{-1}_1 = \det S \det (h_{ij})^{-1} \det (\hat{g}_{\alpha jk})
						\det (h_{kl})^{-1} \det S
						= \frac{\det  (\hat{g}_{\alpha jk})}{\det (h_{jk})}, 
	\label{det_G}					
\end{equation}
where we have made use of $(\det S)^2 = \det (h_{ij})$.
\end{proof}

\subsection{Weak formulation on the moving hypersurface $\Gamma(t)$}
The weak formulation in Lemma \ref{Lemma_weak_formulation_on_M} could, in principle, be used for developing an algorithm for the computation of the
mean curvature-DeTurck flow.
However, we will not follow this route here, since numerical schemes based on surface finite elements are usually formulated on the moving
hypersurface $\Gamma(t) := \hat{x}_\alpha(\M, t) \subset \mathbb{R}^{n+1}$ 
rather than on the reference manifold. 
In this section we therefore reformulate the problem on the moving hypersurface $\Gamma(t)$.

In the following we derive a weak formulation for the map
$u: \Gamma(t) \times [0,T) \rightarrow \mathbb{R}^{n+1}$
defined by
$u:= \hat{x}_\alpha \circ \hat{x}_\alpha^{-1}$.
Obviously, we have $u = id_{| \Gamma(t)}$. We define the material derivative of a
differentiable function $f$ on $\Gamma(t)$ by
$$
	(\partial^\bullet f) \circ \hat{x}_\alpha 
	= \frac{\partial}{\partial t} (f \circ \hat{x}_\alpha).
$$ 
The material derivative of $u$ is thus given by
\begin{equation}
	\partial^\bullet u = \frac{ \partial \hat{x}_\alpha}{\partial t} 
	\circ \hat{x}^{-1}_\alpha.
	\label{material_derivative_u}
\end{equation}
We want to recall that in this section the reference manifold $\M$ is assumed to be an 
$n$-dimensional hypersurface in $\mathbb{R}^{n+1}$.
Similar to the definition of the map $\hat{G}_\alpha(t)$ in (\ref{defi_G}), 
we next introduce the global representation
$\hat{H}_\alpha : \bigcup_{t \in [0,T]} \Gamma(t) \times \{t\} \rightarrow \mathbb{R}^{(n+1) \times (n+1)}$ 
of the Riemannian metric 
$\hat{h}_\alpha(t) := (\hat{x}_\alpha^{-1}(t))^\ast \mathfrak{e}$ on $\Gamma(t)$.
The map $\hat{H}_\alpha(t)$ is defined by 
\begin{equation}
	\label{defi_H}
	\hat{H}_\alpha(t) :=
	(\nabla_{\Gamma(t)} \hat{x}_\alpha^{-1}(t))^T \nabla_{\Gamma(t)} \hat{x}^{-1}_\alpha(t)
	+ \nu(t) \otimes \nu(t).
\end{equation}
Here, $\nu(t)$ is a unit normal field to $\Gamma(t)$. That $\hat{H}_\alpha(t)$
is indeed a global representation of the metric $\hat{h}_\alpha(t)$ is shown in (\ref{H_is_representation_of_h}). 
Using the material derivative, the weak formulation in Lemma \ref{Lemma_weak_formulation_on_M} can be lifted onto the moving hypersurface $\Gamma(t)$.
This is summarized in the following statement.
\begin{thm}
\label{Theorem_weak_formulation_on_Gamma}
Under the same assumptions as in Lemma \ref{Lemma_weak_formulation_on_M}, the identity map $u = id_{\Gamma(t)}$ on $\Gamma(t)$ satisfies 
\begin{align*}
0 &= \alpha \int_{\Gamma(t)} \partial^\bullet u \cdot \eta d\sigma
	+ (1- \alpha) \int_{\Gamma(t)}
		(\partial^\bullet u \cdot \nu)
		( \nu \cdot \eta) d\sigma 	
\\	
&  \quad + \int_{\Gamma(t)} 
		  \nabla_{\Gamma(t)} u : \nabla_{\Gamma(t)} \eta d\sigma
  +	\int_{\Gamma(t)} w_\alpha \cdot
  			 \big( \nabla_{\Gamma(t)} \hat{x}_\alpha^{-1}
  				\hat{H}^{-1}_\alpha \eta \big) d\sigma,
  \\
0 &= \int_{\Gamma(t)} w_\alpha \cdot \zeta d\sigma
  + \int_{\Gamma(t)} 
  		\nabla_{\Gamma(t)} \hat{x}_\alpha^{-1} : \nabla_{\Gamma(t)} \zeta d\sigma,
\end{align*}		
for all $\eta, \zeta \in H^{1,2}(\Gamma(t),\mathbb{R}^{n+1})$ and $t \in [0,T)$.
\end{thm}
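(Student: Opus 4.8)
The plan is to prove this by pushing forward the weak formulation of Lemma \ref{Lemma_weak_formulation_on_M} under the diffeomorphism $\hat{x}_\alpha(t):\M\to\Gamma(t)$. The crucial structural fact is that, by definition of $\hat{g}_\alpha(t)=\hat{x}_\alpha(t)^\ast\mathfrak{e}$, the map $\hat{x}_\alpha(t)$ is an isometry from $(\M,\hat{g}_\alpha(t))$ onto $\Gamma(t)$ equipped with its induced Euclidean metric; consequently it carries the volume form $do_{\hat{g}_\alpha}$ — which by (\ref{identity_of_volume_forms}) equals $\sqrt{\det\hat{G}_\alpha}\,do_h$ — to $d\sigma$, and it intertwines the Dirichlet form of $\hat{g}_\alpha$ on $\M$ with the Dirichlet form of the induced metric on $\Gamma(t)$. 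Given $\eta\in H^{1,2}(\Gamma(t),\mathbb{R}^{n+1})$ I would set $\chi:=\eta\circ\hat{x}_\alpha(t)$, and likewise $\xi:=\zeta\circ\hat{x}_\alpha(t)$; this is a bijection onto $H^{1,2}(\M,\mathbb{R}^{n+1})$, so it suffices to rewrite each term of (\ref{weak_formula_MCF_reference_M_1})--(\ref{weak_formula_MCF_reference_M_2}) for these choices.

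First I would treat the zeroth-order terms: since $\hat{x}_{\alpha t}=(\partial^\bullet u)\circ\hat{x}_\alpha$ by (\ref{material_derivative_u}) and $(\nu\circ\hat{x}_\alpha)$ is the pull-back of the unit normal $\nu$ on $\Gamma(t)$, the change of variables formula $\int_\M f\,do_{\hat{g}_\alpha}=\int_{\Gamma(t)}(f\circ\hat{x}_\alpha^{-1})\,d\sigma$ turns the first two integrals of (\ref{weak_formula_MCF_reference_M_1}) into $\alpha\int_{\Gamma(t)}\partial^\bullet u\cdot\eta\,d\sigma+(1-\alpha)\int_{\Gamma(t)}(\partial^\bullet u\cdot\nu)(\nu\cdot\eta)\,d\sigma$, and the mass term of (\ref{weak_formula_MCF_reference_M_2}) into $\int_{\Gamma(t)}w_\alpha\cdot\zeta\,d\sigma$ once one defines $w_\alpha:=v_\alpha\circ\hat{x}_\alpha^{-1}$ on $\Gamma(t)$. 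For the divergence-form terms I would use identity (\ref{identity_1}) to read $\hat{G}_\alpha^{-1}\nabla_\M\hat{x}_\alpha:\nabla_\M\chi$ and $\hat{G}_\alpha^{-1}\nabla_\M id:\nabla_\M\xi$ as the componentwise $\hat{g}_\alpha$-Dirichlet pairings of $\hat{x}_\alpha$ with $\chi$, respectively of $id_\M$ with $\xi$; pushing these forward by the isometry and using $\hat{x}_\alpha\circ\hat{x}_\alpha^{-1}=u=id_{\Gamma(t)}$ and $id_\M\circ\hat{x}_\alpha^{-1}=\hat{x}_\alpha^{-1}$ yields $\int_{\Gamma(t)}\nabla_{\Gamma(t)}u:\nabla_{\Gamma(t)}\eta\,d\sigma$ and $\int_{\Gamma(t)}\nabla_{\Gamma(t)}\hat{x}_\alpha^{-1}:\nabla_{\Gamma(t)}\zeta\,d\sigma$ respectively.

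The remaining, and most delicate, term is $\int_\M\big((\nabla_\M\hat{x}_\alpha)\,v_\alpha\big)\cdot\chi\,do_{\hat{g}_\alpha}$, which is where the factor $\hat{H}_\alpha^{-1}$ in the statement must appear. Here I would first observe that each column of $\nabla_\M\hat{x}_\alpha$ is a tangential gradient on $\M$, hence tangent to $\M$, so the component of $v_\alpha$ normal to $\M$ does not contribute and $(\nabla_\M\hat{x}_\alpha)\,v_\alpha$ depends only on the $\M$-tangential part of $v_\alpha$; consequently it is the image of that tangential vector field under the differential of $\hat{x}_\alpha$, and is therefore tangent to $\Gamma(t)$. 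Composing with $\hat{x}_\alpha^{-1}$ and using the chain rule applied to $\hat{x}_\alpha\circ\hat{x}_\alpha^{-1}=id_{\Gamma(t)}$, I can re-express $\big((\nabla_\M\hat{x}_\alpha)\,v_\alpha\big)\circ\hat{x}_\alpha^{-1}$ through $w_\alpha=v_\alpha\circ\hat{x}_\alpha^{-1}$ and $\nabla_{\Gamma(t)}\hat{x}_\alpha^{-1}$; the point at which $\hat{H}_\alpha^{-1}$ enters is that $\nabla_{\Gamma(t)}\hat{x}_\alpha^{-1}$ is only a rank-$n$ tangential gradient on $\Gamma(t)$, so inverting this relation requires completing it by the normal projection $\nu\otimes\nu$, i.e. passing to $\hat{H}_\alpha$ of (\ref{defi_H}) — which is exactly the ambient-coordinate analogue on $\Gamma(t)$ of the identities (\ref{G_beta_gamma})--(\ref{G_inv_beta_gamma}) used on $\M$, together with the representation property of $\hat{H}_\alpha$. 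Pairing the resulting expression with $\eta$ yields the term $\int_{\Gamma(t)}w_\alpha\cdot\big(\nabla_{\Gamma(t)}\hat{x}_\alpha^{-1}\,\hat{H}_\alpha^{-1}\eta\big)\,d\sigma$, and since $\eta\mapsto\chi$ and $\zeta\mapsto\xi$ are surjective the two transformed identities hold for all $\eta,\zeta\in H^{1,2}(\Gamma(t),\mathbb{R}^{n+1})$, which is the claim. I expect the verification of this pseudo-inverse identity involving $\hat{H}_\alpha^{-1}$ to be the main obstacle, as it requires careful bookkeeping of the tangential/normal splittings in $\mathbb{R}^{n+1}$ on both $\M$ and $\Gamma(t)$ and of how $\nabla_\M\hat{x}_\alpha$ and $\nabla_{\Gamma(t)}\hat{x}_\alpha^{-1}$ act between the respective tangent bundles.
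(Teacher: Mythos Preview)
Your proposal is correct and follows essentially the same route as the paper: push forward the weak formulation of Lemma~\ref{Lemma_weak_formulation_on_M} through the isometry $\hat{x}_\alpha(t):(\M,\hat{g}_\alpha(t))\to(\Gamma(t),\mathfrak{e})$, handle the zeroth-order and Dirichlet terms by change of variables and the chain rule, and reduce the remaining $v_\alpha$-term via the identity $\nabla_\M(\hat{x}_\alpha)^\beta=\big((\hat{H}_\alpha^{-1})^{\beta\gamma}\underline{D}'_\gamma\hat{x}_\alpha^{-1}\big)\circ\hat{x}_\alpha$, which is precisely the ``pseudo-inverse'' relation you anticipate and which the paper establishes by a direct local-coordinate computation using (\ref{H_inverse_in_local_coordinates}) and (\ref{tangential_gradient_coords}).
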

\begin{proof}
In order to be able to distinguish between the components of the tangential gradients
on $\M$ and on $\Gamma(t)$, we introduce the following notation
\begin{align*}
	 \left(  
		\begin{array}{c}
			\D 1 f
			\\
				\vdots
			\\
			\D {n+1} f
		\end{array}			
	 \right)
	 := \nabla_{\M} f,
	 \quad
	 \textnormal{and}
	 \quad
	 \left(  
		\begin{array}{c}
			\D 1' f
			\\
				\vdots
			\\
			\D {n+1}' f
		\end{array}			
	 \right)
	 := \nabla_{\Gamma(t)} f,
\end{align*}
where $f$ is a differentiable function on $\M$, or on $\Gamma(t)$,
respectively. 
For differentiable functions 
$f,w: \Gamma(t) \times [0,T) \rightarrow \mathbb{R}^{n+1}$,
we obtain the identity
\begin{equation}
	\D \beta (f \circ \hat{x}_\alpha) 
	= (\D \kappa' f) \circ \hat{x}_\alpha \D \beta (\hat{x}_\alpha)^\kappa,
	\label{chain_rule_tangential_gradients}
\end{equation}
and hence,
\begin{align*}
	(\hat{G}_\alpha^{-1})^{\beta \gamma} \D \beta (f \circ \hat{x}_\alpha) 
									\D \gamma (w \circ \hat{x}_\alpha) 
	&= (\hat{G}_\alpha^{-1})^{\beta \gamma}
		(\D \kappa' f) \circ \hat{x}_\alpha \D \beta (\hat{x}_\alpha)^\kappa
		(\D \iota' w) \circ \hat{x}_\alpha \D \gamma (\hat{x}_\alpha)^\iota
	\\
	&= (\hat{G}_\alpha^{-1})^{\beta \gamma} \D \beta (\hat{x}_\alpha)^\kappa
		\D \gamma (\hat{x}_\alpha)^\iota
		(\D \kappa' f \D \iota' w) \circ \hat{x}_\alpha 
	\\
	&= P^{\kappa \iota} \circ \hat{x}_\alpha 
		(\D \kappa' f \D \iota' w) \circ \hat{x}_\alpha 	
	= 	(\D \kappa' f \D \kappa' w) \circ \hat{x}_\alpha, 	
\end{align*}
where we have made use of the identity 
$(\hat{G}_\alpha^{-1})^{\beta \gamma} \D \beta (\hat{x}_\alpha)^\kappa
		\D \gamma (\hat{x}_\alpha)^\iota = P^{\kappa \iota} \circ \hat{x}_\alpha$.
Applying this result, it is easy to see that
\begin{align}
	& \hat{G}_\alpha^{-1} \nabla_\M \hat{x}_\alpha : \nabla_\M \chi
	= \left(
			\nabla_{\Gamma(t)} u : 
						\nabla_{\Gamma(t)} (\chi \circ \hat{x}^{-1}_\alpha)
		\right) \circ \hat{x}_\alpha,
 \label{result_1}		
\\	
	& \hat{G}_\alpha^{-1} \nabla_\M id : \nabla_\M \xi
	= \left(
			\nabla_{\Gamma(t)} \hat{x}_\alpha^{-1} : 
						\nabla_{\Gamma(t)} (\xi \circ \hat{x}^{-1}_\alpha)
		\right) \circ \hat{x}_\alpha.
 \label{result_2}		
\end{align}	
As in (\ref{G_beta_gamma}) and (\ref{G_inv_beta_gamma}), we obtain
the following relations between the components of the map $\hat{H}_{\alpha}(t)$
and the components $\hat{h}_{\alpha ij}(t)$ of the Riemannian metric 
$\hat{h}_\alpha(t) := (\hat{x}_\alpha^{-1}(t))^\ast \mathfrak{e}$ on $\Gamma(t)$
\begin{align}
& (\hat{H}_\alpha)_{\beta \gamma} \circ \hat{X}_\alpha
	= \frac{\partial (\hat{X}_\alpha)_\beta}{\partial \theta^i} \hat{g}_\alpha^{ij} 
		\hat{h}_{\alpha jk} \hat{g}^{kl}_\alpha 
		\frac{\partial (\hat{X}_\alpha)_\gamma}{\partial \theta^l} 
	+ (\nu_\beta  \nu_\gamma) \circ \hat{X}_\alpha,
\label{H_is_representation_of_h}	
\\
& (\hat{H}^{-1}_\alpha)^{\beta \gamma} \circ \hat{X}_\alpha
	= \frac{\partial (\hat{X}_\alpha)^\beta}{\partial \theta^i} 
		\hat{h}_{\alpha}^{ij} 
		\frac{\partial (\hat{X}_\alpha)^\gamma}{\partial \theta^j} 
	+ (\nu_\beta  \nu_\gamma) \circ \hat{X}_\alpha,
\end{align}
where
\begin{align*}
	& \hat{h}_{\alpha ij} = \frac{\partial (\hat{x}^{-1}_\alpha \circ
									 \hat{X}_\alpha)}{\partial \theta^i}
		\cdot \frac{\partial (\hat{x}^{-1}_\alpha \circ
									 \hat{X}_\alpha)}{\partial \theta^j},
		\quad
		\textnormal{and}
		\quad
		(\hat{h}^{ij}_\alpha)_{i,j=1, \ldots, n}
		:= (\hat{h}_{\alpha ij})^{-1}_{i,j=1, \ldots, n},
	\\
	& \hat{g}_{\alpha ij} = \frac{\partial \hat{X}_\alpha}{\partial \theta^i}
		\cdot \frac{\partial \hat{X}_\alpha}{\partial \theta^j},
		\quad
		\textnormal{and}
		\quad
		(\hat{g}^{ij}_\alpha)_{i,j=1, \ldots, n}
		:= (\hat{g}_{\alpha ij})^{-1}_{i,j=1, \ldots, n}.
\end{align*}
Please note that $\hat{h}_\alpha(t)$ is a metric on $\Gamma(t)$,
whereas $h(t)$ is a metric on $\M$.
However, since $\hat{X}_\alpha := \hat{x}_\alpha \circ \mathcal{C}^{-1}_1$,
we have 
$$
	\hat{h}_{\alpha ij} = \frac{\partial \mathcal{C}^{-1}_1}{\partial \theta^i}
		\cdot \frac{\partial \mathcal{C}^{-1}_1}{\partial \theta^j} = h_{ij}.
$$
It follows that
\begin{align}
& (\hat{H}_\alpha)_{\beta \gamma} \circ \hat{X}_\alpha
	= \frac{\partial (\hat{X}_\alpha)_\beta}{\partial \theta^i} \hat{g}_\alpha^{ij} 
		h_{jk} \hat{g}^{kl}_\alpha 
		\frac{\partial (\hat{X}_\alpha)_\gamma}{\partial \theta^l} 
	+ (\nu_\beta  \nu_\gamma) \circ \hat{X}_\alpha,
\label{H_in_local_coordinates}	
\\
& (\hat{H}^{-1}_\alpha)^{\beta \gamma} \circ \hat{X}_\alpha
	= \frac{\partial (\hat{X}_\alpha)^\beta}{\partial \theta^i} 
		h^{ij} 
		\frac{\partial (\hat{X}_\alpha)^\gamma}{\partial \theta^j} 
	+ (\nu_\beta  \nu_\gamma) \circ \hat{X}_\alpha.
\label{H_inverse_in_local_coordinates}	
\end{align}
Using the latter identity and (\ref{tangential_gradient_coords}), we deduce that
\begin{align}
	( \nabla_\M (\hat{x}_\alpha)^\beta ) \circ \mathcal{C}^{-1}_1
	 &= \frac{\partial (\hat{X}_\alpha)^\beta}{\partial \theta^i}  h^{ij} 
	 	\frac{\partial \mathcal{C}^{-1}_1}{\partial \theta^j}
	 =  \frac{\partial (\hat{X}_\alpha)^\beta}{\partial \theta^i}  h^{ij} 
	 	\frac{\partial (\hat{x}^{-1}_\alpha \circ \hat{X}_\alpha)}
	 									{\partial \theta^j}
	\nonumber	 
	 \\
	 &= 	\frac{\partial (\hat{X}_\alpha)^\beta}{\partial \theta^i}  h^{ij} 
	 	\hat{g}_{\alpha jl} \hat{g}_\alpha^{lk}
	 	\frac{\partial (\hat{x}^{-1}_\alpha \circ \hat{X}_\alpha)}
	 									{\partial \theta^k}
	\nonumber	 
	 \\
	 &= 	\frac{\partial (\hat{X}_\alpha)^\beta}{\partial \theta^i}  h^{ij} 
	 	\frac{\partial \hat{X}_\alpha}{\partial \theta^j}
	 	\cdot
	 	\frac{\partial \hat{X}_\alpha}{\partial \theta^l}
	 	\hat{g}_\alpha^{lk}
	 	\frac{\partial (\hat{x}^{-1}_\alpha \circ \hat{X}_\alpha)}
	 									{\partial \theta^k}
	\nonumber	 
	 \\
	 &= 	\frac{\partial (\hat{X}_\alpha)^\beta}{\partial \theta^i}  h^{ij} 
	 	\frac{\partial (\hat{X}_\alpha)^\gamma}{\partial \theta^j}
	 	(\D \gamma' \hat{x}^{-1}_\alpha) \circ \hat{X}_\alpha		
	\nonumber	 
	 \\
	 &=	(\hat{H}_\alpha^{-1})^{\beta \gamma} \circ \hat{X}_\alpha 
	 	(\D \gamma' \hat{x}^{-1}_\alpha) \circ \hat{X}_\alpha,
	 \nonumber	
\end{align}
and hence,
\begin{equation}
	 \nabla_\M (\hat{x}_\alpha)^\beta = 
		\left(	 		
	 		(\hat{H}_\alpha^{-1})^{\beta \gamma} 
	 		\D \gamma' \hat{x}^{-1}_\alpha
	 	\right) \circ \hat{x}_\alpha.	
	 \label{result_3}	
\end{equation}		
The integral on $\M$ with respect to the volume form $do_{\hat{g}_\alpha}$
is transformed into an integral on $\Gamma(t)$ with respect to the volume form $d\sigma$ induced by the Hausdorff measure
in the following way
\begin{align*}
	 \int_\M f do_{\hat{g}_\alpha}
 &= \int_{\Omega} f \circ \mathcal{C}^{-1}_1 
 \sqrt{\det (\hat{g}_{\alpha ij})} d^n \theta
\\
 &= \int_{\Omega} (f \circ \hat{x}^{-1}_\alpha) \circ \hat{X}_\alpha
   \sqrt{
	\det \left(\frac{\partial \hat{X}_\alpha}{\partial \theta^i}
		\cdot \frac{\partial \hat{X}_\alpha}{\partial \theta^j} \right)   
   }
   d^n \theta
\\  
  &= \int_{\Gamma(t)} f \circ \hat{x}_\alpha^{-1} d\sigma, 
\end{align*}
where $f: \M \rightarrow \mathbb{R}$ denotes an integrable function
with $supp f \subset U$ and 
$\mathcal{C}_1: U \subset \M \rightarrow \Omega \subset \mathbb{R}^{n}$ is
a local coordinate chart of $\M$. This result can be easily generalized using a partition of unity.
From (\ref{weak_formulation_reparam_MCF_1}) and (\ref{weak_formulation_reparam_MCF_2}) we then infer that
\begin{align*}
0 &= \alpha \int_{\Gamma(t)} (\hat{x}_{\alpha t} \cdot \chi) 
	\circ \hat{x}_\alpha^{-1} d\sigma
	+ (1- \alpha) \int_{\Gamma(t)}
	\big(	
	(\hat{x}_{\alpha t} \cdot (\nu \circ \hat{x}_\alpha))
	( (\nu \circ \hat{x}_\alpha) \cdot \chi ) \big) 
		\circ \hat{x}_\alpha^{-1} d\sigma 	
\\	
&  \quad + \int_{\Gamma(t)} 
	\left(
		\hat{G}_\alpha^{-1} \nabla_\M \hat{x}_\alpha : \nabla_\M \chi 
	\right) \circ \hat{x}_\alpha^{-1} d\sigma
  +	\int_{\Gamma(t)} 
  		\Big(
  			 \big( (\nabla_\M \hat{x}_\alpha) v_\alpha \big) \cdot \chi 
		\Big) \circ \hat{x}_\alpha^{-1} d\sigma,
  \\
0 &= \int_{\Gamma(t)} ( v_\alpha \cdot \xi ) \circ \hat{x}_\alpha^{-1} d\sigma
  + \int_{\Gamma(t)} 
  	\left(
  		\hat{G}_\alpha^{-1} \nabla_\M id : \nabla_\M \xi 
  	\right) \circ \hat{x}_\alpha^{-1} d\sigma.
\end{align*}	
Using the notations
$w_\alpha := v_\alpha \circ \hat{x}^{-1}_\alpha$,
$\eta := \chi \circ \hat{x}^{-1}_\alpha$ and 
$\zeta := \xi \circ \hat{x}^{-1}_\alpha$,
and applying the identities (\ref{material_derivative_u}), 
(\ref{result_1}), (\ref{result_2}) and (\ref{result_3}) finally proves the claim. 
\end{proof}

\subsection{Numerical scheme on moving hypersurfaces}
We will now discretize the weak formulation which we received in Theorem \ref{Theorem_weak_formulation_on_Gamma} in space and time. 
The reference hypersurface $\M \subset \mathbb{R}^{n+1}$ 
is supposed to be approximated by a piecewise linear,
polyhedral hypersurface 
$$
	\M_h := \bigcup_{T \in \mathcal{T}_h } T \subset \mathbb{R}^{n+1}.
$$	 
Here, $\mathcal{T}_h$ is an admissible triangulation consisting of 
non-degenerate $n$-dimensional simplices in $\mathbb{R}^{n+1}$. 
The finite element space $\mathcal{S}(\M_h)$
is the set of piecewise linear, continuous functions
\begin{equation}
	\mathcal{S}(\M_h) := 
	\left\{ 
		\chi_h \in C^0(\M_h) ~|~ \chi_{h|T} ~ \textnormal{is a linear polynomial
		for all} ~ T \in \mathcal{T}_h
	\right\}.
	\label{finite_element_space}
\end{equation}
$\mathcal{S}(\M_h)$ is a linear space of dimension $N$, where $N$
is the number of the vertices $p_j \in \M_h$, $j=1, \ldots , N$,
of the triangulation. It is spanned by the basis functions 
$\phi_i \in \mathcal{S}(\M_h)$ defined by $\phi_i (p_j) = \delta_{ij}$,
$\forall i,j=1, \ldots,N$.

For the time discretization we introduce the notation 
$f^m = f(\cdot, m \tau)$ for the discrete time levels 
$\{m \tau ~|~ m=0, \ldots, M_\tau \in \mathbb{N} \}$
with time step size $\tau >0$ and $M_\tau \tau < T$.
The approximation of the moving hypersurface
$\Gamma(t)$ at time $m\tau$ will be denoted by
$$
	\Gamma_h^m := \bigcup_{T \in \mathcal{T}_h^m} T \subset \mathbb{R}^{n+1},
$$ 
where $\mathcal{T}^m_h$ is an admissible triangulation of $n$-simplices in 
$\mathbb{R}^{n+1}$. 
The finite element spaces $\mathcal{S}(\Gamma^m_h)$
are defined in accordance to (\ref{finite_element_space}).
The tangential gradient $\nabla_{\Gamma_{h}^m}$ on $\Gamma_h^m$ is defined
piecewise on each $n$-simplex $T \in \mathcal{T}_h^m$.
Taking the geometric quantities from the previous time step,
it is possible to linearize the problem in each time step
and to obtain a semi-implicit scheme 
in the spirit of \cite{Dz91}.
In order to make the scheme more implicit, we observe that $\nabla_\Gamma u = P$
and write 
\begin{align*}
	w_\alpha \cdot
  			 \big( \nabla_{\Gamma(t)} \hat{x}_\alpha^{-1}
  				\hat{H}^{-1}_\alpha \eta \big)
  	&= w_\alpha \cdot
  			 \big( \nabla_{\Gamma(t)} \hat{x}_\alpha^{-1}
  				\hat{H}^{-1}_\alpha P \eta \big)		
\\ 
  	&= P 	\hat{H}^{-1}_\alpha	(\nabla_{\Gamma(t)} \hat{x}_\alpha^{-1})^T w_\alpha
  		\cdot \eta
\\
  	&= (\nabla_\Gamma u) 
  		\hat{H}^{-1}_\alpha	(\nabla_{\Gamma(t)} \hat{x}_\alpha^{-1})^T w_\alpha
  		\cdot \eta.			
\end{align*}
This leads to the following scheme.
	
\begin{alg}
\label{algo_moving_hypersurface}
Let $\alpha \in (0,\infty)$.
For a given initial polyhedral hypersurface $\Gamma_h^0 = \hat{x}_h^0 (\M_h)$ 
with $\hat{x}_h^0 \in \mathcal{S}(\M_h)^{n+1}$, 
set $y_h^0 := (\hat{x}_h^0)^{-1} \in \mathcal{S}(\Gamma_h^0)^{n+1}$ 
and determine for $m=0, \ldots, M_\tau -1$
solutions $u_h^{m+1} \in \mathcal{S}(\Gamma_h^m)^{n+1}$
and $w_h^m \in \mathcal{S}(\Gamma_h^m)^{n+1}$ such that
\begin{align*}
& \frac{\alpha}{\tau} \int_{\Gamma^m_h} u^{m+1}_h \cdot \eta_h d\sigma
	+ \frac{1- \alpha}{\tau} \int_{\Gamma^m_h}
		(u^{m+1}_h \cdot \nu^m_h)
		( \nu^m_h \cdot \eta_h ) d\sigma	
	 + \int_{\Gamma^m_h} 
		  \nabla_{\Gamma^m_h} u^{m+1}_h : \nabla_{\Gamma^m_h} \eta_h d\sigma
\\	
&  \ \ 	  
  +	\int_{\Gamma^m_h} 
  		\nabla_{\Gamma^m_h} u^{m+1}_h (\hat{H}^m_h)^{-1} 
  				(\nabla_{\Gamma^m_h} y^m_h)^T w^m_h \cdot \eta_h d\sigma
  =  \int_{\Gamma^m_h} \frac{\alpha}{\tau} (\tilde{u}^m_h \cdot \eta_h) 
	+ \frac{1- \alpha}{\tau}
		(\tilde{u}^m_h \cdot \nu^m_h)
		( \nu^m_h \cdot \eta_h ) d\sigma	,
  \\
& \int_{\Gamma^m_h} w^m_h \cdot \zeta_h d\sigma
  + \int_{\Gamma^m_h} 
  		\nabla_{\Gamma^m_h} y^m_h : \nabla_{\Gamma^m_h} \zeta_h d\sigma
  = 0,
\end{align*}	
for all $ \eta_h, \zeta_h \in \mathcal{S}(\Gamma_h^m)^{n+1}$, where
$\nu_h^m$ is a unit normal to $\Gamma_h^m$, and
\begin{align*}
	& \tilde{u}^{m}_h := id_{| \Gamma^{m}_h},
\\	
	& \hat{H}^{m}_h 
		:= (\nabla_{\Gamma_h^{m}} y^{m}_h)^T \nabla_{\Gamma^{m}_h} y^{m}_h
			+ \nu^{m}_h \otimes \nu^{m}_h.
\end{align*}
The hypersurface $\Gamma_h^{m+1}$ is defined by
$$
	\Gamma^{m+1}_h := u^{m+1}_h(\Gamma^m_h),
$$
and $y^{m+1}_h \in \mathcal{S}(\Gamma_h^{m+1})^{n+1}$ is set to be
\begin{align*}
	& y^{m+1}_h := y^m_h \circ (u^{m+1}_h)^{-1}.
\end{align*}
\end{alg}
\begin{remark}
Please note that we do not have to keep track of two different triangulations in the above algorithm. The reference mesh $\M_h$ only enters
into the scheme via the maps $y_h^m$. As soon as the map $y_h^0$ is initialized, the reference mesh $\M_h$ is not needed any more.
Moreover, as we will describe in Section \ref{Numerical_results_MCF}, the representation vector $\mathbf{Y}$ of the maps $y_h^m$
does not depend on the discrete time levels $m$. So in the computer code, the only remnant of the reference manifold is a constant vector.
\end{remark}
\begin{remark}
In numerical experiments we observe that the above algorithm is able to redistribute the mesh points of the evolving surface $\Gamma_h^m$ in such a way
that, at least for small $\alpha$, the discrete surfaces $\Gamma_h^m$ and $\M_h$ are of similar mesh quality. 
It is therefore crucial to have a reference mesh $\M_h$ of sufficiently high quality.
\end{remark}
\begin{remark}
Please note that the term $\nabla_{\Gamma^m_h} u^{m+1}_h (\hat{H}^m_h)^{-1} (\nabla_{\Gamma^m_h} y^m_h)^T w^m_h$
in the above algorithm is the product of the first order term $\nabla_{\Gamma^m_h} u^{m+1}_h$ and the second order term $w^m_h$,
which is the weak Laplacian of $y_h^m$.
Since first order terms are, roughly speaking, more critical in numerical simulations with respect to stability,
an alternative approach that only leads to a (pure) second order term might be desirable in certain cases.   
As we will see below, a slight variation of the original DeTurck trick indeed gives rise to a scheme where the elliptic operator only consists of second order terms. 
By this means, we will obtain an algorithm for the computation of a variant of the mean curvature-DeTurck flow.
\end{remark}

\section{Reparametrizations via a variant of the DeTurck trick}
\label{reparametrizations_via_a_generalized_DeTurck_trick}

In the following we introduce a variant of the DeTurck trick.
We start by changing the system of equations $(P)$ in Section $2$ in the following way  
\[
(P')=
\left\{
\begin{aligned}
	& \frac{\partial}{\partial t} x = - (H \nu) \circ x,
	\quad \textnormal{with $x(\cdot,0) = x_0$ on $\M$,}
	\\
	& \frac{\partial}{\partial t} \psi_\alpha = \frac{1}{\alpha} \Delta_{h,g(t)} \psi_\alpha,
	\quad \textnormal{with $g(t) := x(t)^\ast \mathfrak{e}$  
		and $\psi_\alpha(\cdot,0) = id(\cdot)$ on $\M$,}
\end{aligned}
\right.
\]
where $h$ is again a fixed yet arbitrary smooth Riemannian metric on $\M$.
The difference between problem $(P)$ in Section $2$ and $(P')$ is that the metrics
$h$ and $g(t) := x(t)^\ast \mathfrak{e}$ in the map Laplacian have been permuted.
By this means we will obtain a reparametrized flow that is (almost) in divergence form
if we define the reparametrization of the mean curvature flow by
\begin{equation}
	\hat{x}_\alpha(t) := (\psi_\alpha(t)^{-1})_\ast x(t) := (\psi_\alpha(t))^\ast x(t)
	:= x(t) \circ \psi_\alpha(t).
	\label{alternative_reparametrisation_of_MCF} 
\end{equation}
In contrast to definition (\ref{pull_back_of_MCF}) the reparametrization is here defined
by using the push-forward instead of the pull-back. 
Similar as in (\ref{metric_is_pull_back_metric}),
we first observe that the the push-forward metric $(\psi_\alpha^{-1}(t))_\ast g(t)$
and the induced metric $(\hat{x}_\alpha(t))^\ast \mathfrak{e}$ are equal
\begin{equation}
	(\psi_\alpha^{-1}(t))_\ast g(t)
	= \psi_\alpha(t)^\ast g(t)
	= \psi_\alpha(t)^\ast (x(t)^\ast \mathfrak{e})
	= (x(t) \circ \psi_\alpha(t))^\ast \mathfrak{e}
	= (\hat{x}_\alpha(t))^\ast \mathfrak{e} =: \hat{g}_\alpha (t).
	\label{push_forward_metric}
\end{equation}
The same procedure as in Section $2$ then leads to the following evolution equation of the
reparametrized flow
\begin{align*}
	\frac{\partial}{\partial t} \hat{x}_\alpha(t)
	&= \frac{\partial}{\partial t} x(t) \circ \psi_\alpha(t) 
		+ (\nabla x \circ \psi_\alpha(t))\left( \frac{\partial }{\partial t} \psi_\alpha(t) \right)
	\\
	&= (\Delta_{g(t)} x) \circ \psi_\alpha(t) + \frac{1}{\alpha}(\nabla x \circ \psi_\alpha(t))
	 \left( \Delta_{h, g(t)} \psi_\alpha (t) \right)
	 \\
	&= - (H \nu) \circ \hat{x}_\alpha(t) + \frac{1}{\alpha}(\nabla x \circ \psi_\alpha(t))
	 \left( \Delta_{h, g(t)} \psi_\alpha (t) \right)
	 \\
	&=  \Delta_{\hat{g}_\alpha(t)} \hat{x}_\alpha + \frac{1}{\alpha}(\nabla x \circ \psi_\alpha(t))
	 \left( \Delta_{h, g(t)} \psi_\alpha (t) \right).
\end{align*}
In the following we will show that the second term on the right hand side is given by
\begin{equation}
	(\nabla x \circ \psi_\alpha(t))
	 \left( \Delta_{h, g(t)} \psi_\alpha (t) \right)
	 = (P \circ \hat{x}_\alpha) \Delta_h \hat{x}_\alpha,
	 \label{formula_tangential_part}
\end{equation}
and hence,
$$
	\frac{\partial}{\partial t} \hat{x}_\alpha(t) = \Delta_{\hat{g}_\alpha(t)} \hat{x}_\alpha
	+ \frac{1}{\alpha} (P \circ \hat{x}) \Delta_h \hat{x}_\alpha.
$$ 
We are not aware that the above equation has yet been considered elsewhere.
For $\Psi_\alpha = \mathcal{C}_2 \circ \psi_\alpha  \circ \mathcal{C}_1^{-1}$,
$X := x \circ \mathcal{C}_2^{-1}$ and $\hat{X}_\alpha := \hat{x}_\alpha \circ \mathcal{C}_1^{-1}$
we obtain that
\begin{align*}
	\frac{\partial \hat{X}_\alpha}{\partial \theta^k} 
	\left( \frac{\partial (\Psi_\alpha^{-1})^{k}}{\partial \theta^l} \circ \Psi_\alpha \right)
	= \frac{\partial X}{\partial \theta^j} \circ \Psi_\alpha \frac{\partial \Psi_\alpha^j}{\partial \theta^k} 
	\left( \frac{\partial (\Psi_\alpha^{-1})^{k}}{\partial \theta^l} \circ \Psi_\alpha \right)
	= \frac{\partial X}{\partial \theta^l} \circ \Psi_\alpha.
\end{align*}
Therefore, in local coordinates $\mathcal{C}_1$ the term $\nabla x \circ \psi_\alpha(t) \left( \Delta_{h, g(t)} \psi_\alpha (t) \right)$ is given by
\begin{align*}
	&\frac{\partial \hat{X}_\alpha}{\partial \theta^k} 
	\left( \frac{\partial (\Psi_\alpha^{-1})^{k}}{\partial \theta^l} \circ \Psi_\alpha \right)
	h^{mn} \left( \frac{\partial^2 \Psi_\alpha^l}{\partial \theta^m \partial \theta^n}
	 - \Gamma(h)_{mn}^p \frac{\partial \Psi_\alpha^l}{\partial \theta^p}
	 + \Gamma(g)_{ij}^l \circ \Psi_\alpha \frac{\Psi_\alpha^i}{\partial \theta^m}
	 \frac{\Psi_\alpha^j}{\partial \theta^n}
	\right)
\\	
	&= - \frac{\partial \hat{X}_\alpha}{\partial \theta^k} h^{mn} \Gamma(h)^k_{mn}
	  + \frac{\partial \hat{X}_\alpha}{\partial \theta^k} h^{mn} 
	\left( \frac{\partial (\Psi_\alpha^{-1})^{k}}{\partial \theta^l} \circ \Psi_\alpha \right)
	\left( \frac{\partial^2 \Psi_\alpha^l}{\partial \theta^m \partial \theta^n}
	 + \Gamma(g)_{ij}^l \circ \Psi_\alpha \frac{\Psi_\alpha^i}{\partial \theta^m}
	 \frac{\Psi_\alpha^j}{\partial \theta^n}
	\right),
\end{align*}
where in this case the indices $k,m,n,p$ refer to the coordinate chart $\mathcal{C}_1$ and the indices $i,j,l$
to the coordinate chart $\mathcal{C}_2$.
From (\ref{push_forward_metric}) it follows that
\begin{align*}
	& \hat{g}_{\alpha mn} 
		= (g_{ij} \circ \Psi_\alpha) \frac{\partial \Psi^i_\alpha}{\partial \theta^m}
		\frac{\partial \Psi^j_\alpha}{\partial \theta^n}, \quad \textnormal{and}
	\quad g^{ij} \circ \Psi_\alpha 
		= (\hat{g}_\alpha^{mn} ) \frac{\partial \Psi^i_\alpha}{\partial \theta^m}
		\frac{\partial \Psi^j_\alpha}{\partial \theta^n},
	\\
	&\textnormal{as well as}
	\quad  \frac{\partial (\Psi_\alpha)^l}{\partial \theta^k} \Gamma(\hat{g})^k_{mn} 
	= \left(
	    \frac{\partial^2 \Psi_\alpha^l }{\partial \theta^m \partial \theta^n}
		+ \Gamma(g)^l_{ij} \circ \Psi_\alpha
		\frac{\partial \Psi^i_\alpha}{\partial \theta^m} 
		\frac{\partial \Psi^j_\alpha}{\partial \theta^n}
	  \right),
\end{align*}
and hence,
$$
	\left( \nabla x \circ \psi_\alpha(t) \left( \Delta_{h, g(t)} \psi_\alpha (t) \right) \right)
	\circ \mathcal{C}_1^{-1} = 
	- \frac{\partial \hat{X}_\alpha}{\partial \theta^k} h^{mn} \Gamma(h)^k_{mn}
	  + \frac{\partial \hat{X}_\alpha}{\partial \theta^k} h^{mn} \Gamma(\hat{g})^k_{mn}. 
$$
Since identity (\ref{formula_Christoffel_symbols}) is still true, we finally obtain
\begin{align*}
	\left( \nabla x \circ \psi_\alpha(t) \left( \Delta_{h, g(t)} \psi_\alpha (t) \right) \right)
	\circ \mathcal{C}_1^{-1} 
	&= - \frac{\partial \hat{X}_\alpha}{\partial \theta^k} h^{mn} \Gamma(h)^k_{mn} 
+ h^{mn} (P \circ \hat{X}_\alpha) \frac{\partial^2 \hat{X}_\alpha}{\partial \theta^m \partial \theta^n}
\\
	&= (P \circ \hat{X}_\alpha) h^{mn} \left(
		 \frac{\partial^2 \hat{X}_\alpha}{\partial \theta^m \partial \theta^n} 
		 - \Gamma(h)^k_{mn} \frac{\partial \hat{X}_\alpha}{\partial \theta^k}
	\right),
\end{align*}
which proves (\ref{formula_tangential_part}). 
Instead of (\ref{inverse_map}) we now apply the map 
\begin{equation*}
\left( (\nu \circ \hat{x}_\alpha) \otimes (\nu \circ \hat{x}_\alpha) 
	+ \alpha \hat{\rho} P \circ \hat{x}_\alpha \right)
 = \left(\alpha \hat{\rho} \unit + (1 - \alpha \hat{\rho}) 
 	(\nu \circ \hat{x}_\alpha) \otimes (\nu \circ \hat{x}_\alpha) \right),
 \label{inverse_map_2}
\end{equation*}
where $\hat{\rho} := \sqrt{\det (h_{ij})}/ \sqrt{\det (\hat{g}_{\alpha ij})}$. 
Please note that the definition
of $\hat{\rho}$ does not dependent on the local coordinates in which it is evaluated.
This yields
\begin{thm}
The reparametrized mean curvature flow $\hat{x}_\alpha: \M \times [0,T) \rightarrow \mathbb{R}^{n+1}$
defined in (\ref{alternative_reparametrisation_of_MCF}) satisfies
\begin{equation}
\left( \alpha \hat{\rho} \unit + (1 - \alpha \hat{\rho}) (\nu \circ \hat{x}_\alpha) \otimes (\nu \circ \hat{x}_\alpha) \right) \frac{\partial}{\partial t} \hat{x}_\alpha
= \Delta_{\hat{g}_\alpha(t)} \hat{x}_\alpha + \hat{\rho} (P \circ \hat{x}_\alpha) \Delta_h \hat{x}_\alpha,
\label{reparam_MCF_3}
\end{equation}
with $\hat{x}_\alpha(\cdot, 0) = x_0(\cdot)$ on $\M$.
\end{thm}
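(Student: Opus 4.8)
The plan is to treat the final theorem not as a fresh computation but as a rearrangement of the evolution equation just derived. Indeed, combining the chain rule for $\frac{\partial}{\partial t}(x(t)\circ\psi_\alpha(t))$, the identity $\Delta_{g(t)}x=-(H\nu)\circ x$ from (\ref{Laplace_of_the_identity}) (which, pushed forward and combined with (\ref{push_forward_metric}), yields $(\Delta_{g(t)}x)\circ\psi_\alpha=\Delta_{\hat{g}_\alpha(t)}\hat{x}_\alpha$), and the identity (\ref{formula_tangential_part}) for the harmonic map heat flow term, one already has
\begin{equation*}
	\frac{\partial}{\partial t}\hat{x}_\alpha
	= \Delta_{\hat{g}_\alpha(t)}\hat{x}_\alpha
	+ \frac{1}{\alpha}\,(P\circ\hat{x}_\alpha)\,\Delta_h\hat{x}_\alpha .
\end{equation*}
So the only remaining task is to multiply this equation through by the pointwise invertible endomorphism field appearing in (\ref{inverse_map_2}).

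First I would record the orthogonal splitting: $\Delta_{\hat{g}_\alpha(t)}\hat{x}_\alpha=-(H\nu)\circ\hat{x}_\alpha$ is normal to $\Gamma(t)$, whereas $(P\circ\hat{x}_\alpha)\Delta_h\hat{x}_\alpha$ is tangential, since $P=\unit-\nu\otimes\nu$ projects onto the tangent bundle. Decomposing $\frac{\partial}{\partial t}\hat{x}_\alpha$ accordingly and matching the normal and tangential parts in the displayed equation gives the two relations
\begin{align*}
	(\nu\circ\hat{x}_\alpha)\otimes(\nu\circ\hat{x}_\alpha)\,\frac{\partial}{\partial t}\hat{x}_\alpha
		&= \Delta_{\hat{g}_\alpha(t)}\hat{x}_\alpha , \\
	\alpha\,(P\circ\hat{x}_\alpha)\,\frac{\partial}{\partial t}\hat{x}_\alpha
		&= (P\circ\hat{x}_\alpha)\,\Delta_h\hat{x}_\alpha .
\end{align*}
Then I would apply $A:=(\nu\circ\hat{x}_\alpha)\otimes(\nu\circ\hat{x}_\alpha)+\alpha\hat{\rho}\,(P\circ\hat{x}_\alpha)$ to $\frac{\partial}{\partial t}\hat{x}_\alpha$: since $A$ acts as the identity on the normal bundle of $\Gamma(t)$ and as multiplication by $\alpha\hat{\rho}$ on its tangent bundle, the two relations combine into $A\,\frac{\partial}{\partial t}\hat{x}_\alpha=\Delta_{\hat{g}_\alpha(t)}\hat{x}_\alpha+\hat{\rho}\,(P\circ\hat{x}_\alpha)\Delta_h\hat{x}_\alpha$. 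By the elementary algebraic identity (\ref{inverse_map_2}), $A=\alpha\hat{\rho}\,\unit+(1-\alpha\hat{\rho})\,(\nu\circ\hat{x}_\alpha)\otimes(\nu\circ\hat{x}_\alpha)$, which turns the last display into exactly (\ref{reparam_MCF_3}); the initial condition is immediate from $\hat{x}_\alpha(\cdot,0)=x(\cdot,0)\circ\psi_\alpha(\cdot,0)=x_0\circ id=x_0$ on $\M$.

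Two small points I would be careful to address: that $\hat{\rho}=\sqrt{\det(h_{ij})}/\sqrt{\det(\hat{g}_{\alpha ij})}$ is independent of the chosen chart (it is a ratio of volume densities) and strictly positive, so that $A$ is genuinely invertible and the factor $\hat{\rho}$ in (\ref{reparam_MCF_3}) is a well-defined scalar function on $\M$. The only step that involves real work is not contained in the statement of this theorem at all, but upstream, in the verification of (\ref{formula_tangential_part}): there one must carry out the local-coordinate computation in which the second-derivative and $\Gamma(g(t))$ contributions coming from $\Delta_{h,g(t)}\psi_\alpha$ recombine — using (\ref{formula_Christoffel_symbols}) — into $\Gamma(\hat{g}_\alpha)$, so that $(\nabla x\circ\psi_\alpha)(\Delta_{h,g(t)}\psi_\alpha)$ collapses to the tangential projection of the $h$-Laplacian of $\hat{x}_\alpha$. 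Granting that identity (as done above), the present theorem reduces to the routine normal/tangential bookkeeping just sketched.
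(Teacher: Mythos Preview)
Your proposal is correct and follows essentially the same approach as the paper: start from the already-derived evolution equation $\partial_t\hat{x}_\alpha=\Delta_{\hat{g}_\alpha}\hat{x}_\alpha+\tfrac{1}{\alpha}(P\circ\hat{x}_\alpha)\Delta_h\hat{x}_\alpha$, and apply the endomorphism $A=(\nu\circ\hat{x}_\alpha)\otimes(\nu\circ\hat{x}_\alpha)+\alpha\hat{\rho}\,(P\circ\hat{x}_\alpha)=\alpha\hat{\rho}\,\unit+(1-\alpha\hat{\rho})(\nu\circ\hat{x}_\alpha)\otimes(\nu\circ\hat{x}_\alpha)$. The paper simply states ``we now apply the map'' without writing out the normal/tangential bookkeeping you give, so your version is a slightly more detailed rendition of the same argument.
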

Using the above identities, a short calculation shows that the inverse $\psi_\alpha^{-1}$ solves an 
ODE, which in local coordinates is given by
$$
	\frac{\partial}{\partial t} (\Psi_\alpha^{-1})^k	
	= \frac{1}{\alpha} \left( 
		h^{mn} ( \Gamma(h)_{mn}^k  - \Gamma(\hat{g}_\alpha)_{mn}^k)	
	\right)	\circ \Psi_\alpha^{-1}.
$$ 
Solving this equation and setting $x(t) = \hat{x}_\alpha(t) \circ \psi_\alpha(t)^{-1}$,
one can recover the solution to (\ref{MCF_equation}) from the solution to (\ref{reparam_MCF_3}).

\subsection{Weak formulation}
We now assume again that $\M$ is a hypersurface in $\mathbb{R}^{n+1}$
and we choose the metric $h$ on the reference hypersurface $\M \subset \mathbb{R}^{n+1}$ to be the metric that is induced by the 
Euclidean metric $\mathfrak{e}$ of the ambient space.
We multiply (\ref{reparam_MCF_3}) by a test function $\chi \in H^{1,2}(\M,\mathbb{R}^{n+1})$.
Integrating with respect to the
volume form $do_{\hat{g}_\alpha}$ then gives
\begin{align}
	0 &= \int_{\M} (\alpha \hat{\rho}) \hat{x}_{\alpha t} \cdot \chi do_{\hat{g}_\alpha}
	+ \int_{\M} ( 1 - \alpha \hat{\rho}) (\hat{x}_{\alpha t} \cdot (\nu \circ \hat{x}_\alpha)) 
							 (( \nu \circ \hat{x}_\alpha) \cdot \chi)							
							do_{\hat{g}_\alpha}
	\nonumber \\						
	& \quad + \int_\M \hat{G}_\alpha^{-1} \nabla_\M \hat{x}_\alpha : \nabla_\M \chi do_{\hat{g}_\alpha}
	+ \int_\M \nabla_{\M} \hat{x}_\alpha : \nabla_\M ( (P \circ \hat{x}_\alpha) \chi) do_{h},	
	\label{weak_formula_Harmonic_MCF_reference_M}		
\end{align}	 
where we used the fact that $do_{h} = \hat{\rho} do_{\hat{g}_\alpha}$ in the last term.
In order to derive the equivalent formulation of (\ref{weak_formula_Harmonic_MCF_reference_M})
on the moving hypersurface $\Gamma(t) := \hat{x}_\alpha (\M,t) \subset \mathbb{R}^{n+1}$,
we set $u:= \hat{x}_\alpha \circ \hat{x}_\alpha^{-1}$, $\rho := \hat{\rho} \circ \hat{x}_\alpha^{-1}$ 
and $\eta := \chi \circ \hat{x}_\alpha^{-1}$ and obtain the following weak formulation on 
$\Gamma(t)$.
\begin{thm}
\label{theorem_harmonic_MCF_weak_formulation}
Let $\alpha \in (0, \infty)$ and $\M$ be a smooth, $n$-dimensional, closed, connected hypersurface in $\mathbb{R}^{n+1}$.
Furthermore, let $h$ be the metric on $\M \subset \mathbb{R}^{n+1}$ which is induced by the Euclidean metric $\mathfrak{e}$ of the ambient space
and let $\hat{x}_\alpha: \M \times [0,T) \rightarrow \mathbb{R}^{n+1}$ evolve according
to the reparametrized mean curvature flow (\ref{reparam_MCF_3}). 
Then the identity map $u:= id_{\Gamma(t)}$ on $\Gamma(t)$ satisfies
\begin{align*}
	0 &= \int_{\Gamma(t)} (\alpha {\rho}) \partial^\bullet u \cdot \eta d\sigma
	+ \int_{\Gamma(t)} ( 1 - \alpha {\rho}) (\partial^\bullet u \cdot \nu) (\nu \cdot \eta) d\sigma
	\\						
	& \quad + 
	\int_{\Gamma(t)} \nabla_{\Gamma(t)} u : \nabla_{\Gamma(t)} \eta d\sigma
	+ \int_{\Gamma(t)} \hat{H}^{-1}_{\alpha}
	\nabla_{\Gamma(t)} u : \nabla_{\Gamma(t)}( P \eta) \rho d\sigma,
\end{align*}
where $\hat{H}_\alpha(t)$ is defined as in (\ref{defi_H}) and 
$\hat{H}_\alpha^{-1} \nabla_{\Gamma(t)} u : \nabla_{\Gamma(t)} (P \eta)
:=  (\hat{H}_\alpha^{-1})^{\gamma \kappa } \D \gamma' u_\beta \D \kappa' (P \eta)_\beta$. 
The weight function $\rho$ is given by 
$\rho = \sqrt{\det \hat{H}_\alpha}$.
\end{thm}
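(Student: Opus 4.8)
The plan is to transfer the weak formulation on the reference manifold $\M$, namely (\ref{weak_formula_Harmonic_MCF_reference_M}), to the moving hypersurface $\Gamma(t) = \hat{x}_\alpha(\M, t)$ by pulling back along $\hat{x}_\alpha^{-1}$, exactly mirroring the proof of Theorem \ref{Theorem_weak_formulation_on_Gamma}. First I would recall the change-of-variables identity for integrals established there: for integrable $f$ on $\M$,
\[
	\int_\M f\, do_{\hat{g}_\alpha} = \int_{\Gamma(t)} f \circ \hat{x}_\alpha^{-1}\, d\sigma,
\]
and likewise, since $do_h = \hat\rho\, do_{\hat g_\alpha}$ and $\rho = \hat\rho \circ \hat{x}_\alpha^{-1}$,
\[
	\int_\M g\, do_h = \int_{\Gamma(t)} (g \circ \hat{x}_\alpha^{-1})\, \rho\, d\sigma.
\]
Applying these to the four terms of (\ref{weak_formula_Harmonic_MCF_reference_M}) with $g = \hat\rho$ inserted where needed, and using (\ref{material_derivative_u}), that is $\partial^\bullet u = \hat{x}_{\alpha t} \circ \hat{x}_\alpha^{-1}$, together with the definitions $u = \hat{x}_\alpha \circ \hat{x}_\alpha^{-1} = id_{\Gamma(t)}$, $\eta = \chi \circ \hat{x}_\alpha^{-1}$, immediately converts the first two terms into $\int_{\Gamma(t)} (\alpha\rho)\, \partial^\bullet u \cdot \eta\, d\sigma$ and $\int_{\Gamma(t)}(1 - \alpha\rho)(\partial^\bullet u \cdot \nu)(\nu \cdot \eta)\, d\sigma$, since $\nu \circ \hat{x}_\alpha$ becomes the unit normal $\nu$ to $\Gamma(t)$.

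For the third term I would invoke (\ref{result_1}) from the proof of Theorem \ref{Theorem_weak_formulation_on_Gamma}, which states
\[
	\hat{G}_\alpha^{-1} \nabla_\M \hat{x}_\alpha : \nabla_\M \chi
	= \big( \nabla_{\Gamma(t)} u : \nabla_{\Gamma(t)}(\chi \circ \hat{x}_\alpha^{-1}) \big) \circ \hat{x}_\alpha,
\]
so that after the change of variables this term becomes $\int_{\Gamma(t)} \nabla_{\Gamma(t)} u : \nabla_{\Gamma(t)} \eta\, d\sigma$. The only genuinely new work is the fourth term $\int_\M \nabla_\M \hat{x}_\alpha : \nabla_\M((P \circ \hat{x}_\alpha)\chi)\, do_h$. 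Here I would combine (\ref{result_3}), which gives $\nabla_\M (\hat{x}_\alpha)^\beta = \big( (\hat{H}_\alpha^{-1})^{\beta\gamma} \D\gamma' \hat{x}_\alpha^{-1} \big) \circ \hat{x}_\alpha$, with the chain rule (\ref{chain_rule_tangential_gradients}) applied to the composite function $(P \circ \hat{x}_\alpha)\chi = \big((P\,\eta)\circ \hat{x}_\alpha\big)$, namely $\D\kappa\big((P\eta) \circ \hat{x}_\alpha\big)_\beta = \big(\D\iota'(P\eta)_\beta\big)\circ\hat{x}_\alpha\, \D\kappa(\hat{x}_\alpha)^\iota$. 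Contracting and using $(\hat{G}_\alpha^{-1})^{\kappa\lambda}\D\kappa(\hat{x}_\alpha)^\iota \D\lambda(\hat{x}_\alpha)^\sigma = P^{\iota\sigma}\circ\hat{x}_\alpha$ — the same identity used in the earlier proof — I expect the integrand to collapse to $\big(\hat{H}_\alpha^{-1} \nabla_{\Gamma(t)} u : \nabla_{\Gamma(t)}(P\eta)\big) \circ \hat{x}_\alpha$, which under $\int_\M (\cdot)\, do_h = \int_{\Gamma(t)}(\cdot \circ \hat{x}_\alpha^{-1})\rho\, d\sigma$ yields precisely $\int_{\Gamma(t)} \hat{H}_\alpha^{-1} \nabla_{\Gamma(t)} u : \nabla_{\Gamma(t)}(P\eta)\, \rho\, d\sigma$.

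The main obstacle I anticipate is bookkeeping in this last computation: one must be careful that $\nabla_\M$ here acts on a function defined on $\M$ whose $\Gamma(t)$-side counterpart $P\eta$ is \emph{not} merely $\eta$ composed with $\hat{x}_\alpha^{-1}$ but carries the extra projection $P$, and that the weight $\hat\rho$ travels correctly between $do_h$ and $do_{\hat g_\alpha}$. A secondary subtlety is justifying $\rho = \sqrt{\det \hat{H}_\alpha}$: this follows from (\ref{identity_of_volume_forms})-type reasoning applied on $\Gamma(t)$, i.e. $do_{\hat h_\alpha} = \sqrt{\det \hat H_\alpha}\, d\sigma$ together with the fact that $\hat\rho = \sqrt{\det h_{ij}}/\sqrt{\det \hat g_{\alpha ij}}$ and the metric identity $\hat h_{\alpha ij} = h_{ij}$ noted in the proof of Theorem \ref{Theorem_weak_formulation_on_Gamma}; pushing forward along $\hat{x}_\alpha^{-1}$ then identifies $\rho$ with $\sqrt{\det \hat H_\alpha}$ as a function on $\Gamma(t)$. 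Everything else is a direct substitution once these identities are lined up, and no new analytic estimate is required.
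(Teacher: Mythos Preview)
Your proposal is correct and follows essentially the same route as the paper: lift (\ref{weak_formula_Harmonic_MCF_reference_M}) to $\Gamma(t)$ by change of variables, invoking (\ref{material_derivative_u}) and (\ref{result_1}) for the first three terms, (\ref{chain_rule_tangential_gradients}) together with (\ref{result_3}) for the fourth, and a local-coordinate computation via (\ref{H_in_local_coordinates}) to identify $\rho=\sqrt{\det\hat H_\alpha}$. One small correction in the fourth term: there is no $\hat G_\alpha^{-1}$ in $\nabla_\M\hat x_\alpha:\nabla_\M\chi$, so the identity $(\hat G_\alpha^{-1})^{\kappa\lambda}\D\kappa(\hat x_\alpha)^\iota\D\lambda(\hat x_\alpha)^\sigma=P^{\iota\sigma}\circ\hat x_\alpha$ is not the one to use; instead the paper contracts via the reverse chain rule $(\D\gamma'(\hat x_\alpha^{-1})^\rho)\circ\hat x_\alpha\,\D\rho(\hat x_\alpha)^\kappa=P^{\gamma\kappa}\circ\hat x_\alpha$ (coming from $\hat x_\alpha\circ\hat x_\alpha^{-1}=id_\Gamma$), then uses $\hat H_\alpha^{-1}P=P\hat H_\alpha^{-1}$ from (\ref{H_inverse_in_local_coordinates}) and $P=\nabla_\Gamma u$ to reach the claimed integrand.
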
	
\proof
By applying (\ref{material_derivative_u}) as well as (\ref{result_1}) we obtain
the first three integrals.
For the last term we combine (\ref{chain_rule_tangential_gradients}) and (\ref{result_3}),
which leads to
\begin{align*}
	\nabla_{\M} \hat{x}_\alpha : \nabla_{\M} \chi 
	&= \left( (\hat{H}_\alpha^{-1})^{\beta \gamma} \D \gamma'  (\hat{x}_\alpha^{-1})^\rho 
		\right) \circ \hat{x}_\alpha \D \rho \chi_\beta
	\\
	&= \left( (\hat{H}_\alpha^{-1})^{\beta \gamma} \D \gamma'  (\hat{x}_\alpha^{-1})^\rho 
		\right) \circ \hat{x}_\alpha (\D \kappa' \eta_\beta) \circ \hat{x}_\alpha \D \rho 
		(\hat{x}_\alpha)^\kappa 
	\\
	&= \left( (\hat{H}_\alpha^{-1})^{\beta \kappa} \D \kappa' \eta_\beta \right) \circ \hat{x}_\alpha
	= 	\left( (\hat{H}_\alpha^{-1})^{\beta \gamma} P_{\gamma \kappa} 
		\D \kappa' \eta_\beta \right) \circ \hat{x}_\alpha
	\\
	&= 	\left( (\hat{H}_\alpha^{-1})^{\kappa \gamma} P_{\gamma \beta} 
		\D \kappa' \eta_\beta \right) \circ \hat{x}_\alpha	
	= 	\left( (\hat{H}_\alpha^{-1})^{\gamma \kappa } \D \gamma' u_\beta 
		\D \kappa' \eta_\beta \right) \circ \hat{x}_\alpha	
	\\	
	&= \left( \hat{H}_\alpha^{-1} \nabla_\Gamma u : \nabla_{\Gamma} \eta \right) 
	\circ \hat{x}_\alpha,
\end{align*}
where we have used $\hat{H}_\alpha^{-1} P = P \hat{H}_\alpha^{-1}$,
which can be easily seen from (\ref{H_inverse_in_local_coordinates}), 
and $P = \nabla_\Gamma u$. 
Similar as in (\ref{det_G}), it follows from (\ref{H_in_local_coordinates}) that
$$
	\det \hat{H}_\alpha \circ \hat{X}_\alpha = \frac{\det h_{ij}}{\det \hat{g}_{\alpha ij}},
$$
and thus $\rho = \sqrt{\det \hat{H}_\alpha}$. 
\hfill $\Box$

\begin{remark}
The reparametrized mean curvature flow (\ref{weak_formula_Harmonic_MCF_reference_M})
is a gradient flow in the sense that
\begin{align*}
	& \int_\M \hat{G}_\alpha^{-1} \nabla_\M \hat{x}_\alpha : \nabla_\M \chi 
		d o_{\hat{g}_\alpha} 
		= \frac{d}{d \epsilon} A(\hat{x}_\alpha + \epsilon \chi) \big|_{\epsilon = 0}
	\\
	& \int_\M \nabla_\M \hat{x}_\alpha : \nabla_\M ((P\circ \hat{x}_\alpha) \chi) do_h
		= \frac{d}{d \epsilon} E(a(\hat{x}_\alpha + \epsilon \chi)) \big|_{\epsilon = 0}
\end{align*}
where $A(\hat{x}_\alpha) := \int_\M \sqrt{\det \hat{G}_\alpha} do_h = \int_\M 1 do_{\hat{g}_\alpha}$ is the area functional of $\Gamma := \hat{x}_\alpha (\M)$,
and $E(\hat{x}_\alpha) := \frac{1}{2} \int_\M | \nabla_\M \hat{x}_\alpha |^2 do_h$ is
the Dirichlet energy of the map $\hat{x}_\alpha: \M \rightarrow \Gamma$.
Please note that in the second equation only tangential variations 
$a(\hat{x}_\alpha + \epsilon \chi)$ are considered. Here, $a: \Gamma_\delta \rightarrow \Gamma$ denotes the orthogonal projection onto $\Gamma$, that is
$$
	a(x) = x - d(x) \nu(a(x)) \in \Gamma,
$$
where $x$ is a point in the tubular neighbourhood 
$\Gamma_\delta := \{ x \in \mathbb{R}^{n+1} 
~|~ |d(x)| < \delta \}$ of width $\delta > 0$ about $\Gamma$ and $d(\cdot)$
denotes the oriented distance function to $\Gamma$, see \cite{DDE05} for more details. 
\end{remark}

\subsection{Numerical scheme for a variant of the DeTurck trick}
We now present an algorithm for the computation of the reparametrized mean curvature flow (\ref{reparam_MCF_3}) 
based on the weak formulation in Theorem \ref{theorem_harmonic_MCF_weak_formulation}.
\begin{alg}
\label{algorithm_harmonic_MCF}
Let $\alpha \in (0,\infty)$.
For a given initial polyhedral hypersurface $\Gamma_h^0 = \hat{x}_h^0 (\M_h)$
with $\hat{x}_h^0 \in \mathcal{S}(\M_h)^{n+1}$, 
set $y_h^0 := (\hat{x}_h^0)^{-1} \in \mathcal{S}(\Gamma_h^0)^{n+1}$ and  
determine for $m=0, \ldots, M_\tau -1$
solutions $u_h^{m+1} \in \mathcal{S}(\Gamma_h^m)^{n+1}$
such that
\begin{align*}
& \frac{1}{\tau} \int_{\Gamma^m_h} (\alpha \rho^m_h) I_h (u^{m+1}_h \cdot \eta_h) d\sigma
	+ \frac{1}{\tau} \int_{\Gamma^m_h} (1- \alpha \rho^m_h)
		I_h ((u^{m+1}_h \cdot \tilde{\nu}^m_h)
		    ( \tilde{\nu}^m_h \cdot \eta_h )) d\sigma	
\\
&	 + \int_{\Gamma^m_h} 
		  \nabla_{\Gamma^m_h} u^{m+1}_h : \nabla_{\Gamma^m_h} \eta_h d\sigma
	+ \int_{\Gamma^m_h}
		(\hat{H}^m_h)^{-1} \nabla_{\Gamma_h^m} u^{m+1} : \nabla_{\Gamma_h^m} (I_h(P^m_h \eta_h))	
		\rho_h^m d\sigma
\\	
&  =  \frac{1}{\tau} \int_{\Gamma^m_h} (\alpha \rho^m_h) 
		I_h (\tilde{u}^m_h \cdot \eta_h)  d\sigma
	+ \frac{1}{\tau} \int_{\Gamma^m_h} (1- \alpha \rho^m_h)
		I_h ((\tilde{u}^m_h \cdot \tilde{\nu}^m_h)
		( \tilde{\nu}^m_h \cdot \eta_h )) d\sigma	,
		\quad \forall \eta_h \in \mathcal{S}(\Gamma_h^m)^{n+1},	
\end{align*}	
where
$\nu_h^m$ is the piecewise constant outward unit normal to 
$\Gamma_h^m$, $\tilde{u}^{m}_h := id_{| \Gamma^{m}_h}$,
and
\begin{align*}
	& \hat{H}^{m}_h 
		:= (\nabla_{\Gamma_h^{m}} y^{m}_h)^T \nabla_{\Gamma^{m}_h} y^{m}_h
			+ \nu^{m}_h \otimes \nu^{m}_h,
     ~ \textnormal{and} ~ \rho_h^m 
		:= \sqrt{\det \hat{H}^m_h }, 		
\\
	& \tilde{\nu}^m_h \in \mathcal{S}(\Gamma_h^m)^{n+1} ~ \textnormal{such that} ~
	\tilde{\nu}^m_h(p_j) 
	= \frac{\sum_{T \in \sigma_j} \nu_{h |T}^m |T|}{|\sum_{T \in \sigma_j} \nu_{h |T}^m |T||},
	~ \textnormal{with} ~ \sigma_j = \{ T \in \mathcal{T}^m_h |~ p_j \in T \}, 
\\
	& P_h^m \in \mathcal{S}(\Gamma_h^m)^{(n+1)\times(n+1)} ~ \textnormal{such that} ~ 
	P_h^m(p_j):= \unit - \tilde{\nu}_h^m(p_j) \otimes \tilde{\nu}_h^m(p_j)
	~ \textnormal{for all vertices $p_j \in \Gamma_h^m$.}				
\end{align*}
The hypersurface $\Gamma_h^{m+1}$ is defined by
$$
	\Gamma^{m+1}_h := u^{m+1}_h(\Gamma^m_h),
$$
and $y^{m+1}_h \in \mathcal{S}(\Gamma_h^{m+1})^{n+1}$ is set to be
\begin{align*}
	& y^{m+1}_h := y^m_h \circ (u^{m+1}_h)^{-1}.
\end{align*}
\end{alg}

\section{Numerical results for the mean curvature flow}
\label{Numerical_results_MCF}
We implemented Algorithms \ref{algo_moving_hypersurface} and \ref{algorithm_harmonic_MCF} 
as well as the benchmark algorithm
(2.25) in \cite{BGN08} for the computation of the mean curvature flow within the
Finite Element Toolbox ALBERTA, see \cite{SS05}. 
For Algorithm \ref{algo_moving_hypersurface} one has to solve the linear system 
\begin{align}
	\frac{1}{\tau} \sum_{\gamma=1}^{n+1} \sum_{j=1}^N \widetilde{M}_{ij \beta \gamma} \mathbf{U}^{j \gamma}
	+ \sum_{\gamma=1}^{n+1} \sum_{j=1}^N S_{ij \beta \gamma} \mathbf{U}^{j \gamma}
    + \sum_{\gamma=1}^{n+1} \sum_{j=1}^N B_{ij \beta \gamma} \mathbf{U}^{j \gamma}	
	= \frac{1}{\tau} 
		\sum_{\gamma=1}^{n+1} \sum_{j=1}^N \widetilde{M}_{ij \beta \gamma} \mathbf{U}_{old}^{j \gamma},
	\label{system_of_linear_equations_MCF_original_DeTurck}	
\end{align}
for $i=1, \ldots, N$, $\beta = 1,2$, whereas for Algorithm \ref{algorithm_harmonic_MCF} one has to assemble the
following system,
\begin{align}
	\frac{1}{\tau} \sum_{\gamma=1}^{n+1} \sum_{j=1}^N M_{ij \beta \gamma} \mathbf{U}^{j \gamma}
	+ \sum_{\gamma=1}^{n+1} \sum_{j=1}^N S_{ij \beta \gamma} \mathbf{U}^{j \gamma}
    + \sum_{\gamma=1}^{n+1} \sum_{j=1}^N D_{ij \beta \gamma} \mathbf{U}^{j \gamma}	
	= \frac{1}{\tau} 
		\sum_{\gamma=1}^{n+1} \sum_{j=1}^N M_{ij \beta \gamma} \mathbf{U}_{old}^{j \gamma}.
	\label{system_of_linear_equations_MCF}	
\end{align}
Here, $u_h^{m+1} = \sum_{\gamma=1}^{n+1} \sum_{j=1}^N \mathbf{U}^{j \gamma} \phi_j e_\gamma$
is the unknown parametrization of the discrete surface $\Gamma_h^{m+1}$ and 
$\tilde{u}_h^m = \sum_{\gamma=1}^{n+1} \sum_{j=1}^N \mathbf{U}_{old}^{j \gamma} \phi_j e_\gamma$
is the identity function on $\Gamma_h^m$.
Please note that the basis functions are defined on the changing polyhedral surfaces $\Gamma^m_h$, that is
$\phi_j = \phi_j^m$. We usually drop the superscript $m$ for the sake of convenience.
The vector $\mathbf{U}_{old}$ is just given by the solution vector $\mathbf{U}$ from the previous time step.
The matrices 
$\widetilde{M} := (\widetilde{M}_{ij \beta \gamma}), S:= (S_{ij \beta \gamma}), B:= (B_{ij \beta \gamma})
 \in \mathbb{R}^{((n+1)N)\times((n+1)N)}$, 
and respectively, the matrices
$M: = (M_{ij \beta \gamma}), S:= (S_{ij \beta \gamma})$ and 
$D:= (D_{ij \beta \gamma})$ can be assembled
by summing up the non-vanishing components coming from the element matrices, which are
\begin{align*}
	& \widetilde{M}_{ij \beta \gamma}(T) = \left( \alpha \delta_{\beta \gamma} 
	+ (1 - \alpha) \nu_{h \beta}^m(T) \nu_{h \gamma}^m(T) \right) 
	\int_T \phi_i \phi_j d\sigma,
	\\
	& S_{ij \beta \gamma}(T) = \delta_{\beta \gamma} 
	\int_T \nabla_{\Gamma^m_h} \phi_i \cdot \nabla_{\Gamma^m_h} \phi_j d\sigma,
	\\
	& B_{ij \beta \gamma}(T) = 	
	\delta_{\beta \gamma} \int_T \phi_i (\nabla_{\Gamma^m_h} \phi_j)_\iota 
	\left( \hat{H}^m_h(T)^{-1} (\nabla_{\Gamma^m_h} y_h^m)^T w_h^m \right)^\iota d\sigma,
	\\
	& M_{ij \beta \gamma}(T) = \left( \alpha \rho_h^m(T) \delta_{\beta \gamma} 
		+ ( 1 - \alpha \rho_h^m(T))\tilde{\nu}^m_{h\beta}(p_i) \tilde{\nu}^m_{h\gamma}(p_i) \right) 
		\delta_{ij} \int_{T} \phi_i d\sigma,
	\\
	& D_{ij \beta \gamma}(T) = 
	\left( \delta_{\beta \gamma} - \tilde{\nu}^m_{h \beta}(p_i) \tilde{\nu}^m_{h \gamma}(p_i)\right)
	\left(\hat{H}^m_h(T)^{-1} \right)^{\kappa \iota} \rho_h^m(T)
	\int_T (\nabla_{\Gamma^m_h} \phi_i)_\kappa (\nabla_{\Gamma^m_h} \phi_j)_\iota d\sigma.
\end{align*} 
Here, $\phi_i$ and $\phi_j$ are the nodal basis functions associated with 
the vertices $p_i$ and $p_j$ of the simplex $T$.
Please note that the unit normal vector $\nu_h^m(T)$, the matrix $\hat{H}^m_h(T)$ and the weight function 
$\rho_h^m(T)$ are constant on each simplex.  
The linear systems (\ref{system_of_linear_equations_MCF_original_DeTurck})
and (\ref{system_of_linear_equations_MCF}) can be solved by the
biconjugate gradient stabilized method. 
The vector $w_h^m$ in Algorithm \ref{algo_moving_hypersurface} can be easily computed by inverting a mass matrix.
The initial polyhedral hypersurface $\Gamma_h^0$ is constructed
by mapping the vertices of a triangulation of the reference hypersurface $\M_h$,
which is the unit sphere in Example 1 and 2 and a torus in Example 3, onto
the initial smooth hypersurface. The linear interpolation of the image then gives
$\Gamma_h^0$. 
By this means, the inverse 
$y_h^0 := (\hat{x}_h^0)^{-1} = \sum_{\gamma=1}^{n+1} \sum_{j=1}^N \mathbf{Y}^{j \gamma} \phi_j e_\gamma$ is determined by 
the position vectors $(\mathbf{Y}^{j \gamma})_{\gamma= 1, \ldots, n+1}$ for $j=1, \ldots, N$, of the
vertices of the reference hypersurface $\M_h$.
Please note that the vector $\mathbf{Y}$ is constant in time and that only the basis functions 
change in each time step $m$, that is 
$y_h^m = \sum_{\gamma =1}^{n+1} \sum_{j=1}^N \mathbf{Y}^{j \gamma} \phi_j^m e_\gamma$
for all $m$.
In the following, we will compare the performance of Algorithms
\ref{algo_moving_hypersurface} and \ref{algorithm_harmonic_MCF}
as well as of the benchmark scheme (2.25) in \cite{BGN08}, defined by the equation
\begin{equation*}
	\frac{1}{\tau} \int_{\Gamma_h^m} I_h ((u_h^{m+1} \cdot \overline{\nu}_h^m)
	(\overline{\nu}_h^m \cdot \eta_h))
	+  \int_{\Gamma^m_h} 
		  \nabla_{\Gamma^m_h} u^{m+1}_h : \nabla_{\Gamma^m_h} \eta_h d\sigma
	= \frac{1}{\tau} \int_{\Gamma_h^m} I_h ((\tilde{u}_h^{m} \cdot \overline{\nu}_h^m)
	(\overline{\nu}_h^m \cdot \eta_h)),
\end{equation*} 
where the definition of $\overline{\nu}_h^m \in \mathcal{S}(\Gamma_h^m)^{n+1}$, see (2.7) in \cite{BGN08}, 
slightly differs from our definition
of $\tilde{\nu}_h^m$ in Algorithm \ref{algorithm_harmonic_MCF}.
If we formally set $\alpha = 0$ in Algorithm \ref{algorithm_harmonic_MCF},
the main difference between Algorithm \ref{algorithm_harmonic_MCF} and the BGN-scheme
is the second order term given by the matrix $D$.
In our numerical tests, we will focus on the 
mesh properties of the schemes. A good quantity to evaluate the mesh quality of a polyhedral surface is 
\begin{equation}
	\sigma_{max} := \max_{T \in \mathcal{T}_h} 
	\frac{h(T)}{r(T)},
\label{definition_sigma_max}
\end{equation}
where $h(T)$ denotes the diameter of the simplex $T$ and $r(T)$ is the radius of the
largest ball contained in the simplex. Small values of $\sigma_{max}$ imply that there
are no simplices with sharp angles.

\subsection*{Example 1:}
The initial surface, approximated 
by the polyhedral surface in Figure \ref{initial_surface_MCF_example_1}, 
is given by the local parametrization
\begin{equation*}
	X_0(\theta, \varphi) :=
	\left( 
	\begin{array}{c}	
		\cos \varphi \\
		(0.7 \cos^2 \varphi + 0.3) \cos \theta  \sin \varphi \\
		(0.7 \cos^2 \varphi + 0.3) \sin \theta  \sin \varphi
	\end{array}		
	\right),
	\quad \theta \in [0,2\pi), \varphi \in [0,\pi].
\end{equation*}
The discrete reference hypersurface $\M_h$ is a triangulation of the unit sphere, see
Figure \ref{reference_surface_MCF_example_1}. 
The simulation in Figure \ref{Figure_simulation_MCF_example_1} shows that the mean curvature
flow for this initial surface develops a neck pinch singularity in finite time,
see Figures \ref{Fig_neck_surface_alpha_0_0001_tau_0_0001_3} 
or \ref{Fig_neck_surface_BGN_tau_0_0001_3}.
Please note that under Algorithm \ref{algorithm_harmonic_MCF} the simplices at the poles 
have a greater area compared to the simplices at the neck of the surface,
see Figures \ref{Fig_neck_surface_alpha_0_0001_tau_0_0001_1}, 
\ref{Fig_neck_surface_alpha_0_0001_tau_0_0001_2} and 
\ref{Fig_neck_surface_alpha_0_0001_tau_0_0001_3},
whereas under the BGN-scheme (2.25) in \cite{BGN08} this is not the case.
However, this does not mean that the mesh properties of Algorithm \ref{algorithm_harmonic_MCF}
are not good. On the contrary, the area of the simplices can be easily reduced by local mesh
refinements, whereas the size of the quantity $\sigma_{max}$ almost remains unchanged under
local mesh refinements. It is therefore much more preferable to have an algorithm that
produces meshes with small values of $\sigma_{max}$ rather than meshes with simplices
of the same area size.
The comparison of the simplices at the surface neck, see Figures 
\ref{Figure_enlarged_surface_neck_alpha_0_0001_MCF} and
\ref{Figure_enlarged_surface_neck_BGN_MCF},
gives a first hint that Algorithm \ref{algorithm_harmonic_MCF} is indeed able to produce
good meshes, that is without any sharp angles.
This observation is confirmed in a systematic study of $\sigma_{max}$
for different choices of the parameter $\alpha$. Figure \ref{Figure_mesh_properties_MCF_example_1}
shows that Algorithm \ref{algorithm_harmonic_MCF} clearly outperform the BGN-scheme
for $\alpha \leq 0.1$. For $\alpha = 1.0$ the result of Algorithm \ref{algorithm_harmonic_MCF}
looks similar to the result of the BGN-scheme, see also 
Figure \ref{Figure_simulation_MCF_for_alpha_1_0_example_1}.
In Figure \ref{comparison_alg_3_and_4_example_1}, the mesh properties of Algorithms \ref{algo_moving_hypersurface}
and \ref{algorithm_harmonic_MCF} are compared.
For small values of $\alpha$, both schemes show a similar performance, although Algorithm \ref{algorithm_harmonic_MCF}
behaves slightly better close to the surface singularity. However, this is not always the case,
see for example Figure \ref{comparison_alg_3_and_4_example_2}, where Algorithm \ref{algo_moving_hypersurface}
shows better behaviour at the singularity.
For $\alpha = 1.0$ and $\alpha = 0.1$, Algorithm \ref{algo_moving_hypersurface} generally seems to produce better meshes.
In the following, we will mainly focus on the comparison of Algorithm \ref{algorithm_harmonic_MCF} and 
the BGN-scheme, since Algorithms \ref{algo_moving_hypersurface} and \ref{algorithm_harmonic_MCF} provide similar behaviour. 
Figure \ref{Figure_area_decrease_MCF_example_1} shows the decrease of the discrete surface
area under the mean curvature flow. 
Please note that for small values of $\alpha$ and for times close
to the starting point,
the area of the surface is not monotonically 
decreasing under Algorithm \ref{algorithm_harmonic_MCF}.
This behaviour is due to relatively large tangential motions that occur for small
choices of $\alpha$ if the initial surface parametrization $x_0: \M \rightarrow \Gamma(0)$
is not harmonic. 
Although, Figure \ref{Figure_dependence_of_area_decrease_on_time_step_size} indicates
that the initial increase of the surface area can be reduced by choosing smaller 
time step sizes $\tau$, an improved time discretization might solve the problem
also for larger time step sizes. This is an open problem that should be addressed in further
research. Figure \ref{Figure_dependence_sigma_max_mesh_refinements} illustrates 
the influence of decreasing maximal diameters 
$h = \max_T h(T)$ on the behaviour of the mesh quality $\sigma_{max}$.

\begin{figure}
\begin{center}
~~~~~~
\subfloat[][\centering Reference surface for \mbox{Algorithm \ref{algorithm_harmonic_MCF}.}]
{\includegraphics[width=0.3\textwidth]{./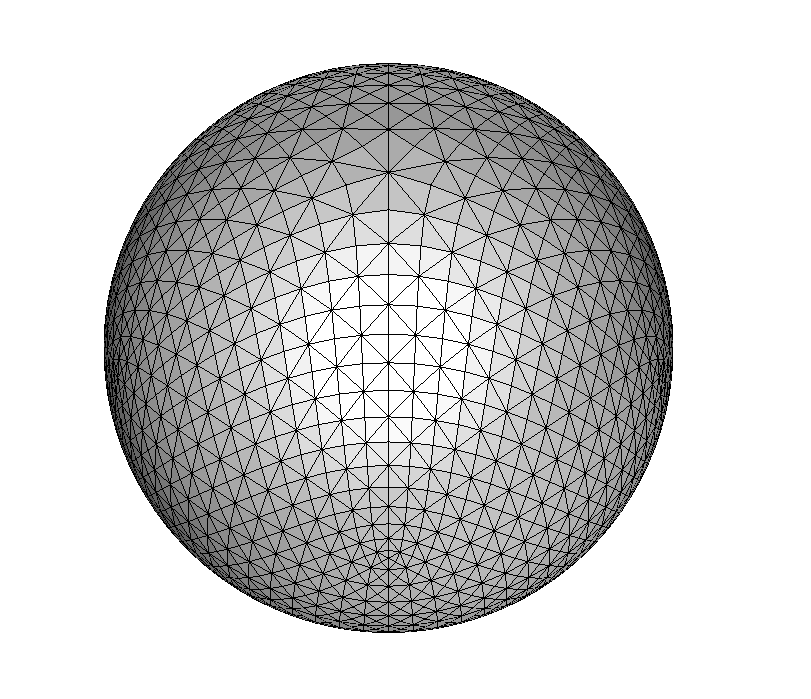}
\label{reference_surface_MCF_example_1}
} 
~~~~~
\subfloat[][\centering Surface at time $t = 0.0$. The surface area is $5.549$.]
{\includegraphics[width=0.4\textwidth]{./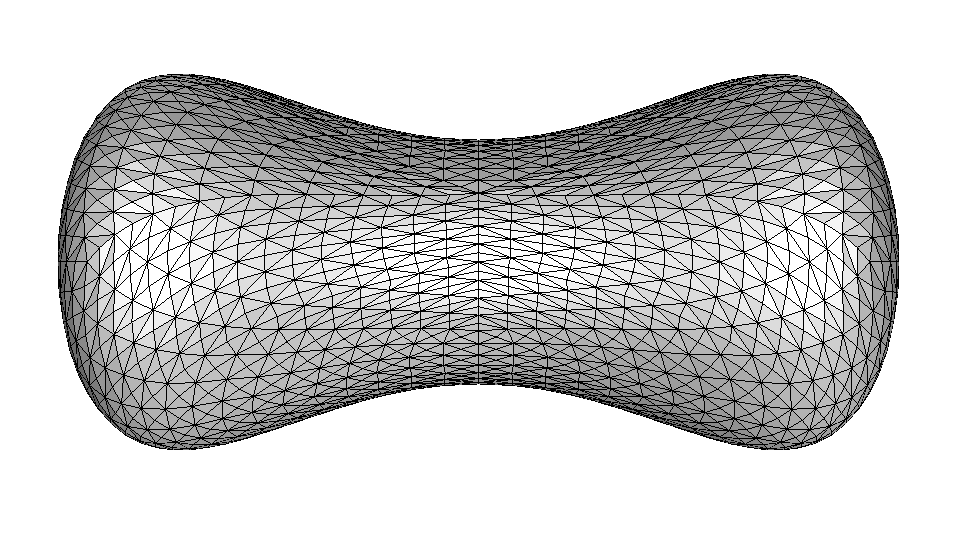}
\label{initial_surface_MCF_example_1}
} \\

\subfloat[][\centering Algorithm \ref{algorithm_harmonic_MCF}
at time $t=0.0301$. The surface area is $3.296$.]
{\includegraphics[width=0.4\textwidth]
{./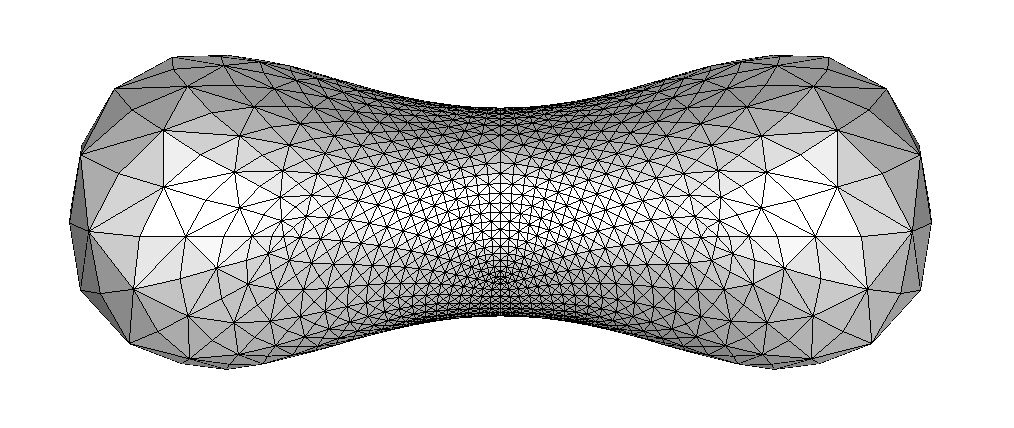}
\label{Fig_neck_surface_alpha_0_0001_tau_0_0001_1}
} 
\subfloat[][\centering BGN-scheme at time $t = 0.0301$. The surface area is $3.111$.]
{\includegraphics[width=0.4\textwidth]
{./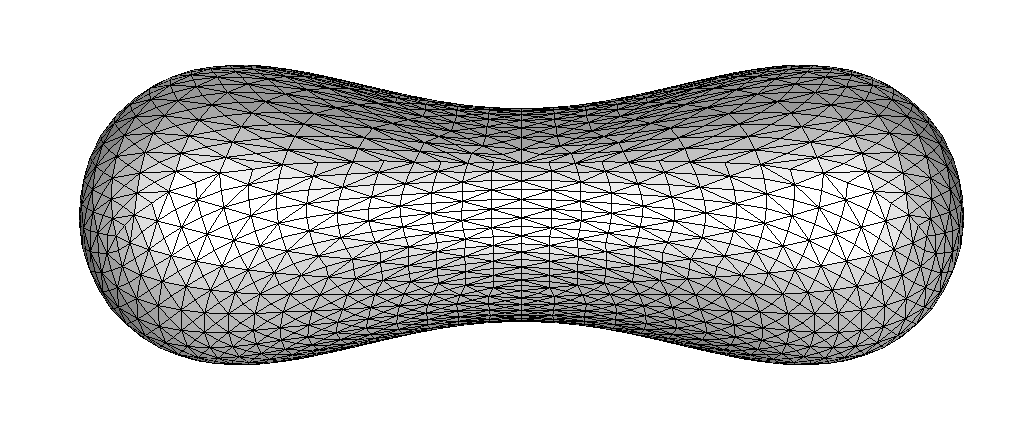}}\\

\subfloat[][\centering Algorithm \ref{algorithm_harmonic_MCF}
at time $t=0.0556$. The surface area is $1.208$.]
{\includegraphics[width=0.4\textwidth]
{./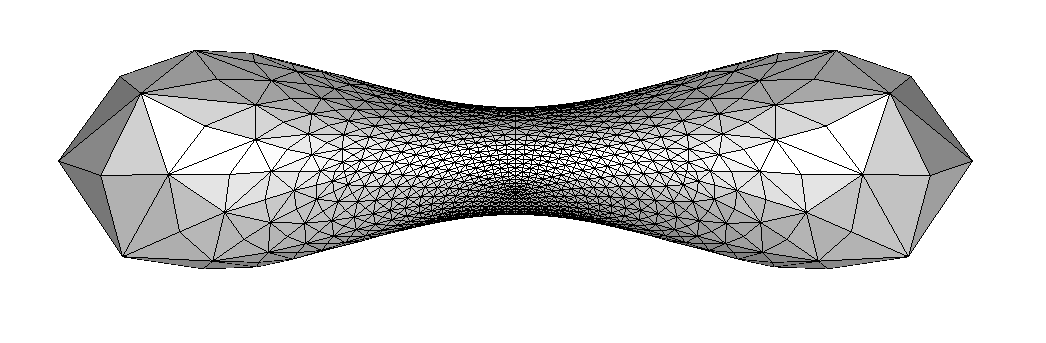}
\label{Fig_neck_surface_alpha_0_0001_tau_0_0001_2}
} 
\subfloat[][\centering BGN-scheme at time $t = 0.0556$. The surface area is $0.893$.]
{\includegraphics[width=0.4\textwidth]
{./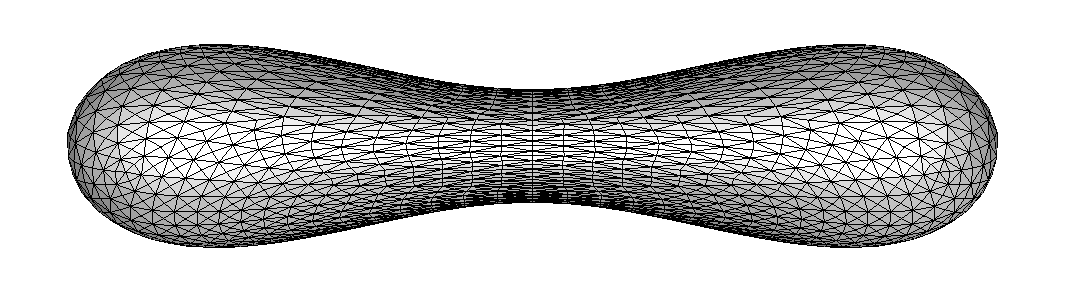}}\\

\subfloat[][\centering Algorithm \ref{algorithm_harmonic_MCF}
at time $t=0.0556$. The surface area is $1.208$. Enlarged section of the surface neck.]
{\includegraphics[width=0.3\textwidth]
{./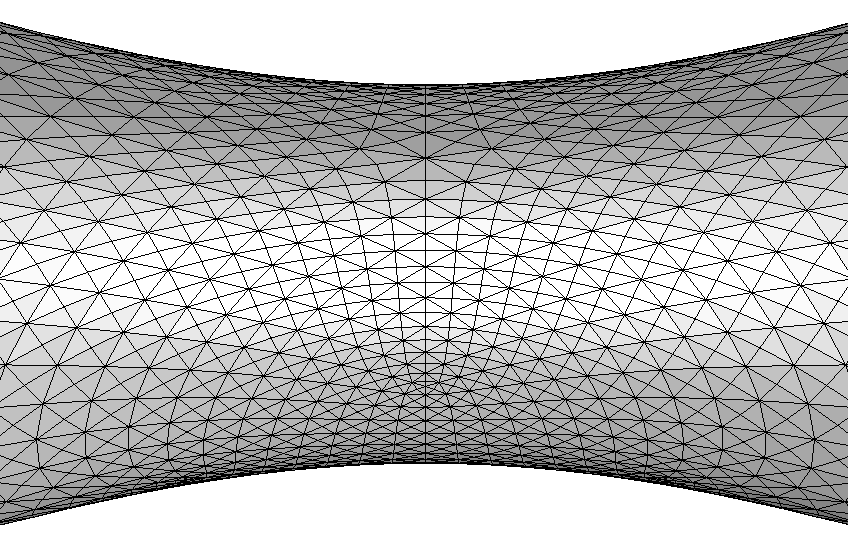}
\label{Figure_enlarged_surface_neck_alpha_0_0001_MCF}
} 
~~~~~~~~~
\subfloat[][\centering BGN-scheme at time $t = 0.0556$. The surface area is $0.893$.
Enlarged section of the surface neck.]
{\includegraphics[width=0.31\textwidth]
{./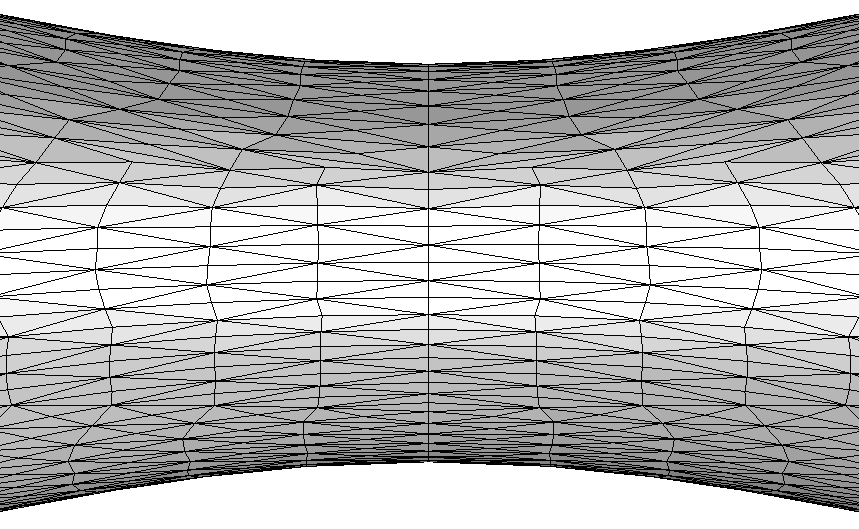}
\label{Figure_enlarged_surface_neck_BGN_MCF}
}\\

\subfloat[][\centering Algorithm \ref{algorithm_harmonic_MCF}
at time $t=0.0603$. The surface area is $0.697$.]
{\includegraphics[width=0.4\textwidth]
{./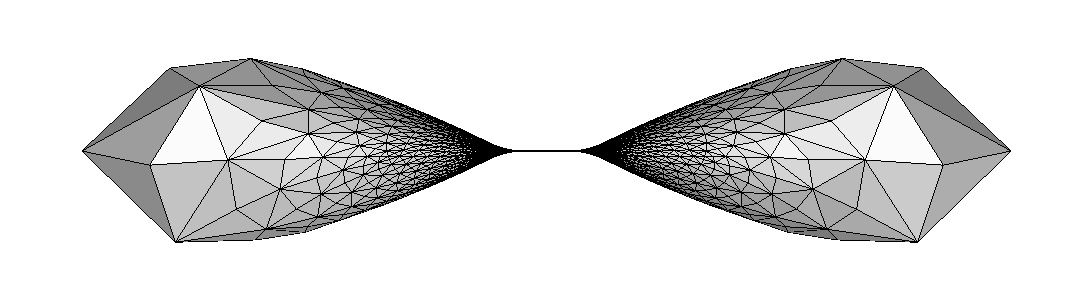}
\label{Fig_neck_surface_alpha_0_0001_tau_0_0001_3}
} 
\subfloat[][\centering BGN-scheme at time $t = 0.0596$. The surface area is $0.389$.]
{\includegraphics[width=0.4\textwidth]
{./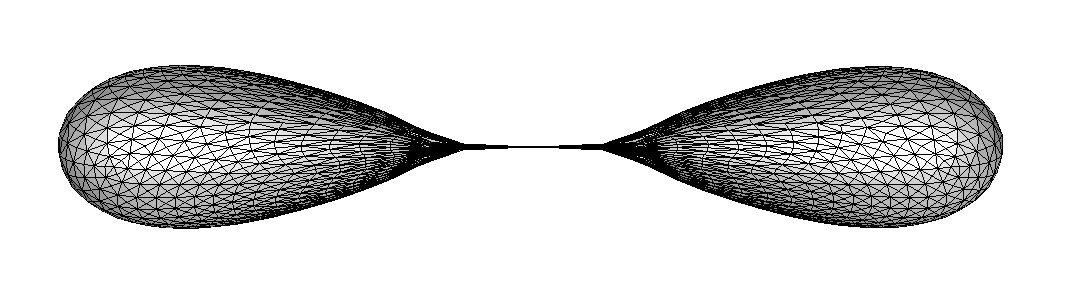}
\label{Fig_neck_surface_BGN_tau_0_0001_3}
}
\caption{Comparison of Algorithm \ref{algorithm_harmonic_MCF} for $\alpha = 10^{-4}$
and the BGN-scheme (2.25) in \cite{BGN08}. The time step size for both schemes was $\tau = 10^{-4}$.
The mesh had $5120$ triangles and $2562$ vertices.
The images are rescaled. See Example 1 of Section \ref{Numerical_results_MCF} for further details.}
\label{Figure_simulation_MCF_example_1}
\end{center}
\end{figure}

\begin{figure}
\begin{center}
\subfloat[][\centering Algorithm \ref{algorithm_harmonic_MCF}
at time $t=0.0301$. The surface area is $3.127$.]
{\includegraphics[width=0.4\textwidth]{./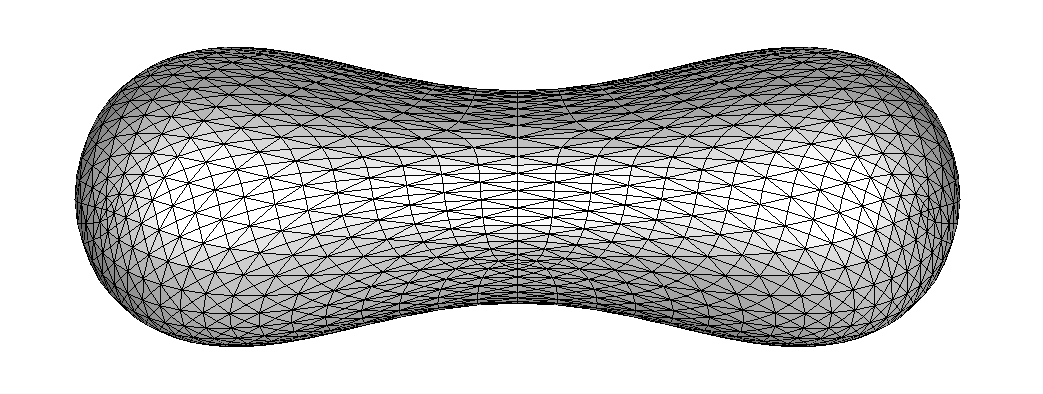}} 
\subfloat[][\centering Algorithm \ref{algorithm_harmonic_MCF}
at time $t=0.0556$. The surface area is $0.923$.]
{\includegraphics[width=0.4\textwidth]{./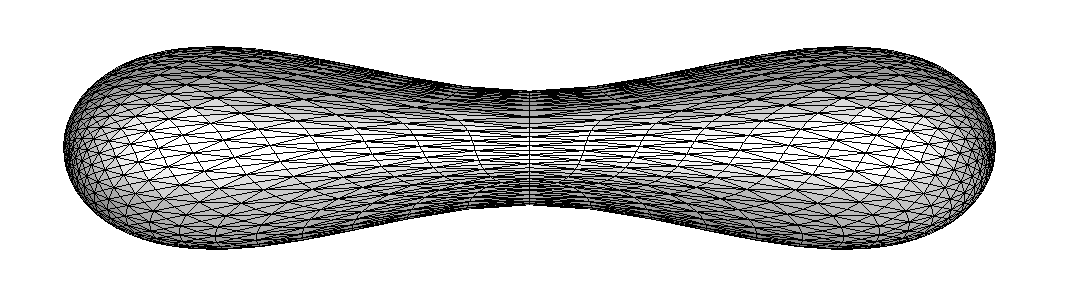}} 
\caption{Results of Algorithm \ref{algorithm_harmonic_MCF} for $\alpha = 1$. 
The time step size was $\tau = 10^{-4}$.
The images are rescaled. See Figures \ref{Fig_neck_surface_alpha_0_0001_tau_0_0001_1} and
\ref{Fig_neck_surface_alpha_0_0001_tau_0_0001_2} 
for the corresponding results with $\alpha = 10^{-4}$.
See Example 1 of Section \ref{Numerical_results_MCF} for further details.
}
\label{Figure_simulation_MCF_for_alpha_1_0_example_1}
\end{center}
\end{figure}

\gdef\gplbacktext{}%
\gdef\gplfronttext{}%
\begin{figure}
\centering
  \begin{picture}(7936.00,4534.00)%
    \gplgaddtomacro\gplbacktext{%
      \csname LTb\endcsname%
      \put(660,110){\makebox(0,0)[r]{\strut{} 0}}%
      \csname LTb\endcsname%
      \put(660,787){\makebox(0,0)[r]{\strut{} 1}}%
      \csname LTb\endcsname%
      \put(660,1464){\makebox(0,0)[r]{\strut{} 2}}%
      \csname LTb\endcsname%
      \put(660,2141){\makebox(0,0)[r]{\strut{} 3}}%
      \csname LTb\endcsname%
      \put(660,2818){\makebox(0,0)[r]{\strut{} 4}}%
      \csname LTb\endcsname%
      \put(660,3495){\makebox(0,0)[r]{\strut{} 5}}%
      \csname LTb\endcsname%
      \put(660,4172){\makebox(0,0)[r]{\strut{} 6}}%
      \csname LTb\endcsname%
      \put(797,-110){\makebox(0,0){\strut{}0.0}}%
      \csname LTb\endcsname%
      \put(1850,-110){\makebox(0,0){\strut{}0.02}}%
      \csname LTb\endcsname%
      \put(2902,-110){\makebox(0,0){\strut{}0.04}}%
      \csname LTb\endcsname%
      \put(3954,-110){\makebox(0,0){\strut{}0.06}}%
      \put(154,2310){\rotatebox{-270}{\makebox(0,0){\strut{}Area}}}%
      \put(2373,-440){\makebox(0,0){\strut{}Time}}%
    }%
    \gplgaddtomacro\gplfronttext{%
      \csname LTb\endcsname%
      \put(2967,4338){\makebox(0,0)[r]{\strut{}BGN}}%
      \csname LTb\endcsname%
      \put(2967,4118){\makebox(0,0)[r]{\strut{}$\alpha = 1.0$}}%
      \csname LTb\endcsname%
      \put(2967,3898){\makebox(0,0)[r]{\strut{}$\alpha = 0.1$}}%
      \csname LTb\endcsname%
      \put(2967,3678){\makebox(0,0)[r]{\strut{}$\alpha = 0.01$}}%
      \csname LTb\endcsname%
      \put(2967,3458){\makebox(0,0)[r]{\strut{}$\alpha =10^{-3}$}}%
      \csname LTb\endcsname%
      \put(2967,3238){\makebox(0,0)[r]{\strut{}$\alpha =10^{-4}$}}%
    }%
    \gplgaddtomacro\gplbacktext{%
      \csname LTb\endcsname%
      \put(4628,110){\makebox(0,0)[r]{\strut{} 4}}%
      \csname LTb\endcsname%
      \put(4628,599){\makebox(0,0)[r]{\strut{} 4.2}}%
      \csname LTb\endcsname%
      \put(4628,1088){\makebox(0,0)[r]{\strut{} 4.4}}%
      \csname LTb\endcsname%
      \put(4628,1577){\makebox(0,0)[r]{\strut{} 4.6}}%
      \csname LTb\endcsname%
      \put(4628,2066){\makebox(0,0)[r]{\strut{} 4.8}}%
      \csname LTb\endcsname%
      \put(4628,2555){\makebox(0,0)[r]{\strut{} 5}}%
      \csname LTb\endcsname%
      \put(4628,3044){\makebox(0,0)[r]{\strut{} 5.2}}%
      \csname LTb\endcsname%
      \put(4628,3533){\makebox(0,0)[r]{\strut{} 5.4}}%
      \csname LTb\endcsname%
      \put(4628,4022){\makebox(0,0)[r]{\strut{} 5.6}}%
      \csname LTb\endcsname%
      \put(4628,4511){\makebox(0,0)[r]{\strut{} 5.8}}%
      \csname LTb\endcsname%
      \put(4776,-110){\makebox(0,0){\strut{}0.0}}%
      \csname LTb\endcsname%
      \put(5562,-110){\makebox(0,0){\strut{}0.005}}%
      \csname LTb\endcsname%
      \put(6348,-110){\makebox(0,0){\strut{}0.01}}%
      \csname LTb\endcsname%
      \put(7135,-110){\makebox(0,0){\strut{}0.015}}%
      \csname LTb\endcsname%
      \put(7921,-110){\makebox(0,0){\strut{}0.02}}%
      \put(6340,-440){\makebox(0,0){\strut{}Time}}%
    }%
    \gplgaddtomacro\gplfronttext{%
      \csname LTb\endcsname%
      \put(6934,4338){\makebox(0,0)[r]{\strut{}BGN}}%
      \csname LTb\endcsname%
      \put(6934,4118){\makebox(0,0)[r]{\strut{}$\alpha = 1.0$}}%
      \csname LTb\endcsname%
      \put(6934,3898){\makebox(0,0)[r]{\strut{}$\alpha = 0.1$}}%
      \csname LTb\endcsname%
      \put(6934,3678){\makebox(0,0)[r]{\strut{}$\alpha = 0.01$}}%
      \csname LTb\endcsname%
      \put(6934,3458){\makebox(0,0)[r]{\strut{}$\alpha =10^{-3}$}}%
      \csname LTb\endcsname%
      \put(6934,3238){\makebox(0,0)[r]{\strut{}$\alpha =10^{-4}$}}%
    }%
    \gplbacktext
    \put(0,0){\includegraphics{./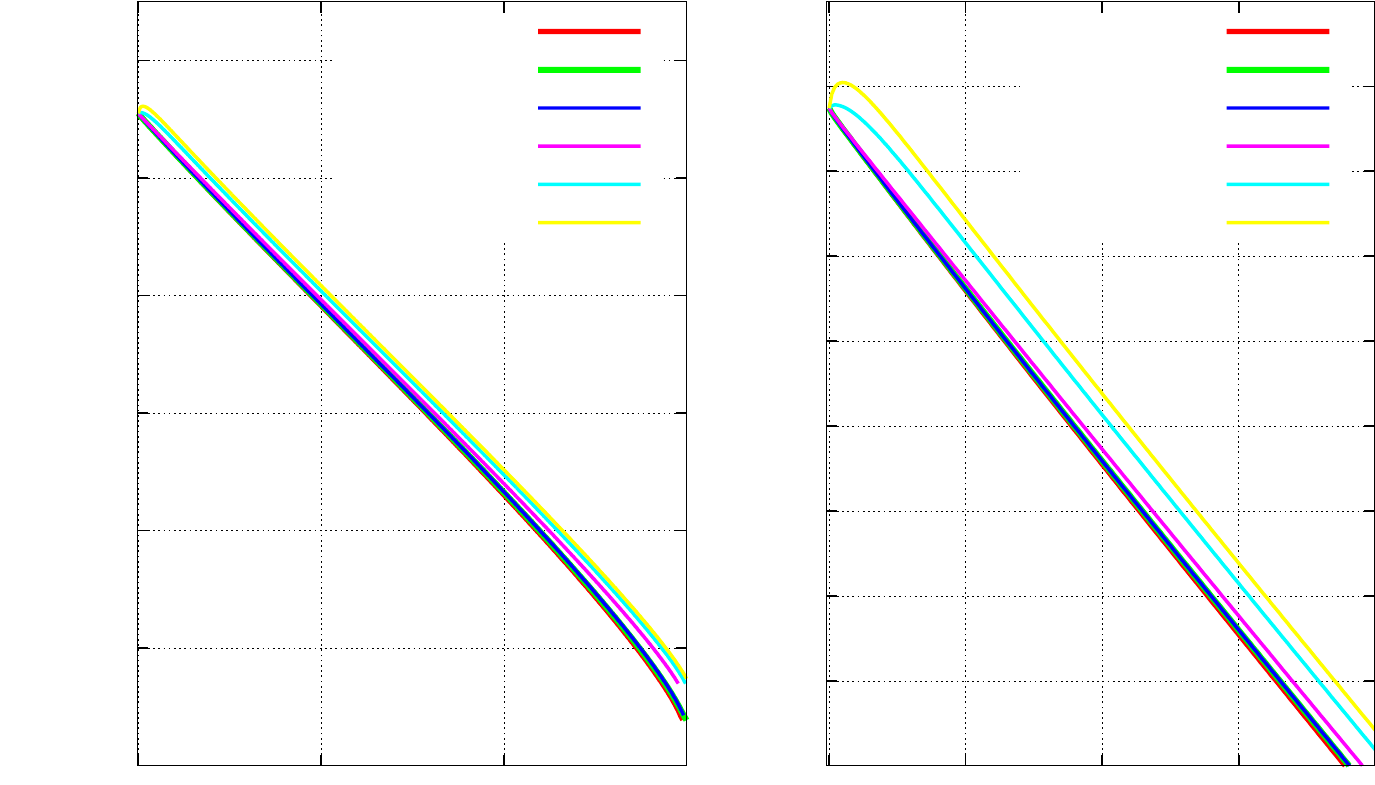}}%
    \gplfronttext
  \end{picture}%
  \vspace*{12mm}
  \caption{The images show the behaviour of the surface area for the BGN-scheme (2.25) in \cite{BGN08} 
  and for Algorithm \ref{algorithm_harmonic_MCF} 
  for different choices of $\alpha$. 
  The initial surface is shown in Figure \ref{initial_surface_MCF_example_1}. 
  The time step size was chosen as $\tau = 10^{-4}$. 
  The right image shows an enlarged section for small times $t$. 
  In general, for small $\alpha$ and small times $t$, the area of the solution of the $\alpha$-scheme is 
  not monotonically decreasing, 
  see for example the solution for $\alpha = 10^{-4}$ in the right picture.
  However, smaller time step sizes $\tau$ lead to a drop of the absolute increase of the area, 
  see Figure \ref{Figure_dependence_of_area_decrease_on_time_step_size}. 
  For further details see Example 1 of Section \ref{Numerical_results_MCF}.
}
\label{Figure_area_decrease_MCF_example_1}
\end{figure}

\gdef\gplbacktext{}%
\gdef\gplfronttext{}%
\begin{figure}
\centering
\begin{picture}(5102.00,3400.00)%
    \gplgaddtomacro\gplbacktext{%
      \csname LTb\endcsname%
      \put(396,382){\makebox(0,0)[r]{\strut{} 4.8}}%
      \csname LTb\endcsname%
      \put(396,927){\makebox(0,0)[r]{\strut{} 5}}%
      \csname LTb\endcsname%
      \put(396,1471){\makebox(0,0)[r]{\strut{} 5.2}}%
      \csname LTb\endcsname%
      \put(396,2016){\makebox(0,0)[r]{\strut{} 5.4}}%
      \csname LTb\endcsname%
      \put(396,2560){\makebox(0,0)[r]{\strut{} 5.6}}%
      \csname LTb\endcsname%
      \put(396,3105){\makebox(0,0)[r]{\strut{} 5.8}}%
      \csname LTb\endcsname%
      \put(528,-110){\makebox(0,0){\strut{}0.0}}%
      \csname LTb\endcsname%
      \put(1668,-110){\makebox(0,0){\strut{}0.0025}}%
      \csname LTb\endcsname%
      \put(2808,-110){\makebox(0,0){\strut{}0.005}}%
      \csname LTb\endcsname%
      \put(3947,-110){\makebox(0,0){\strut{}0.0075}}%
      \csname LTb\endcsname%
      \put(5087,-110){\makebox(0,0){\strut{}0.01}}%
      \put(-374,1743){\rotatebox{-270}{\makebox(0,0){\strut{}Area}}}%
      \put(2807,-440){\makebox(0,0){\strut{}Time}}%
    }%
    \gplgaddtomacro\gplfronttext{%
      \csname LTb\endcsname%
      \put(4100,3204){\makebox(0,0)[r]{\strut{}BGN, $\tau =10^{-4}$}}%
      \csname LTb\endcsname%
      \put(4100,2984){\makebox(0,0)[r]{\strut{}$\alpha =10^{-4}, \tau =10^{-4}$}}%
      \csname LTb\endcsname%
      \put(4100,2764){\makebox(0,0)[r]{\strut{}$\alpha =10^{-4}, \tau =10^{-5}$}}%
      \csname LTb\endcsname%
      \put(4100,2544){\makebox(0,0)[r]{\strut{}$\alpha =10^{-4}, \tau =10^{-6}$}}%
      \csname LTb\endcsname%
      \put(4100,2324){\makebox(0,0)[r]{\strut{}$\alpha =10^{-4}, \tau =10^{-7}$}}%
      \csname LTb\endcsname%
      \put(4100,2104){\makebox(0,0)[r]{\strut{}$\alpha =10^{-4}, \tau =10^{-8}$}}%
    }%
    \gplbacktext
  \put(0,0){\includegraphics{./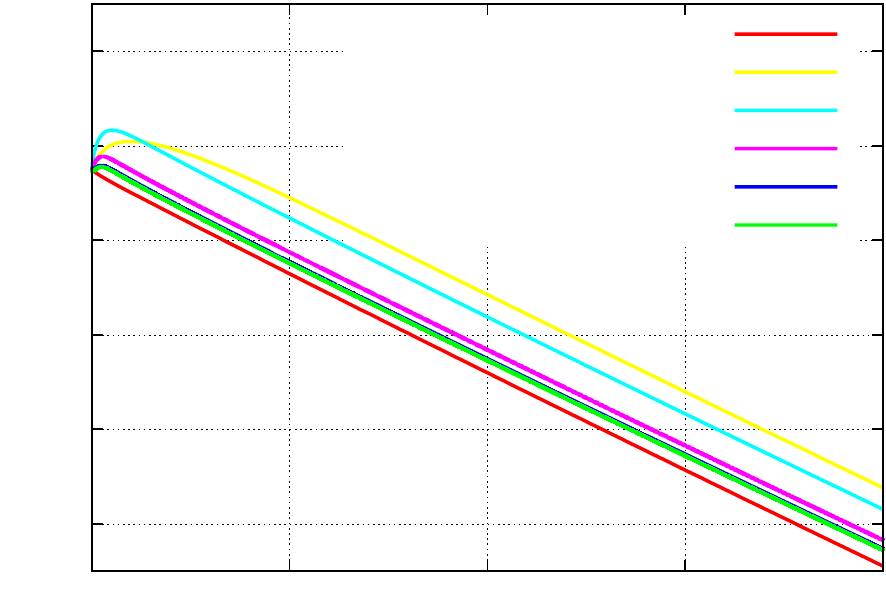}}%
    \gplfronttext
  \end{picture}%
  \vspace*{12mm}
  \caption{The image shows the behaviour of the area under the mean curvature flow 
  for the BGN-scheme (2.25) in \cite{BGN08} and
  for Algorithm \ref{algorithm_harmonic_MCF} with $\alpha = 10^{-4}$ for different time step sizes $\tau$. 
  The initial surface of the simulation is shown in Figure \ref{initial_surface_MCF_example_1}. 
  The area increase of the $\alpha$-scheme clearly depends on the time step size.
  Smaller time step sizes lead to a significant drop of the absolute increase of the surface area.  
  See Example 1 of Section \ref{Numerical_results_MCF} for further details.}
  \label{Figure_dependence_of_area_decrease_on_time_step_size}
\end{figure}

\gdef\gplbacktext{}%
\gdef\gplfronttext{}%
\begin{figure}
\centering
 \begin{picture}(7936.00,4534.00)%
    \gplgaddtomacro\gplbacktext{%
      \csname LTb\endcsname%
      \put(660,110){\makebox(0,0)[r]{\strut{} 5}}%
      \csname LTb\endcsname%
      \put(660,599){\makebox(0,0)[r]{\strut{} 10}}%
      \csname LTb\endcsname%
      \put(660,1088){\makebox(0,0)[r]{\strut{} 15}}%
      \csname LTb\endcsname%
      \put(660,1577){\makebox(0,0)[r]{\strut{} 20}}%
      \csname LTb\endcsname%
      \put(660,2066){\makebox(0,0)[r]{\strut{} 25}}%
      \csname LTb\endcsname%
      \put(660,2555){\makebox(0,0)[r]{\strut{} 30}}%
      \csname LTb\endcsname%
      \put(660,3044){\makebox(0,0)[r]{\strut{} 35}}%
      \csname LTb\endcsname%
      \put(660,3533){\makebox(0,0)[r]{\strut{} 40}}%
      \csname LTb\endcsname%
      \put(660,4022){\makebox(0,0)[r]{\strut{} 45}}%
      \csname LTb\endcsname%
      \put(660,4511){\makebox(0,0)[r]{\strut{} 50}}%
      \csname LTb\endcsname%
      \put(797,-110){\makebox(0,0){\strut{}0.0}}%
      \csname LTb\endcsname%
      \put(1850,-110){\makebox(0,0){\strut{}0.02}}%
      \csname LTb\endcsname%
      \put(2902,-110){\makebox(0,0){\strut{}0.04}}%
      \csname LTb\endcsname%
      \put(3954,-110){\makebox(0,0){\strut{}0.06}}%
      \put(22,2310){\rotatebox{-270}{\makebox(0,0){\strut{}$\sigma_{max}$}}}%
      \put(2373,-440){\makebox(0,0){\strut{}Time}}%
    }%
    \gplgaddtomacro\gplfronttext{%
      \csname LTb\endcsname%
      \put(2967,4338){\makebox(0,0)[r]{\strut{}$\mbox{Alg. 2, } \alpha = 1.0$}}%
      \csname LTb\endcsname%
      \put(2967,4118){\makebox(0,0)[r]{\strut{}$\mbox{Alg. 2, } \alpha = 0.1$}}%
      \csname LTb\endcsname%
      \put(2967,3898){\makebox(0,0)[r]{\strut{}$\mbox{Alg. 2, } \alpha = 0.01$}}%
      \csname LTb\endcsname%
      \put(2967,3678){\makebox(0,0)[r]{\strut{}$\mbox{Alg. 2, } \alpha =10^{-3}$}}%
    }%
    \gplgaddtomacro\gplbacktext{%
      \csname LTb\endcsname%
      \put(4628,110){\makebox(0,0)[r]{\strut{} 5}}%
      \csname LTb\endcsname%
      \put(4628,599){\makebox(0,0)[r]{\strut{} 10}}%
      \csname LTb\endcsname%
      \put(4628,1088){\makebox(0,0)[r]{\strut{} 15}}%
      \csname LTb\endcsname%
      \put(4628,1577){\makebox(0,0)[r]{\strut{} 20}}%
      \csname LTb\endcsname%
      \put(4628,2066){\makebox(0,0)[r]{\strut{} 25}}%
      \csname LTb\endcsname%
      \put(4628,2555){\makebox(0,0)[r]{\strut{} 30}}%
      \csname LTb\endcsname%
      \put(4628,3044){\makebox(0,0)[r]{\strut{} 35}}%
      \csname LTb\endcsname%
      \put(4628,3533){\makebox(0,0)[r]{\strut{} 40}}%
      \csname LTb\endcsname%
      \put(4628,4022){\makebox(0,0)[r]{\strut{} 45}}%
      \csname LTb\endcsname%
      \put(4628,4511){\makebox(0,0)[r]{\strut{} 50}}%
      \csname LTb\endcsname%
      \put(4765,-110){\makebox(0,0){\strut{}0.0}}%
      \csname LTb\endcsname%
      \put(5817,-110){\makebox(0,0){\strut{}0.02}}%
      \csname LTb\endcsname%
      \put(6869,-110){\makebox(0,0){\strut{}0.04}}%
      \csname LTb\endcsname%
      \put(7921,-110){\makebox(0,0){\strut{}0.06}}%
      \put(6340,-440){\makebox(0,0){\strut{}Time}}%
    }%
    \gplgaddtomacro\gplfronttext{%
      \csname LTb\endcsname%
      \put(6934,4338){\makebox(0,0)[r]{\strut{}$\mbox{Alg. 3, } \alpha = 1.0$}}%
      \csname LTb\endcsname%
      \put(6934,4118){\makebox(0,0)[r]{\strut{}$\mbox{Alg. 3, } \alpha = 0.1$}}%
      \csname LTb\endcsname%
      \put(6934,3898){\makebox(0,0)[r]{\strut{}$\mbox{Alg. 3, } \alpha = 0.01$}}%
      \csname LTb\endcsname%
      \put(6934,3678){\makebox(0,0)[r]{\strut{}$\mbox{Alg. 3, } \alpha =10^{-3}$}}%
    }%
    \gplbacktext
    \put(0,0){\includegraphics{./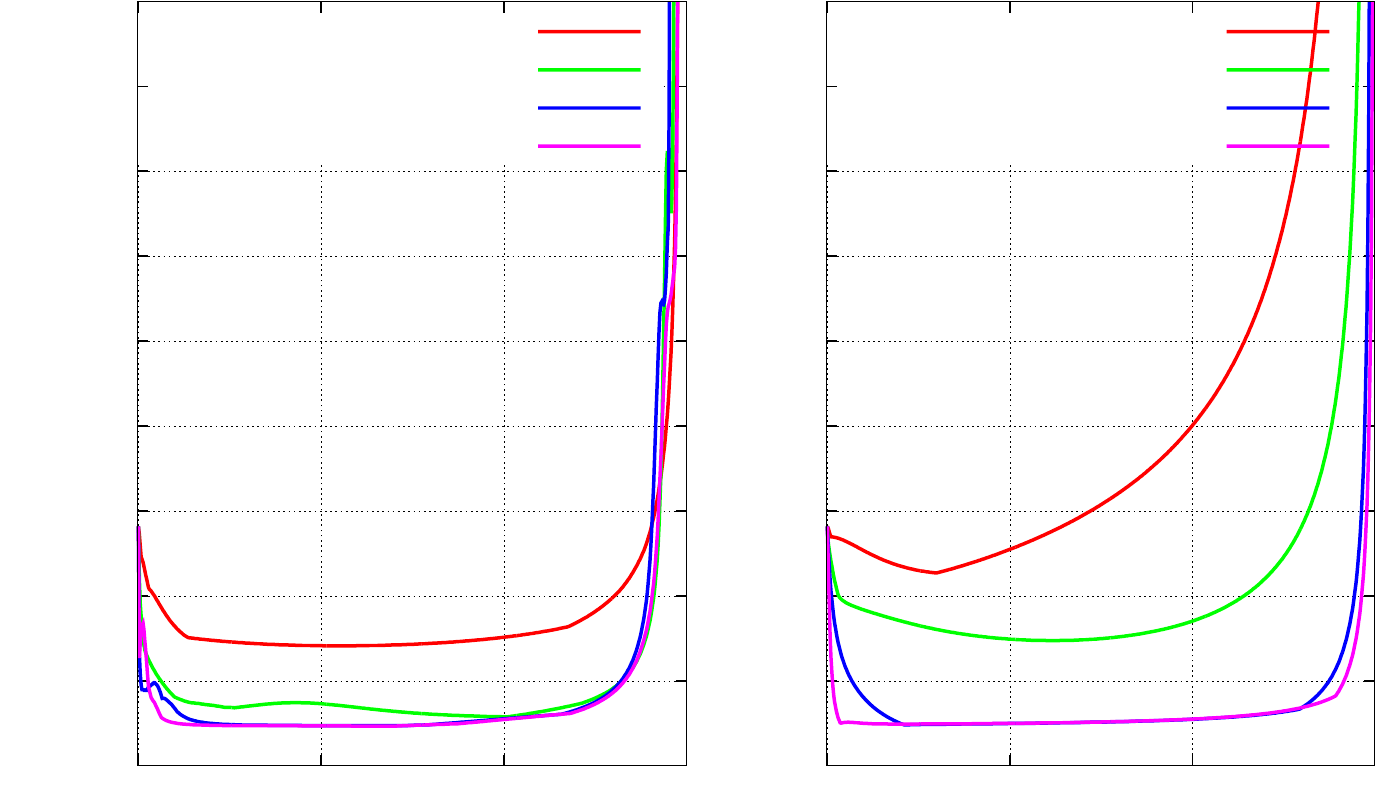}}%
    \gplfronttext
  \end{picture}%
\vspace*{12mm}
  \caption{Comparison of Algorithms \ref{algo_moving_hypersurface} and \ref{algorithm_harmonic_MCF} with respect to the 
  quantity $\sigma_{max}$ for different choices of $\alpha$. 
  The reference surface and the initial surface are shown in Figures \ref{reference_surface_MCF_example_1}
  and \ref{initial_surface_MCF_example_1}. The time step size was $\tau = 10^{-4}$.
  The experiment shows that for $\alpha = 0.01$ and $\alpha= 10^{-3}$ both algorithms
  produce good meshes. For these choices of $\alpha$, Algorithm \ref{algorithm_harmonic_MCF} seems to have a slightly better
  behaviour close to the surface singularity, 
  whereas Algorithm \ref{algo_moving_hypersurface} has a better mesh performance for 
  $\alpha = 1.0$ and $\alpha = 0.1$.  
  See Example 1 of Section \ref{Numerical_results_MCF} for further details.}
  \label{comparison_alg_3_and_4_example_1}
\end{figure}

\gdef\gplbacktext{}%
\gdef\gplfronttext{}%
\begin{figure}
\centering
\begin{picture}(5102.00,3400.00)%
    \gplgaddtomacro\gplbacktext{%
      \csname LTb\endcsname%
      \put(396,110){\makebox(0,0)[r]{\strut{} 5}}%
      \csname LTb\endcsname%
      \put(396,473){\makebox(0,0)[r]{\strut{} 10}}%
      \csname LTb\endcsname%
      \put(396,836){\makebox(0,0)[r]{\strut{} 15}}%
      \csname LTb\endcsname%
      \put(396,1199){\makebox(0,0)[r]{\strut{} 20}}%
      \csname LTb\endcsname%
      \put(396,1562){\makebox(0,0)[r]{\strut{} 25}}%
      \csname LTb\endcsname%
      \put(396,1925){\makebox(0,0)[r]{\strut{} 30}}%
      \csname LTb\endcsname%
      \put(396,2288){\makebox(0,0)[r]{\strut{} 35}}%
      \csname LTb\endcsname%
      \put(396,2651){\makebox(0,0)[r]{\strut{} 40}}%
      \csname LTb\endcsname%
      \put(396,3014){\makebox(0,0)[r]{\strut{} 45}}%
      \csname LTb\endcsname%
      \put(396,3377){\makebox(0,0)[r]{\strut{} 50}}%
      \csname LTb\endcsname%
      \put(536,-110){\makebox(0,0){\strut{}0.0}}%
      \csname LTb\endcsname%
      \put(2053,-110){\makebox(0,0){\strut{}0.02}}%
      \csname LTb\endcsname%
      \put(3570,-110){\makebox(0,0){\strut{}0.04}}%
      \csname LTb\endcsname%
      \put(5087,-110){\makebox(0,0){\strut{}0.06}}%
      \put(-242,1743){\rotatebox{-270}{\makebox(0,0){\strut{}$\sigma_{max}$}}}%
      \put(2807,-440){\makebox(0,0){\strut{}Time}}%
    }%
    \gplgaddtomacro\gplfronttext{%
      \csname LTb\endcsname%
      \put(1716,3204){\makebox(0,0)[r]{\strut{}BGN}}%
      \csname LTb\endcsname%
      \put(1716,2984){\makebox(0,0)[r]{\strut{}$\alpha = 1.0$}}%
      \csname LTb\endcsname%
      \put(1716,2764){\makebox(0,0)[r]{\strut{}$\alpha = 0.1$}}%
      \csname LTb\endcsname%
      \put(1716,2544){\makebox(0,0)[r]{\strut{}$\alpha = 0.01$}}%
      \csname LTb\endcsname%
      \put(1716,2324){\makebox(0,0)[r]{\strut{}$\alpha =10^{-3}$}}%
      \csname LTb\endcsname%
      \put(1716,2104){\makebox(0,0)[r]{\strut{}$\alpha =10^{-4}$}}%
    }%
    \gplbacktext
\put(0,0){\includegraphics{./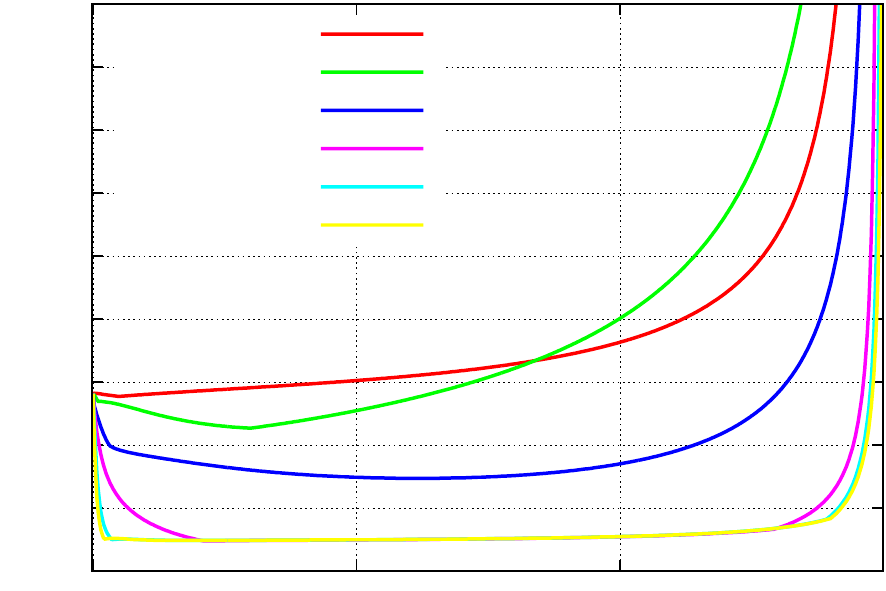}}%
    \gplfronttext
  \end{picture}%
  \vspace*{12mm}
  \caption{The image shows the behaviour of the mesh quality described by the quantity $\sigma_{max}$
  for the BGN-scheme (2.25) in \cite{BGN08} and
  for Algorithm \ref{algorithm_harmonic_MCF} for different choices of $\alpha$. 
  The initial surface is shown in Figure \ref{initial_surface_MCF_example_1}. 
  The reference surface for the $\alpha$-scheme is presented in Figure 
  \ref{reference_surface_MCF_example_1}. In this example, the $\alpha$-schemes 
  with $\alpha \leq 0.1$ outperform the benchmark scheme (2.25) in \cite{BGN08} with respect 
  to the mesh quality.
  See Example 1 of Section \ref{Numerical_results_MCF} for further details.	  
  }
  \label{Figure_mesh_properties_MCF_example_1}
\end{figure}

\gdef\gplbacktext{}%
\gdef\gplfronttext{}%
\begin{figure}
\centering
\begin{picture}(5102.00,3400.00)%
    \gplgaddtomacro\gplbacktext{%
      \csname LTb\endcsname%
      \put(396,302){\makebox(0,0)[r]{\strut{} 7}}%
      \csname LTb\endcsname%
      \put(396,687){\makebox(0,0)[r]{\strut{} 8}}%
      \csname LTb\endcsname%
      \put(396,1071){\makebox(0,0)[r]{\strut{} 9}}%
      \csname LTb\endcsname%
      \put(396,1455){\makebox(0,0)[r]{\strut{} 10}}%
      \csname LTb\endcsname%
      \put(396,1840){\makebox(0,0)[r]{\strut{} 11}}%
      \csname LTb\endcsname%
      \put(396,2224){\makebox(0,0)[r]{\strut{} 12}}%
      \csname LTb\endcsname%
      \put(396,2608){\makebox(0,0)[r]{\strut{} 13}}%
      \csname LTb\endcsname%
      \put(396,2993){\makebox(0,0)[r]{\strut{} 14}}%
      \csname LTb\endcsname%
      \put(396,3377){\makebox(0,0)[r]{\strut{} 15}}%
      \csname LTb\endcsname%
      \put(536,-110){\makebox(0,0){\strut{}0.0}}%
      \csname LTb\endcsname%
      \put(2053,-110){\makebox(0,0){\strut{}0.02}}%
      \csname LTb\endcsname%
      \put(3570,-110){\makebox(0,0){\strut{}0.04}}%
      \csname LTb\endcsname%
      \put(5087,-110){\makebox(0,0){\strut{}0.06}}%
      \put(-242,1743){\rotatebox{-270}{\makebox(0,0){\strut{}$\sigma_{max}$}}}%
      \put(2807,-440){\makebox(0,0){\strut{}Time}}%
    }%
    \gplgaddtomacro\gplfronttext{%
      \csname LTb\endcsname%
      \put(3502,3204){\makebox(0,0)[r]{\strut{}$n=3, \alpha = \tau = 10^{-4}$}}%
      \csname LTb\endcsname%
      \put(3502,2984){\makebox(0,0)[r]{\strut{}$n=4, \alpha = \tau = 10^{-4}$}}%
      \csname LTb\endcsname%
      \put(3502,2764){\makebox(0,0)[r]{\strut{}$n=5, \alpha = \tau = 10^{-4}$}}%
      \csname LTb\endcsname%
      \put(3502,2544){\makebox(0,0)[r]{\strut{}$n=6, \alpha = \tau = 10^{-4}$}}%
    }%
    \gplbacktext
    \put(0,0){\includegraphics{./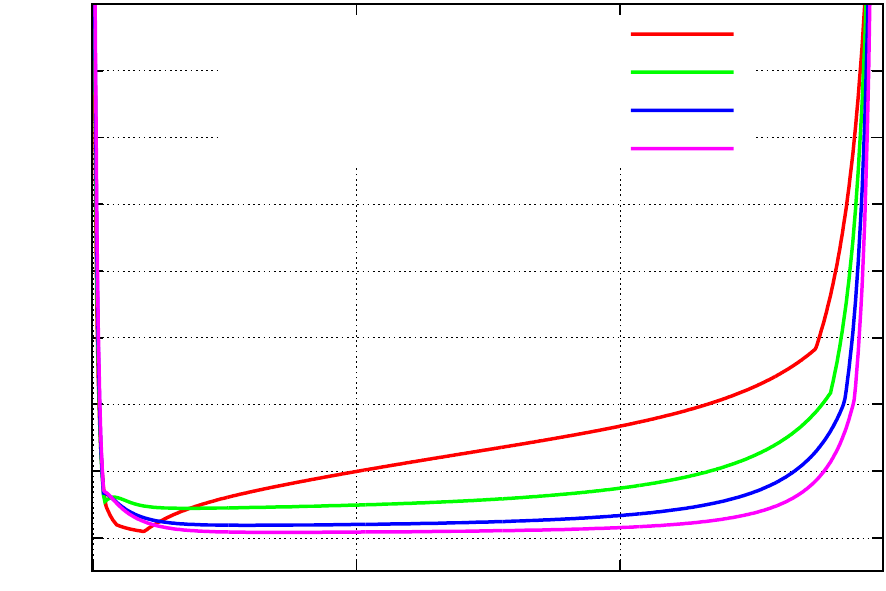}}%
    \gplfronttext
  \end{picture}%
  \vspace*{12mm}
  \caption{The image shows the behaviour of the mesh quantity $\sigma_{max}$
  for Algorithm \ref{algorithm_harmonic_MCF} with $\alpha = 10^{-4}$ and 
  for different global mesh refinements $n$ of the macro triangulation. 
  In each refinement step the simplices are bisected twice so that the maximal diameter of the simplices
  is approximately halved in each step.
  The reference surface and the initial surface for $n=4$ are shown in Figures 
  \ref{reference_surface_MCF_example_1} and \ref{initial_surface_MCF_example_1}. 
  For smaller mesh sizes (that is a higher number of
  global mesh refinements) the positive properties of
  the $\alpha$-scheme with respect to the mesh quantity $\sigma_{max}$ become more pronounced. 
  In Figure \ref{Figure_mesh_properties_MCF_example_1} the
  mesh properties of the BGN-scheme (2.25) in \cite{BGN08} and
  of Algorithm \ref{algorithm_harmonic_MCF} are compared for the global refinement $n=4$
  and different choices of $\alpha$.
  See Example 1 of Section \ref{Numerical_results_MCF} for further details.	  
  }
  \label{Figure_dependence_sigma_max_mesh_refinements}
\end{figure}

\subsection*{Example 2:}
We now change the local surface parametrization to be
\begin{equation*}
	X_0(\theta, \varphi) :=
	\left( 
	\begin{array}{c}	
		\cos \varphi \\
		(0.6 \cos^2 \varphi + 0.4) \cos \theta  \sin \varphi \\
		(0.6 \cos^2 \varphi + 0.4) \sin \theta  \sin \varphi
	\end{array}		
	\right),
	\quad \theta \in [0,2\pi), \varphi \in [0,\pi].
\end{equation*}
The radius at the neck of the initial surface is now equal to $0.4$ instead of $0.3$
as in Example $1$.
In this case, the mean curvature flow does not develop a neck pinch singularity,
but shrinks to a round sphere, see Figure \ref{Figure_simulation_MCF_shrinking_sphere}.
The decrease of the surface area is presented in Figure \ref{Figure_area_decrease_MCF_example_2}.
For a round shrinking sphere the area should decrease linearly in time.
However, for both schemes and fixed time step size $\tau$,
the area does not decrease linearly close to the singularity of the surface.
We, therefore, couple the time step size to the maximal diameter $h$ of the mesh
by $\tau = 0.001 h$ and $\tau = 0.01 h^2$. This leads to a linear decrease of the surface area also
for times close to the surface singularity. In Figure \ref{Figure_mesh_properties_for_fixed_alpha_example_2}
the comparison of the mesh quantity $\sigma_{max}$ is presented. Away from the singularity,
Algorithm \ref{algorithm_harmonic_MCF} outperforms the BGN-scheme.
The BGN-scheme, however, shows better mesh behaviour at the singularity.
Coupling the parameter $\alpha$ to the time step size $\tau$ 
and thus to the mesh size $h$,
a good mesh behaviour at the singularity is also achievable for Algorithm \ref{algorithm_harmonic_MCF}, 
see Figure \ref{Figure_mesh_properties_for_alpha_is_tau_example_2}.
Please note that such a coupling is not necessary for Algorithm \ref{algo_moving_hypersurface},
see left image in Figure \ref{comparison_alg_3_and_4_example_2}.
In this example, Algorithm \ref{algo_moving_hypersurface} clearly shows better mesh behaviour
than Algorithm \ref{algorithm_harmonic_MCF}.
 
\begin{figure}
\begin{center}
~~~~~
\subfloat[][\centering Reference surface for \mbox{Algorithm \ref{algorithm_harmonic_MCF}.}]
{\includegraphics[width=0.3\textwidth]{./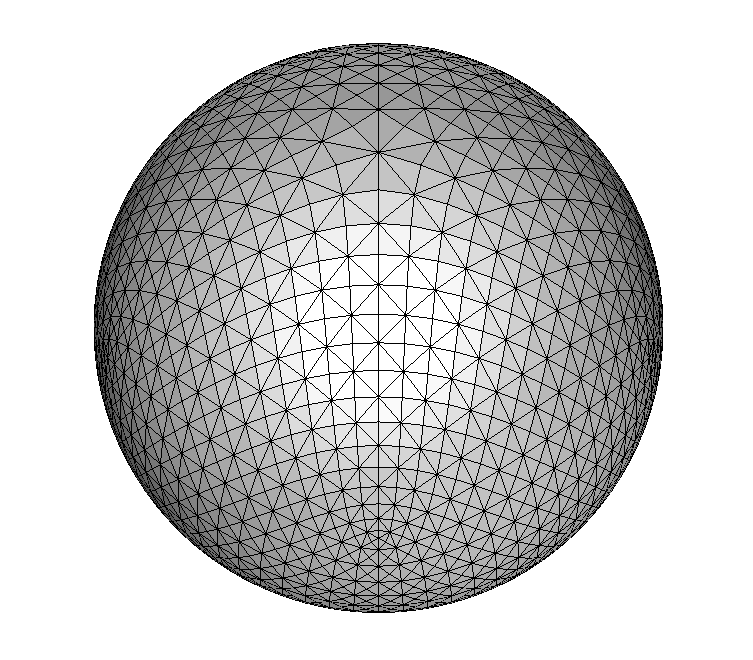}
\label{reference_surface_MCF_example_2}
} 
~~~~~~
\subfloat[][\centering Surface at time $t = 0.0$. The surface area is $6.330$.]
{\includegraphics[width=0.4\textwidth]{./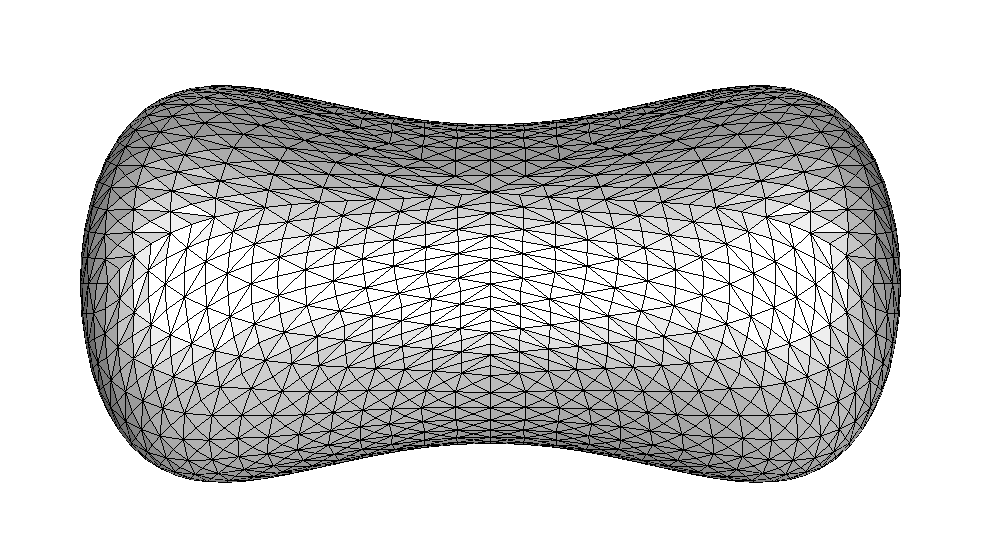}
\label{start_surface_MCF_example_2}
} \\

\subfloat[][\centering Algorithm \ref{algorithm_harmonic_MCF}
at time $t=0.0751$. The surface area is $1.221$.]
{\includegraphics[width=0.4\textwidth]{./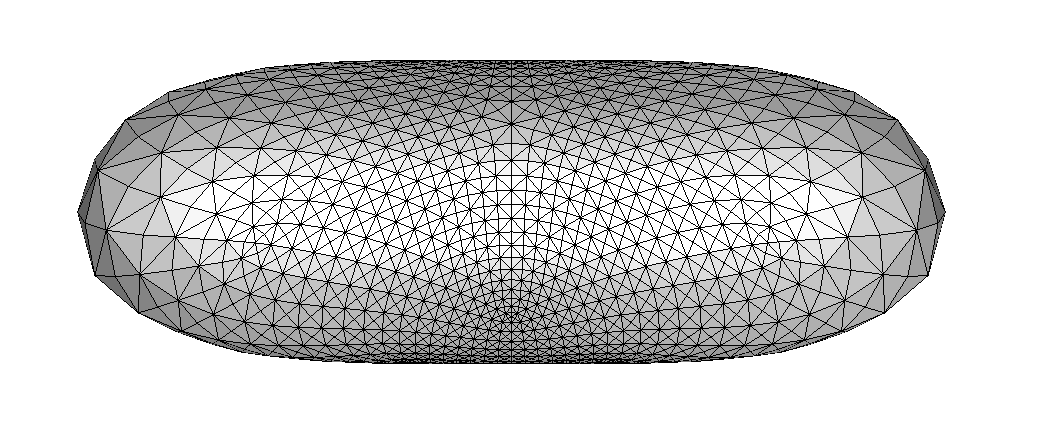}} 
\subfloat[][\centering BGN-scheme at time $t = 0.0750$. The surface area is $1.071$.]
{\includegraphics[width=0.4\textwidth]{./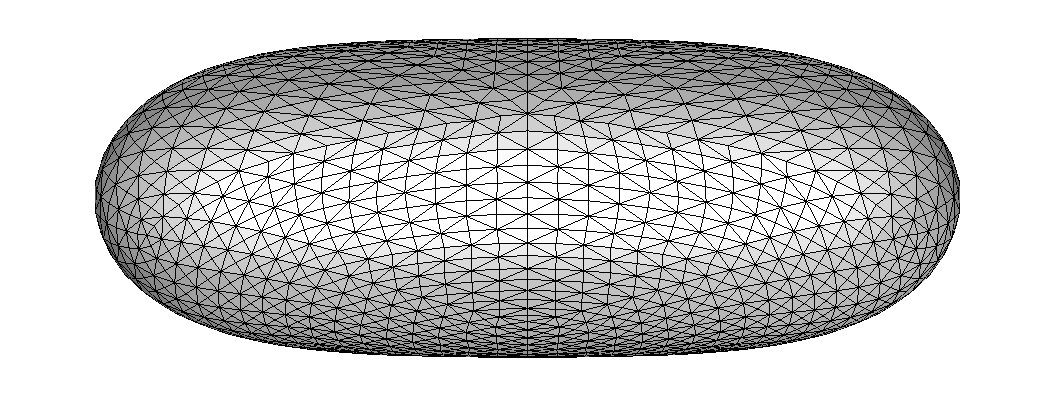}}\\
~
\subfloat[][\centering Algorithm \ref{algorithm_harmonic_MCF}
at time $t=0.0928$. The surface area is $0.00001$.]
{\includegraphics[width=0.3\textwidth]{./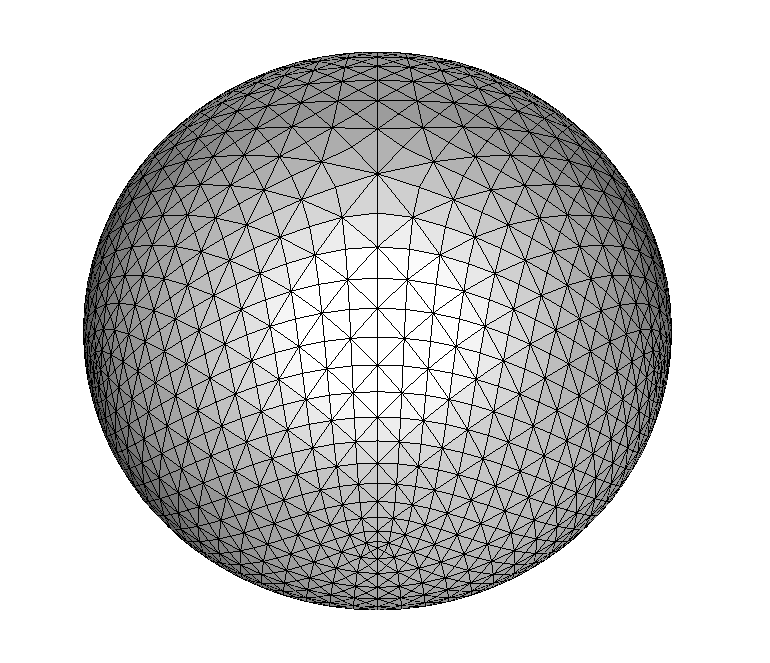}}
~~~~~~~~~~
\subfloat[][\centering BGN-scheme at time $t = 0.0908$. The surface area is $0.00001$.]{\includegraphics[width=0.3\textwidth]{./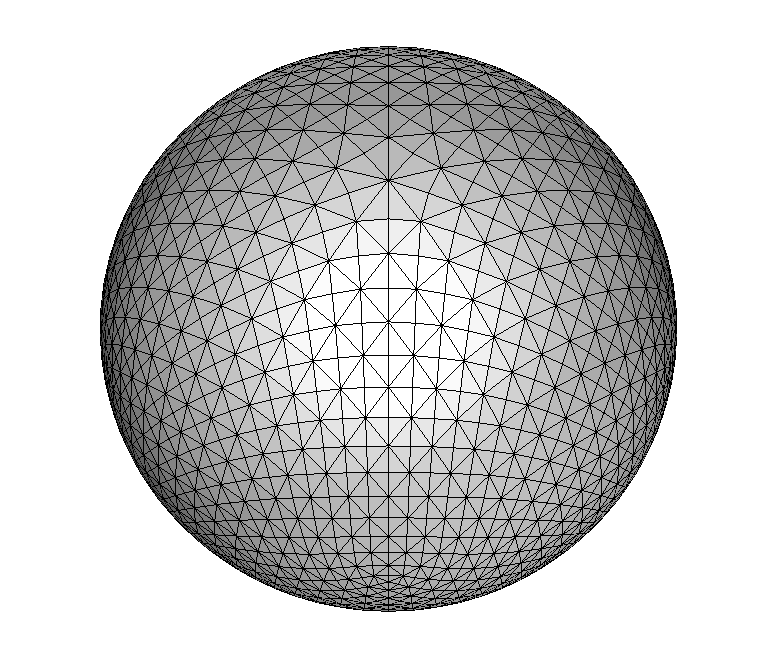}}
\caption{Comparison of Algorithm \ref{algorithm_harmonic_MCF} for $\alpha = \tau$
and the BGN-scheme (2.25) in \cite{BGN08}. The time step size in both schemes was chosen as 
$\tau = 0.01 h^2$, where $h$ is the maximal diameter of the surface triangulation.
The mesh had $5120$ triangles and $2562$ vertices.
The images are rescaled. See Example 2 of Section \ref{Numerical_results_MCF} for further details.}
\label{Figure_simulation_MCF_shrinking_sphere}
\end{center}
\end{figure}
\gdef\gplbacktext{}%
\gdef\gplfronttext{}%
\begin{figure}
\centering
  \begin{picture}(7936.00,4534.00)%
    \gplgaddtomacro\gplbacktext{%
      \csname LTb\endcsname%
      \put(660,110){\makebox(0,0)[r]{\strut{} 0}}%
      \csname LTb\endcsname%
      \put(660,573){\makebox(0,0)[r]{\strut{} 1}}%
      \csname LTb\endcsname%
      \put(660,1037){\makebox(0,0)[r]{\strut{} 2}}%
      \csname LTb\endcsname%
      \put(660,1500){\makebox(0,0)[r]{\strut{} 3}}%
      \csname LTb\endcsname%
      \put(660,1963){\makebox(0,0)[r]{\strut{} 4}}%
      \csname LTb\endcsname%
      \put(660,2426){\makebox(0,0)[r]{\strut{} 5}}%
      \csname LTb\endcsname%
      \put(660,2890){\makebox(0,0)[r]{\strut{} 6}}%
      \csname LTb\endcsname%
      \put(660,3353){\makebox(0,0)[r]{\strut{} 7}}%
      \csname LTb\endcsname%
      \put(660,3816){\makebox(0,0)[r]{\strut{} 8}}%
      \csname LTb\endcsname%
      \put(660,4279){\makebox(0,0)[r]{\strut{} 9}}%
      \csname LTb\endcsname%
      \put(823,-110){\makebox(0,0){\strut{}0.0}}%
      \csname LTb\endcsname%
      \put(1606,-110){\makebox(0,0){\strut{}0.025}}%
      \csname LTb\endcsname%
      \put(2389,-110){\makebox(0,0){\strut{}0.05}}%
      \csname LTb\endcsname%
      \put(3171,-110){\makebox(0,0){\strut{}0.075}}%
      \csname LTb\endcsname%
      \put(3954,-110){\makebox(0,0){\strut{}0.1}}%
      \put(154,2310){\rotatebox{-270}{\makebox(0,0){\strut{}Area}}}%
      \put(2373,-440){\makebox(0,0){\strut{}Time}}%
    }%
    \gplgaddtomacro\gplfronttext{%
      \csname LTb\endcsname%
      \put(2967,4338){\makebox(0,0)[r]{\strut{}BGN, $\tau = 10^{-4}$}}%
      \csname LTb\endcsname%
      \put(2967,4118){\makebox(0,0)[r]{\strut{}BGN, $\tau = 10^{-3} h$}}%
      \csname LTb\endcsname%
      \put(2967,3898){\makebox(0,0)[r]{\strut{}BGN, $\tau = 10^{-2} h^2$}}%
      \csname LTb\endcsname%
      \put(2967,3678){\makebox(0,0)[r]{\strut{}$\alpha = \tau = 10^{-4}$}}%
      \csname LTb\endcsname%
      \put(2967,3458){\makebox(0,0)[r]{\strut{}$\alpha =10^{-4}, \tau = 10^{-3} h$}}%
      \csname LTb\endcsname%
      \put(2967,3238){\makebox(0,0)[r]{\strut{}$\alpha =10^{-4}, \tau = 10^{-2} h^2$}}%
    }%
    \gplgaddtomacro\gplbacktext{%
      \csname LTb\endcsname%
      \put(4628,110){\makebox(0,0)[r]{\strut{} 0}}%
      \csname LTb\endcsname%
      \put(4628,1210){\makebox(0,0)[r]{\strut{} 0.05}}%
      \csname LTb\endcsname%
      \put(4628,2311){\makebox(0,0)[r]{\strut{} 0.1}}%
      \csname LTb\endcsname%
      \put(4628,3411){\makebox(0,0)[r]{\strut{} 0.15}}%
      \csname LTb\endcsname%
      \put(4628,4511){\makebox(0,0)[r]{\strut{} 0.2}}%
      \csname LTb\endcsname%
      \put(4760,-110){\makebox(0,0){\strut{}0.089}}%
      \csname LTb\endcsname%
      \put(5462,-110){\makebox(0,0){\strut{}0.09}}%
      \csname LTb\endcsname%
      \put(6165,-110){\makebox(0,0){\strut{}0.091}}%
      \csname LTb\endcsname%
      \put(6867,-110){\makebox(0,0){\strut{}0.092}}%
      \csname LTb\endcsname%
      \put(7570,-110){\makebox(0,0){\strut{}0.093}}%
      \put(6340,-440){\makebox(0,0){\strut{}Time}}%
    }%
    \gplgaddtomacro\gplfronttext{%
    }%
    \gplbacktext
    \put(0,0){\includegraphics{./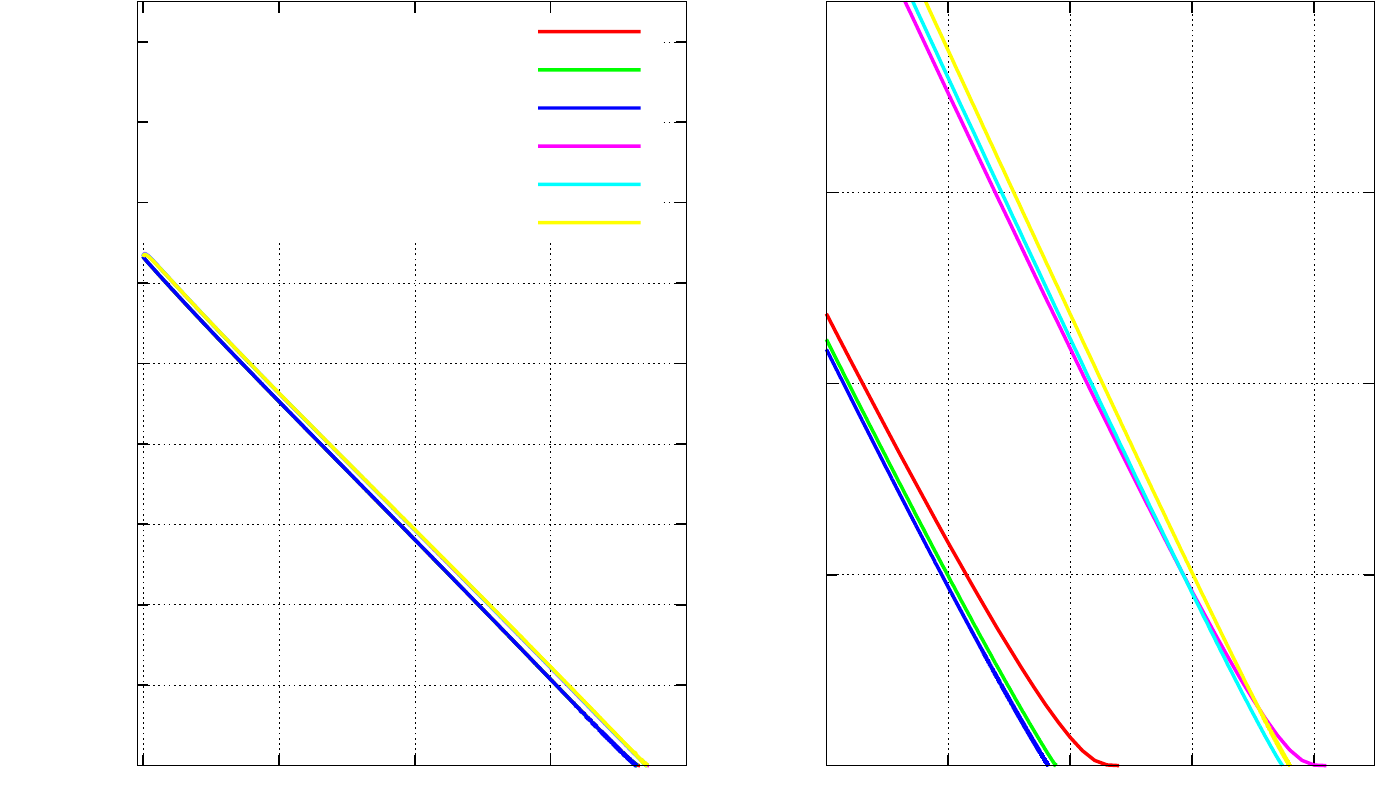}}%
    \gplfronttext
  \end{picture}%
  \vspace*{12mm}
  \caption{The images show the decrease of the surface area for the BGN-scheme (2.25) in \cite{BGN08}
  and for Algorithm \ref{algorithm_harmonic_MCF} with $\alpha = 10^{-4}$.
  The initial surface and the reference surface used 
  in Algorithm \ref{algorithm_harmonic_MCF} are presented 
  in Figures \ref{start_surface_MCF_example_2} and \ref{reference_surface_MCF_example_2}.
  The right image shows an enlarged section for times close to the surface singularity.
  The colour code is valid for both images.
  Different time step sizes were chosen. For a fixed time step size 
  $\tau = 10^{-4}$, the area does not decrease linearly close to the singularity
  (right image). However, by coupling the time step size $\tau$ to the maximal diameter $h$ 
  of the triangulation, we also obtain a linear area decrease close to the singularity. 
  See Example 2 of Section \ref{Numerical_results_MCF} for further details.
  }
  \label{Figure_area_decrease_MCF_example_2}
\end{figure}

\gdef\gplbacktext{}%
\gdef\gplfronttext{}%
\begin{figure}
\centering
\begin{picture}(7936.00,4534.00)%
    \gplgaddtomacro\gplbacktext{%
      \csname LTb\endcsname%
      \put(660,110){\makebox(0,0)[r]{\strut{} 6}}%
      \csname LTb\endcsname%
      \put(660,739){\makebox(0,0)[r]{\strut{} 8}}%
      \csname LTb\endcsname%
      \put(660,1367){\makebox(0,0)[r]{\strut{} 10}}%
      \csname LTb\endcsname%
      \put(660,1996){\makebox(0,0)[r]{\strut{} 12}}%
      \csname LTb\endcsname%
      \put(660,2625){\makebox(0,0)[r]{\strut{} 14}}%
      \csname LTb\endcsname%
      \put(660,3254){\makebox(0,0)[r]{\strut{} 16}}%
      \csname LTb\endcsname%
      \put(660,3882){\makebox(0,0)[r]{\strut{} 18}}%
      \csname LTb\endcsname%
      \put(660,4511){\makebox(0,0)[r]{\strut{} 20}}%
      \csname LTb\endcsname%
      \put(823,-110){\makebox(0,0){\strut{}0.0}}%
      \csname LTb\endcsname%
      \put(1606,-110){\makebox(0,0){\strut{}0.025}}%
      \csname LTb\endcsname%
      \put(2389,-110){\makebox(0,0){\strut{}0.05}}%
      \csname LTb\endcsname%
      \put(3171,-110){\makebox(0,0){\strut{}0.075}}%
      \csname LTb\endcsname%
      \put(3954,-110){\makebox(0,0){\strut{}0.1}}%
      \put(22,2310){\rotatebox{-270}{\makebox(0,0){\strut{}$\sigma_{max}$}}}%
      \put(2373,-440){\makebox(0,0){\strut{}Time}}%
    }%
    \gplgaddtomacro\gplfronttext{%
      \csname LTb\endcsname%
      \put(2967,4338){\makebox(0,0)[r]{\strut{}$\mbox{Alg. 2, } \alpha = 1.0$}}%
      \csname LTb\endcsname%
      \put(2967,4118){\makebox(0,0)[r]{\strut{}$\mbox{Alg. 2, } \alpha = 0.1$}}%
      \csname LTb\endcsname%
      \put(2967,3898){\makebox(0,0)[r]{\strut{}$\mbox{Alg. 2, } \alpha = 0.01$}}%
      \csname LTb\endcsname%
      \put(2967,3678){\makebox(0,0)[r]{\strut{}$\mbox{Alg. 2, } \alpha =10^{-3}$}}%
    }%
    \gplgaddtomacro\gplbacktext{%
      \csname LTb\endcsname%
      \put(4628,110){\makebox(0,0)[r]{\strut{} 6}}%
      \csname LTb\endcsname%
      \put(4628,739){\makebox(0,0)[r]{\strut{} 8}}%
      \csname LTb\endcsname%
      \put(4628,1367){\makebox(0,0)[r]{\strut{} 10}}%
      \csname LTb\endcsname%
      \put(4628,1996){\makebox(0,0)[r]{\strut{} 12}}%
      \csname LTb\endcsname%
      \put(4628,2625){\makebox(0,0)[r]{\strut{} 14}}%
      \csname LTb\endcsname%
      \put(4628,3254){\makebox(0,0)[r]{\strut{} 16}}%
      \csname LTb\endcsname%
      \put(4628,3882){\makebox(0,0)[r]{\strut{} 18}}%
      \csname LTb\endcsname%
      \put(4628,4511){\makebox(0,0)[r]{\strut{} 20}}%
      \csname LTb\endcsname%
      \put(4791,-110){\makebox(0,0){\strut{}0.0}}%
      \csname LTb\endcsname%
      \put(5574,-110){\makebox(0,0){\strut{}0.025}}%
      \csname LTb\endcsname%
      \put(6356,-110){\makebox(0,0){\strut{}0.05}}%
      \csname LTb\endcsname%
      \put(7139,-110){\makebox(0,0){\strut{}0.075}}%
      \csname LTb\endcsname%
      \put(7921,-110){\makebox(0,0){\strut{}0.1}}%
      \put(6340,-440){\makebox(0,0){\strut{}Time}}%
    }%
    \gplgaddtomacro\gplfronttext{%
      \csname LTb\endcsname%
      \put(6934,4338){\makebox(0,0)[r]{\strut{}$\mbox{Alg. 3, } \alpha = 1.0$}}%
      \csname LTb\endcsname%
      \put(6934,4118){\makebox(0,0)[r]{\strut{}$\mbox{Alg. 3, } \alpha = 0.1$}}%
      \csname LTb\endcsname%
      \put(6934,3898){\makebox(0,0)[r]{\strut{}$\mbox{Alg. 3, } \alpha = 0.01$}}%
      \csname LTb\endcsname%
      \put(6934,3678){\makebox(0,0)[r]{\strut{}$\mbox{Alg. 3, } \alpha =10^{-3}$}}%
    }%
    \gplbacktext
    \put(0,0){\includegraphics{./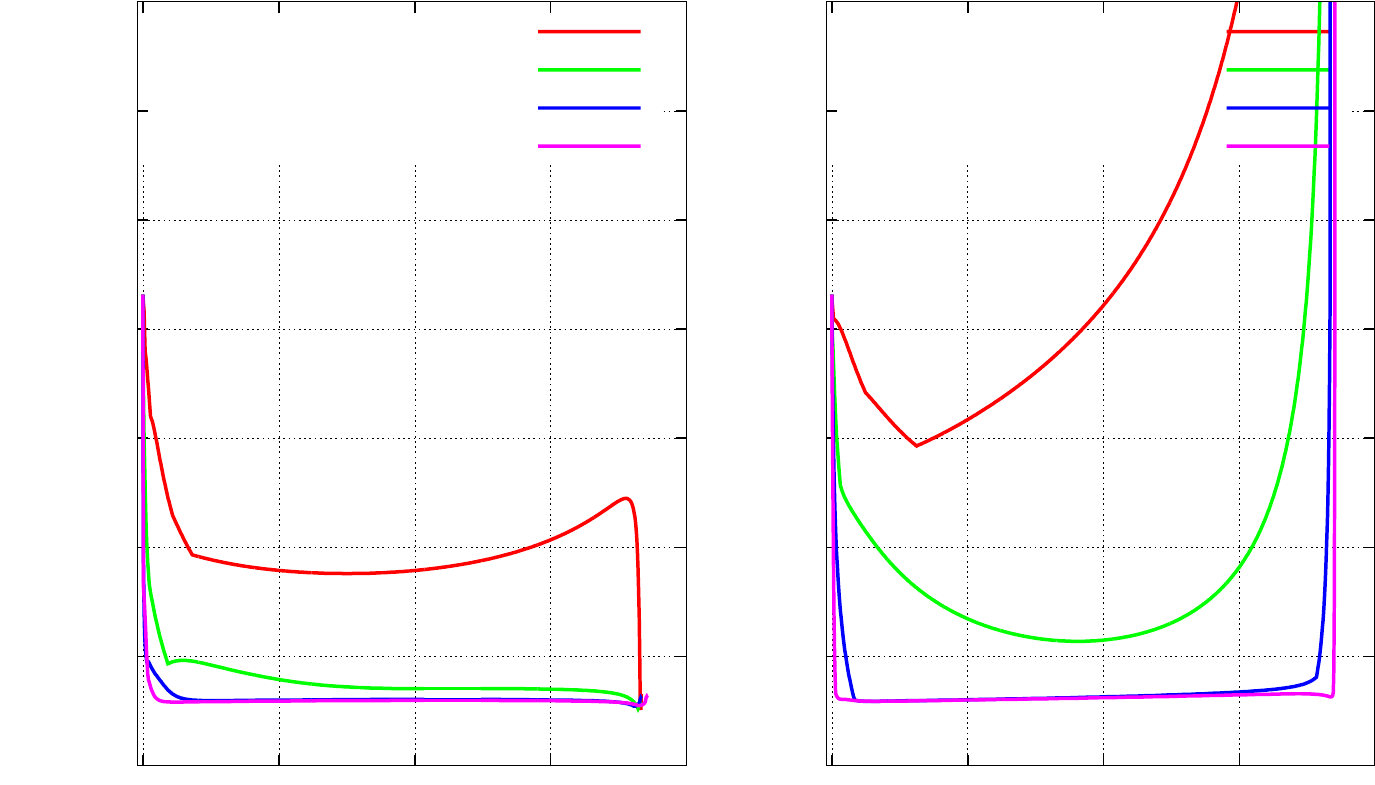}}%
    \gplfronttext
  \end{picture}%
\vspace*{12mm}
  \caption{Comparison of Algorithms \ref{algo_moving_hypersurface} and \ref{algorithm_harmonic_MCF} 
  with respect to the quantity $\sigma_{max}$ for different choices of $\alpha$. 
  The reference surface and the initial surface are shown in Figures \ref{reference_surface_MCF_example_2}
  and \ref{start_surface_MCF_example_2}. The time step size was $\tau = 10^{-4}$.
  For $\alpha = 0.01$ and $\alpha = 10^{-3}$ both algorithms have good mesh properties.
  However, Algorithm \ref{algo_moving_hypersurface} outperforms Algorithm \ref{algorithm_harmonic_MCF} for $\alpha = 1.0$
  and $\alpha = 0.1$ as well as for times close to the surface singularity. 
  For Algorithm \ref{algo_moving_hypersurface} 
  it is not necessary to couple the parameter $\alpha$ to the time step size $\tau$, 
  like in Figure \ref{Figure_mesh_properties_for_alpha_is_tau_example_2},
  in order to obtain a good mesh behaviour at the surface singularity. 
  See Example 2 of Section \ref{Numerical_results_MCF} for further details.}
  \label{comparison_alg_3_and_4_example_2}
\end{figure}

\gdef\gplbacktext{}%
\gdef\gplfronttext{}%
\begin{figure}
\centering
	 \begin{picture}(5102.00,3400.00)%
    \gplgaddtomacro\gplbacktext{%
      \csname LTb\endcsname%
      \put(396,110){\makebox(0,0)[r]{\strut{} 6}}%
      \csname LTb\endcsname%
      \put(396,577){\makebox(0,0)[r]{\strut{} 8}}%
      \csname LTb\endcsname%
      \put(396,1043){\makebox(0,0)[r]{\strut{} 10}}%
      \csname LTb\endcsname%
      \put(396,1510){\makebox(0,0)[r]{\strut{} 12}}%
      \csname LTb\endcsname%
      \put(396,1977){\makebox(0,0)[r]{\strut{} 14}}%
      \csname LTb\endcsname%
      \put(396,2444){\makebox(0,0)[r]{\strut{} 16}}%
      \csname LTb\endcsname%
      \put(396,2910){\makebox(0,0)[r]{\strut{} 18}}%
      \csname LTb\endcsname%
      \put(396,3377){\makebox(0,0)[r]{\strut{} 20}}%
      \csname LTb\endcsname%
      \put(573,-110){\makebox(0,0){\strut{}0.0}}%
      \csname LTb\endcsname%
      \put(1702,-110){\makebox(0,0){\strut{}0.025}}%
      \csname LTb\endcsname%
      \put(2830,-110){\makebox(0,0){\strut{}0.05}}%
      \csname LTb\endcsname%
      \put(3959,-110){\makebox(0,0){\strut{}0.075}}%
      \csname LTb\endcsname%
      \put(5087,-110){\makebox(0,0){\strut{}0.1}}%
      \put(-242,1743){\rotatebox{-270}{\makebox(0,0){\strut{}$\sigma_{max}$}}}%
      \put(2807,-440){\makebox(0,0){\strut{}Time}}%
    }%
    \gplgaddtomacro\gplfronttext{%
      \csname LTb\endcsname%
      \put(3300,3204){\makebox(0,0)[r]{\strut{}BGN, $\tau = 10^{-4}$}}%
      \csname LTb\endcsname%
      \put(3300,2984){\makebox(0,0)[r]{\strut{}BGN, $\tau = 0.001 h$}}%
      \csname LTb\endcsname%
      \put(3300,2764){\makebox(0,0)[r]{\strut{}BGN, $\tau = 0.01 h^2$}}%
      \csname LTb\endcsname%
      \put(3300,2544){\makebox(0,0)[r]{\strut{}$\alpha = \tau = 10^{-4}$}}%
      \csname LTb\endcsname%
      \put(3300,2324){\makebox(0,0)[r]{\strut{}$\alpha =10^{-4}, \tau = 0.001 h$}}%
      \csname LTb\endcsname%
      \put(3300,2104){\makebox(0,0)[r]{\strut{}$\alpha =10^{-4}, \tau = 0.01 h^2$}}%
    }%
    \gplbacktext
    \put(0,0){\includegraphics{./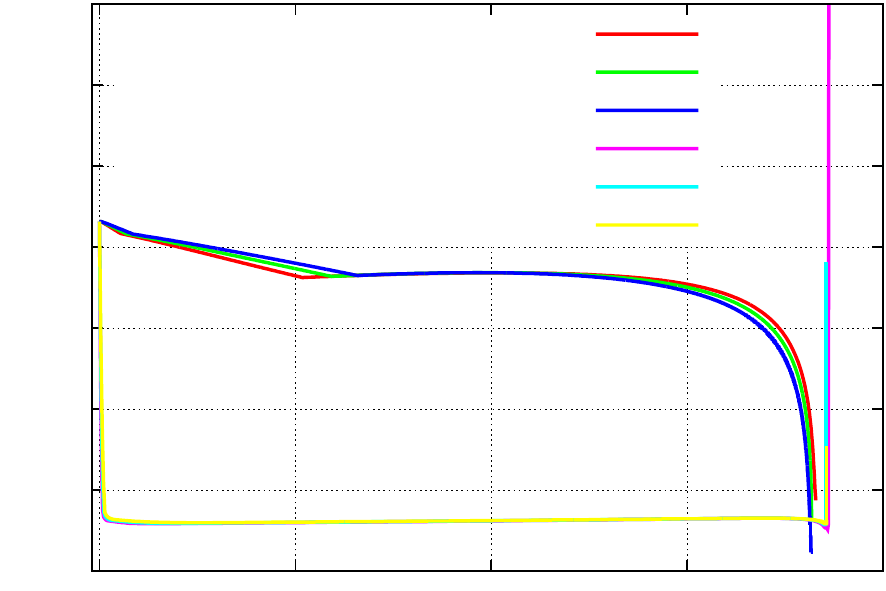}}%
    \gplfronttext
  \end{picture}%
  \vspace*{12mm}
  \caption{The image shows the behaviour of the mesh quantity $\sigma_{max}$ for the BGN-scheme
  (2.25) in \cite{BGN08} and for Algorithm \ref{algorithm_harmonic_MCF} with $\alpha = 10^{-4}$.
  The initial surface and the reference surface for Algorithm \ref{algorithm_harmonic_MCF} are presented 
  in Figures \ref{start_surface_MCF_example_2} and \ref{reference_surface_MCF_example_2}.
  Apart from the singularity, Algorithm \ref{algorithm_harmonic_MCF} shows good mesh properties.
  By coupling $\alpha$ and $\tau$, see Figure \ref{Figure_mesh_properties_for_alpha_is_tau_example_2},
  the mesh properties of Algorithm \ref{algorithm_harmonic_MCF} also remain controlled close to the surface singularity.
  For further details see Example 2 of Section \ref{Numerical_results_MCF}.}
  \label{Figure_mesh_properties_for_fixed_alpha_example_2}
\end{figure}

\gdef\gplbacktext{}%
\gdef\gplfronttext{}%
\begin{figure}
\centering
 \begin{picture}(5102.00,3400.00)%
    \gplgaddtomacro\gplbacktext{%
      \csname LTb\endcsname%
      \put(396,110){\makebox(0,0)[r]{\strut{} 6}}%
      \csname LTb\endcsname%
      \put(396,577){\makebox(0,0)[r]{\strut{} 8}}%
      \csname LTb\endcsname%
      \put(396,1043){\makebox(0,0)[r]{\strut{} 10}}%
      \csname LTb\endcsname%
      \put(396,1510){\makebox(0,0)[r]{\strut{} 12}}%
      \csname LTb\endcsname%
      \put(396,1977){\makebox(0,0)[r]{\strut{} 14}}%
      \csname LTb\endcsname%
      \put(396,2444){\makebox(0,0)[r]{\strut{} 16}}%
      \csname LTb\endcsname%
      \put(396,2910){\makebox(0,0)[r]{\strut{} 18}}%
      \csname LTb\endcsname%
      \put(396,3377){\makebox(0,0)[r]{\strut{} 20}}%
      \csname LTb\endcsname%
      \put(573,-110){\makebox(0,0){\strut{}0.0}}%
      \csname LTb\endcsname%
      \put(1702,-110){\makebox(0,0){\strut{}0.025}}%
      \csname LTb\endcsname%
      \put(2830,-110){\makebox(0,0){\strut{}0.05}}%
      \csname LTb\endcsname%
      \put(3959,-110){\makebox(0,0){\strut{}0.075}}%
      \csname LTb\endcsname%
      \put(5087,-110){\makebox(0,0){\strut{}0.1}}%
      \put(-242,1743){\rotatebox{-270}{\makebox(0,0){\strut{}$\sigma_{max}$}}}%
      \put(2807,-440){\makebox(0,0){\strut{}Time}}%
    }%
    \gplgaddtomacro\gplfronttext{%
      \csname LTb\endcsname%
      \put(2772,3204){\makebox(0,0)[r]{\strut{}BGN, $\tau = 10^{-4}$}}%
      \csname LTb\endcsname%
      \put(2772,2984){\makebox(0,0)[r]{\strut{}BGN, $\tau = 0.001 h$}}%
      \csname LTb\endcsname%
      \put(2772,2764){\makebox(0,0)[r]{\strut{}BGN, $\tau = 0.01 h^2$}}%
      \csname LTb\endcsname%
      \put(2772,2544){\makebox(0,0)[r]{\strut{}$\alpha = \tau = 0.001 h$}}%
      \csname LTb\endcsname%
      \put(2772,2324){\makebox(0,0)[r]{\strut{}$\alpha = \tau = 0.01 h^2$}}%
    }%
    \gplbacktext
    \put(0,0){\includegraphics{./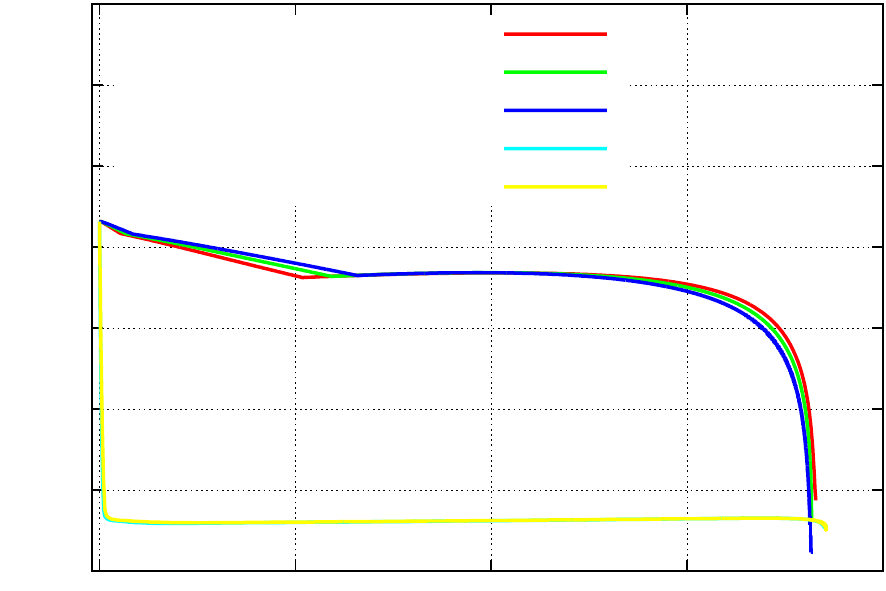}}%
    \gplfronttext
  \end{picture}%  
  \vspace*{12mm}
  \caption{
  The image shows the behaviour of the mesh quantity $\sigma_{max}$ for the BGN-scheme
  (2.25) in \cite{BGN08} and for Algorithm \ref{algorithm_harmonic_MCF} with $\alpha = 10^{-4}$.
  The initial surface and the reference surface for Algorithm \ref{algorithm_harmonic_MCF} are presented 
  in Figures \ref{start_surface_MCF_example_2} and \ref{reference_surface_MCF_example_2}.
  The parameter $\alpha$ was coupled to the time step size $\tau$ for the choices $\tau = 0.001 h$ and
  $\tau = 0.01 h^2$. For these choices Algorithm \ref{algorithm_harmonic_MCF} has good mesh properties 
  even close to the singularity of the surface.
  See Example 2 of Section \ref{Numerical_results_MCF} for further details.}
  \label{Figure_mesh_properties_for_alpha_is_tau_example_2}
\end{figure}

\subsection*{Example 3:}
In the last example, we consider a surface of genus one given by the local parametrization
\begin{equation*}
	X_0(\theta, \phi) :=
	\left(
		\begin{array}{c}
			(r_1 + r_2 \cos \varphi) \cos \theta \\
 			(r_1 + r_2 \cos \varphi) \sin \theta \\
 			 r_2 \sin \varphi + \tfrac{1}{5} \sin(6\theta)
		\end{array}
	\right),
	\quad \theta \in [0,2\pi), \varphi \in [0,2\pi).
\end{equation*}
We simulated the mean curvature flow for the radius $r_1 =1.0$ and 
for different choices of the radius $r_2$, that is for $r_2=0.7$, $r_2 = 0.6$ and $r_2 = 0.65$.
For $r_2 = 0.6$ the surface shrinks to a circle, see Figure \ref{circle_singularity}, 
whereas for $r_2=0.7$ it tries to converge
to a sphere developing a singularity, see Figure \ref{sphere_singularity} and Figure 4.7 in \cite{DDE05}. 
Because of this behaviour there must be a range of radii $r_2$, where 
the formation of the singularity becomes unstable in the sense that 
small changes of the initial triangulation, the mesh size $h$ or
the time step size $\tau$ can lead to the formation of different singularities.
Moreover, the formation of the singularity will even depend on the chosen algorithm.
Because of this instability, a rigorous study of the formation of the singularities
for radii $r_2$ close to $r_2 \approx 0.65$ does not make sense here.
Nevertheless, a comparison of Algorithm \ref{algorithm_harmonic_MCF} to the BGN-scheme (2.25) of \cite{BGN08}
for times far away from the singularity gives an interesting insight into the mesh
behaviour of both algorithms. As shown in Figures \ref{Figure_degenerated_tori_under_BGN} 
and \ref{Figure_mesh_properties_MCF_example_3}, the mesh of the surface degenerates
under the BGN-scheme at a time when the surface is still far away from the singularity. 
In contrast, Algorithm \ref{algorithm_harmonic_MCF} provides good meshes
with small values of $\sigma_{max}$ as long as the surface does not become singular, 
see Figures \ref{Figure_good_mesh_behaviour_tori_under_alpha_scheme}
and \ref{Figure_mesh_properties_MCF_example_3}. Algorithm \ref{algo_moving_hypersurface}
again provides a similar behaviour as Algorithm \ref{algorithm_harmonic_MCF}.

\begin{figure}
\begin{center}
\subfloat[][Reference surface for \mbox{Algorithm \ref{algorithm_harmonic_MCF}.}]
{\includegraphics[width=0.4\textwidth]
{./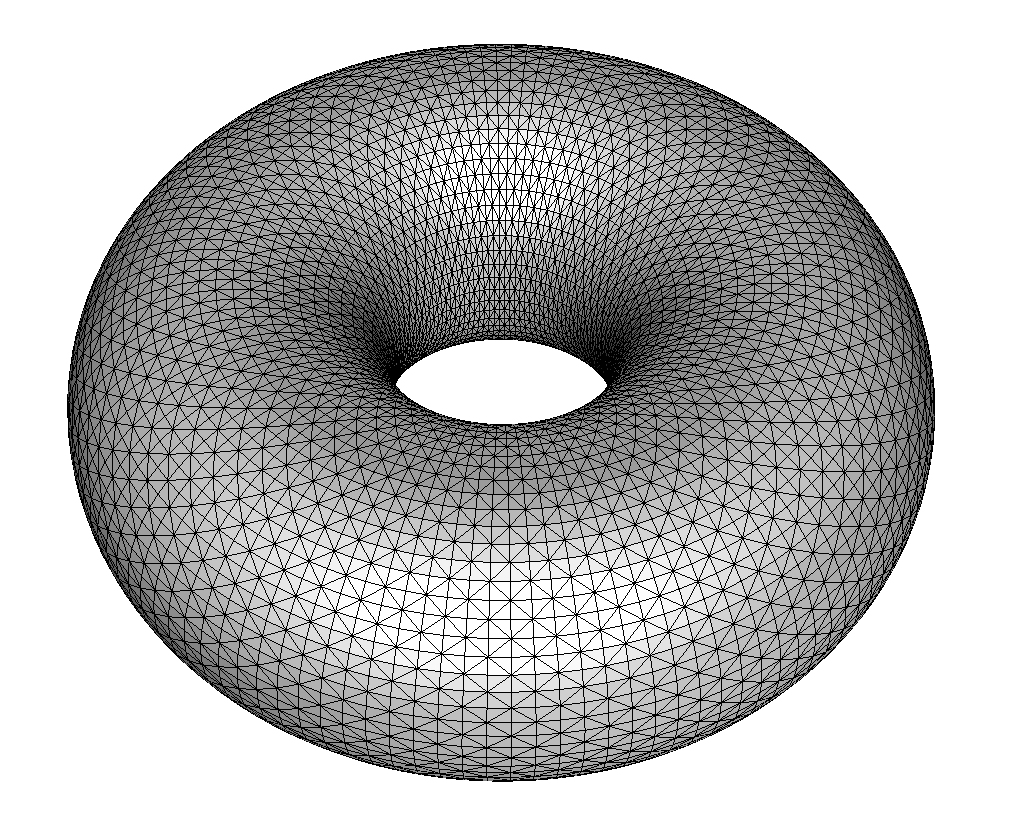}} 
\subfloat[][\centering Surface at time $t = 0.0$. The radii are $r_1=1.0$ and $r_2=0.6$. 
The surface area is $27.56$.]
{\includegraphics[width=0.4\textwidth]
{./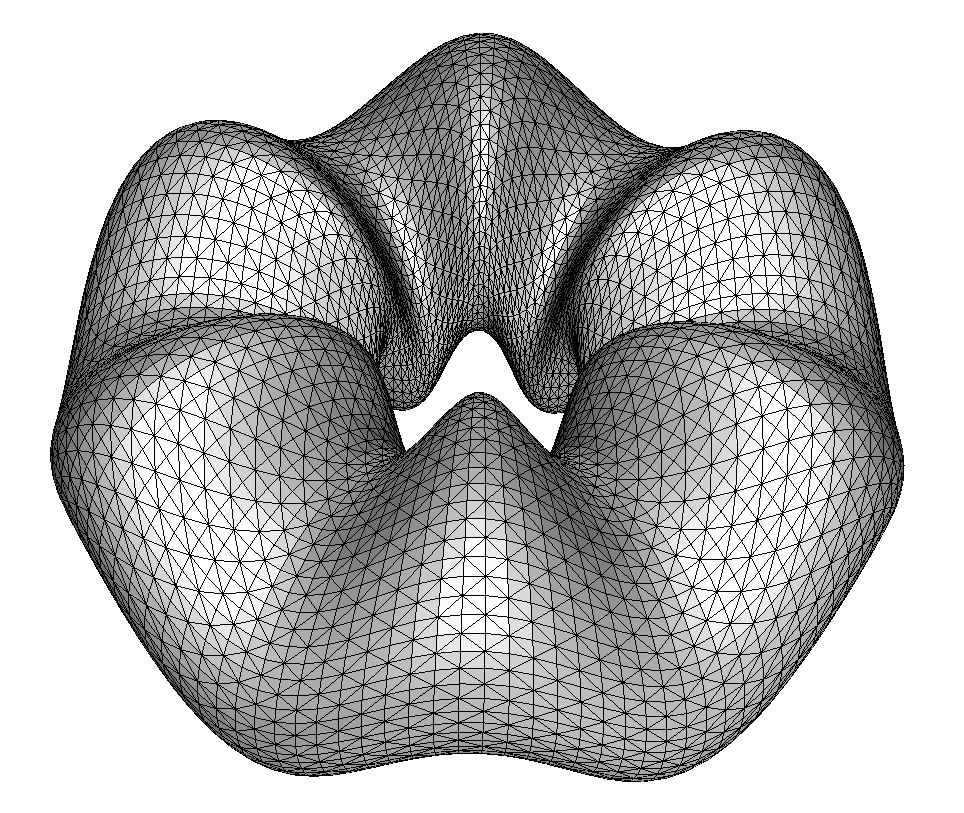}
\label{Fig_Torus_radii_1_0_and_0_6_time_0_0}} \\

\subfloat[][\centering Algorithm \ref{algorithm_harmonic_MCF}
at time $t=0.1$ for the initial surface \ref{Fig_Torus_radii_1_0_and_0_6_time_0_0}. 
The surface area is $14.91$.]{\includegraphics[width=0.4\textwidth]
{./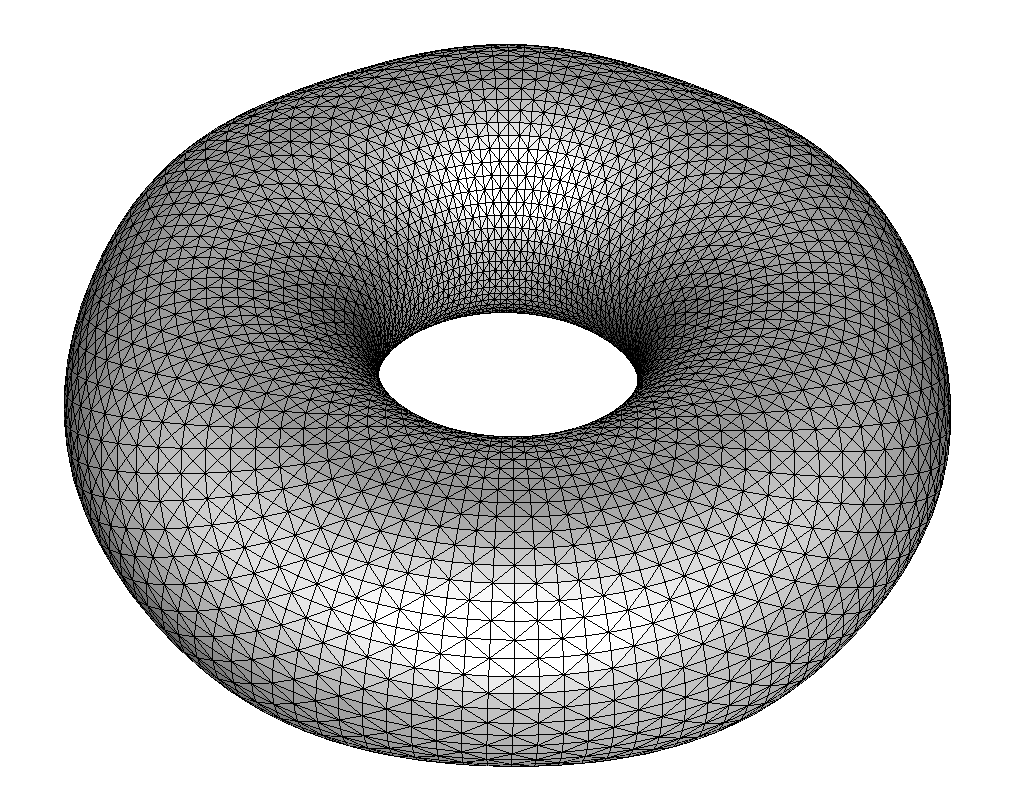}} 
\subfloat[][\centering Algorithm \ref{algorithm_harmonic_MCF}
at time $t=0.2$ for the initial surface \ref{Fig_Torus_radii_1_0_and_0_6_time_0_0}. 
The surface area is $3.22$.]{\includegraphics[width=0.4\textwidth]
{./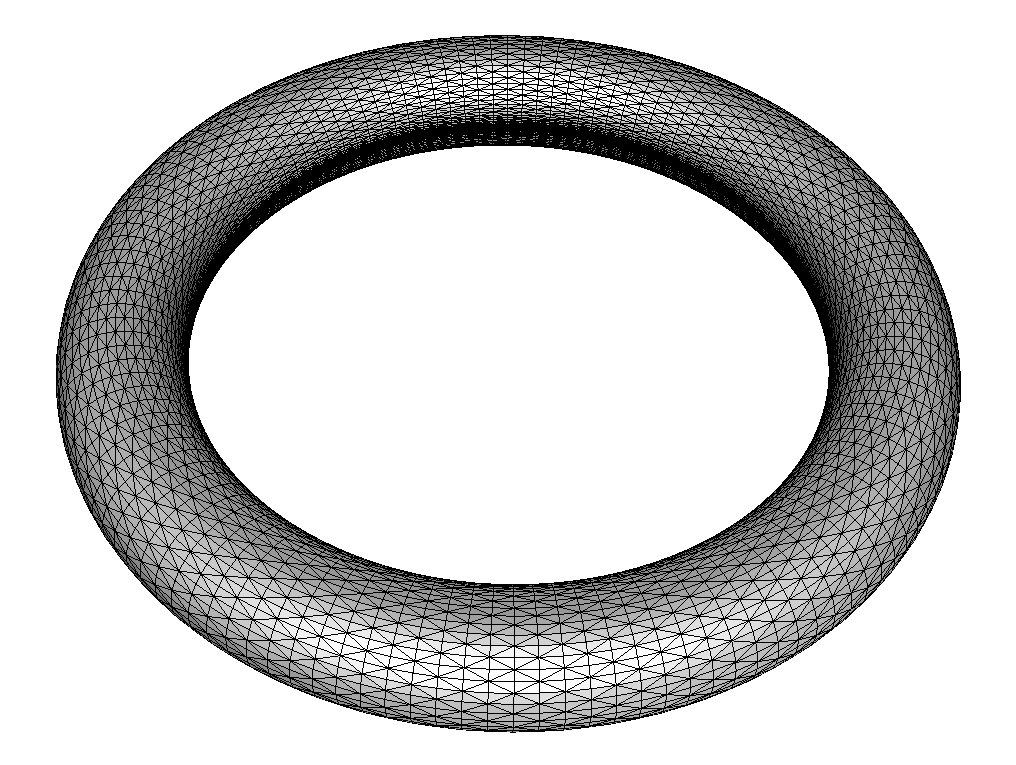}
\label{circle_singularity}}\\

\subfloat[][\centering Surface at time $t = 0.0$. The radii are $r_1=1.0$ and $r_2=0.7$. 
The surface area is $32.28$.]
{\includegraphics[width=0.4\textwidth]
{./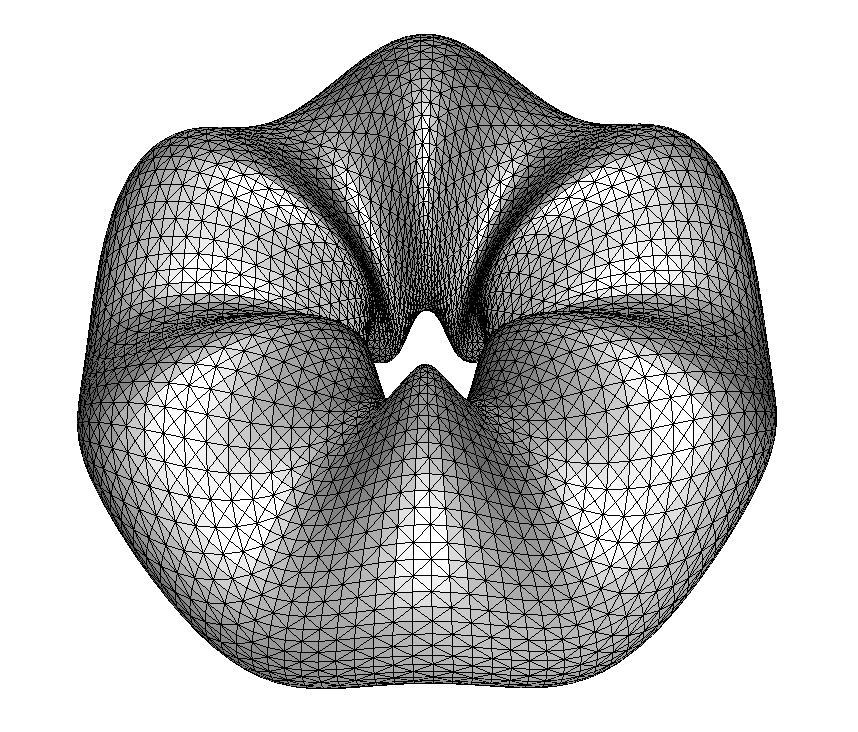}
\label{Fig_Torus_radii_1_0_and_0_7_time_0_0}}
\subfloat[][\centering Algorithm \ref{algorithm_harmonic_MCF}
at time $t=0.08$ for the initial surface \ref{Fig_Torus_radii_1_0_and_0_7_time_0_0}. 
The surface area is $21.11$.]{\includegraphics[width=0.4\textwidth]
{./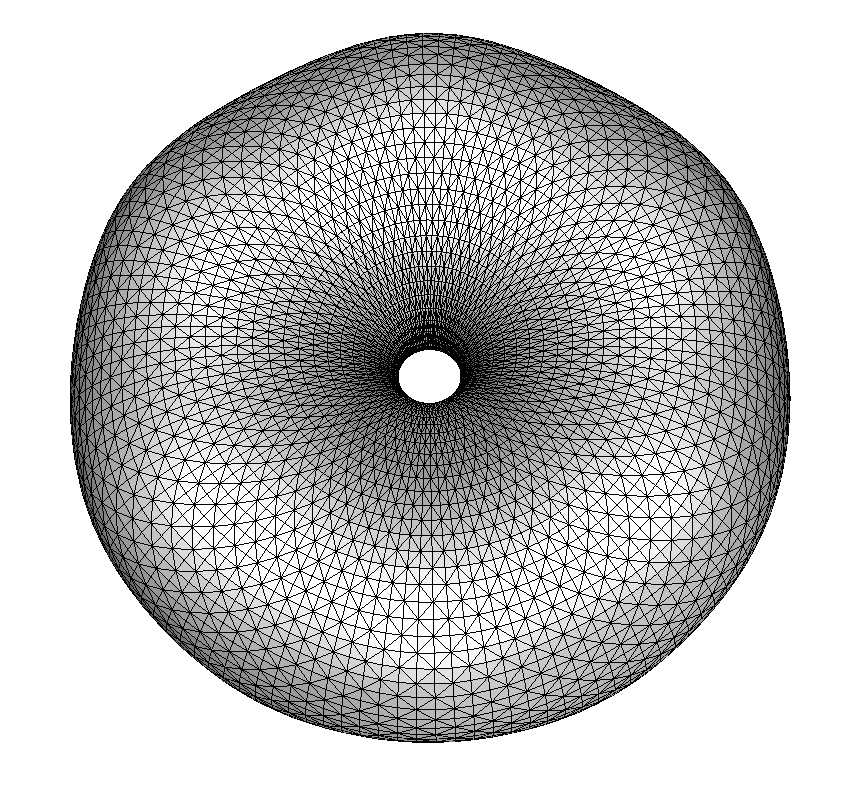}
\label{sphere_singularity}}
\caption{Simulation of the mean curvature flow for undulating tori with different radii $r_2$.
The pictures show the numerical results for Algorithm \ref{algorithm_harmonic_MCF}
with $\alpha = 0.01$. 
The time step size was $\tau = 10^{-4}$. The computational mesh had $16384$ triangles and $8385$ vertices.
The surface singularity strongly depends on the initial radius $r_2$.
In Figure \ref{circle_singularity} the torus converges to a circle, whereas
in \ref{sphere_singularity} it tries to converge to a sphere developing a singularity,
see also Figure 4.7 in \cite{DDE05}. For further details see Example 3 in Section \ref{Numerical_results_MCF}. 
} 
\end{center}
\end{figure}

\begin{figure}
\begin{center}
\subfloat[][\centering Algorithm \ref{algorithm_harmonic_MCF} at time $t = 0.11$. 
The surface area is $16.54$.]
{\includegraphics[width=0.4\textwidth]
{./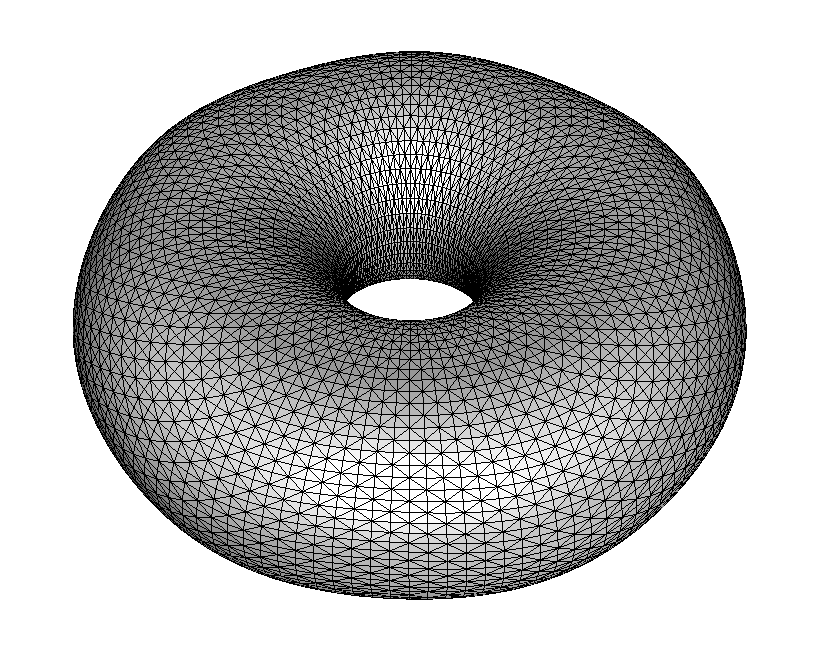}
\label{Figure_good_mesh_behaviour_tori_under_alpha_scheme}
} 
\subfloat[][\centering BGN-scheme at time $t= 0.11$. The surface area is $16.50$.]
{\includegraphics[width=0.38\textwidth]
{./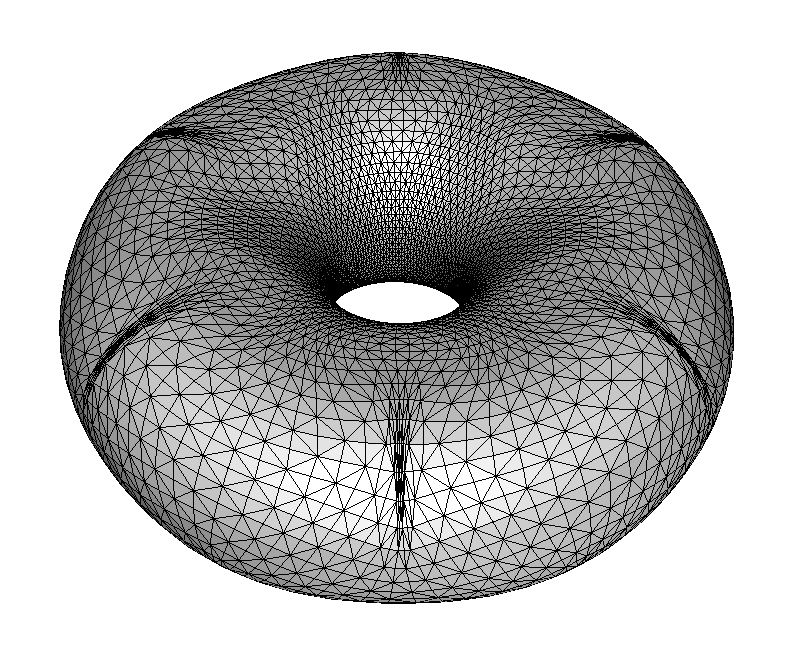}
\label{Figure_degenerated_tori_under_BGN}
}

\caption{Comparison of Algorithm \ref{algorithm_harmonic_MCF} for $\alpha = 1.0$
and the BGN-scheme (2.25) in \cite{BGN08}. 
The radii of the initial surface were $r_1 = 1.0$ and $r_2 = 0.65$, 
see Figures \ref{Fig_Torus_radii_1_0_and_0_6_time_0_0} ($r_1 = 1.0$ and $r_2 = 0.6$) and 
\ref{Fig_Torus_radii_1_0_and_0_7_time_0_0} ($r_1 = 1.0$ and $r_2 = 0.7$) for a visualization
of the initial surface.
The time step size for both schemes was $\tau = 10^{-5}$.
Both meshes had $16384$ triangles and $8385$ vertices.
In this example the mesh degenerates under the BGN-scheme, whereas the mesh
of the $\alpha$-scheme evolves in a controlled way.
See Figure \ref{Figure_mesh_properties_MCF_example_3} for the behaviour of the mesh quantity
$\sigma_{max}$ and Example 3 of Section \ref{Numerical_results_MCF} for further details.
}
\label{Figure_degenerated_mesh_of_the_torus}
\end{center}
\end{figure}

\gdef\gplbacktext{}%
\gdef\gplfronttext{}%
\begin{figure}
\centering
	 \begin{picture}(5102.00,3400.00)%
	 \gplgaddtomacro\gplbacktext{%
      \csname LTb\endcsname%
      \put(396,110){\makebox(0,0)[r]{\strut{} 0}}%
      \csname LTb\endcsname%
      \put(396,637){\makebox(0,0)[r]{\strut{} 5}}%
      \csname LTb\endcsname%
      \put(396,1164){\makebox(0,0)[r]{\strut{} 10}}%
      \csname LTb\endcsname%
      \put(396,1691){\makebox(0,0)[r]{\strut{} 15}}%
      \csname LTb\endcsname%
      \put(396,2218){\makebox(0,0)[r]{\strut{} 20}}%
      \csname LTb\endcsname%
      \put(396,2745){\makebox(0,0)[r]{\strut{} 25}}%
      \csname LTb\endcsname%
      \put(396,3272){\makebox(0,0)[r]{\strut{} 30}}%
      \csname LTb\endcsname%
      \put(528,-110){\makebox(0,0){\strut{} 0}}%
      \csname LTb\endcsname%
      \put(1328,-110){\makebox(0,0){\strut{} 0.05}}%
      \csname LTb\endcsname%
      \put(2128,-110){\makebox(0,0){\strut{} 0.1}}%
      \csname LTb\endcsname%
      \put(2927,-110){\makebox(0,0){\strut{} 0.15}}%
      \csname LTb\endcsname%
      \put(3727,-110){\makebox(0,0){\strut{} 0.2}}%
      \csname LTb\endcsname%
      \put(4527,-110){\makebox(0,0){\strut{} 0.25}}%
      \put(-242,1743){\rotatebox{-270}{\makebox(0,0){\strut{}Area}}}%
      \put(2807,-440){\makebox(0,0){\strut{}Time}}%
    }%
    \gplgaddtomacro\gplfronttext{%
      \csname LTb\endcsname%
      \put(4100,3204){\makebox(0,0)[r]{\strut{}BGN}}%
      \csname LTb\endcsname%
      \put(4100,2984){\makebox(0,0)[r]{\strut{}$\alpha = 1.0$}}%
      \csname LTb\endcsname%
      \put(4100,2764){\makebox(0,0)[r]{\strut{}$\alpha = 0.1$}}%
      \csname LTb\endcsname%
      \put(4100,2544){\makebox(0,0)[r]{\strut{}$\alpha = 0.01$}}%
      \csname LTb\endcsname%
      \put(4100,2324){\makebox(0,0)[r]{\strut{}$\alpha = 0.001$}}%
    }%
    \gplbacktext
	 \put(0,0){\includegraphics{./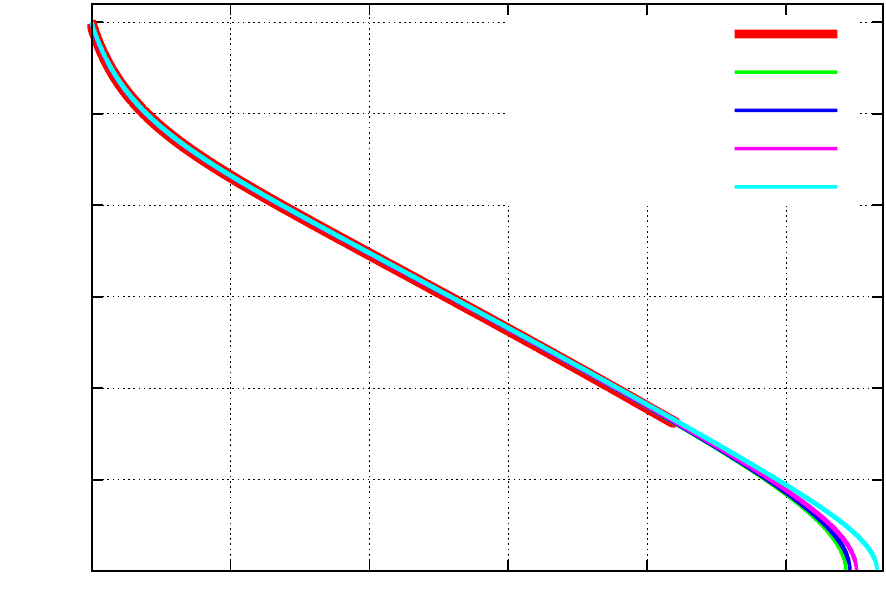}}%
    \gplfronttext
  \end{picture}%  
  \vspace*{12mm}
  \caption{
  The image shows the area decrease under the BGN-scheme (2.25) in \cite{BGN08}
  and under Algorithm \ref{algorithm_harmonic_MCF} for different choices of $\alpha$. 
  The radii of the initial surface were $r_1 = 1.0$ and $r_2 = 0.65$.  
  See Figure \ref{Figure_degenerated_mesh_of_the_torus} for a visualization of the surface
  at time $t=0.11$.
  The time step size was $\tau = 10^{-5}$. Please note that the BGN-scheme has
  to be stopped at time $t= 0.21023$, since the mesh fully degenerates, 
  see Figure \ref{Figure_mesh_properties_MCF_example_3}.
  See Example 3 of Section \ref{Numerical_results_MCF} for further details. }
\end{figure}

\gdef\gplbacktext{}%
\gdef\gplfronttext{}%
\begin{figure}
\centering
	 \begin{picture}(5102.00,3400.00)%
	 \gplgaddtomacro\gplbacktext{%
      \csname LTb\endcsname%
      \put(396,110){\makebox(0,0)[r]{\strut{} 0}}%
      \csname LTb\endcsname%
      \put(396,763){\makebox(0,0)[r]{\strut{} 20}}%
      \csname LTb\endcsname%
      \put(396,1417){\makebox(0,0)[r]{\strut{} 40}}%
      \csname LTb\endcsname%
      \put(396,2070){\makebox(0,0)[r]{\strut{} 60}}%
      \csname LTb\endcsname%
      \put(396,2724){\makebox(0,0)[r]{\strut{} 80}}%
      \csname LTb\endcsname%
      \put(396,3377){\makebox(0,0)[r]{\strut{} 100}}%
      \csname LTb\endcsname%
      \put(528,-110){\makebox(0,0){\strut{} 0}}%
      \csname LTb\endcsname%
      \put(1342,-110){\makebox(0,0){\strut{} 0.05}}%
      \csname LTb\endcsname%
      \put(2156,-110){\makebox(0,0){\strut{} 0.1}}%
      \csname LTb\endcsname%
      \put(2970,-110){\makebox(0,0){\strut{} 0.15}}%
      \csname LTb\endcsname%
      \put(3784,-110){\makebox(0,0){\strut{} 0.2}}%
      \csname LTb\endcsname%
      \put(4599,-110){\makebox(0,0){\strut{} 0.25}}%
      \put(-374,1743){\rotatebox{-270}{\makebox(0,0){\strut{}$\sigma_{max}$}}}%
      \put(2807,-440){\makebox(0,0){\strut{}Time}}%
    }%
    \gplgaddtomacro\gplfronttext{%
      \csname LTb\endcsname%
      \put(4100,3204){\makebox(0,0)[r]{\strut{}BGN}}%
      \csname LTb\endcsname%
      \put(4100,2984){\makebox(0,0)[r]{\strut{}$\alpha = 1.0$}}%
      \csname LTb\endcsname%
      \put(4100,2764){\makebox(0,0)[r]{\strut{}$\alpha = 0.1$}}%
      \csname LTb\endcsname%
      \put(4100,2544){\makebox(0,0)[r]{\strut{}$\alpha = 0.01$}}%
      \csname LTb\endcsname%
      \put(4100,2324){\makebox(0,0)[r]{\strut{}$\alpha = 0.001$}}%
    }%
    \gplbacktext
	 \put(0,0){\includegraphics{./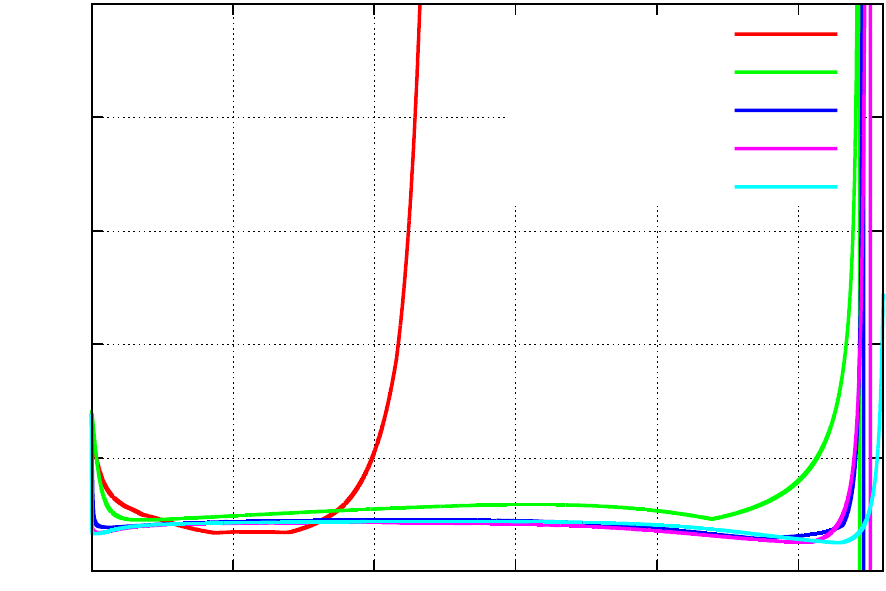}}%
    \gplfronttext
  \end{picture}%  
  \vspace*{12mm}
  \caption{Comparison of the behaviour of the mesh quantity $\sigma_{max}$ 
  for the BGN-scheme (2.25) in \cite{BGN08}
  and for Algorithm \ref{algorithm_harmonic_MCF} for different choices of $\alpha$. 
  The time step size was $\tau = 10^{-5}$.
  In this example, the mesh degenerates under the BGN-scheme although the surface
  is still far away from developing a singularity, see Figure \ref{Figure_degenerated_tori_under_BGN}.
  In contrast, the mesh of the $\alpha$-scheme does not degenerate until the surface becomes singular.
  See Example 3 of Section \ref{Numerical_results_MCF} for further details. 
  }
  \label{Figure_mesh_properties_MCF_example_3}
\end{figure}

\section{Generalizations to other geometric flows}
\label{Section_generalization_to_other_geometric_flows}
We are aware that the ideas developed in Sections $2$, $5$ and $6$ 
can be easily generalized
to other geometric evolution equations such as the anisotropic mean curvature flow
and the Willmore flow. For example, suppose that the 
time-dependent embedding $x: \M \times [0,T) \rightarrow \mathbb{R}^{n+1}$
evolves according to 
$$
	\frac{\partial}{\partial t} x = \mathcal{V} \circ x,
$$
where $\mathcal{V}$ denotes the normal velocity of $\Gamma(t) = x(\M,t)$. 
Obviously, this equation of motion can be rewritten in the form
$$
	\frac{\partial}{\partial t} x = \Delta_{g(t)} x + \widetilde{\mathcal{V}} \circ x,
$$
with
$$
	\widetilde{\mathcal{V}} \circ x = (\mathcal{V} + H \nu) \circ x. 
$$
Since $\widetilde{\mathcal{V}}$ is normal to the hypersurface 
$\Gamma(t) \subset \mathbb{R}^{n+1}$, this term remains unaffected
under the operations of Sections $2$, $5$ and $6$.
Hence, the reparametrized equations equivalent to (\ref{reparam_MCF_2}) are given by
\begin{align*}
\label{reparam_curvature_flow}
	\left( \alpha \unit 
	+ (1 - \alpha) (\nu \circ \hat{x}_\alpha) \otimes (\nu \circ \hat{x}_\alpha) \right) 
	\frac{\partial}{\partial t}\hat{x}_\alpha
	&= \Delta_{\hat{g}_\alpha} \hat{x}_\alpha
	- (\nabla_\M \hat{x}_\alpha) v_\alpha + \widetilde{\mathcal{V}} \circ \hat{x}_\alpha 
	\\	
	&= \mathcal{V} \circ \hat{x}_\alpha 
	- (\nabla_\M \hat{x}_\alpha) v_\alpha,
\end{align*}
and the reparametrized equations equivalent to (\ref{reparam_MCF_3}) are
\begin{align*}
	\left( \alpha \hat{\rho} \unit 
	+ (1 - \alpha \hat{\rho}) (\nu \circ \hat{x}_\alpha) \otimes (\nu \circ \hat{x}_\alpha) \right) 
	\frac{\partial}{\partial t}\hat{x}_\alpha
	&= \mathcal{V} \circ \hat{x}_\alpha 
	 + \hat{\rho} (P \circ \hat{x}_\alpha) \Delta_h \hat{x}_\alpha.
\end{align*}
We plan to publish more details including numerical experiments elsewhere.

\section{Discussion}
In this paper, we have introduced reformulations of the curve shortening and mean curvature flows 
based on the DeTurck trick.
The main idea was to reparametrize the flows by solutions to the harmonic map heat flow,
which leads to (strongly) parabolic PDEs
called the curve shortening-DeTurck and mean curvature-DeTurck flows.
The motivation for this approach is that the reparametrization should give rise to tangential
motions that might be advantageous in numerical simulations, in particular with respect
to the mesh quality. It has turned out in our numerical tests that this is indeed the case.
By a straightforward discretization of the reparametrized evolution equations in space
and time, we have obtained algorithms with very good mesh properties.
For the tangential motions being able to redistribute the mesh vertices efficiently,
it was necessary to introduce a variable time scale $\alpha$ 
on which the tangential motions take place. 

The here presented built-in approach for generating good meshes is 
clearly more preferable to approaches 
where the evolution of the flow has to be stopped in order to improve the mesh, see, for example, in \cite{St14}.
We have therefore compared our schemes to algorithms that are in the spirit of a built-in approach.
Namely, we have considered the schemes of Barrett, Garcke and N\"urnberg,
introduced in \cite{BGN08} and \cite{BGN11}.
For the computation of the curve shortening
and mean curvature flows, these schemes represent the present benchmarks with respect 
to the quality of the generated meshes. Our numerical tests show that the algorithms
developed on the basis of the DeTurck trick can
outperform the BGN-schemes with respect to the mesh quality 
-- at least away from surface singularities.
However, we do not regard the better mesh behaviour
as the main advantage of our approach,
but the fact that the tangential motions in our scheme have an analogue in the continuous case
and that our schemes arise from the straightforward discretizations of non-degenerate PDEs.
This might be crucial for the numerical analysis of the schemes.

In the following we must clearly distinguish between a hypersurface that evolves according to the mean curvature flow in a purely geometric sense,
and its parametrization which might not evolve according to the corresponding PDE.
For example, the solutions to our schemes will certainly not approximate the PDE-solution to the mean curvature flow (with vanishing tangential velocity) in the continuous case.
However, we hope that under sufficient conditions they are good approximations to the PDE-solution of the mean curvature-DeTurck flow, 
and hence also approximate the geometric evolution of the mean curvature flow. 
The fact that we have concrete and unique PDE-candidates to which our discrete solutions might converge is very important if one is interested in the error analysis of our schemes.
The scheme for the mean curvature flow in \cite{BGN08} is based on the discretization of the system
\begin{equation}
	\frac{\partial x}{\partial t} \cdot (\nu \circ x) = - H \circ x,
	\qquad (H \nu) \circ x = \Delta_{g(t)} x,
	\label{BGN_PDE}
\end{equation}
see (1.7) in \cite{BGN08}. As the authors clearly state this system has a whole family of solutions,
since the tangential component of the velocity of $x$
is not prescribed. The question, which then arises, is: Which of these solutions is approximated by the
discrete solution of the BGN-scheme? Or, is there at least a candidate which might be considered in the error analysis of the scheme?
Firstly, it is not possible that the solution to the BGN-scheme approximates the solution to the mean curvature flow (\ref{MCF_equation}),
since this solution has vanishing tangential velocity, whereas the desirable mesh properties of the BGN-schemes
are exactly due to non-vanishing tangential motions. 
Another possible answer to this question might be given in
Section 4.1 of \cite{BGN08}. There, the authors point out that the discrete solution
$u^{m+1}_h: \Gamma^m_h \rightarrow \Gamma^{m+1}_h$ to their scheme is
an approximate discrete conformal map. 
One could therefore speculate that the discrete solution approximates a parametrization,
which evolves according to the mean curvature flow
along the normal direction and which in addition satisfy a kind of harmonic map equation in each time step.
Apart from the fact that this would couple a PDE of
parabolic type to an equation of elliptic type, this cannot be the answer. 
The reason is that the only surface in the continuous case is 
the surface $\Gamma(t)$ and the only Riemannian metric is the metric induced by the Euclidean metric of the ambient space. Hence, this would lead to harmonic maps
from $\Gamma(t)$ onto itself with respect to the same metric. 
The identity map clearly solves the corresponding harmonic map equation. 
However, it is not clear how this solution can 
induce any tangential motions. So, the question remains: 
Which solution to (\ref{BGN_PDE}) is approximated by the BGN-scheme? Or, is there a non-degenerate PDE with a unique solution that is approximated by the BGN-scheme?
To have a candidate to which the discrete solutions might converge is a clear advantage of our approach. This statement holds regardless of the fact whether Algorithms
\ref{algo_moving_hypersurface} or \ref{algorithm_harmonic_MCF} do converge or not. For the curve-shortening flow, the situation is slightly different. The weak formulation of
the BGN-scheme in \cite{BGN11} also depends on a background metric. This metric is actually the same metric which we have used for the curve shortening-DeTurck flow.
This strengthens the idea that the BGN-scheme in \cite{BGN11} might approximate the limit of the curve shortening-DeTurck flow (\ref{weak_formulation_reparam_CSF}) 
for $\alpha \searrow 0$ provided that this limit exists.
Whether this view can be made rigorous is an open problem.

In (2.16a) and (2.16b) of \cite{BGN08b}, 
the authors introduced numerical schemes for the mean curvature
and other geometric flows, which allow to reduce or induce tangential motions.
Although, these schemes might seem to have some similarity with the schemes developed in this paper,
there are two main differences. Firstly, in contrast to (2.16a) and (2.16b) of \cite{BGN08b},
our schemes are based on the straightforward discretization
of some reparametrized PDEs.
Secondly, the tangential motions in our scheme are uniquely determined by the DeTurck trick,
and in some sense by the fixed background metric $h$,
whereas in (2.16a) and (2.16b) of \cite{BGN08b} there are no background metrics at all.   

Compared to the scheme (2.25) in \cite{BGN08} 
the implementation of Algorithm \ref{algorithm_harmonic_MCF} 
requires the assemblage of the additional stiffness matrix $D$ in each time step, whereas
for Algorithm \ref{algo_moving_hypersurface} an additional matrix associated to
the tangential part in (\ref{reparam_MCF_2}) has to be assembled,
see Section \ref{Numerical_results_MCF} for further details. 
In both algorithms, the map $y_h^0$ has to be computed
in the first time step, which is, however, trivial if the initial surface is given by an
embedding of the reference surface.

From a numerical point of view, the main difference between the approaches in Sections 
\ref{reparametrizations_by_the_original_DeTurck_trick}
and \ref{reparametrizations_via_a_generalized_DeTurck_trick} is that the original DeTurck
trick leads to a term, which behaves like a first order term, whereas our variant of the DeTurck trick introduced in Section 
\ref{reparametrizations_via_a_generalized_DeTurck_trick} leads to a term that can be written in divergence form. 
Whether Algorithm \ref{algo_moving_hypersurface} or \ref{algorithm_harmonic_MCF} is
generally  more advantageous
for computations as well as for numerical analysis is an open question.

In our numerical tests, Algorithms \ref{algo_moving_hypersurface} and \ref{algorithm_harmonic_MCF} 
turned out to be numerically stable if the surface was not singular. 
Nevertheless, it is not clear whether they are
unconditionally stable in general. This is still an open problem. In contrast, the BGN-scheme in \cite{BGN08} is known to be unconditionally stable.
This does, however, not imply that this scheme is always able to prevent mesh degenerations, see Figure \ref{Figure_degenerated_mesh_of_the_torus}.
It is not unlikely that the time discretization in Algorithms \ref{algo_moving_hypersurface} and \ref{algorithm_harmonic_MCF} has to be improved in order to prove 
stability. A further question that should be addressed in future research is
how the parameter $\alpha$ can be chosen to obtain an optimal behaviour with respect to
the mesh quality.

\subsection*{Acknowledgements}
We would like to thank Klaus Deckelnick and Gerhard Dziuk to call our attention 
to the fact that their work in \cite{DD94} can be linked to the DeTurck trick
as described above.
The second author would also like to thank the Alexander von Humboldt Foundation,
Germany, for their financial support by a Feodor Lynen Research Fellowship 
in collaboration with the University of Warwick, UK.

\end{document}